\theoremstyle{plain}
\newtheorem{thm}{Theorem}[section]
\newtheorem{lem}[thm]{Lemma}
\newtheorem{prop}[thm]{Proposition}
\newtheorem{cor}[thm]{Corollary}
\newtheorem*{thm*}{Main Theorem}
\newtheorem*{prop*}{Proposition}
\newtheorem*{cor*}{Corollary}
\newtheorem{thmintro}{Theorem}
\newtheorem{propintro}[thmintro]{Proposition}
\theoremstyle{definition}
\newtheorem{mydef}[thm]{Definition}
\newtheorem{exam}[thm]{Example}
\newtheorem{rem}[thm]{Remark}
\newtheorem*{quest*}{Question}
\newtheorem{defprop}[thm]{Definition/Proposition}
\DeclareMathOperator{\Ima}{Im}
\DeclareMathOperator{\diam}{diam}
\newcommand{\CAT}{{\rm CAT(0)}}
\newcommand{\codim}{\mathrm{codim}}
\newcommand{\RomanNumeralCaps}[1]
    {\MakeUppercase{\romannumeral #1}}
\newcommand{\grad}{\mathrm{grad}}
\title{Circumcenter extension maps for non-positively curved spaces}
\author{Merlin Incerti-Medici}
\address{Institut f\"ur Mathematik, Universit\"at Z\"urich, Switzerland}
\email{merlin.incerti-medici@math.uzh.ch}
\begin{document}

\maketitle

\begin{abstract}
We show that every cross ratio preserving homeomorphism between boundaries of Hadamard manifolds extends to a continuous map, called circumcenter extension, provided that the manifolds satisfy certain visibility conditions. We show that this map is a rough isometry, whenever the manifolds admit cocompact group actions by isometries and we improve the quasi-isometry constants provided by Biswas in the case of $\mathrm{CAT(-1)}$ spaces. Finally, we provide a sufficient condition for this map to be an isometry in the case of Hadamard surfaces.
\end{abstract}

\tableofcontents




\section{Introduction} \label{sec:Introduction}

The visual boundary of a geodesically complete $\mathrm{CAT(-1)}$ space is naturally endowed with a cross ratio. In this paper, we show that this cross ratio can still be defined on the visual boundary of a geodesically complete $\CAT$ space and that it still retains a lot of information about the interior space, provided that certain visibility conditions are satisfied.

Our core motivation is the following, informal question, which is related to several results from geometry and geometric group theory in recent decades.

\begin{quest*}
Let $(X,d)$ and $(Y,d)$ be two geodesically complete $\CAT$ spaces and $f : \partial X \rightarrow \partial Y$ a cross ratio-preserving homeomorphism between their boundaries. Can $f$ be extended to an isometry $F : X \rightarrow Y$?

Alternatively, if a group $G$ acts on $X$ and $Y$ by isometries and $f$ is a $G$-equivariant, cross ratio-preserving homeomorphism, can we construct $F$ to be $G$-equivariant?
\end{quest*}

This question has seen a series of complete and partial answers for various special cases over the course of the last few decades. The first series of results concern situations where $X$ and $Y$ admit geometric actions, i.e.\,proper, cocompact actions by isometries, by some group $G$. Specifically, if $X$ and $Y$ have constant negative curvature, the extension of $f$ to a $G$-equivariant isometry is used in Thurston's proof of Mostow rigidity \cite{MostowThurston}. If $X$ and $Y$ are universal coverings of negatively curved surfaces, the fact that $f$ extends to a $G$-equivariant isometry is crucial to Otal's proof of Marked Length Spectrum Rigidity \cite{Otal}. In a series of papers, Hamenst\"adt and Besson-Courtois-Gallot show that $f$ extends to a $G$-equivariant isometry, if both spaces are universal coverings of negatively curved manifolds and one of them is a locally symmetric space \cite{BessonCourtoisGallot95, Hamenstadt99}.

A second, more recent series of results mostly drops the assumptions about group actions. If $X$ and $Y$ are proper, geodesically complete $\mathrm{CAT(-1)}$ spaces, Biswas proved that $f$ can be extended to a rough isometry, i.e.\,a $(1,C)$-quasi-isometry \cite{Biswas15}. One may also consider classes of spaces for which the appropriate notions are no longer the visual boundary and isometries. Beyrer, Fioravanti and the author proved a similar extension theorem for $\CAT$ cube complexes and their Roller boundaries, equipped with a suitable cross ratio \cite{BeyFioMedici}. Furthermore, Beyrer-Fioravanti proved additional extension theorems for cubulable hyperbolic groups and for certain group actions on nice $\CAT$ cube complexes \cite{BeyFio,BeyFio2}.

A third collection of results can be found when thinking of the question above in more dynamical terms. There are various instances, where the visual boundary is closely related to the geodesic flow of the interior space. For example, if $X$ is the universal covering of a closed, negatively curved Riemannian manifold, then the cross ratio on its boundary and the Marked Length Spectrum of the closed manifold determine each other (see for example \cite{Hamenstadt92, Biswas15} for how the cross ratio determines the Marked Length Spectrum and \cite{Otal92, Kim01, Kim04} for the converse). De Simoi-Kaloshin-Wei have proven a Length Spectrum Rigidity result for billiards that feature sufficient symmetries and whose boundaries are sufficiently close to the circle \cite{SimoiKaloshinWei}. We also mention the work of Kaloshin-Sorrentino, who show that, if a strictly convex billiard has the same maximal marked length spectrum as an ellipse, then it is an ellipse \cite{KaloshinSorrentino}. It seems to be an open question, whether these results can be formulated in a geometric language similar to the one of this paper.

Finally, one may take a coarse viewpoint and only require that $f$ coarsely preserves the cross ratio. This is called a quasi-M\"obius map. In \cite{CCM}, Charney-Cordes-Murray show that, under a mild stability condition, quasi-M\"obius maps between Morse boundaries of finitely generated groups extend to quasi-isometries of groups.

While cross ratios have been used on numerous occasions in spaces of negative curvature, they have not been studied very much for general non-positively curved spaces. In part, this is due to the fact that visual boundaries of hyperbolic or $\mathrm{CAT(-1)}$ spaces have several properties that visual boundaries of $\CAT$ spaces don't. In this paper, we show how to work around these difficulties to define a cross ratio on the visual boundary of a proper, geodesically complete $\CAT$ space. We then show that the circumcenter construction introduced in \cite{Biswas15, Biswas17b} can be generalized to a large class of non-positively curved manifolds and that this circumcenter extension provides a good framework to study the initially stated question.\\

In order to state our results, we first need to define the cross ratio. Let $X$ be a proper, connected, geodesically complete $\CAT$ space. Fix a base point $o \in X$. For any admissible quadruple $(\xi_1, \xi_2, \xi_3, \xi_4) \in \partial X^4$ (see section \ref{subsec:crossratios} for the definition of admissibility), we can define the cross ratio
\[ cr(\xi_1, \xi_2, \xi_3, \xi_4) := \frac{ \rho_o(\xi_1, \xi_2) \rho_o(\xi_3, \xi_4) }{ \rho_o(\xi_1, \xi_3) \rho_o(\xi_2, \xi_4) }, \]
where $\rho_o(\xi, \eta) := e^{-(\xi \vert \eta)_o}$, with $(\xi \vert \eta)_o$ denoting the Gromov product on $\partial X$ with respect to the base point $o$.

\begin{propintro} \label{propintro:PropositionA}
Let $X$ be a proper, connected, geodesically complete $\CAT$ space. The cross ratio $cr$ is well-defined for all admissible quadruples and independent of the choice of $o$.
\end{propintro}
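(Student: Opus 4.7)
The plan is to prove both parts of the proposition by reducing each Gromov product $(\xi_i\mid\xi_j)_o$ to an expression in Busemann functions, after which the two desired properties become short computations.

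First, the well-definedness claim reduces to checking that the four Gromov products appearing in the formula are finite, so that each $\rho_o$ is a positive real number and the ratio makes sense. Unlike the $\mathrm{CAT}(-1)$ setting, this is not automatic in $\CAT$ spaces: by convexity of the distance function between unit-speed rays from $o$, two distinct boundary rays can diverge sublinearly and force $(\xi\mid\eta)_o = +\infty$, as already happens in Euclidean space. I would unpack the admissibility condition from Section~\ref{subsec:crossratios} to confirm that, for each of the four pairs $\{\xi_1,\xi_2\}, \{\xi_3,\xi_4\}, \{\xi_1,\xi_3\}, \{\xi_2,\xi_4\}$ relevant to $cr$, the visibility-type hypotheses built into admissibility guarantee finiteness of $(\xi_i\mid\xi_j)_o$.

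The heart of the argument is the intermediate identity
\[ (\xi\mid\eta)_o \;=\; \tfrac{1}{2}\bigl(\beta_\xi(o,p) + \beta_\eta(o,p)\bigr), \]
valid for any point $p$ on a bi-infinite geodesic $\gamma$ from $\eta$ to $\xi$ (whose existence is supplied by the visibility part of admissibility). Here $\beta_\xi(o,p) := \lim_{s\to\infty}\bigl(d(o,\gamma_\xi(s)) - d(p,\gamma_\xi(s))\bigr)$ is the two-point Busemann cocycle, which depends only on $o, p, \xi$. To derive the identity, I would parametrize $\gamma$ by arc length with $\gamma(+\infty) = \xi,\ \gamma(-\infty) = \eta,\ \gamma(0) = p$ and expand
\[ (\gamma(s)\mid\gamma(t))_o \;=\; \tfrac{1}{2}\bigl(d(o,\gamma(s)) + d(o,\gamma(t)) - (s-t)\bigr). \]
Using $d(o,\gamma(s)) - s \to \beta_\xi(o,p)$ as $s \to +\infty$ and $d(o,\gamma(t)) - (-t) \to \beta_\eta(o,p)$ as $t \to -\infty$, the $s$ and $-t$ terms cancel and the limit collapses to $\tfrac{1}{2}(\beta_\xi(o,p) + \beta_\eta(o,p))$. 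I then have to verify that this limit along $\gamma$ agrees with $(\xi\mid\eta)_o$ as defined in the paper; this is where admissibility again plays its decisive role, by ensuring the Gromov product does not depend on the sequences chosen to approach $\xi$ and $\eta$. I expect this verification to be the main technical obstacle.

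Once the Busemann formula is in hand, basepoint independence is essentially bookkeeping. Subtracting the identity at two basepoints $o$ and $o'$ with the same auxiliary $p$ yields
\[ (\xi\mid\eta)_o - (\xi\mid\eta)_{o'} \;=\; \tfrac{1}{2}\bigl(\beta_\xi(o,o') + \beta_\eta(o,o')\bigr), \]
since $\beta_\xi(o,p) - \beta_\xi(o',p)$ is the cocycle $\beta_\xi(o,o')$, independent of $p$. Taking $\log$ in the defining formula gives $\log cr = -(\xi_1\mid\xi_2)_o - (\xi_3\mid\xi_4)_o + (\xi_1\mid\xi_3)_o + (\xi_2\mid\xi_4)_o$, and replacing $o$ by $o'$ contributes $\tfrac{1}{2}\bigl(-\beta_{\xi_1} - \beta_{\xi_2} - \beta_{\xi_3} - \beta_{\xi_4} + \beta_{\xi_1} + \beta_{\xi_3} + \beta_{\xi_2} + \beta_{\xi_4}\bigr)(o,o') = 0$, so every Busemann cocycle appears once with each sign and the cross ratio is unchanged.
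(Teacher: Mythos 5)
There are two genuine gaps here, one structural and one fatal to your central identity. First, your reduction of well-definedness to ``all four Gromov products are finite'' misreads admissibility. In this paper the cross ratio takes values in $[0,\infty]$, and admissibility does \emph{not} force every pair among $(\xi_1,\xi_2),(\xi_3,\xi_4),(\xi_1,\xi_3),(\xi_2,\xi_4)$ to be algebraically visible; it only forbids two non-visible pairs sharing an index, which is exactly what rules out indeterminate forms like $0/0$ or $\infty/\infty$. So the value $0$ or $\infty$ can legitimately occur, and these degenerate cases must be treated separately (the paper does so at the end of the proof of Theorem \ref{thm:CrossRatio}; they are easy, since a vanishing numerator forces $cr=0$ at every basepoint and a vanishing denominator forces $cr=\infty$ at every basepoint, by the basepoint-invariance of algebraic visibility).

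Second, and more seriously, your ``heart of the argument'' assumes a point $p$ on a bi-infinite geodesic from $\eta$ to $\xi$ ``whose existence is supplied by the visibility part of admissibility.'' Admissibility is phrased in terms of \emph{algebraic} visibility (finiteness of the Gromov product), which in a general $\CAT$ space does \emph{not} imply the existence of a connecting bi-infinite geodesic: Section \ref{subsec:VisibilityAlgebraicVisibility} of the paper constructs surfaces of revolution with pairs of boundary points that are algebraically visible but not visible. So the identity $(\xi\mid\eta)_o = \tfrac{1}{2}(\beta_\xi(o,p)+\beta_\eta(o,p))$ has no $p$ to be evaluated at in general, and the step you flag as ``the main technical obstacle'' (agreement of the limit along $\gamma$ with the ray-based definition) cannot even be posed. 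The paper's Lemma \ref{lem:derivative} avoids this entirely: it proves the basepoint-change formula $(\xi\mid\eta)_x = (\xi\mid\eta)_y + \tfrac{1}{2}(B(y,x,\xi)+B(y,x,\eta))$ directly from the rays $\xi_x,\xi_y,\eta_x,\eta_y$, using the horosphere reparametrisation $T_\xi = T + B(y,x,\xi)$ and the triangle inequality, with no connecting geodesic required. Your final cancellation of the eight Busemann cocycle terms is correct and is exactly the paper's computation, so if you replace your geodesic-based derivation with a direct proof of the cocycle formula for the Gromov product (and add the two degenerate cases), the argument goes through.
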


It turns out that the boundary, together with the cross ratio, contains a lot of information about the interior space, provided that the boundary satisfies certain visibility properties. Specifically, we say:
\begin{enumerate}
\item The visual boundary $\partial X$ satisfies {\it 4-visibility}, if for every quadruple $(\xi_1, \xi_2, \xi_3, \xi_4) \in \partial X^4$, there exists $\eta \in \partial X$, such that for all $i \in \{1,2,3,4\}$, $(\xi_i \vert \eta)_o$ is finite for some base point $o \in X$.

\item We say that $\xi \in \partial X$ is {\it in a rank 1 hinge} if there exist $\eta, \zeta \in \partial X$, such that there is a bi-infinite geodesic from $\eta$ to $\zeta$ and there exist bi-infinite rank 1 geodesics from $\xi$ to $\eta$ and from $\xi$ to $\zeta$.
\end{enumerate}

Given two proper, connected, geodesically complete $\CAT$ spaces $X$ and $Y$, we say that a map $f : \partial X \rightarrow \partial Y$ is {\it M\"obius} if and only if it preserves the cross ratio.

Let $X, Y$ be Hadamard manifolds, i.e.\,simply connected, geodesically complete Riemannian manifolds such that all sectional curvatures are non-positive. The main result of this paper is a construction that allows us to extend M\"obius homeomorphisms that satisfy one mild extra condition to the interior spaces, provided that $\partial X$ and $\partial Y$ satisfy conditions (1) and (2) above. We call this extension the {\it circumcenter extension} of $f$. We prove a very general Theorem about the circumcenter extension in section \ref{sec:HolderLipschitz}. This Theorem has several consequences, as soon as one adds some extra assumption. If $X$ and $Y$ admit a cocompact group action, we obtain

\begin{thmintro} \label{thmintro:CocompactAction}
Let $X, Y$ be Hadamard manifolds such that $\partial X, \partial Y$ satisfy (1) and all points in $\partial X$ and $\partial Y$ satisfy (2). Suppose, the group $G$ acts cocompactly by isometries on $X$ and $Y$. Let $f : \partial X \rightarrow \partial Y$ be a $G$-equivariant M\"obius homeomorphism, such that $f$ and $f^{-1}$ send visible pairs to visible pairs. Then the circumcenter extension is a $G$-equivariant $(1, 2M)$-quasi-isometry $F : X \rightarrow Y$ for some constant $M \geq 0$.
\end{thmintro}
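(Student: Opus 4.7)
The plan is to apply the general circumcenter extension theorem proved in Section \ref{sec:HolderLipschitz} and then use cocompactness of the $G$-action to convert its coarse estimates into uniform additive bounds. The general theorem, applied to $f : \partial X \to \partial Y$, produces the continuous circumcenter extension $F : X \to Y$; since $f^{-1}$ satisfies the same hypotheses (it is Möbius, and the visible-pair condition is symmetric), one obtains symmetrically a continuous circumcenter extension $\bar F : Y \to X$ from $f^{-1}$. Assumptions (1) and (2), together with preservation of visible pairs by $f$ and $f^{-1}$, are exactly what is needed to guarantee existence and uniqueness of both circumcenters. I expect the general theorem to deliver a pointwise comparison of the form
\[ \bigl| d_Y(F(x_1), F(x_2)) - d_X(x_1, x_2) \bigr| \leq E(x_1, x_2), \]
where $E$ is a continuous, non-negative error function depending on geometric data at $(x_1, x_2)$.

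I would next verify $G$-equivariance. The circumcenter $F(x)$ is characterized as the unique minimizer on $Y$ of a canonical functional $\Phi_x$ built from $f$ and the boundary Gromov products based at $x$. Each $g \in G$ acts isometrically on both $X$ and $Y$, hence preserves Gromov products and cross ratios on the respective boundaries; combined with $G$-equivariance of $f$, this yields $\Phi_{gx} = \Phi_x \circ g^{-1}$. Uniqueness of the minimizer then forces $F(gx) = gF(x)$, and the same argument shows $\bar F$ is $G$-equivariant.

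Finally I would use cocompactness to extract the $(1, 2M)$-quasi-isometry conclusion. Fix a compact fundamental domain $K \cu X$ for the $G$-action. Since both $d_X(x_1, x_2)$ and $d_Y(F(x_1), F(x_2))$ are $G$-invariant, it suffices to control $E(x_1, x_2)$ for $x_1 \in K$. Continuity of $E$ on the compact set $K$, together with the asymptotic behavior of the general estimate as $d_X(x_1, x_2) \to \infty$, yields a uniform bound $E \leq M$. This gives the one-sided inequality $d_Y(F(x_1), F(x_2)) \leq d_X(x_1, x_2) + M$. Running the analogous argument for $\bar F$ and combining via the observation that $\bar F \circ F$ is a $G$-equivariant continuous map at bounded distance from $\mathrm{id}_X$ produces the matching lower bound, and hence the full $(1, 2M)$-quasi-isometry.

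The main obstacle I expect is this final step: confirming that the asymptotic structure of the general estimate is genuinely additive (not merely $(1+\eps)$-multiplicative) and that the two-sided argument via $\bar F$ introduces no additional multiplicative slack. This is where the rank 1 hinge assumption and the fact that $f$ preserves the cross ratio exactly (rather than merely coarsely) are essential, since they are what force the multiplicative constant in the comparison to equal $1$ in the limit.
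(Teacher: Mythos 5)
Your overall plan is right in spirit but mislocates the source of the crucial estimate and takes a more roundabout route to the lower bound than the paper does.

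The main issue is that Theorem \ref{thm:Extensionresult} (your ``general circumcenter extension theorem'') only delivers continuity and local H\"older continuity of $F$; it does \emph{not} produce the pointwise comparison $\lvert d_Y(F(x),F(x')) - d_X(x,x')\rvert \leq E(x,x')$ that you postulate. That comparison is instead obtained directly from the geodesic-conjugacy property of $\Phi$ (Lemma \ref{lem:GeodesicConjugacy}) together with the definition $M(x) = \Vert u_{x,\cdot}\Vert_\infty$ on $M_x$. Concretely: for $\xi$ the endpoint of the geodesic ray extending $[F(x'),F(x)]$ one gets the cocycle decomposition
\begin{equation*}
d(F(x),F(x')) = B(F(x),F(x'),f(\xi)) = B(F(x), \pi\circ\Phi(\overrightarrow{x\xi}),f(\xi)) + B(x,x',\xi) + B(\pi\circ\Phi(\overrightarrow{x'\xi}),F(x'),f(\xi)),
\end{equation*}
whose outer terms are bounded by $M(x)$ and $M(x')$ and whose middle term is at most $d(x,x')$, giving the upper bound. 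The lower bound comes from the \emph{same} decomposition — there is no need to route it through $\bar F\circ F$ — by instead choosing $\xi'$ to be the endpoint of the geodesic ray extending $[x',x]$, so that $B(x,x',\xi')=d(x,x')$ while $B(F(x),F(x'),f(\xi'))\leq d(F(x),F(x'))$, yielding $d(F(x),F(x'))\geq d(x,x')-M(x)-M(x')$ directly. Your $\bar F\circ F$ argument can be made to work, but it requires separately establishing that $\bar F\circ F$ is at uniformly bounded distance from $\mathrm{id}_X$, which adds complications (and a worse constant) compared to this direct two-sided estimate.

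Two smaller points. First, your $G$-equivariance argument invokes ``uniqueness of the minimizer,'' but $M_x$ need not be a singleton in the present generality: the paper chooses a representative $y\in M_x$ on each $G$-orbit and extends $F$ equivariantly from there (at the cost of continuity), and equivariance follows from the transformation law $u_{gx,gy}(g\xi)=u_{x,y}(\xi)$, hence $M_{gx}=g(M_x)$. Second, your cocompactness step is correct and matches the paper: $M$ is $G$-invariant and $1$-Lipschitz (Lemma \ref{lem:LipschitzContinuityM}), hence bounded on a compact fundamental domain and therefore globally bounded, and coarse surjectivity of $F$ follows from $G$-equivariance together with cocompactness of the $G$-action on $Y$.
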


The constant $M$ will be the supremum of a Lipschitz continuous function $M : X \rightarrow [0, \infty)$, which essentially measures by how much $F$ fails to be an isometry at a certain point. A better understanding of the function $M$ has the potential to significantly improve this result, the main result of the paper and the results stated below.\\

If we drop the assumption about cocompact group actions, we can restrict to more specialised situations and obtain other, sometimes even stronger results. The first of these results concerns surfaces.

\begin{thmintro} \label{thmintro:Surfaces}
Let $X, Y$ be Hadamard manifolds whose sectional curvatures are bounded from below by $-b^2$ such that $\partial X, \partial Y$ satisfy (1) and all points in $\partial X$ and $\partial Y$ satisfy (2). Assume that $X, Y$ are $2$-dimensional. Let $f : \partial X \rightarrow \partial Y$ be a M\"obius homeomorphism such that $f$ and $f^{-1}$ send visible pairs to visible pairs. Then, the circumcenter extension is a homeomorphism $F: X \rightarrow Y$, it is locally Lipschitz continuous on a dense subset and differentiable almost everywhere.

Furthermore, to every $x \in X$, we can associate a set $K_x \subset \partial X$ with the following property: For almost every $x$, $F$ is differentiable at $x$ and if $K_x$ contains at least five points, then $DF_x$ is an isometry between tangent spaces.
\end{thmintro}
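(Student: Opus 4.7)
My plan is to build on the general circumcenter extension theorem of Section~\ref{sec:HolderLipschitz}, which under hypotheses (1) and (2) already yields a well-defined continuous map $F : X \to Y$. The first step is to promote $F$ to a homeomorphism. I would apply the general theorem in the other direction to $f^{-1}$; the assumption that both $f$ and $f^{-1}$ send visible pairs to visible pairs is precisely what makes both applications legitimate. The resulting map $G : Y \to X$ should be an inverse of $F$: because $F$ sends the circumcenter in $X$ of a boundary set $A$ to the circumcenter in $Y$ of $f(A)$, one reads off $G \circ F = \id_X$ and $F \circ G = \id_Y$ directly from the circumcenter characterization.

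Next, for local Lipschitz continuity on a dense subset and differentiability almost everywhere, I would use the lower sectional curvature bound $-b^2$. It provides two-sided Toponogov-type comparisons that relate boundary Gromov products to interior distances in a quantitative way. The Lipschitz defect function $M$ appearing in the general circumcenter extension theorem, which measures the pointwise failure of $F$ to be an isometry, can then be shown to be locally bounded on the open dense subset where the data defining the circumcenter are non-degenerate. There $F$ is locally Lipschitz, and Rademacher's theorem in smooth charts yields differentiability almost everywhere.

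The heart of the theorem is the isometry statement. At a differentiability point $x$ for which $K_x$ has at least five elements, each $\xi \in K_x$ gives a geodesic ray from $x$ to $\xi$ whose image under $F$ is, by the circumcenter construction, the geodesic ray from $F(x)$ to $f(\xi)$. Therefore $DF_x$ sends the initial unit tangent vector at $x$ pointing to $\xi$ to a positive multiple of the initial unit tangent vector at $F(x)$ pointing to $f(\xi)$. This produces five directional constraints on the $2 \times 2$ matrix $DF_x$. Combined with the quantitative angle information supplied by cross ratio preservation on quadruples drawn from $K_x$, these constraints over-determine $DF_x$: in dimension two, four matched directions with correct angle ratios already force $DF_x$ to be a positive scalar multiple of an isometry, and the fifth forces the scalar to equal $1$.

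The main obstacle, as I see it, is the regularity step: extracting enough quantitative control from the visibility hypotheses and the lower curvature bound to make $M$ locally bounded on a dense open subset, and ensuring that the set $K_x$ behaves well as $x$ varies. Once that is in place, the isometry statement reduces to a linear-algebra rigidity computation in two dimensions, and Rademacher's theorem supplies the almost-everywhere differentiability routinely.
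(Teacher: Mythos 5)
There are genuine gaps in all three main steps, and in each case the missing ingredient is precisely where the two-dimensionality does the work. For invertibility, the claim that $G \circ F = \id_X$ can be ``read off directly from the circumcenter characterization'' is unjustified: $F(x)$ is the minimizer of $y \mapsto \Vert u_{x,y}\Vert_{\infty}$, and there is no a priori symmetry between this variational problem and the one defining the extension of $f^{-1}$. The paper's argument (Proposition \ref{prop:InversesDimension2}) is essentially two-dimensional: it uses Carath\'eodory's theorem to reduce $K_{x,F(x)}$ to three directions, the characterization of $M_x$ by the zero vector lying in the convex hull of $\{\overrightarrow{yf(\xi)} \,\vert\, \xi \in K_{x,y}\}$ (Lemma \ref{lem:ConvexHull}), and the fact that this convex-hull condition for three unit vectors in the plane is equivalent to a cyclic-order condition on the six points $\pm v_i$ of the circle --- a condition a boundary homeomorphism preserves. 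The introduction states explicitly that invertibility is not obtained in higher dimensions, so an argument such as yours that never invokes $\dim = 2$ cannot be complete. For the regularity step you have misidentified the obstruction: local boundedness of $M$ is automatic ($M$ is $1$-Lipschitz by Lemma \ref{lem:LipschitzContinuityM}) and does not yield Lipschitz continuity of $F$. What is actually needed is a uniform angular separation --- for every unit $w \in T_{F(x)}Y$ some $\xi \in K_x$ with $\langle w, \overrightarrow{F(x)f(\xi)}\rangle \geq \delta$ --- which the paper extracts by a sector argument in the two-dimensional tangent plane on the dense set $X \setminus \partial\{M = 0\}$; no Toponogov comparison enters. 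Since that dense set's complement need not be null, the paper also proves finiteness of the pointwise Lipschitz constant at every point and invokes Stepanov's theorem rather than Rademacher's.

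The isometry step rests on a false premise: the circumcenter extension does not map the geodesic ray $\xi_x$ to the ray $f(\xi)_{F(x)}$, so $DF_x$ does not send $\overrightarrow{x\xi}$ to a positive multiple of $\overrightarrow{F(x)f(\xi)}$. The correct relation, obtained by differentiating the inequality $M(x') \geq u_{x',F(x')}(\xi)$ along a geodesic through $x$ (Lemma \ref{lem:Gradientequation}), is the adjoint identity $DF_x^{*}\bigl(\overrightarrow{F(x)f(\xi)}\bigr) = \overrightarrow{x\xi} - \nabla M_x$ for all $\xi \in K_x$. Consequently the invertible affine map $DF_x^{*} + \nabla M_x$ carries the subset $\{\overrightarrow{F(x)f(\xi)} \,\vert\, \xi \in K_x\}$ of the unit circle of $T_{F(x)}Y$ into the unit circle of $T_xX$; an affine image of a circle is an ellipse, and an ellipse meeting the unit circle in at least five points must coincide with it, forcing $\nabla M_x = 0$ and $DF_x^{*}$ orthogonal (Lemma \ref{lem:FivePointCondition}). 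That is where the number five comes from; your ``four matched directions plus a fifth to fix the scalar,'' together with unspecified ``angle information from the cross ratio,'' does not correspond to a valid rigidity statement and cannot be repaired without replacing it by the adjoint--ellipse argument or an equivalent.
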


We emphasize that the Theorem above includes the statement that the circumcenter extension is invertible, a result that we do not obtain in higher dimensions.\\

In \cite{Biswas15}, Biswas proved that the circumcenter extension provides a $(1, \ln(2))$-quasi-isometry, if $X$ and $Y$ are $\mathrm{CAT(-1)}$ spaces. For manifolds with a lower curvature bound $-b^2$, it is a $(1, (1 - \frac{1}{b}) \ln(2))$-quasi-isometry (cf. \cite{Biswas17a}). We can recover Biswas' first constant and improve it in dimension two.

\begin{thmintro} \label{thmintro:CAT(-1)}
Let $X, Y$ be Hadamard manifolds whose sectional curvatures are bounded from below by $-b^2$ and from above by $-1$. Let $f : \partial X \rightarrow \partial Y$ be a M\"obius homeomorphism. Then, the circumcenter extension is a $(1, \ln(2))$-quasi-isometry.

Furthermore, if $X$ and $Y$ are $2$-dimensional, then the circumcenter extension is a $(1, \ln( \frac{4}{3} ))$-quasi-isometry.
\end{thmintro}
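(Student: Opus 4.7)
The strategy is to apply the general estimate established in Section~\ref{sec:HolderLipschitz}, which identifies the quasi-isometry constant of the circumcenter extension as $2M$ with $M = \sup_{x \in X} M(x)$, where
$$M(x) = \inf_{y \in Y} \sup_{(\xi,\eta)} \Bigl| \log \frac{f_* \rho_x(\xi,\eta)}{\rho_y(\xi,\eta)} \Bigr|,$$
the infimum being attained at $y = F(x)$. Improving Biswas's bound thus reduces to sharpening the pointwise bound on $M(x)$ under the hypothesis $-b^2 \leq \sec \leq -1$.

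The key tool is the Busemann identity, valid in any Hadamard manifold,
$$\log \frac{\rho_{y_1}(\xi,\eta)}{\rho_{y_2}(\xi,\eta)} = \tfrac{1}{2}\bigl( B_\xi(y_2, y_1) + B_\eta(y_2, y_1)\bigr),$$
which expresses the variation of $\log(f_* \rho_x / \rho_y)$ along a geodesic $y_0 = F(x) \to y_1 = \exp_{y_0}(tv)$ in terms of the Busemann sum $\tfrac{1}{2}(B_\xi + B_\eta)$. Using the $\mathrm{CAT}(-1)$ hypothesis via the comparison with $\mathbb{H}^n$, one obtains the sharp estimate $B_\xi(y_1, y_0) \leq -\log(\cosh t - \sinh t \cos\alpha)$, where $\alpha$ is the Tits angle between $v$ and the geodesic ray from $y_0$ to $\xi$; this is a refinement of the coarse bound $|B_\xi(y_1, y_0)| \leq t$ used in Biswas's argument. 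Combined with the circumcenter symmetry $\sup = -\inf$ at $y_0$, the minimax problem becomes a trigonometric optimization on the sphere of directions at $F(x)$. Careful balancing of the positive and negative extrema then yields $M(x) \leq \tfrac{1}{2}\ln\sqrt{2}$, giving the first claim.

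For the 2-dimensional case, the sphere of directions is the circle $S^1$, which severely constrains the extremizing pairs $(\xi, \eta)$. A direct computation in $\mathbb{H}^2$ shows that the optimal angular configuration is governed by $30^\circ$-$60^\circ$ angles, which accounts for the appearance of $\sqrt{3}$; this produces the sharper constant $\tfrac{1}{2}\ln(2/\sqrt{3})$. The main obstacle I foresee is not the achievability of these constants but their tightness within the current framework: one must show that no configuration of $(\xi, \eta, v)$ can force the bound higher. This likely requires a compactness argument on the space of admissible extremal configurations, together with sharp hyperbolic trigonometry; the 2-dimensional case is more tractable since the configuration space is essentially finite-dimensional.
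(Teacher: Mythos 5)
Your reduction to bounding $M(x)$ and the $2\Vert M\Vert_\infty$ additive constant is the right frame, and you correctly sense that hyperbolic trigonometry at $F(x)$ is where the constants $\sqrt{2}$ and $2/\sqrt{3}$ come from. But the proposal has a genuine gap at its center: the sentence ``careful balancing of the positive and negative extrema then yields $M(x)\le \tfrac12\ln\sqrt2$'' is the entire theorem, and nothing in what precedes it actually produces that inequality. The Busemann comparison $e^{B(y_0,y_1,\xi)}=\cosh t-\sinh t\cos\alpha$ controls how $u_{x,y}(\xi)$ varies as $y$ moves away from $F(x)$, but $M(x)$ is the value of $\Vert u_{x,\cdot}\Vert_\infty$ \emph{at} the minimizer, not a modulus of variation, so this identity alone gives no upper bound on $M(x)$; you would still need an independent input telling you that some direction $\xi$ forces $u_{x,F(x)}(\xi)$ to be small. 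The paper supplies exactly that input, and it is absent from your sketch: for any two $\xi,\xi'\in K_{x,F(x)}$ the geometric mean value theorem gives $(\xi\vert\xi')_x=(f(\xi)\vert f(\xi'))_{F(x)}-M(x)$, whence $M(x)\le (f(\xi)\vert f(\xi'))_{F(x)}$ by non-negativity of Gromov products; then the convex-hull characterization of the circumcenter (Lemma \ref{lem:ConvexHull} plus Corollary \ref{cor:AtleastthreeExtremalDirections}) guarantees a pair in $K_{x,F(x)}$ subtending an angle $\ge\pi/2$ at $F(x)$ (resp.\ $\ge 2\pi/3$ in dimension $2$), and the $\mathrm{CAT}(-1)$ comparison $(\,\cdot\,\vert\,\cdot\,)\le-\ln\sin(\alpha/2)$ finishes the computation. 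Without some such mechanism your ``minimax problem on the sphere of directions'' has no constraint to optimize against, and the compactness argument you defer to at the end is not a repair but the missing proof.

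Two further symptoms of the gap. First, your target bound $M(x)\le\tfrac12\ln\sqrt2=\tfrac14\ln 2$ appears to be reverse-engineered from the statement via the $2M$ rule; the paper's method actually yields $M(x)\le\ln\sqrt2$, and there is no indication that the stronger bound you assert is attainable. Second, you do not address why the circumcenter extension is a genuine map $X\to Y$ here (no flat strips in $\mathrm{CAT}(-1)$, so $M_x$ is a single point) nor coarse surjectivity, which the paper imports from Biswas; both are needed before the additive estimate even makes sense as a quasi-isometry statement.
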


We end the introduction with a discussion on what kind of spaces satisfy the visibility properties necessary for our results (see section \ref{sec:Preliminaries} for all definitions not given here). Many examples can be constructed by using the notion of visibility points in the boundary. A point $\xi \in \partial X$ is called a {\it visibility point} if it can be connected with every other point in the boundary by a bi-infinite geodesic in $X$. Let $\xi \in \partial X$ be a visibility point, $\eta \in \partial X$ and $\gamma$ a bi-infinite geodesic from $\xi$ to $\eta$. It follows that $\gamma$ is a rank one geodesic, as the end points of geodesic lines that are not rank one (i.e.\,that bound a euclidean halfplane) cannot be visibility points. Since two points in the boundary that can be connected by a bi-infinite geodesic always have finite Gromov product, we conclude that, whenever $\partial X$ contains at least five visibility points, both visibility properties introduced above are satisfied.

We now present a class of Hadamard manifolds that do not have strictly negative curvature and do admit five visibility points. Consider a closed, non-positively curved Riemannian manifold $M$. By the rank rigidity theorem (see Theorem C in \cite{Ballmann95}), we obtain that its universal covering $\tilde{M}$ is either a finite Riemannian product, a higher rank symmetric space, or contains at least one bi-infinite rank one geodesic. By Lemma 1.7 in \cite{BallmannBuyalo}, the endpoints of this rank one geodesic in $\partial X$ have Tits distance strictly greater than $\pi$. Because $\pi_1(M)$ acts properly and cocompactly, its limit-set, denoted $\Lambda$, satisfies $\Lambda = \partial X$ (see the introduction of \cite{BallmannBuyalo}). By Proposition 1.10 in \cite{BallmannBuyalo}, this implies that there exists an element $g \in \pi_1(M)$ and a rank one geodesic $\gamma$ in $\tilde{M}$, such that $g$ acts as translation on $\gamma$. By Theorem 5.4 in \cite{BestvinaFujiwara}, an axis for some isometry in a proper $\CAT$ space is rank one if and only if it is contracting. Therefore, $\gamma$ is a contracting geodesic line. In particular, both of its endpoints in $\partial X$ are visibility points by Proposition 3.6 in \cite{CharneySultan}. Since every orbit of $\pi_1(M)$ in $\partial X$ is dense (see for example \cite{Hamenstadt09}, in particular Lemma 5.1) and isometries send visibility points to visibility points, we conclude that $\partial \tilde{M}$ has infinitely many visibility points. Therefore, the universal covering of any closed, non-positively curved Riemannian manifold $M$ is either a finite Riemannian product, a higher rank symmetric space, or it satisfies the visibility properties (1) and (2). This provides us with a large class of spaces satisfying our assumptions. In particular, this includes most graph manifolds.

We also mention a non-cocompact example that can be obtained as follows. Consider five copies of the euclidean upper halfplane $\mathbb{R} \times [0, \infty)$ and glue them together isometrically along their boundary $\mathbb{R} \times \{ 0 \}$ such that all five halfplanes intersect at the origin. The space obtained this way is a $\CAT$ space and its Tits boundary is a circle of circumference $5\pi$. Using properties of the Tits metric and rank one geodesics, one can see that this space satisfies visibility properties (1) and (2) as well. While this example is only a $\CAT$ and not a Riemannian manifold it seems feasible that a Hadamard manifold with the same behaviour can be constructed. Both of the examples above illustrate that there is a large and flexible class of Hadamard manifolds that satisfy our visibility properties, but do not admit a negative upper curvature bound, which shows that the circumcenter extension map indeed can be constructed in a more general setting than previously thought.\\

The remainder of the paper is organised as follows. In section \ref{sec:Preliminaries}, we develop all the necessary preliminary theory. Specifically, we give a brief introduction to asymptotic geometry and generalise several results known for $\mathrm{CAT(-1)}$ spaces to $\CAT$ spaces, including the proof of Proposition \ref{propintro:PropositionA}; we generalise the theory of metric derivatives as needed and we give a brief primer on the facts we will need about convex functions and Jacobi fields. We also provide an example of a space whose boundary contains points that have finite Gromov product but are not visible (this example is the reason why $f$ and $f^{-1}$ have to send visible pairs to visible pairs). In section \ref{sec:Definition}, we construct the circumcenter extension and define all the notions we will use to prove the results above. In section \ref{sec:HolderLipschitz}, we prove a result on H\"older continuity of the circumcenter extension and finish the proof of the main result (see Theorem \ref{thm:Extensionresult}). In section \ref{sec:Applications}, we prove Theorems \ref{thmintro:CocompactAction}, \ref{thmintro:Surfaces} and \ref{thmintro:CAT(-1)}. Sections \ref{sec:HolderLipschitz} and \ref{sec:Applications} are written so that they can be read independently.\\

{\bf Acknowledgments} The author thanks Viktor Schroeder for countless discussions, many helpful questions, ideas and suggestions. The author thanks Jean-Claude Picaud for several discussions, ideas, for proposing the original project and for his comments on an early draft of the first part of this paper. The author is grateful to Giuliano Basso, Ruth Charney, Matthew Cordes, Peter Feller, Yannick Krifka, Urs Lang, Marc Lischka, Alessandro Sisto, Davide Spriano and Luca Studer for their patient listening, their inputs, suggestions and counter examples concerning all things geometric and to Elia Fioravanti for the same and additionally for teaching the author to always consult \cite{BH}. The author thanks Manuela Gehrig and Simone Steinbruechel for their patience, ideas and for pointing out several dead ends concerning all things analytic. Finally, the author is grateful to Kingshook Biswas for his comments, suggestions and for his work in \cite{Biswas15, Biswas17a, Biswas17b, Biswas18a}, which was crucial to the work presented here. Part of this work was carried out at the Simons Semester in spring 2019 supported by the grant 346300 for IMPAN from the Simons Foundation and the matching 2015-2019 Polish MNiSW fund. The author has been supported by the Swiss National Science Foundation Grant 175567.




\section{Preliminaries} \label{sec:Preliminaries}

\subsection{Boundaries at infinity, Gromov products and Busemann functions} \label{subsec:Boundaries}

For a general introduction to spaces of non-positive curvature, we refer to \cite{BGS} and \cite{BH}. For more material on asymptotic geometry, we additionally refer to \cite{BS}.

Let $(X,d)$ be a proper, connected $\CAT$ space. (Later, we will specialize to an $n$-dimensional, connected, geodesically complete Riemannian manifold $(X,g)$, such that all sectional curvatures are non-positive.) Since $(X,d)$ is $\CAT$, the functions $d(\gamma(t), \gamma'(t))$ and $d(x, \gamma(t))$ are convex and strictly convex respectively for all geodesics $\gamma, \gamma'$ and $x \in X$.

A {\it geodesic ray} is an isometric embedding $\gamma : [a, \infty) \rightarrow X$. A {\it bi-infinite geodesic} is an isometric embedding defined on $\mathbb{R}$. To make notation easier, all our geodesic rays will start at time $a = 0$. A metric space is called {\it geodesically complete} if and only if all geodesic segments can be extended to geodesic lines.

Assume from now on, that $(X,d)$ is a proper, connected, geodesically complete $\CAT$ space. Two geodesic rays $\gamma$, $\gamma'$ are called {\it asymptotic} if there exists $B > 0$ such that for all $t \geq 0$, $d(\gamma(t), \gamma'(t)) \leq B$. The {\it boundary at infinity} of $X$ is defined as the space of equivalence classes of geodesic rays
\[ \partial X := \faktor{\{ \gamma | \gamma \text{ a geodesic ray } \}}{asymptoticity}. \]

Given a representative $\gamma$ of $\xi$, we sometimes write $\xi = \gamma(\infty)$ and call $\xi$ the {\it endpoint of $\gamma$}. A geodesic ray $\gamma : (-\infty, 0] \rightarrow X$ can also be interpreted as a representative of a point $\xi$ at infinity and we write $\xi = \gamma(-\infty)$. In particular, a bi-infinite geodesic $\gamma$ defines two points $\gamma(\infty), \gamma(-\infty)$ in $\partial X$. Since $X$ is $\CAT$, any two bi-infinite geodesics whose endpoints are $\xi, \eta$ are parallel. We denote the set of geodesics with $\gamma(-\infty) = \xi, \gamma(\infty) = \eta$ by $[\xi, \eta]$. Whenever there exists a bi-infinite geodesic with endpoints $\xi, \eta$ we call it a geodesic from $\xi$ to $\eta$ and we say that $(\xi, \eta)$ is {\it visible}. We call an $n$-tuple $(\xi_1, \dots, \xi_n)$ a {\it visible $n$-tuple}, whenever for all $i \neq j, (\xi_i, \xi_j)$ is visible.

We can equip the boundary at infinity with a topology called the visual topology. It is defined as follows. Fix a base point $x \in X$. For any $\xi \in \partial X$, denote the unique geodesic ray starting at $x$, representing $\xi$ by $\xi_x$. (The existence of such geodesics is a well-known application of the Theorem of Arzela-Ascoli.) Let $\xi \in \partial X$. For all $R > 0$, $\epsilon > 0$, define
\[ÊU_{R, \epsilon, x}(\xi) := \{ \eta \in \partial X \vert d(\eta_x(R), \xi_x(R)) < \epsilon \}. \]

It is easy to see that the collection $\{ U_{R, \epsilon, x}(\xi) \}_{R, \epsilon, \xi}$ forms a basis for a topology on $\partial X$, the visual topology. Furthermore, this topology is independent of $x$ (see Part \RomanNumeralCaps{2}, Section 8 in \cite{BH}).

The following family of functions is a valuable tool when studying $\partial X$. Fix a base point $x \in X$ and define the Gromov product of two points $\xi$, $\eta \in \partial X$ with respect to $x$ to be
\begin{equation*}
\begin{split}
(\xi | \eta)_x & := \lim_{t \rightarrow \infty} \frac{1}{2} \left( d(x, \xi_x(t)) + d(x, \eta_x(t)) - d(\xi_x(t), \eta_x(t)) \right)\\
& = \lim_{t \rightarrow \infty} t - \frac{1}{2} d(\xi_x(t), \eta_x(t)).
\end{split}
\end{equation*}

This limit exists, since the function on the right-hand-side is non-decreasing in $t$, although $(\xi | \eta)_x$ may be infinite, e.g. if $\xi = \eta$, or if $\xi_x, \eta_x$ span a flat sector in $X$ (think of a sector in $\mathbb{R}^2$). If there exists $\epsilon > 0$, such that $X$ is $\rm{CAT(-\epsilon)}$, then $(\xi | \eta)_x$ is infinite if and only if $\xi = \eta$. Note that, if $(\xi \vert \eta)_x = \infty$ for some base point $x$, then $(\xi \vert \eta)_y = \infty$ for all $y \in X$. We define
\[ \rho_x(\xi, \eta) := e^{-(\xi | \eta)_x}. \]

\begin{rem}
We remark that we could define the Gromov product to be the limit $\lim_{n \rightarrow \infty} (\xi_x(t_n) \vert \eta_x(t'_n))_x$ for any two sequences $t_n, t'_n \xrightarrow{n \rightarrow \infty} \infty$. Due to monotony, all these limits are equal.
\end{rem}

We will also use the following notation. In analogy to the Gromov product on the boundary, we define for any triple $x, y, z \in X$ the Gromov product of $y, z$ with respect to $x$ by
\[ (y \vert z)_x := \frac{1}{2} (d(x,y) + d(x,z) - d(y,z)). \]

In order to understand how the Gromov product depends on the choice of base point, we use the Busemann function: Let $x$, $y \in X$ and $\xi \in \partial X$. The Busemann function is defined by
\[ B(x,y, \xi) := \lim_{t \rightarrow \infty} d(y, \xi_x(t)) - d(x, \xi_x(t)). \]

Using the triangle-inequality, we see that the function on the right-hand-side is bounded in absolute value and non-increasing, hence this limit exists and is finite. Further, the Busemann function is continuous in $x$ and $y$.

In Appendix A.2 of \cite{DPS}, it is shown that in $\CAT$ spaces, for all geodesic rays $\gamma$ asymptotic to $\xi_x$,
\[ B(x,y,\xi) = \lim_{t \rightarrow \infty} d(y, \gamma(t)) - d(x, \gamma(t)). \]
This independence of the representative of $\xi$ implies that for all $x, y, z \in X$ and $\xi \in \partial X$, we have the cocycle equation
\begin{equation*}
\begin{split}
B(x,y,\xi) + B(y,z,\xi) & = \lim_{t \rightarrow \infty} d(y, \gamma(t)) - d(x, \gamma(t)) + d(z, \gamma(t)) - d(y, \gamma(t))\\
& = B(x,z,\xi).
\end{split}
\end{equation*}
In particular,
\[ B(x,y,\xi) = - B(y,x,\xi). \]
Finally, the Busemann function is also continuous in $\xi$ with respect to the visual topology on $\partial X$

Let $\gamma, \gamma'$ be two asymptotic geodesic rays representing $\xi \in \partial X$. Extend both of them to bi-infinite geodesic lines. For all $T \in \mathbb{R}$, there exists $T'$ such that $\gamma(T), \gamma'(T')$ lie on the same horosphere of $\xi$, i.e. $B(\gamma'(T'), \gamma(T), \xi) = 0$. In fact,
\begin{equation*}
\begin{split}
T' & = B(\gamma'(T'), \gamma'(0), \xi)\\
& = B(\gamma'(T'), \gamma(T), \xi) + B(\gamma(T), \gamma(0), \xi) + B(\gamma(0), \gamma'(0), \xi)\\
& = T + B(\gamma(0), \gamma'(0), \xi).
\end{split}
\end{equation*}

\begin{figure} 
\begin{tikzpicture}[scale=1.50]
\draw [] (0,0) circle [radius=2cm];
\draw [] (1.25,0) circle [radius=0.75cm];
\draw [] (0.985,0) circle [radius=1.015cm];
\node [right] at (2,0) {$\xi$};
\draw [] (0,2) arc [radius=2, start angle =180, end angle=270 ];
\draw [fill] (0.385,0.82) circle [radius=0.015cm];
\node [above right] at (0.24, 0.87) {$\gamma(0)$};
\draw [] (2,0) arc [radius=11.35, start angle= 90, end angle = 110];
\draw [fill] (-1, -0.405) circle [radius=0.015cm];
\node [below] at (-1, -0.405) {$\gamma'(0)$};
\draw [fill] (-0.015,-0.18) circle [radius=0.015cm];
\node [above left] at (-0.015,-0.18) {$\gamma_{new}'(0)$};
\draw [fill] (0.685,0.49) circle [radius=0.015cm];
\node [right] at (0.685,0.51) {$\gamma(T)$};
\draw [fill] (0.505,-0.103) circle [radius=0.015cm];
\node [below right] at (0.505,-0.08) {$\gamma'(T')$};
\end{tikzpicture}
\caption{Any two geodesic rays representing the same point $\xi$ at infinity can be reparametrised so that at any given time, they lie on the same horosphere centered at $\xi$. In this figure, one may reparametrise $\gamma'$ as $\gamma_{new}'(t) = \gamma'(t +T' - T)$.}
\label{fig:Horospheres} 
\end{figure}
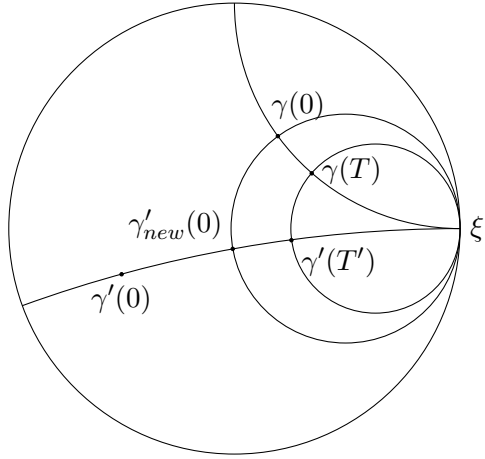

\begin{mydef} \label{def:comparison triangle}
Let $x, y \in X$ and $\xi \in \partial X$. For $k > 0$, there exists an - up to isometry unique - constellation $\overline{x}, \overline{y} \in \mathbb{H}_{-k}^2, \overline{\xi} \in \partial \mathbb{H}_{-k}^2$, such that $d(\overline{x}, \overline{y}) = d(x, y)$ and $B(\overline{x}, \overline{y}, \overline{\xi}) = B(x, y, \xi)$. We call $(\overline{x}, \overline{y}, \overline{\xi})$ a {\it comparison triangle} of $(x,y, \xi)$ in $\mathbb{H}_{-k}^2$. We call the angle $\angle_{\overline{x}}(\overline{y}, \overline{\xi})$ the {\it comparison angle} to $(x,y,\xi)$ in curvature $-k$ and denote it by $\angle^{(-k)}_{x}(y, \xi)$.

For $k = 0$, we analogously find $\overline{x}, \overline{y} \in \mathbb{R}^2, \overline{\xi} \in \partial \mathbb{R}^2$ satisfying the same equations. We use the same terminology and denote the comparison angle to $(x,y,\xi)$ in curvature $0$ by $\angle_x^{(0)}(y,\xi)$.
\end{mydef}

Note that $\angle^{(-k)}_x(y,\xi)$ is continuous in $x, y$ and $\xi$ for all $k \geq 0$. The Busemann function $B(x,y,\xi)$ can be described completely in terms of the distance $d(x,y)$ and its comparison triangle with respect to one model space of constant curvature. Specifically,
\[ \forall k > 0 : e^{k B(x,y,\xi)} = \cosh(k d(x,y)) - \sinh(k d(x,y))\cos( \angle^{(-k^2)}_{x}(y,\xi) )Ê\]
\[ B(x,y,\xi) = -d(x,y) \cos( \angle^{(0)}_{x}(y,\xi) ). \]

The first of these formulas is proven in \cite{Biswas17a}. We prove the second one here, since it is not easy to find in the literature.

\begin{proof}
Let $x, y \in X, \xi \in \partial X$ and let $\overline{\gamma}$ be a geodesic ray in $\mathbb{R}^2$. Denote $\overline{x} := \overline{\gamma}(0)$ and $\overline{\xi} := [\overline{\gamma}]$. For every $t > 0$, there exist exactly two points $\overline{y_1(t)}, \overline{y_2(t)} \in \mathbb{R}^2$, such that $(\overline{x}, \overline{y_i}, \overline{\gamma(t)})$ is a comparison triangle to $(x,y,\xi_x(t))$ (by definition a triangle with the same side lengths). For every $t$, choose one of these two points, denoted $\overline{y}(t)$, such that $\overline{y}(t)$ varies continuously in $t$. Since $(\overline{x}, \overline{y}(t), \overline{\gamma}(t))$ are comparison triangles to $(x,y,\xi_x(t))$, we have
\[ d(\overline{y}(t), \overline{\gamma}(t)) - d(\overline{x},\overline{\gamma}(t)) = d(y,\xi_x(t)) - d(x,\xi_x(t)) \xrightarrow{t \rightarrow \infty} B(x,y,\xi). \]

Since $\overline{y}(t)$ is a bounded curve, it admits a converging subsequence. The equation above implies that any convergent subsequence of $\overline{y}(t)$ converges to a point $\overline{y}$ such that $(\overline{x},Ê\overline{y}, \overline{\xi})$ is a comparison triangle for $(x,y,\xi)$. Since there are exactly two such points and $\overline{y}(t)$ is continuous, we see that $\overline{y}(t)$ converges to one of these points. This implies that
\[ \angle_{\overline{x}}(\overline{y}(t),\overline{\gamma}(t)) \xrightarrow{t \rightarrow \infty} \angle_{\overline{x}}(\overline{y}, \overline{\xi}) = \angle^{(0)}_x(y,\xi). \]

By the law of cosines in Euclidean space, we have
\begin{equation*}
\begin{split}
d(\overline{y}(t), \overline{\gamma}(t))^2 & = d(\overline{x}, \overline{y}(t))^2 + d(\overline{x}, \overline{\gamma}(t))^2 - 2d(\overline{x}, \overline{y}(t)) d(\overline{x}, \overline{\gamma}(t)) \cos(\angle_{\overline{x}}(\overline{y}(t), \overline{\gamma}(t)))\\
& = d(x,y)^2 + t^2 - 2d(x,y)t \cos(\angle_{\overline{x}}(\overline{y}(t), \overline{\gamma}(t))).
\end{split}
\end{equation*}

Therefore,
\begin{equation*}
\begin{split}
B(x,y,\xi) & = \lim_{t \rightarrow \infty} ( d(y, \xi_x(t)) - d(x, \xi_x(t)) ) \frac{d(y, \xi_x(t)) + d(x, \xi_x(t))}{2t}\\
& = \lim_{t \rightarrow \infty} \frac{ d(x,y)^2 - 2d(x,y)t \cos(\angle_{\overline{x}}(\overline{y}(t), \overline{\gamma}(t))) }{2t}\\
& = -d(x,y)\cos(\angle^{(0)}_x(y,\xi)).
\end{split}
\end{equation*}

\end{proof}


\subsection{Cross ratios} \label{subsec:crossratios}

Let $X$ be a proper, connected, geodesically complete $\CAT$ space. We obtain a family of functions $( \cdot \vert \cdot)_x : \partial X \times \partial X \rightarrow [0, \infty]$.

\begin{mydef} \label{def:algebraicvisibility}
Let $(\xi_1, \dots, \xi_n) \in \partial X^n$ be an $n$-tuple. Choose $x \in X$. We say $(\xi_1, \dots, \xi_n)$ is {\it algebraically visible}, if for all $i \neq j$, $(\xi_i \vert \xi_j)_x < \infty$.

\end{mydef}

As noted in the last section, this definition does not depend on the choice of $x$. We can reformulate it in terms of the maps $\rho_x$, by requiring $\rho_x(\xi_i, \xi_j) > 0$ for all $i \neq j$ instead. Note that $\rho_x$ is symmetric and non-negative, but it does not satisfy the triangle inequality and there may be pairs $\xi \neq \eta$ such that $\rho_x(\xi, \eta) = 0$. Nevertheless, we can use $\rho_x$ to define a cross ratio as follows.

Define the set of {\it admissible quadruples} in $\partial X$ to be the set
\begin{equation*}
\begin{split}
\mathcal{A} := \{ (\xi_1, \xi_2, \xi_3, \xi_4) \in \partial X^4 \vert & \forall i \neq j \neq k \neq i, \text{ at least two of the pairs }\\
& (\xi_i, \xi_j), (\xi_i, \xi_k), (\xi_j, \xi_k) \text{ are algebraically visible} \}.
\end{split}
\end{equation*}

In other words, $\mathcal{A}$ consists of the quadruples, whose points do not include a chain in which pairs of consecutive points are not algebraically visible. For all admissible quadruples, we can define a cross ratio by
\[ cr_x(\xi_1, \xi_2, \xi_3, \xi_4) := \frac{\rho_x(\xi_1, \xi_2) \rho_x(\xi_3, \xi_4)}{\rho_x(\xi_1, \xi_3) \rho_x(\xi_2, \xi_4)} \in [0, \infty]. \]

The goal of this section is to prove the following theorem.

\begin{thm} \label{thm:CrossRatio}

Let $X$ be a proper, connected, geodesically complete $\CAT$ space. Then, for all $x$, $y \in X$, $cr_x = cr_y$ on all of $\mathcal{A}$.

\end{thm}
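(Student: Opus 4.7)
The plan is to deduce the theorem from a change-of-basepoint formula for the Gromov product on algebraically visible pairs, namely
\[ (\xi \mid \eta)_y - (\xi \mid \eta)_x = \tfrac{1}{2}\bigl(B(x,y,\xi) + B(x,y,\eta)\bigr) \qquad (\ast) \]
valid for every algebraically visible $(\xi,\eta) \in \partial X \times \partial X$ and every $x,y \in X$. Granted $(\ast)$, writing $B_i := B(x,y,\xi_i)$ and substituting into $\log cr_y - \log cr_x$ gives
\[ \log cr_y - \log cr_x = -\tfrac{1}{2}(B_1 + B_2 + B_3 + B_4) + \tfrac{1}{2}(B_1 + B_3 + B_2 + B_4) = 0, \]
so $cr_x = cr_y$ on every admissible quadruple in which each of the four pairs $\{1,2\},\{3,4\},\{1,3\},\{2,4\}$ appearing in the cross-ratio is algebraically visible.

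The remaining admissible quadruples I would handle by a short case analysis. A non-visible pair contributes $\rho_x = 0$ to its side of the cross-ratio, and admissibility (at most one non-visible pair per triple) forbids a non-visible pair from simultaneously appearing in the numerator and in the denominator, because each pair in the numerator shares a triple with each pair in the denominator ($\{1,2\}$ with $\{1,3\}, \{2,4\}$ via $\{1,2,3\}$ and $\{1,2,4\}$, and $\{3,4\}$ with $\{1,3\}, \{2,4\}$ via $\{1,3,4\}$ and $\{2,3,4\}$). Hence $cr_x \in \{0,\infty\}$ is well-defined; since algebraic non-visibility is basepoint-independent (as noted in Section~\ref{subsec:Boundaries}), $cr_y$ takes the same value, and the theorem holds trivially in these cases.

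The technical heart is $(\ast)$. I would begin with the elementary identity
\[ (\xi_x(t) \mid \eta_x(t))_y = (\xi_x(t) \mid \eta_x(t))_x + \tfrac{1}{2}\bigl[(d(y,\xi_x(t)) - t) + (d(y,\eta_x(t)) - t)\bigr]. \]
Letting $t \to \infty$, the first summand converges to $(\xi \mid \eta)_x$ by the definition of the Gromov product, while each bracketed difference converges to $B(x,y,\xi)$, resp.\ $B(x,y,\eta)$, by the definition of the Busemann function applied to the representative ray $\xi_x$, resp.\ $\eta_x$, together with the independence of the Busemann function from the chosen representative. This yields
\[ \lim_{t \to \infty}(\xi_x(t) \mid \eta_x(t))_y = (\xi \mid \eta)_x + \tfrac{1}{2}\bigl(B(x,y,\xi) + B(x,y,\eta)\bigr). \]

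The main obstacle is then to identify this limit with $(\xi \mid \eta)_y$, i.e.\ to show that the Gromov product at $y$ may be computed using rays issuing from the different basepoint $x$. This is immediate in the CAT$(-1)$ setting, where asymptotic rays converge, but in general CAT$(0)$ spaces asymptotic rays may persist at positive distance, so one has to work harder. My approach is to extend the monotonicity observation of the remark following the definition of the Gromov product: the map $t \mapsto d(y, \xi_x(t)) - t$ is convex and bounded on $[0,\infty)$, hence non-increasing, so $d(y,\xi_x(t)) \geq t + B(x,y,\xi)$ for every $t$, and analogously for $\eta$. Combining these with the defining asymptotic $d(\xi_x(t),\eta_x(t)) = 2t - 2(\xi \mid \eta)_x + o(1)$ provided by algebraic visibility, and the symmetric estimate obtained after interchanging $x$ and $y$, I expect to squeeze both one-sided limits to agree with $(\xi \mid \eta)_y$, thereby establishing $(\ast)$. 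This sandwich step is where the lion's share of the work lies.
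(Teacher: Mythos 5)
Your overall architecture coincides with the paper's: the theorem is reduced to the change-of-basepoint formula $(\ast)$ for the Gromov product (this is precisely Lemma \ref{lem:derivative}, up to the sign convention $B(x,y,\xi)=-B(y,x,\xi)$), the Busemann terms cancel in the cross ratio exactly as you compute, and the degenerate admissible quadruples are handled by the same $0/\infty$ case analysis (your observation that admissibility forbids a non-visible pair in the numerator and one in the denominator simultaneously is correct, and in fact spelled out more carefully than in the paper). Your elementary identity and the resulting limit
\[ \lim_{t \to \infty}(\xi_x(t) \mid \eta_x(t))_y = (\xi \mid \eta)_x + \tfrac{1}{2}\bigl(B(x,y,\xi) + B(x,y,\eta)\bigr) \]
are also correct.

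However, there is a genuine gap in the step you yourself flag as the heart of the matter. The ingredients you propose for the sandwich cannot close it. The estimate $d(y,\xi_x(t)) \geq t + B(x,y,\xi)$ together with $d(\xi_x(t),\eta_x(t)) = 2t - 2(\xi\mid\eta)_x + o(1)$ only yields $2(\xi_x(t)\mid\eta_x(t))_y \geq B(x,y,\xi)+B(x,y,\eta)+2(\xi\mid\eta)_x - o(1)$, which is (a one-sided version of) the limit identity you already have; it says nothing about $(\xi\mid\eta)_y$. Likewise the $x\leftrightarrow y$ symmetric estimates only control $(\xi_y(s)\mid\eta_y(s))_x$ in terms of $(\xi\mid\eta)_y$. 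All of these inequalities live entirely on one family of rays at a time; none of them compares a point $\xi_x(t)$ on the ray from $x$ with a point $\xi_y(s)$ on the ray from $y$, and without such a comparison the quantity $(\xi\mid\eta)_y=\lim_s\bigl(s-\tfrac12 d(\xi_y(s),\eta_y(s))\bigr)$ never enters the computation of $\lim_t(\xi_x(t)\mid\eta_x(t))_y$. This is exactly where CAT$(0)$ differs from CAT$(-1)$: asymptotic rays need not converge, so some genuinely geometric input is required. The paper supplies it by reparametrising: $T_\xi := T + B(y,x,\xi)$ places $\xi_x(T_\xi)$ and $\xi_y(T)$ on a common horosphere centred at $\xi$, whence $d(\xi_y(T),\xi_x(s)) - d(\xi_x(T_\xi),\xi_x(s)) \to 0$ as $s\to\infty$ (this uses the representative-independence of Busemann functions from \cite{DPS}), and then a triangle-inequality chain gives $(\xi\mid\eta)_x \geq (\xi\mid\eta)_y + \tfrac12(B(y,x,\xi)+B(y,x,\eta))$, with the reverse inequality by swapping $x$ and $y$. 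An alternative that stays closer to your setup: prove the single inequality $\liminf_t(\xi_x(t)\mid\eta_x(t))_y \geq (\xi\mid\eta)_y$ using monotonicity of the Gromov product along the geodesic segments $[y,\xi_x(t)]$, $[y,\eta_x(t)]$ and their local-uniform convergence to the rays $\xi_y$, $\eta_y$; writing $L$ and $L'$ for the two limits and $G_x, G_y$ for the two Gromov products, your limit identity gives $L+L' = G_x+G_y$, and the two inequalities $L\geq G_y$, $L'\geq G_x$ then force $L=G_y$. Either way, an ingredient of this kind must be added before $(\ast)$ is established.
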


The proof is based on the following

\begin{lem} \label{lem:derivative}
For all $x, y \in X$, and for all $\xi, \eta \in \partial X$,
\[ (\xi | \eta)_x = \frac{1}{2} \left( B(y,x,\xi) + B(y,x,Ê\eta) \right) + (\xi | \eta)_y \]
\end{lem}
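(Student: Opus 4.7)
The plan is to evaluate the limit of the \emph{mixed} Gromov product $(\xi_x(t) \vert \eta_x(t))_y$---the Gromov product at $y$ of the points $\xi_x(t)$ and $\eta_x(t)$ that travel to $\xi$ and $\eta$ along the rays emanating from $x$---in two independent ways, and then to combine the two resulting expressions using a symmetry step.

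For the first evaluation I would appeal directly to the Busemann function. From the identity $B(y,x,\xi) = \lim_t d(x,\xi_x(t)) - d(y,\xi_x(t)) = \lim_t t - d(y,\xi_x(t))$ one obtains the expansion $d(y,\xi_x(t)) = t - B(y,x,\xi) + o(1)$ and the analogous expression for $\eta$. Substituting both into the three-point identity $2(\xi_x(t) \vert \eta_x(t))_y = d(y,\xi_x(t)) + d(y,\eta_x(t)) - d(\xi_x(t),\eta_x(t))$ and recognising that $t - \frac{1}{2} d(\xi_x(t), \eta_x(t)) \to (\xi \vert \eta)_x$ by definition yields
\[
\lim_{t \to \infty} (\xi_x(t) \vert \eta_x(t))_y = (\xi \vert \eta)_x - \tfrac{1}{2}\bigl(B(y,x,\xi) + B(y,x,\eta)\bigr).
\]

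For the second evaluation I would exploit that in a $\CAT$ space the Gromov product is monotone along rays issuing from the base point: if $p \in [y,a]$ and $q \in [y,b]$, then $(a \vert b)_y \geq (p \vert q)_y$, which is a one-line application of the triangle inequality $d(a,b) \leq d(a,p) + d(p,q) + d(q,b)$. Fix $T > 0$ and let $p(t), q(t)$ be the points at distance $T$ from $y$ on the geodesic segments $[y, \xi_x(t)]$ and $[y, \eta_x(t)]$. Arzel\`a-Ascoli together with the uniqueness of geodesic rays in a proper $\CAT$ space gives $[y, \xi_x(t)] \to \xi_y$ uniformly on compact subsets, so $p(t) \to \xi_y(T)$ and $q(t) \to \eta_y(T)$; by continuity of the Gromov product on points of $X$ this produces
\[
\liminf_{t \to \infty}(\xi_x(t) \vert \eta_x(t))_y \;\geq\; \sup_T (\xi_y(T) \vert \eta_y(T))_y \;=\; (\xi \vert \eta)_y.
\]

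Comparing the two evaluations yields $(\xi \vert \eta)_x \geq (\xi \vert \eta)_y + \tfrac{1}{2}(B(y,x,\xi) + B(y,x,\eta))$. Interchanging the roles of $x$ and $y$ and using $B(x,y,\cdot) = -B(y,x,\cdot)$ gives the reverse inequality, so equality follows; the finiteness of the Busemann corrections ensures the argument remains consistent also when one (equivalently, both) of the Gromov products is infinite. The step I expect to require the most care is the second one, because in a general $\CAT$ space Gromov products are not continuous on the visual boundary along arbitrary approaching sequences. The monotonicity trick sidesteps this at the price of producing only a one-sided inequality, which is precisely why the symmetry step is indispensable.
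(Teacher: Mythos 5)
Your proof is correct, and it shares the paper's overall architecture (establish the inequality $(\xi\vert\eta)_x \geq (\xi\vert\eta)_y + \tfrac{1}{2}(B(y,x,\xi)+B(y,x,\eta))$, then symmetrize using $B(x,y,\cdot) = -B(y,x,\cdot)$, with the infinite case absorbed by the finiteness of the Busemann terms), but the mechanism by which you obtain the one-sided inequality is genuinely different. The paper fixes $T$ large so that $(\xi_y(T)\vert\eta_y(T))_y$ approximates $(\xi\vert\eta)_y$, synchronizes horospheres to find times $T_\xi = T + B(y,x,\xi)$ and $T_\eta = T + B(y,x,\eta)$ with $B(\xi_x(T_\xi),\xi_y(T),\xi)=0$, and then applies the triangle inequality to $d(\xi_x(s),\eta_x(s))$ through the fixed points $\xi_y(T),\eta_y(T)$, using that points on a common horosphere have asymptotically equal distances to points tending to $\xi$. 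You instead compute the mixed product $(\xi_x(t)\vert\eta_x(t))_y$ exactly via the expansion $d(y,\xi_x(t)) = t - B(y,x,\xi) + o(1)$ (which is legitimate because the paper has already recorded, via \cite{DPS}, that the Busemann limit is independent of the representing ray), and bound it below by monotonicity of the Gromov product along segments issuing from $y$ together with the convergence $[y,\xi_x(t)] \to \xi_y$ on compacta. Your route trades the paper's horosphere-synchronization step for a cone-topology convergence fact (Arzel\`a--Ascoli plus identification of the limit ray as $\xi_y$); both are standard in proper $\CAT$ spaces, and the two arguments are of comparable length and rigor. One small point worth making explicit in your write-up: the identification of the limit of the segments $[y,\xi_x(t)]$ with the ray $\xi_y$ is exactly the statement that $\xi_x(t) \to \xi$ in the cone topology based at $y$, which uses basepoint-independence of that topology; it is standard but is the one place where your argument leans on an external fact rather than a direct estimate.
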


This formula is well known for $\mathrm{CAT(-1)}$ spaces (see \cite{Bourdon95}). However, the case of $\CAT$ spaces is hard to find in the literature, which is why we provide a proof here.

\begin{proof}[Proof of Lemma \ref{lem:derivative}]
Since $(\xi \vert \eta)_x = \infty$ if and only if $(\xi \vert \eta)_y = \infty$, the equation trivially holds in that case. Suppose $(\xi \vert \eta)_x < \infty$. We first show the inequality `$\geq$'. Let $\epsilon > 0$. Since the function $(\xi_y(t) \vert \eta_y(t))_y$ is monotone increasing, we find $T \geq 0$, such that for all $t \geq T$, we have
\[ (\xi_y(t) \vert \eta_y(t))_y \geq (\xi \vert \eta)_y - \frac{\epsilon}{2}. \]

From the properties of Busemann functions in the last section, we know that there are $T_{\xi}, T_{\eta}$ such that $B(\xi_x(T_{\xi}), \xi_y(T), \xi) = 0 = B(\eta_x(T_{\eta}), \eta_y(T), \eta)$. Specifically,
\[ T_{\xi} = T + B(y,x,\xi) \]
\[ T_{\eta} = T + B(y,x,\eta). \]

Since $B(\xi_x(T_{\xi}), \xi_y(T), \xi) = B(\eta_x(T_{\eta}), \eta_y(T), \eta) = 0$, there exists $S \geq T$ such that for all $s \geq S$,
\[ \vert d(\xi_y(T), \xi_x(s)) - d(\xi_x(T_{\xi}), \xi_x(s)) \vert \leq \frac{\epsilon}{2} \]
\[ \vert d(\eta_y(T), \eta_x(s)) - d(\eta_x(T_{\eta}), \eta_x(s)) \vert \leq \frac{\epsilon}{2}. \]
We obtain for all $s \geq S$
\begin{equation*}
\begin{split}
2(\xi \vert \eta)_x & \geq d(x, \xi_x(s)) + d(x, \eta_x(s)) - d(\xi_x(s), \eta_x(s))\\
& \geq s + s - d(\xi_x(s), \xi_y(T)) - d(\xi_y(T), \eta_y(T)) - d(\eta_y(T), \eta_x(s))\\
& \geq s + s - d(\xi_x(s), \xi_x(T_{\xi})) - d(\eta_x(s), \eta_x(T_{\eta})) - d(\xi_y(T), \eta_y(T)) - \epsilon\\
& = T_{\xi} + T_{\eta} - d(\xi_y(T), \eta_y(T)) - \epsilon\\
& = B(y,x,\xi) + B(y,x,\eta) + 2(\xi_y(T) \vert \eta_y(T))_y - \epsilon\\
& \geq B(y,x,\xi) + B(y,x,\eta) + 2(\xi \vert \eta)_y - 2\epsilon.
\end{split}
\end{equation*}
Since $\epsilon$ was chosen arbitrarily, we obtain
\[ (\xi \vert \eta)_x \geq (\xi \vert \eta)_y + \frac{1}{2}(B(y,x,\xi) + B(y,x,\eta))Ê\]
The same argument with $x$ and $y$ swapped yields
\begin{equation*}
\begin{split}
(\xi \vert \eta)_y & \geq (\xi \vert \eta)_x + \frac{1}{2}(B(x,y,\xi) + B(x,y,\eta))\\
& = (\xi \vert \eta)_x - \frac{1}{2}(B(y,x,\xi) + B(y,x,\eta))
\end{split}
\end{equation*}
and thus
\[ (\xi \vert \eta)_x \leq (\xi \vert \eta)_y + \frac{1}{2}(B(y,x,\xi) + B(y,x,\eta)), \]
which concludes the proof.
\end{proof}

\begin{proof}[Proof of Theorem \ref{thm:CrossRatio}]

We know from Lemma \ref{lem:derivative} that for all $\xi, \eta \in \partial X$
\[ \rho_y(\xi, \eta) = \sqrt{ e^{-B(x,y,\xi)} e^{-B(x,y,\eta)}Ê} \rho_x(\xi, \eta) \]

Therefore, for all admissible, algebraically visible quadruples $(\xi_1, \xi_2, \xi_3, \xi_4)$,
\begin{equation*}
\begin{split}
cr_y(\xi_1, \xi_2, \xi_3, \xi_4) & = \frac{\sqrt{e^{-B(x,y,\xi_1)-B(x, y, \xi_2)-B(x,y,\xi_3)-B(x,y,\xi_4)}}\rho_x(\xi_1, \xi_2) \rho_x(\xi_3, \xi_4)}{\sqrt{e^{-B(x,y,\xi_1)-B(x, y, \xi_2)-B(x,y,\xi_3)-B(x,y,\xi_4)}}\rho_x(\xi_1, \xi_3) \rho_x(\xi_2, \xi_4)}\\
& = cr_x(\xi_1, \xi_2, \xi_3, \xi_4).
\end{split}
\end{equation*}

We are left to check the special cases where $(\xi_1, \xi_2, \xi_3, \xi_4)$ is admissible, but not algebraically visible. If an admissible quadruple is not algebraically visible, there has to be at least one pair in the quadruple that is not algebraically visible. If $(\xi_1, \xi_2)$ or $(\xi_3, \xi_4)$ is not algebraically visible, then $cr_x(\xi_1, \xi_2, \xi_3, \xi_4) = 0 = cr_y(\xi_1, \xi_2, \xi_3, \xi_4)$. If $(\xi_1, \xi_3)$ or $(\xi_2, \xi_4)$ is not algebraically visible, then $cr_x(\xi_1, \xi_2, \xi_3, \xi_4) = \infty = cr_y(\xi_1, \xi_2, \xi_3, \xi_4)$. Otherwise, the equation from above still applies. We conclude that $cr_x = cr_y$ on all of $\mathcal{A}$.

\end{proof}

If $X$ is $\mathrm{CAT(-1)}$, it is a well-known result that $(\xi \vert \eta)_x$ is continuous with respect to the visual topology. For $\CAT$ spaces, this is not true anymore, which is illustrated by the fact that the Gromov product on the boundary of the euclidean plane obtains exactly the values zero and infinity and the set of pairs for which the Gromov product is infinite is dense. Nevertheless, some continuity properties remain true. We say that a bi-infinite geodesic $\gamma$ in $X$ is {\it rank 1} if and only if it does not bound an isometrically embedded half plane in $X$. Note that $\gamma$ might still have parallel geodesics, however there is a bound on the distance of any such parallel geodesic to $\gamma$.

\begin{lem} \label{lem:ContinuityGromovProductRank1}

Let $\xi_n \rightarrow \xi, \eta_n \rightarrow \eta$ be two converging sequences in $\partial X$. Then the following statements hold.

\begin{enumerate}

\item $(\xi \vert \eta)_x \leq \liminf\limits_{n \rightarrow \infty} (\xi_nÊ\vert \eta_n)_x$.

\item If $(\xi_n, \eta_n)$ is visible for all $n$ and $(\xi, \eta)$ can be connected by a rank 1 geodesic, then $\lim\limits_{n \rightarrow \infty} (\xi_n \vert \eta_n)_x = (\xi \vert \eta)_x$.

\end{enumerate}

\end{lem}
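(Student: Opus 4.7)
My plan is to address (1) by a direct monotonicity argument and (2) by reformulating the Gromov product in terms of Busemann functions along the connecting geodesics and invoking the stability of rank one geodesics.

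For (1), recall from Subsection~\ref{subsec:Boundaries} that the expression $T - \tfrac{1}{2}d(\xi_x(T), \eta_x(T))$ is non-decreasing in $T$ and converges to $(\xi \vert \eta)_x$. Since the visual topology is defined by the basic neighborhoods $U_{R,\epsilon,x}$, the convergence $\xi_n \to \xi$ (resp.\ $\eta_n \to \eta$) implies $\xi_{n,x}(T) \to \xi_x(T)$ (resp.\ $\eta_{n,x}(T) \to \eta_x(T)$) for every fixed $T \geq 0$. By monotonicity,
\[ (\xi_n \vert \eta_n)_x \;\geq\; T - \tfrac{1}{2}d(\xi_{n,x}(T), \eta_{n,x}(T)), \]
and passing to $\liminf_n$ yields $\liminf_n(\xi_n \vert \eta_n)_x \geq T - \tfrac{1}{2}d(\xi_x(T), \eta_x(T))$. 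Letting $T \to \infty$ proves (1).

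For (2), the first step is to derive a base-point formula. If $y$ lies on a bi-infinite geodesic $\gamma$ joining $\xi$ to $\eta$, then the rays $\xi_y$ and $\eta_y$ parametrize the two halves of $\gamma$ in opposite directions, so $d(\xi_y(t), \eta_y(t)) = 2t$ and hence $(\xi \vert \eta)_y = 0$. Combined with Lemma~\ref{lem:derivative} this gives
\[ (\xi \vert \eta)_x \;=\; \tfrac{1}{2}\bigl(B(y, x, \xi) + B(y, x, \eta)\bigr) \]
for any $y$ on a geodesic from $\xi$ to $\eta$; the analogous identity applies to each $(\xi_n, \eta_n)$ with $y_n$ chosen on the connecting geodesic $\gamma_n$ provided by the visibility hypothesis. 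Now let $\gamma$ be the rank one geodesic from $\xi$ to $\eta$ and set $y := \gamma(0)$. Using stability of rank one geodesics in proper $\CAT$ spaces, one reparametrizes each $\gamma_n$ so that $\gamma_n \to \gamma$ uniformly on compact subsets of $\mathbb{R}$; in particular $y_n := \gamma_n(0) \to y$. Plugging into the base-point formula and using continuity of the Busemann function in all three of its arguments (Subsection~\ref{subsec:Boundaries}) yields
\[ \lim_{n \to \infty}(\xi_n \vert \eta_n)_x \;=\; \tfrac{1}{2}\bigl(B(y, x, \xi) + B(y, x, \eta)\bigr) \;=\; (\xi \vert \eta)_x. \]

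The stability step is where I expect the main work. Without the rank one hypothesis the geodesics $\gamma_n$ could drift off to infinity, so the points $y_n$ might leave every compact set and continuity of $B$ would fail to apply. To establish stability I would argue that rank one implies the contracting property (cf.\ Bestvina--Fujiwara) and then use contraction to show that the nearest-point projection of $x$ onto $\gamma_n$ remains bounded. An Arzela--Ascoli argument then produces subsequential limit geodesics, and any such limit must have endpoints $\xi, \eta$ and be parallel to $\gamma$; since $\gamma$ is rank one, any such parallel must coincide with $\gamma$, so the full sequence converges. This is the only place where both hypotheses of (2) --- visibility of the pairs $(\xi_n, \eta_n)$ and rank one of the limit connecting geodesic --- are used in an essential way.
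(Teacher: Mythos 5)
Your argument for (1) is correct and is essentially identical to the paper's.

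For (2), your overall strategy --- reduce to the base-point identity $(\xi\vert\eta)_x = \tfrac{1}{2}\bigl(B(y,x,\xi)+B(y,x,\eta)\bigr)$ for $y$ on a connecting geodesic, then pass to the limit using convergence of the geodesics $\gamma_n$ together with continuity of Busemann functions --- is sound, and is arguably cleaner than the paper's route, which instead characterizes the Gromov product via intersecting horoballs (Lemma \ref{lem:GromovProductHorospheres}) and runs a contradiction argument with the points $p_n, q_n, r_n$. You have also correctly isolated the stability of the geodesics $\gamma_n$ as the crux; the paper proves exactly this statement as Lemma \ref{lem:GeodesicConvergenceRank1}.

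However, your proposed proof of that stability step has a genuine gap. The implication ``rank one $\Rightarrow$ contracting'' that you want to import from Bestvina--Fujiwara holds only for \emph{axes of isometries} (periodic geodesics); that is precisely how the paper cites Theorem 5.4 of \cite{BestvinaFujiwara} in the introduction. For a general bi-infinite geodesic in a proper $\CAT$ space, rank one does not imply the contracting property: one can attach flat squares of unboundedly large side length along a geodesic line without ever creating an embedded half-plane, so the line remains rank one while nearest-point projections of far-away sets have unbounded diameter. Hence the contraction route does not get off the ground, and the stability step requires an argument of the kind given in Lemma \ref{lem:GeodesicConvergenceRank1} (convexity of the functions $\phi_{n}$ built from $B_{\xi_n}+B_{\eta_n}$ along the geodesics joining $\gamma(0)$ to $\gamma_n(0)$, followed by an Arzel\`a--Ascoli limit). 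A second, smaller error: a rank one geodesic may admit distinct parallel geodesics --- rank one only excludes half-planes, not finite-width flat strips, as the paper notes just before the statement of the lemma --- so ``any such parallel must coincide with $\gamma$'' is false. This slip is harmless for your argument, since your base-point formula applies to any point on any geodesic from $\xi$ to $\eta$; you should simply claim subsequential convergence of $\gamma_n(0)$ to some point of the (bounded) parallel set, which suffices to conclude.
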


The proof of Lemma \ref{lem:ContinuityGromovProductRank1} requires several preliminary results. We begin with

\begin{lem} \label{lem:RightAngleExistence}
Let $\xi_n \rightarrow \xi$, $\eta_n \rightarrow \eta$ be two converging sequences in $\partial X$ such that $(\xi_n, \eta_n)$ can be connected by a geodesic line $\gamma_n$ for all $n$ and $(\xi, \eta)$ can be connected by a geodesic line $\gamma$. Then, for all $n$ sufficiently large, there exists a point $p_n \in \gamma_n$ such that $\angle_{\gamma(0)}(\xi, p_n) = \frac{\pi}{2}$.
\end{lem}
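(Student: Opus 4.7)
The strategy is to apply the intermediate value theorem to the angle function $f_n : \mathbb{R} \to [0, \pi]$ defined by $f_n(t) := \angle_{\gamma(0)}(\xi, \gamma_n(t))$. Set $x_0 := \gamma(0)$. Observe first that $\angle_{x_0}(\xi, \eta) = \pi$, because $\gamma$ is a bi-infinite geodesic through $x_0$ with endpoints $\xi$ and $\eta$. I plan to show (i) $f_n$ is continuous, (ii) $\lim_{t \to +\infty} f_n(t) = \angle_{x_0}(\xi, \xi_n)$ and $\lim_{t \to -\infty} f_n(t) = \angle_{x_0}(\xi, \eta_n)$, and (iii) these limits tend respectively to $0$ and $\pi$ as $n \to \infty$. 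The IVT then produces $t_n \in \mathbb{R}$ with $f_n(t_n) = \pi/2$, and I set $p_n := \gamma_n(t_n)$.

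For (i), the triangle inequality for Alexandrov angles yields $|f_n(t) - f_n(t_0)| \le \angle_{x_0}(\gamma_n(t), \gamma_n(t_0))$, and the $\CAT$ condition gives the comparison-angle bound $\angle_{x_0}(\cdot, \cdot) \le \angle^{(0)}_{x_0}(\cdot, \cdot)$. The comparison angle on the right is a continuous function of the Euclidean side lengths and vanishes when its two arguments coincide, so $f_n$ is continuous on $\mathbb{R}$.

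For (ii), parametrize $\gamma_n$ so that $\gamma_n(t) \to \xi_n$ as $t \to +\infty$. Since $\gamma_n$ is unit speed, the cocycle equation gives $B(x_0, \gamma_n(t), \xi_n) = B(x_0, \gamma_n(0), \xi_n) - t$, and the triangle inequality gives $d(x_0, \gamma_n(t)) = t + O(1)$. The formula $\cos(\angle^{(0)}_{x_0}(y, \xi_n)) = -B(x_0, y, \xi_n)/d(x_0, y)$ proved above then forces $\angle^{(0)}_{x_0}(\gamma_n(t), \xi_n) \to 0$, whence $\angle_{x_0}(\gamma_n(t), \xi_n) \to 0$. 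A further application of the triangle inequality for angles gives $\lim_{t \to +\infty} f_n(t) = \angle_{x_0}(\xi, \xi_n)$; the limit at $-\infty$ is handled analogously.

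For (iii), visual-topology convergence $\xi_n \to \xi$ forces $\angle_{x_0}(\xi, \xi_n) \to 0$: the convex function $s \mapsto d(\xi_{x_0}(s), \xi_{n, x_0}(s))$ vanishes at $s = 0$, so its secant slope $d(\xi_{x_0}(s), \xi_{n, x_0}(s))/s$ is nondecreasing in $s$ and becomes uniformly small on $(0, R]$ when the two rays are visually close at scale $R$; the small-$s$ limit of this slope controls the Alexandrov angle via the comparison formula. Similarly $\angle_{x_0}(\eta, \eta_n) \to 0$, and the triangle inequality gives $\angle_{x_0}(\xi, \eta_n) \ge \pi - \angle_{x_0}(\eta, \eta_n)$. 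For $n$ large the two asymptotic limits of $f_n$ therefore straddle $\pi/2$, and the IVT produces $p_n$. The main obstacle is that Alexandrov angles in $\CAT$ spaces are only upper semi-continuous in general, which obstructs a direct IVT argument; this is circumvented by systematically comparing with the manifestly continuous angle $\angle^{(0)}$ and invoking the triangle inequality for Alexandrov angles to transfer estimates back.
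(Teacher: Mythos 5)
Your proof is correct and follows the same overall strategy as the paper: apply the intermediate value theorem to $t \mapsto \angle_{\gamma(0)}(\xi, \gamma_n(t))$, using that the asymptotic values tend to $\angle_{\gamma(0)}(\xi, \xi_n) \to 0$ and $\angle_{\gamma(0)}(\xi, \eta_n) \to \pi$. The difference lies entirely in how the continuity facts are justified. The paper simply invokes Proposition 9.2 in Part II of Bridson--Haefliger for continuity of $(\zeta, \zeta') \mapsto \angle_{\gamma(0)}(\zeta, \zeta')$ in the cone topology and then asserts that the angle varies continuously along $\gamma_n$. You instead verify each piece by hand from tools already developed in Section 2.1: the triangle inequality for Alexandrov angles plus the $\CAT$ bound $\angle \le \angle^{(0)}$ gives continuity of $f_n$ in $t$; the formula $B(x,y,\xi) = -d(x,y)\cos\angle^{(0)}_x(y,\xi)$ together with the cocycle relation identifies the limits at $\pm\infty$ as $\angle_{\gamma(0)}(\xi,\xi_n)$ and $\angle_{\gamma(0)}(\xi,\eta_n)$; and convexity of $s \mapsto d(\xi_{\gamma(0)}(s), \xi_{n,\gamma(0)}(s))$ together with the Euclidean comparison-angle formula shows $\angle_{\gamma(0)}(\xi,\xi_n) \to 0$. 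You also explicitly flag, and route around, the fact that Alexandrov angles are in general only upper semi-continuous, which the paper leaves implicit in the citation. The trade-off is the usual one: your version is longer but self-contained and exposes exactly which structural facts about $\CAT$ spaces are being used, while the paper's citation-based version is shorter.
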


\begin{proof}
By Proposition 9.2 in Part II of \cite{BH}, the function $(\xi, \eta) \mapsto \angle_{\gamma(0)}(\xi, \eta)$ that sends two points in $X \cup \partial X$ to the angle between the unique geodesics from $\gamma(0)$ to $\xi$ and $\eta$ respectively is continuous in the cone topology (which restricts to the visual topology on the boundary). This implies that
\[ \angle_{\gamma(0)}(\xi, \xi_n) \rightarrow 0, \]
\[ \angle_{\gamma(0)}(\xi, \eta_n) \rightarrow \pi. \]
Therefore, we find $N$ such that for all $n \geq N$, we have
\[ \angle_{\gamma(0)}(\xi, \xi_n) \leq \frac{\pi}{4}, \]
\[ \angle_{\gamma(0)}(\xi, \eta_n) \geq \frac{3 \pi}{4}. \]
Let $\gamma_n$ be a geodesic from $\xi_n$ to $\eta_n$. Since the angle function varies continuously along $\gamma_n$, the intermediate value theorem tells us that there has to exist some point $p_n \in \gamma_n$, for which $\angle_{\gamma(0)}(\xi, p_n) = \frac{\pi}{2}$.
\end{proof}

Given a subset $A \subset X$ and $\epsilon > 0$, we denote the $\epsilon$-neighbourhood of $A$ by $N_{\epsilon}(A) := \{ x \in X \vert d(x,A) < \epsilon \}$. Next, we need

\begin{lem} \label{lem:GeodesicConvergenceRank1}
Let $\xi_n \rightarrow \xi$, $\eta_n \rightarrow \eta$ be two converging sequences in $\partial X$, such that $(\xi_n, \eta_n)$ is visible for all $n$ and $(\xi, \eta)$ can be connected by a rank 1 geodesic $\gamma$. Let $\gamma_n$ be a geodesic from $\xi_n$ to $\eta_n$. For $n$ sufficiently large, Lemma \ref{lem:RightAngleExistence} allows us to choose $p_n \in \gamma_n$, such that $\angle_{\gamma(0)}(\xi, p_n) = \frac{\pi}{2}$. Reparametrize $\gamma_n$, such that $\gamma_n(0) = p_n$.

Then for all $\epsilon > 0$, $T > 0$, there exists an $N$ such that for all $n \geq N$, there exists a geodesic $\tilde{\gamma}$ from $\xi$ to $\eta$ such that $\gamma_n\vert_{[-T, T]} \subset N_{\epsilon}(\tilde{\gamma}\vert_{[-T,T]})$.
\end{lem}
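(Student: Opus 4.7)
The plan is to argue by contradiction and extract a limit bi-infinite geodesic from $\xi$ to $\eta$ along a subsequence of $(\gamma_n)$. Specifically, suppose the conclusion fails: there exist $\epsilon_0, T_0 > 0$ and a subsequence (still denoted $\gamma_n$) such that for every $n$ and every bi-infinite geodesic $\tilde\gamma$ from $\xi$ to $\eta$, the inclusion $\gamma_n\vert_{[-T_0,T_0]} \subset N_{\epsilon_0}(\tilde\gamma\vert_{[-T_0,T_0]})$ fails. The goal is to produce, along a further subsequence, exactly such a limit geodesic, which would violate the failure and close the argument.

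The first step would invoke the rank $1$ hypothesis on $\gamma$ via the standard contracting property: for any sufficiently large compact set $K \subset X$ containing a long enough segment of $\gamma$, there exist visual neighborhoods $U_\xi$ of $\xi$ and $U_\eta$ of $\eta$ in $\partial X$ so that every bi-infinite geodesic with endpoints in $U_\xi$ and $U_\eta$ meets $K$. Since $\xi_n \to \xi, \eta_n \to \eta$ in the visual topology and each pair $(\xi_n, \eta_n)$ is visible, for all large $n$ the geodesic $\gamma_n$ contains a point $q_n \in K$, and in particular $(q_n)$ is bounded.

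Next I would show that $p_n$ itself is bounded. Reparametrize $\gamma_n$ so that $q_n = \gamma_n(s_n)$ for some $s_n \in \R$; then $d(p_n, q_n) = |s_n|$, so boundedness of $(p_n)$ is equivalent to boundedness of $(s_n)$. If $|s_n| \to \infty$ along a subsequence, consider the translated geodesics $t \mapsto \gamma_n(t + s_n)$, which are based at $q_n$ and hence in a bounded region. Arzela-Ascoli (using properness of $X$) yields a further subsequence converging uniformly on compact subsets of $\R$ to a bi-infinite geodesic whose endpoints at infinity must be $\xi, \eta$ in view of $\xi_n \to \xi, \eta_n \to \eta$ in the visual topology. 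But $p_n = \gamma_n(0)$ lies at signed distance $-s_n$ from the base point $q_n$ along these geodesics, so $p_n$ converges in the cone topology to $\xi$ or $\eta$ according to the sign of $s_n$. Continuity of the angle function (Prop.~II.9.2 in \cite{BH}) then forces $\angle_{\gamma(0)}(\xi, p_n) \to 0$ or $\pi$, contradicting the normalization $\angle_{\gamma(0)}(\xi, p_n) = \pi/2$.

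Once boundedness of $(p_n)$ is established, Arzela-Ascoli applied directly to the $\gamma_n$ yields a subsequence converging uniformly on compacts to a bi-infinite geodesic $\tilde\gamma_\infty$ whose endpoints are $\xi, \eta$ by the same visual-topology argument. For all $n$ large enough in this subsequence one has $\gamma_n\vert_{[-T_0,T_0]} \subset N_{\epsilon_0}(\tilde\gamma_\infty\vert_{[-T_0,T_0]})$, contradicting the standing assumption. The main obstacle will be the rigorous invocation of the contracting property of rank $1$ geodesics; this is standard in the theory of nonpositive curvature but must be combined carefully with the right-angle normalization of $p_n$. The remaining steps follow routinely from properness of $X$ and Arzela-Ascoli.
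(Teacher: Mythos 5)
Your argument is correct in outline, but it takes a genuinely different route from the paper. You import the contracting property of rank~$1$ geodesics (every bi-infinite geodesic with endpoints in suitable visual neighbourhoods of $\xi$ and $\eta$ passes through a fixed compact set; this is Lemma~III.3.1 in Ballmann's lectures, applicable because a rank~$1$ geodesic fails to bound a flat strip of some finite width) to get a bounded point $q_n\in\gamma_n$, then use the right-angle normalization of $p_n=\gamma_n(0)$ together with continuity of the angle function to rule out $|s_n|\to\infty$, and finish with Arzela--Ascoli; the identification of the limit's endpoints with $\xi,\eta$ is fine because $\gamma_n\vert_{[s_n,\infty)}$ \emph{is} the ray from $q_n$ to $\eta_n$ and rays depend continuously on basepoint and endpoint. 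The paper instead gives a self-contained argument: its Step~1 uses convexity of the distance function to reduce the failure of the conclusion to the statement that $\gamma_{n}(0)$ stays a definite distance from every correctly normalized $\tilde\gamma(0)$, and its Step~2 studies the perpendicular geodesics $\delta_n$ from $\gamma(0)$ to $\gamma_n(0)$, showing via convexity of the sums of Busemann functions $B_\xi+B_\eta$ that their limit lies in the parallel set of $\gamma$, which is bounded by rank~$1$; this forces $\gamma_n(0)$ to converge to some $\tilde\gamma(0)$, contradicting Step~1. What your approach buys is brevity, at the cost of outsourcing the essential geometric content to an external lemma that the paper deliberately avoids (indeed, the paper later proves a weaker cousin of it, Lemma~\ref{lem:rank1characterisation}, by reusing the Busemann-function technique from this very proof). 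If you take your route, you should cite the contracting lemma precisely and verify its hypothesis (no flat strip of width $R$ for some $R$) rather than calling it standard, since it is exactly the point where rank~$1$ enters.
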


\begin{proof}[Proof of Lemma \ref{lem:GeodesicConvergenceRank1}]
We set the convention that, throughout this proof, $\tilde{\gamma}$ denotes a bi-infinite geodesic from $\xi$ to $\eta$, which is parametrised such that $B(\gamma(0), \tilde{\gamma}(0),Ê\xi) = 0$.

Suppose, the statement of the Lemma was not true. Then, we would find $\epsilon > 0, T > 0$ and subsequences $(\xi_{n_i})_i, (\eta_{n_i})_i$ such that for all $\tilde{\gamma}$, we find $t_{n_i} \in [-T, T]$ such that $d(\tilde{\gamma}(t_{n_i}), \gamma_{n_i}(t_{n_i})) \geq \epsilon$.

Step 1: There exists $N$, such that for all $n_i \geq N$ and for all $\tilde{\gamma}$, we have $d(\tilde{\gamma}(0), \gamma_{n_i}(0)) \geq \frac{\epsilon}{2}$.

Suppose, $d(\tilde{\gamma}(0), \gamma_{n_i}(0)) < \frac{\epsilon}{2}$. Since $\gamma$ is a rank 1 geodesic, the set $\{ \tilde{\gamma}(0) \vert \tilde{\gamma} \}$ is bounded. Therefore, we can choose $N$ sufficiently large, such that for all $\tilde{\gamma}$, we have
\[ d(\tilde{\gamma}(-T), \xi_{n_i, \tilde{\gamma}(0)}(T)) \leq \frac{\epsilon}{2}, \]
\[ d(\tilde{\gamma}(T), \eta_{n_i, \tilde{\gamma}(0)}(T)) \leq \frac{\epsilon}{2}. \]
By construction and assumption,
\[ d(\xi_{n_i, \tilde{\gamma}(0)}(0), \gamma_{n_i}(0)) < \frac{\epsilon}{2}, \]
\[ d(\eta_{n_i, \tilde{\gamma}(0)}(0), \gamma_{n_i}(0)) < \frac{\epsilon}{2}.\]
Since distance functions are convex in $\CAT$ spaces, this implies that
\[ d(\xi_{n_i, \tilde{\gamma}(0)}(T), \gamma_{n_i}(-T)) < \frac{\epsilon}{2},\]
\[ d(\eta_{n_i, \tilde{\gamma}(0)}(T), \gamma_{n_i}(T)) < \frac{\epsilon}{2} \]
and therefore,
\[ d(\tilde{\gamma}(t), \gamma_{n_i}(t)) < \epsilonÊ\]
for all $t \in [-T, T]$. This contradicts our assumption that we have $t_{n_i} \in [-T, T]$ such that for all $\tilde{\gamma}$, $d(\tilde{\gamma}(t_{n_i}), \gamma_{n_i}(t_{n_i})) \geq \epsilon$. We conclude that, for all $n_i \geq N$ and for all $\tilde{\gamma}$, $d(\tilde{\gamma}(0), \gamma_{n_i}(0)) \geq \frac{\epsilon}{2}$.\\

\begin{figure}
\begin{tikzpicture}[scale=1.50]
\draw [] (-3,0) -- (3,0);
\node [left] at (-3,0) {$\xi$};
\node [right] at (3,0) {$\eta$};
\draw [] (-2.9, 1) to [out=-30, in = 180] (-1.5, 0.5) to [out = 0, in= 180] (1.5,0.5) to [out = 0, in = 210] (2.9, 1);
\node [left] at (-2.9,1) {$\xi_{n_i}$};
\node [right] at (2.9,1) {$\eta_{n_i}$};
\draw [] (0,0) -- (0,0.5);
\node [right] at (0, 0.25) {$\delta_{n_i}$};
\draw [] (0,0.5) arc [radius = 1.5, start angle=0, end angle=45 ];
\draw [] (0,0.5) arc [radius = 1.5, start angle=0, end angle=-45 ];
\draw [] (0,0.5) arc [radius = 1.5, start angle=180, end angle=225 ];
\draw [] (0,0.5) arc [radius = 1.5, start angle=180, end angle=135 ];
\node [above] at (1.2,1.6) {$B(\gamma_{n_i}(0), \cdot, \eta_{n_i}) = 0$};
\node [above] at (-1.2,1.6) {$B(\gamma_{n_i}(0), \cdot, \xi_{n_i}) = 0$};
\end{tikzpicture}
\caption{Visualisation of proof: Since the geodesic from $\xi_{n_i}$ to $\eta_{n_i}$ stays away from the flat strip from $\xi$ to $\eta$, the geodesic $\delta_{n_i}$ has to move away from that flat strip. The shape of the two horospheres illustrates why $\phi_{n_i}$ is strictly decreasing along $\delta_{n_i}$. As $n_i \rightarrow \infty$, any finite segment of $\delta_{n_i}$ is pushed into the flat strip from $\xi$ to $\eta$, because $\xi_{n_i} \rightarrow \xi$ and $\eta_{n_i} \rightarrow \eta$.}
\label{fig:GeodesicConvergenceRank1} 
\end{figure}
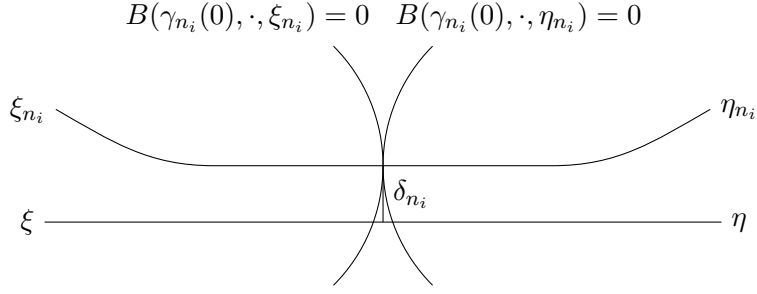

Step 2: Let $\delta_{n_i}$ be the geodesic from $\gamma(0)$ to $\gamma_{n_i}(0)$. Because of the way we parametrised $\gamma_n$ in the statement of the Lemma, we know that $\delta_{n_i}$ meets $\gamma\vert_{[0, -\infty)}$ at a right angle for all sufficiently large $n_i$. By the Theorem of Arzela-Ascoli, $\delta_{n_i}$ has a converging subsequence in compact-open topology. Passing to a subsequence if necessary, we assume without loss of generality that $\delta_{n_i}$ converges to a geodesic $\delta$. Since $\delta_{n_i}$ meets $\gamma\vert_{[0, -\infty)}$ at a right angle for all $n_i$, we know that the same is true for $\delta$. Therefore, $\delta$ cannot be a geodesic ray representing $\xi$ or $\eta$.

Choose $x \in \gamma$ and denote for all $\zeta \in \partial X$
\[ B_{\zeta}(x') := B(x, x', \zeta).Ê\]

Further, we define
\[ \phi_{n_i}(x') := B_{\xi_{n_i}}(x') + B_{\eta_{n_i}}(x') - B_{ \xi_{n_i}Ê} (\gamma(0)) - B_{ \eta_{n_i}}(\gamma(0)). \]

For $x \in \gamma$, we have $B(x, x', \xi) + B(x,x', \eta) \geq 0$ with equality if and only if $x'$ lies on a geodesic from $\xi$ to $\eta$. Using the cocycle equation, we see that
\[ \phi_{n_i}(\gamma_{n_i}(0)) \leq 0 \]
and
\[ \phi_{n_i}(\delta(0)) = \phi_{n_i}(\gamma(0)) = 0. \]

In particular, since Busemann functions are convex, $\phi_{n_i}(\delta_{n_i}(s)) \leq 0$ for all $s$, where $\delta_{n_i}$ is defined. Since $B_{\xi}(x)$ is continuous in $\xi$ and Lipschitz continuous in $x$, we see that on every compact interval, on which $\delta_{n_i}$ is defined for $n_i$ large, we have
\[ 0 \geq \phi_{n_i}(\delta_{n_i}(s)) \xrightarrow{i \rightarrow \infty} B_{\xi}(\delta(s)) + B_{\eta}(\delta(s)) \geq 0. \]

Therefore, $B_{\xi}(\delta(s)) + B_{\eta}(\delta(s)) \equiv 0$ for all $s$ for which $\delta$ is defined, which implies that $\delta$ lies completely in the set of points that are contained in geodesics from $\xi$ to $\eta$. Since $\xi$ and $\eta$ are connected by a rank 1 geodesic, the geodesic $\delta$ can only have infinite length if it represents either $\xi$ or $\eta$. As discussed above, this cannot happen given the way we constructed $\delta$. We conclude that $\delta$ has finite length. Therefore, the forward-endpoints of $\delta_{n_i}$ converge to the forward-endpoint of $\delta$, i.e. $\gamma_{n_i}(t_{n_i}) \rightarrow p$, where $p$ lies on a geodesic connecting $\xi$ with $\eta$ and -- because $\delta$ meets $\gamma\vert_{[0, -\infty)}$ at a right angle -- $B(\gamma(0), p, \xi) = 0$. Therefore, $p = \tilde{\gamma}(0)$ for some $\tilde{\gamma}$. This is a contradiction to our original assumption that the subsequence $\gamma_{n_i}(0)$ stays away from $\tilde{\gamma}(0)$ for all $\tilde{\gamma}$. This completes the proof.
\end{proof}

We also need another characterisation of the Gromov product. Consider $x \in X, \xi, \eta \in \partial X$ and let $h, h'$ be the horoballs centered at $\xi$ and $\eta$ respectively such that $x \in \partial h \cap \partial h'$. Denote by $h_{-m}$ the horoball centered at $\xi$ such that for any $y \in h_{-m}, B(x,y,\xi) \leq -m$ and analogously for $h'_{-m}$. Note that $\xi_x(t) \in \partial h_{-t}$ and $\eta_x(t) \in \partial h'_{-t}$. Define
\[ m_x(\xi, \eta) := \sup \{ m' \vert h_{-m'} \cap h'_{-m'} \neq \emptyset \}. \]

\begin{lem} \label{lem:GromovProductHorospheres}
Let $X$ be a proper, geodesically complete $\CAT$ space. For all $x \in X, \xi, \eta \in \partial X$, we have
\[ (\xi \vert \eta)_x = m_x(\xi, \eta).Ê\]

Furthermore, if $(\xi, \eta)$ is visible, then every geodesic $\gamma$ from $\xi$ to $\eta$ contains a point in $\partial h_{-m_x(\xi, \eta)} \cap \partial h'_{-m_x(\xi, \eta)}$ and every $p \in h_{-m_x(\xi, \eta)} \cap h'_{-m_x(\xi, \eta)}$ lies on a geodesic from $\xi$ to $\eta$.
\end{lem}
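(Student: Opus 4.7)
The plan is to establish the equality $(\xi \vert \eta)_x = m_x(\xi, \eta)$ by proving the two opposite inequalities, and then in the visible case to deduce the geometric claims from the cocycle identity together with a convexity argument applied to the sum $B(x, \cdot, \xi) + B(x, \cdot, \eta)$. For $(\xi \vert \eta)_x \geq m_x(\xi, \eta)$, fix any $y \in h_{-m'} \cap h'_{-m'}$; since $B(x, y, \xi) \leq -m'$, the defining limit of the Busemann function yields $d(y, \xi_x(s)) \leq s - m' + \epsilon$ for all sufficiently large $s$, and the same bound holds with $\eta$ in place of $\xi$, so the triangle inequality gives $d(\xi_x(s), \eta_x(s)) \leq 2s - 2m' + 2\epsilon$ and hence $s - \frac{1}{2} d(\xi_x(s), \eta_x(s)) \geq m' - \epsilon$; sending $s \to \infty$ and then $\epsilon \to 0$ yields $(\xi \vert \eta)_x \geq m_x(\xi, \eta)$. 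For the reverse inequality, I consider the midpoint $p_t$ of the geodesic segment from $\xi_x(t)$ to $\eta_x(t)$: a direct triangle-inequality estimate using $d(p_t, \xi_x(t)) = \frac{1}{2} d(\xi_x(t), \eta_x(t))$ and $d(\xi_x(t), \xi_x(s)) = s - t$ for $s \geq t$ shows $B(x, p_t, \xi) \leq \frac{1}{2} d(\xi_x(t), \eta_x(t)) - t$, with the symmetric bound for $\eta$, so $p_t \in h_{-m_t} \cap h'_{-m_t}$ with $m_t := t - \frac{1}{2} d(\xi_x(t), \eta_x(t)) \to (\xi \vert \eta)_x$.

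For the visible case, fix a geodesic $\gamma$ from $\xi$ to $\eta$. A short cocycle computation based on $B(\gamma(0), \gamma(s), \xi) = s$ and $B(\gamma(0), \gamma(s), \eta) = -s$ shows that $s \mapsto B(x, \gamma(s), \xi) + B(x, \gamma(s), \eta)$ is constant along $\gamma$; call this value $-C$. The convex function $s \mapsto \max(B(x, \gamma(s), \xi), B(x, \gamma(s), \eta))$ attains its minimum $-C/2$ at the unique $s_0$ where the two Busemann functions agree, and the corresponding point $p := \gamma(s_0)$ lies in $\partial h_{-C/2} \cap \partial h'_{-C/2}$, giving $C/2 \leq m_x$. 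For the reverse direction, I invoke the key estimate that for any $y \in X$ and any $p$ on a geodesic from $\xi$ to $\eta$ one has $B(p, y, \xi) + B(p, y, \eta) \geq 0$, obtained by passing to the limit $t \to \infty$ in the triangle inequality $d(y, \gamma(-t)) + d(y, \gamma(t)) \geq 2t$. Combined with the cocycle, if $y \in h_{-m'} \cap h'_{-m'}$ then $B(p, y, \xi) + B(p, y, \eta) = B(x, y, \xi) + B(x, y, \eta) + C \leq C - 2m'$, forcing $m' \leq C/2$; hence $C/2 = m_x(\xi, \eta)$ and $p \in \partial h_{-m_x} \cap \partial h'_{-m_x}$, which proves the first half of the visibility part.

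For the remaining assertion, let $y \in h_{-m_x} \cap h'_{-m_x}$; combining the estimates above with $m' = m_x$ forces $B(p, y, \xi) + B(p, y, \eta) = 0$, so the convex function $f := B(x, \cdot, \xi) + B(x, \cdot, \eta)$ attains its infimum $-2 m_x$ at $y$. Using the formula $B(y, c(t), \xi) = -t \cos(\angle^{(0)}_y(c(t), \xi))$ proved in the previous subsection, the right-derivative of $B(x, \cdot, \xi)$ at $y$ along a unit-speed geodesic $c$ with $c(0) = y$, $c'(0) = v$ equals $-\cos(\angle_y(v, \xi))$, and analogously for $\eta$; since $y$ is a minimizer of $f$, one obtains $-\cos(\angle_y(v, \xi)) - \cos(\angle_y(v, \eta)) \geq 0$ for every direction $v$ at $y$. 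Choosing $v$ to be the initial direction of $\xi_y$ forces $\cos(\angle_y(\xi, \eta)) \leq -1$, so $\angle_y(\xi, \eta) = \pi$, and the rays from $y$ to $\xi$ and to $\eta$ concatenate into a bi-infinite geodesic through $y$. The main obstacle is making this directional-derivative argument rigorous in the $\CAT$ setting — in particular justifying that $\angle^{(0)}_y(c(t), \xi)$ converges to $\angle_y(v, \xi)$ as $t \to 0^+$, that the right-derivative of $f$ at the minimizer is nonnegative, and that an angle of $\pi$ between two geodesic rays in a $\CAT$ space produces a bi-infinite geodesic — but all the needed analytic ingredients have been prepared earlier in this section.
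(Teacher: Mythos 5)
Your proof is correct, and while it rests on the same core identities as the paper's (the Busemann cocycle and the inequality $B(p,y,\xi) + B(p,y,\eta) \geq 0$ for $p$ on a geodesic from $\xi$ to $\eta$), it departs from the paper's argument in two places worth noting. First, for the inequality $(\xi \vert \eta)_x \leq m_x(\xi,\eta)$ the paper works with the geodesic from $\xi_x(t)$ to $\eta_x(t)$ and its crossing points with the two horospheres at level $-m'$ for $m' > m_x$, exploiting that the corresponding horoballs are then disjoint; your midpoint construction, which exhibits an explicit point of $h_{-m_t} \cap h'_{-m_t}$ with $m_t = t - \tfrac{1}{2}d(\xi_x(t),\eta_x(t))$, is more direct and handles $(\xi \vert \eta)_x = \infty$ with no separate case. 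Second, and more substantially, for the final assertion the paper reduces to the claim that $B(p,q,\xi)+B(p,q,\eta) = 0$, with $p$ on a geodesic from $\xi$ to $\eta$, forces $q$ to lie on such a geodesic, and simply asserts this equality case; you actually prove it, via first variation of $B(x,\cdot,\xi)+B(x,\cdot,\eta)$ at a minimizer. The analytic points you flag are all fine: nonnegativity of one-sided derivatives at a global minimizer of a convex function is immediate; angle $\pi$ yields a concatenated geodesic by the $\CAT$ law of cosines; and the first variation formula for Busemann functions follows from the one for distance functions (Corollary 3.6 in Part II of \cite{BH}) by monotone approximation, since $\angle_y(v,\overrightarrow{y\,\xi_y(s)})$ does not depend on $s$. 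That said, you can bypass the delicate limit $\angle^{(0)}_y(c(t),\xi) \to \angle_y(v,\xi)$ entirely: take $c = \xi_y$, so that $B(y,c(t),\xi) = -t$ exactly; minimality forces $B(y,c(t),\eta) \geq t$, hence $= t$, and monotonicity of the defining limit then gives $d(\xi_y(t),\eta_y(s)) \geq t+s$ for all $s,t$, which with the triangle inequality produces the bi-infinite geodesic through $y$ with no angle computation at all.
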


Before we prove this lemma, we introduce a convenient notation. Given two real numbers $a, b$ and $\delta > 0$, we write $a \asymp_{\delta} b$, whenever $\vert a - b \vert \leq \delta$.

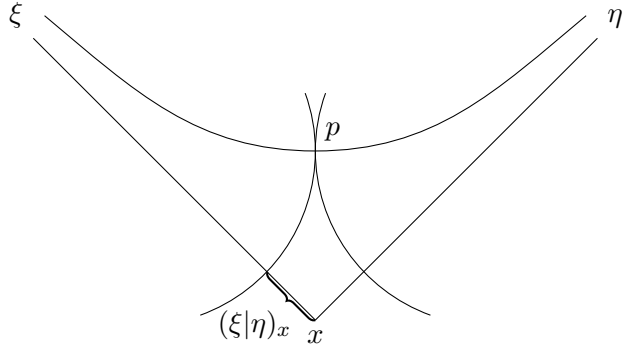
\begin{figure} 
\begin{tikzpicture}[scale=1.50]
\draw [] (0,0) -- (2.5,2.5);
\draw [] (0,0) -- (-2.5,2.5);
\node [above left] at (-2.5,2.5) {$\xi$};
\node [above right] at (2.5,2.5) {$\eta$};
\node [below] at (0,0) {$x$};
\draw [] (-2.4,2.7) to [out=-40, in = 180] (0, 1.5) to [out = 0, in = 220] (2.4, 2.7);
\draw [] (0,1.5) arc [radius = 1.5, start angle=0, end angle=20 ];
\draw [] (0,1.5) arc [radius = 1.55, start angle=0, end angle=-70 ];
\draw [] (0,1.5) arc [radius = 1.5, start angle=180, end angle=160 ];
\draw [] (0,1.5) arc [radius = 1.55, start angle=180, end angle=250 ];
\node [above right] at (0,1.5) {$p$};
\draw [decorate,decoration={brace}, thick] (0,0) -- (-0.425,0.425);
\node [below left] at (-0.12, 0.18) {$(\xi \vert \eta)_x$};
\end{tikzpicture}
\caption{The horospheres in the picture are the smallest ones centered at $\xi$ and $\eta$ that have non-empty intersection, provided that we shrink them at equal speed, starting with the horospheres containing $x$.}
\label{fig:GromovProductHorospheres} 
\end{figure}

\begin{proof}[Proof of Lemma \ref{lem:GromovProductHorospheres}]
Denote $m := m_x(\xi, \eta)$. We first show that $2(\xi \vert \eta)_x \leq 2m$. Suppose $(\xi \vert \eta)_x < \infty$. Let $m' > m$ and $\epsilon > 0$. There exists $t_0 \geq 0$, such that
\[ 2(\xi \vert \eta)_x \leq 2t_0 - d(\xi_x(t_0), \eta_x(t_0)) + \epsilon. \]
Let $\gamma_t$ be the geodesic from $\xi_x(t)$ to $\eta_x(t)$. Denote the unique intersection point of $\gamma_t$ with $\partial h_{-m'}$ by $p(t)$ and the unique intersection point of $\gamma_t$ with $\partial h'_{-m'}$ by $q(t)$. Since $m' > m$, we know that there is a segment of $\gamma_t$ that lies outside of $h_{-m'} \cup h'_{-m'}$. For $t > t_0$, we compute
\begin{equation*}
\begin{split}
2(\xi \vert \eta)_x & \leq 2t_0 - d(\xi_x(t_0), p(t_0)) - d(p(t_0), q(t_0)) - d(q(t_0), \eta_x(t_0)) + \epsilon\\
& \leq 2t - d(\xi_x(t), p(t_0)) - d(q(t_0), \eta_x(t)) + \epsilon\\
& \leq B(p(t_0),x,\xi) + B(q(t_0),x,\eta) + \epsilon\\
& = 2m' + \epsilon,
\end{split}
\end{equation*}
as $p(t_0) \in \partial h_{-m'}, q(t_0) \in \partial h_{-m'}$. Since this computation applies for all $\epsilon > 0$ and $m' > m$, we conclude that $(\xi \vert \eta)_x \leq m$ whenever $(\xi \vert \eta)_x < \infty$. If $(\xi \vert \eta)_x = \infty$, we do the same computation as above, except that we drop $\epsilon$ and instead find for every $C > 0$ a time $t_0$, such that $C \leq 2t - d(\xi_x(t), p(t_0)) - d(p(t_0), q(t_0)) - d(q(t_0), \eta_x(t)) \xrightarrow{t \rightarrow \infty} 2m'$.

Now, let $m' < m$, i.e. $h_{-m'} \cap h'_{-m'} \neq \emptyset$ and define $\gamma_t$ as above. Choose $p \in \partial h_{-m'} \cap h'_{-m'}$. Let $\epsilon > 0$. Then, for $t$ sufficiently large,
\begin{equation*}
\begin{split}
2m' & \leq B(p,x,\xi) + B(p,x,\eta)\\
& \leq 2t - d(\xi_x(t), p) - d(p, \eta_x(t)) + \epsilon\\
& \leq 2t - d(\xi_x(t), \eta_x(t)) + \epsilon\\
& \leq 2(\xi \vert \eta)_x + \epsilon.
\end{split}
\end{equation*}

Therefore, $(\xi \vert \eta)_x \geq m$, which concludes the proof of the first statement.\\

For the second statement of the Lemma, suppose $(\xi, \eta)$ is visible and again denote $m:= m_x(\xi, \eta)$. Let $\gamma$ be a geodesic from $\xi$ to $\eta$. Denote the unique intersection point of $\gamma$ with $\partial h_{-m}$ by $p$.

We claim that $p \in \partial h'_{-m}$. Suppose not. Since $m = \sup \{ m' \vert h_{-m'} \cap h'_{-m'} \neq \emptyset \}$, this implies that $B(x,p,\eta) > -m$. Therefore, there exists $\epsilon > 0$ such that $B(x,p,\eta) > -m + 2 \epsilon$. Since $m = m_x(\xi,Ê\eta)$, we find $q \in h_{-m + \epsilon} \cap h'_{-m + \epsilon}$. We compute
\begin{equation*}
\begin{split}
B(x,p,\xi) + B(x,p,\eta) & > -2m + 2\epsilon\\
& \geq B(x,q,\xi) + B(x,q,\eta)\\
& = B(x,p,\xi) + B(x,p,\eta) + B(p,q,\xi) + B(p,q,\eta).
\end{split}
\end{equation*}
This implies that $B(p,q,\xi) + B(p,q,\eta) < 0$, which is a contradiction to the fact that $p \in \gamma$. We conclude that $p \in \partial h'_{-m}$.

Note that $h_{-m} \cap h'_{-m} = \partial h_{-m} \cap \partial h_{-m}$. Otherwise, we would find a point $p$ such that $B(x,p,\xi) = B(x,p,\eta) = -m - \epsilon$ with $\epsilon > 0$, which contradicts the assumption that $m = m_x(\xi, \eta)$. We now show that every $q \in \partial h_{-m} \cap \partial h'_{-m}$ lies on a geodesic from $\xi$ to $\eta$ as well. Let $p$ be as above. We compute
\begin{equation*}
\begin{split}
B(p,q,\xi) + B(p,q,\eta) & = B(p,x,\xi) + B(x,q,\xi) + B(p,x,\eta) + B(x,q,\eta)\\
& = m + (-m) + m + (-m) = 0.
\end{split}
\end{equation*}

Since $p$ lies on a geodesic from $\xi$ to $\eta$, the sum $B(p,q,\xi) + B(p,q,\eta)$ equals zero if and only if $q$ also lies on a geodesic from $\xi$ to $\eta$. This proves the second part of the Lemma.
\end{proof}

\begin{proof}[Proof of Lemma \ref{lem:ContinuityGromovProductRank1}]
We first prove (1). Let $\epsilon > 0$. There exists $T \geq 0$, such that for all $t \geq T$, $(\xi_x(t) \vert \eta_x(t))_x \geq (\xi \vert \eta)_x - \epsilon$. Since $\xi_n \rightarrow \xi$ and $\eta_n \rightarrow \eta$, we find $N$ such that for all $n \geq N, \xi_n \in U_{T, \frac{\epsilon}{2}, x}(\xi)$ and $\eta_n \in U_{T, \frac{\epsilon}{2}, x}$. Thus,
\begin{equation*}
\begin{split}
(\xi_n \vert \eta_n)_x & \geq (\xi_{n,x}(T) \vert \eta_{n,x}(T))_x\\
& = T - \frac{1}{2} d(\xi_{n,x}(T), \eta_{n,x}(T))\\
& \geq T - \frac{1}{2} (d(\xi_x(T), \eta_x(T)) + 2\epsilon)\\
& \geq (\xi \vert \eta)_x - 2 \epsilon.
\end{split}
\end{equation*}

Since $\epsilon$ was chosen to be any positive number, we conclude that $\liminf\limits_{n \rightarrow \infty} (\xi_n \vert \eta_n)_x \geq (\xi \vert \eta)_x$.\\

To prove (2), we start by using Lemma \ref{lem:GromovProductHorospheres} to describe the Gromov product as follows. Let $(\xi, \eta)$ be visible and let $h, h'$ be as in the definition of $m_x(\xi, \eta)$. Denote the unique point where $\xi_x$ intersects $\partial h_{-m_x(\xi, \eta)}$ by $p$ and the unique point where $\eta_x$ intersects $\partial h'_{-m_x(\xi, \eta)}$ by $q$. Since $(\xi, \eta)$ is visible, we know from Lemma \ref{lem:GromovProductHorospheres} that $h_{-m_x(\xi, \eta)} \cap h'_{-m_x(\xi, \eta)}$ is non-empty and contains only points that are contained in a geodesic from $\xi$ to $\eta$. Let $r \in h_{-m_x(\xi, \eta)} \cap h'_{-m_x(\xi, \eta)}$. The Gromov product is equal to $m_x(\xi, \eta)$ which is the same as the distance $d(x,p)$. Note that the following equations hold by construction:
\[ d(x,p) = d(x,q) \]
\[ B(p,r,\xi) = B(q,r,\eta) = 0.Ê\]

Let $\epsilon > 0$. Let $\gamma_n$ be a bi-infinite geodesic from $\xi_n$ to $\eta_n$. By Lemma \ref{lem:GeodesicConvergenceRank1}, there exists (after reparametrisation) a subsequence $\gamma_{n_i}$ converging to a geodesic $\gamma$ from $\xi$ to $\eta$. Choose $r$ from above such that $r \in \gamma$. We find triples $(p_{n}, q_{n}, r_{n})$ as in the construction above, where we choose $r_{n} \in \gamma_{n}$. By (1), we know that $\liminf\limits_{n \rightarrow \infty} d(x,p_{n}) \geq d(x,p)$.

Suppose, $\liminf\limits_{n \rightarrow \infty} d(x,p_{n}) \geq d(x,p) + \epsilon$. By choice of $\gamma$, we know that $\gamma_{n_i}$ converges to $\gamma$ in compact-open topology. In particular, for $n_i$ sufficiently large, $r \in N_{\frac{\epsilon}{4}}(\gamma_{n_i})$ and we find $r'_{n_i} \in \gamma_{n_i}$, such that $d(r, r'_{n_i}) < \frac{\epsilon}{4}$. Therefore, for $n_i$ sufficiently large, $B(r, r_{n_i}, \xi_{n_i}) \asymp_{\frac{\epsilon}{4}} B(r'_{n_i},r_{n_i}, \xi_{n_i})$.

Furthermore, since $\xi_n \rightarrow \xi$ and $\eta_n \rightarrow \eta$, we can choose $n_i$ sufficiently large such that $d(\xi_x(d(x,p)), \xi_{n_i,x}(d(x,p))) < \frac{\epsilon}{4}$ and $d(\eta_x(d(x,p)), \eta_{n_i,x}(d(x,p))) < \frac{\epsilon}{4}$. Together with our assumption on the convergence behaviour of $d(x,p_n)$, we obtain that for all $n_i$ sufficiently large
\[ B(p_{n_i}, p, \xi_{n_i}) \geq B(p_{n_i}, \xi_{n_i,x}(d(x,p)), \eta_{n_i}) - \frac{\epsilon}{8} \geq d(x,p_{n_i}) - d(x,p) - \frac{\epsilon}{4} \geq \frac{3\epsilon}{4}, \]
\[ B(q_{n_i}, q, \eta_{n_i}) \geq B(q_{n_i}, \eta_{n_i,x}(d(x,p)), \eta_{n_i}) - \frac{\epsilon}{8} \geq d(x,q_{n_i}) - d(x,q) - \frac{\epsilon}{4} \geq \frac{3\epsilon}{4}, \]
and thus,
\[ B(p_{n_i}, r, \xi_{n_i}) \geq B(p,r,\xi_{n_i}) + \frac{3\epsilon}{4}, \]
\[ B(q_{n_i}, r, \eta_{n_i}) \geq B(q,r, \eta_{n_i}) + \frac{3\epsilon}{4}. \]

Finally, since Busemann functions $B(x,y,\xi)$ are continuous in $\xi$, we can choose $n_i$ sufficiently large such that
\[ B(p, r, \xi_{n_i}) \geq B(p,r,\xi) - \frac{\epsilon}{4}, \]
\[ B(q,r,Ê\eta_{n_i}) \geq B(q,r,\eta) - \frac{\epsilon}{4}. \]

Altogether, this implies that there exists $N \in \mathbb{N}$ such that for all $n \geq N$,
\begin{equation*}
\begin{split}
B(p_{n_i}, r_{n_i}, \xi_{n_i}) & \geq B(p_{n_i}, r, \xi_{n_i}) + B(r'_{n_i}, r_{n_i}, \xi_{n_i}) - \frac{\epsilon}{4}\\
& \geq B(p, r, \xi_{n_i}) + B(r'_{n_i}, r_{n_i}, \xi_{n_i}) + \frac{\epsilon}{2}\\
& \geq B(p,r,\xi) + B(r'_{n_i}, r_{n_i}, \xi_{n_i}) + \frac{\epsilon}{4}\\
& = B(r'_{n_i}, r_{n_i}, \xi_{n_i}) + \frac{\epsilon}{4}
\end{split}
\end{equation*}
\begin{equation*}
\begin{split}
B(q_{n_i}, r_{n_i}, \eta_{n_i}) & \geq B(q_{n_i}, r, \eta_{n_i}) + B(r'_{n_i}, r_{n_i}, \eta_{n_i}) - \frac{\epsilon}{4}\\
& \geq B(q, r, \eta_{n_i}) + B(r'_{n_i}, r_{n_i}, \eta_{n_i}) + \frac{\epsilon}{2}\\
& \geq B(q,r,\eta) + B(r'_{n_i}, r_{n_i}, \eta_{n_i}) + \frac{\epsilon}{4}\\
& = B(r'_{n_i}, r_{n_i}, \eta_{n_i}) + \frac{\epsilon}{4}.
\end{split}
\end{equation*}

However, since $r_{n_i}, r'_{n_i}$ both lie on the geodesic $\gamma_{n_i}$ from $\xi_{n_i}$ to $\eta_{n_i}$, we have
\[ B(p_{n_i}, r_{n_i}, \xi_{n_i}) + B(q_{n_i}, r_{n_i}, \eta_{n_i})) \leq 0 \]
and
\[ B(r'_{n_i}, r_{n_i}, \xi_{n_i}) + B(r'_{n_i}, r_{n_i}, \eta_{n_i})) = 0, \]
which is a contradiction to the inequalities above. We conclude that $\liminf\limits_{n \rightarrow \infty} d(x,p_n) = d(x,p)$. Since this argument applies to any subsequence of $(\xi_n, \eta_n)$ as well, we conclude that $\lim\limits_{n \rightarrow \infty} d(x,p_n)$ exists and equals $d(x,p)$. This concludes the proof.

\end{proof}

We prove one more Lemma that characterizes rank 1 geodesics in terms of a local visibility property.

\begin{lem} \label{lem:rank1characterisation}
Let $\gamma$ be a rank 1 geodesic from $\xi$ to $\eta$. Then, there exists an open neighbourhood $U \times V$ of $(\xi, \eta)$, such that for all $(\xi', \eta') \in U \times V$, $(\xi', \eta')$ is a visible pair.

In particular, a pair $(\xi, \eta)$ can be connected by a rank 1 geodesic if and only if there exists a neighbourhood $U$ of $\xi$, such that for all $\xi' \in U$, $(\xi', \eta)$ is visible.
\end{lem}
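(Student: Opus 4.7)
My plan is to invoke the Tits metric $d_T$ on $\partial X$, along with three well-known facts for proper, geodesically complete $\CAT$ spaces: by Lemma~1.7 of \cite{BallmannBuyalo} (already cited in the paper), the endpoints of a rank one geodesic satisfy $d_T(\xi,\eta)>\pi$; the Tits metric is lower semicontinuous with respect to the visual topology on $\partial X$; and if $d_T(\xi,\eta)>\pi$ then $\xi$ and $\eta$ are joined by a bi-infinite geodesic (the Tits visibility theorem, see \cite[Part II, Chapter~9]{BH}). The first statement then follows immediately: from $d_T(\xi,\eta)>\pi$ and lower semicontinuity, pick open neighbourhoods $U\ni\xi$ and $V\ni\eta$ with $d_T(\xi',\eta')>\pi$ for all $(\xi',\eta')\in U\times V$, and apply the visibility theorem.

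The forward implication of the ``in particular'' equivalence is obtained by restricting the first statement to $\eta'=\eta\in V$. For the converse, I would argue by contradiction: suppose $U$ exists but no geodesic from $\xi$ to $\eta$ is rank one. Since $\xi\in U$ the pair $(\xi,\eta)$ is visible, and failure of rank one forces a connecting geodesic $\tilde\gamma$ to bound a flat half-plane $P\subset X$. The Tits boundary of $P$ is an arc of length exactly $\pi$ from $\xi$ to $\eta$, and a point $\xi'$ on this arc near $\xi$ lies in $U$ by visual continuity (rays in $P$ with close initial directions stay close for a long time). By hypothesis $(\xi',\eta)$ is visible, and the aim is to derive a contradiction from the coexistence of this geodesic with the flat half-plane $P$.

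I expect this last step to be the main obstacle. The cleanest strategy is to show $d_T^X(\xi',\eta)<\pi$, which contradicts visibility (since visibility forces $d_T\geq\pi$). Bounding $d_T^X(\xi',\eta)$ by the $P$-Tits distance $\pi-\delta$ for $\delta$ the angular offset of $\xi'$ from $\xi$ along $\partial P$ requires controlling Alexandrov angles $\angle_x(\xi',\eta)$ at points $x\in X\setminus P$; this I would do using the 1-Lipschitz convex projection $\pi_P\colon X\to P$ to compare the angle at $x$ with the angle at $\pi_P(x)\in P$, where the flat structure of $P$ supplies the bound $\pi-\delta$. Combining these ingredients yields $d_T^X(\xi',\eta)\leq\pi-\delta$ and completes the contradiction.
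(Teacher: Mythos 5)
Your proof is correct but takes a genuinely different route from the paper's. The paper argues by contradiction with a compactness argument: assuming visibility fails along sequences $\xi_i\to\xi$, $\eta_i\to\eta$, it constructs connecting geodesics whose nearest point to $\gamma(0)$ sits at distance $2C$ (where $C$ bounds the width of the parallel set of $\gamma$), extracts a subsequential limit geodesic via Arzel\`a--Ascoli which turns out to be parallel to $\gamma$ yet at distance $2C$ from it, and derives a contradiction. You instead route everything through the Tits metric: rank-one endpoints have $d_T>\pi$, the set $\{d_T>\pi\}$ is open in the cone topology, and $d_T>\pi$ forces visibility. Your route is shorter and leans on classical facts, whereas the paper's is more self-contained and never invokes $d_T$. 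Your treatment of the converse of the ``in particular'' equivalence is also more careful than the paper's, which does not prove that direction explicitly even though it is used later (for instance in the proof of Lemma~\ref{lem:Continuityu}). One remark on the step you single out as the main obstacle: the $1$-Lipschitz projection $\pi_P$ is not needed, and the pointwise inequality $\angle_x(\xi',\eta)\leq\angle_{\pi_P(x)}(\xi',\eta)$ you hoped to extract from it is not a standard property of the convex projection. The Tits angle can instead be computed from rays at a single basepoint: for any $p\in X$, $\angle(\xi',\eta)$ is the monotone limit as $t\to\infty$ of the Euclidean comparison angle at $p$ of the triangle $\bigl(p,\xi'_p(t),\eta_p(t)\bigr)$ (see \cite[Part~II, Chapter~9]{BH}, the limit formula for the angle at infinity). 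Choosing $p\in P$ makes every quantity in this formula intrinsic to the convex flat half-plane $P$, so $\angle^X(\xi',\eta)=\angle^P(\xi',\eta)=\pi-\delta<\pi$ immediately; visibility of $(\xi',\eta)$ would force this angle to equal $\pi$, which is the contradiction you want.
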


\begin{proof}
The proof uses a similar idea as the proof of Lemma \ref{lem:GeodesicConvergenceRank1}. Since $\gamma$ is a rank 1 geodesic, there exists a constant $C > 0$, such that every geodesic $\gamma'$ from $\xi$ to $\eta$ is parallel to $\gamma$ and has Hausdorff distance $d_{Haus}(\gamma, \gamma') \leq C$.

Suppose the Lemma was not true. Then, there exist sequences $\xi_i \rightarrow \xi$, $\eta_i \rightarrow \eta$, such that for all $i$, $(\xi_i, \eta_i)$ is not visible. Denote $x := \gamma(0)$, $\gamma^{-}_i$ the geodesic ray starting at $x$ representing $\xi_i$, and $\gamma^{+}_i$ the geodesic ray starting at $x$ representing $\eta_i$. Let $\gamma_{T,i}$ be the unique geodesic from $\gamma^{-}_i(T)$ to $\gamma^{+}_i(T)$. Note that, if we fix $i$, the paths $\gamma_{T,i}$ vary continuously in $T$ in the sense that $d_{Haus}(\gamma_{T+\epsilon,i}, \gamma_{T,i}) \leq 2\epsilon$. Denote the points at infinity obtained by extending $\gamma_{T,i}$ by $\xi_{T,i}$ and $\eta_{T,i}$ respectively.

Since $\gamma_{T,i}$ varies continuously in $T$, there exists a time $T_i$ such that $d(x, \gamma_{T,i}) = 2C$ and there exists a unique point $x_i \in \gamma_{T_i,i}$ satisfying $d(x, x_i) = 2C$. Note that $T_i \xrightarrow{i \rightarrow \infty} \infty$, as $\xi_i \rightarrow \xi$ and $\eta_i \rightarrow \eta$. We reparametrise $\gamma_{T_i,i}$ such that it is an arc-length geodesic with $\gamma_{T_i,i}(0) = x_i$. Since $X$ is assumed to be proper, the Arzela-Ascoli theorem implies the existence of a subsequence $\gamma_{T_{n_i}, n_i}$ that converges to a bi-infinite geodesic line $\tilde{\gamma}$ from $\tilde{\xi}$ to $\tilde{\eta}$ with $x_{n_i}$ converging to a point $\tilde{x} \in \tilde{\gamma}$. Without loss of generality, we denote these subsequences by $\gamma_{T_i, i}$ and $x_i$.

We claim that $\angle_x(\tilde{x}, \xi) = \angle_x(\tilde{x}, \eta) = \frac{\pi}{2}$. To prove this, we denote $\alpha_i := \angle_x(x_i, \gamma^{-}_i(T_i))$, $\alpha'_i := \angle_x(x_i, \gamma^{+}_i(T_i))$, $\beta_i := \angle_{x_i}(x, \gamma^{-}_i(T_i))$, $\beta'_i := \angle_{x_i}(x, \gamma^{+}_i(T_i))$. Since $x_i$ minimizes the distance $d(x, \gamma_{T_i, i}(t))$, we conclude that $\beta_i, \beta'_i \geq \frac{\pi}{2}$. Since the sum of angles of a triangle in a $\CAT$ space is at most $\pi$, this implies that $\alpha_i, \alpha'_i \leq \frac{\pi}{2}$. However, since $\xi_i \rightarrow \xi$ and $\eta_i \rightarrow \eta$, we have that $\lim_{i \rightarrow \infty} \alpha_i + \alpha'_i \geq \pi$ (the limit exists, since $x_i \rightarrow \tilde{x}$). We conclude that $\lim_{i \rightarrow \infty} \alpha_i = \lim_{i \rightarrow \infty} \alpha'_i = \frac{\pi}{2}$, which means that $\angle_x(\tilde{x}, \xi) = \angle_x(\tilde{x}, \eta) = \frac{\pi}{2}$. Combined with the fact that $d(x, \tilde{x}) = 2C$ and any geodesic parallel to $\gamma$ is contained in the $C$-neighbourhood of $\gamma$, this implies that $\tilde{x}$ does not lie in the flat strip spanned by all geodesic lines from $\xi$ to $\eta$.

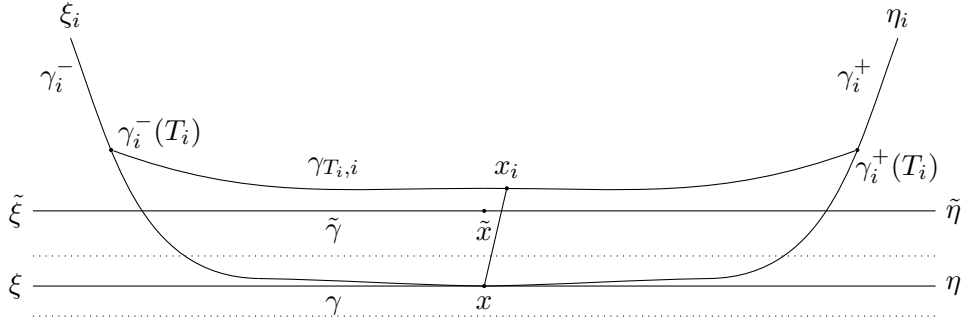
\begin{figure} 
\begin{tikzpicture}[scale=1]
\draw (-6,0) -- (6,0);
\node [left] at (-6, 0) {$\xi$};
\node [right] at (6, 0) {$\eta$};
\node [below] at (-2, 0) {$\gamma$};
\draw [dotted] (-6,0.4) -- (6, 0.4);
\draw [dotted] (-6,-0.4) -- (6,-0.4);
\draw [fill] (0,0) circle[radius = 0.02cm];
\node [below] at (0,0) {$x$};
\draw [] (0,0) to [out = 1, in = 181] (3, 0.1);
\draw [] (3, 0.1) to [out = 1, in = 250] (5.5, 3.3);
\node [above] at (5.5, 3.3) {$\eta_i$};
\node [left] at (5.3, 2.8) {$\gamma^{+}_i$};
\draw [fill] (4.96, 1.81) circle [radius= 0.02cm];
\node [below right] at (4.8, 1.95) {$\gamma^{+}_i(T_i)$};
\draw [] (0,0) to [out = 179, in = -1] (-3, 0.1);
\draw [] (-3, 0.1) to [out = 179, in = -70] (-5.5, 3.3);
\node [above] at (-5.5, 3.3) {$\xi_i$};
\node [left] at (-5.3, 2.8) {$\gamma^{-}_i$};
\draw [fill] (-4.96, 1.81) circle [radius= 0.02cm];
\node [above right] at (-5, 1.7) {$\gamma^{-}_i(T_i)$};
\draw [] (0,1.3) to [out = 0, in = 200] (4.93, 1.8);
\draw [] (0,1.3) to [out = 180, in = -20] (-4.93, 1.8);
\node [above] at (-2, 1.3) {$\gamma_{T_i,i}$};
\draw [fill] (0.3, 1.3) circle [radius = 0.02cm];
\node [above] at (0.3, 1.3) {$x_{i}$};
\draw [] (0,0) -- (0.3, 1.3);
\draw [] (-6, 1) -- (6, 1);
\node [below] at (-2, 1.05) {$\tilde{\gamma}$};
\draw [fill] (0, 1) circle [radius = 0.02cm];
\node [below] at (0, 1) {$\tilde{x}$};
\node [right] at (6, 1) {$\tilde{\eta}$};
\node [left] at (-6, 1) {$\tilde{\xi}$};
\end{tikzpicture}
\caption{The times $T_i$ are chosen such that $d( x, x_{i}) = 2C$. This provides us with a subsequence of the geodesics $\gamma_{T_i,i}$ that converges to $\tilde{\gamma}$. The angles of the triangles $(x, x_i, \gamma^{\pm}_i(T_i))$ tell us that $\tilde{x}$ cannot lie in the flat strip from $\xi$ to $\eta$ (indicated by the dotted lines). However, the endpoints of $\tilde{\gamma}$ turn out to be $\xi$ and $\eta$, which leads to a contradiction.}
\label{fig:VisibilityofRankOne} 
\end{figure}

We now claim that $\tilde{\xi} = \xi$ and $\tilde{\eta} = \eta$, contradicting the fact that $\tilde{x}$ does not lie on any geodesic from $\xi$ to $\eta$. We show this by proving that $d_{Haus}(\gamma, \tilde{\gamma}) < \infty$. Fix $R > 0$. By the convergences established above, there exists $I$ such that for all $i \geq I$, we have $T_i \geq R$, and for all $\vert t \vert \leq R$, we have $d(\gamma_{T_i, i}(t), \tilde{\gamma}(t)) \leq C$, and $d(\gamma(t), \gamma^{\pm}_i(\vert t \vert)) \leq C$. We estimate for all $\vert t \vert \leq R$,
\begin{equation*}
\begin{split}
d(\gamma(t), \tilde{\gamma}(t)) & \leq d(\gamma(t), \gamma^{\pm}_i(\vert t \vert))\\
& \quad + d(\gamma^{\pm}_i(\vert t \vert), \gamma_{T_i, i}(t))\\
& \quad + d(\gamma_{T_i, i}(t), \tilde{\gamma}(t))\\
& \leq 4C,
\end{split}
\end{equation*}
where we used the fact that $\gamma^{\pm}_i(T_i) \in\gamma_{T_i, i}$ and convexity of distance functions to estimate $d(\gamma^{\pm}_i(\vert t \vert), \gamma_{T_i, i}(t)) \leq \max( d(x, x_i), d(\gamma^{\pm}_i(T_i), \gamma_{T_i, i}(T_i))) = 2C$.

This implies that $\tilde{\gamma}$ and $\gamma$ have bounded Hausdorff distance and, therefore, they are parallel. In particular, $\xi = \tilde{\xi}$ and $\eta = \tilde{\eta}$ and $\tilde{x}$ lies on a geodesic from $\xi$ to $\eta$. However, $\tilde{x}$ was constructed so that it cannot lie on such a geodesic. This is a contradiction and proves the Lemma.
\end{proof}

\begin{cor} \label{cor:ContinuityGromovProduct}
Let $x \in X, \xi, \gamma$ a rank 1 geodesic from $\xi$ to $\eta$. Then, $(Ê\cdot \vert \cdot )_x : \partial X \times \partial X \rightarrow [0, \infty]$ is continuous at $(\xi, \eta)$.
\end{cor}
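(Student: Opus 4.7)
The plan is to show that this corollary follows almost immediately by combining the two preceding lemmas, which have already done the substantive work. Specifically, Lemma \ref{lem:ContinuityGromovProductRank1}(2) already provides continuity of the Gromov product along sequences $(\xi_n, \eta_n) \to (\xi, \eta)$ under the additional hypothesis that every $(\xi_n, \eta_n)$ be visible. The content of the corollary is therefore to verify that, when $(\xi, \eta)$ can be joined by a rank 1 geodesic, this visibility hypothesis is automatic for all sufficiently large $n$, which is exactly what Lemma \ref{lem:rank1characterisation} supplies.

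In detail, I would fix an arbitrary sequence $(\xi_n, \eta_n) \in \partial X \times \partial X$ converging to $(\xi, \eta)$ in the product of visual topologies. Since $\gamma$ is a rank 1 geodesic from $\xi$ to $\eta$, Lemma \ref{lem:rank1characterisation} furnishes open neighborhoods $U \ni \xi$ and $V \ni \eta$ such that every pair $(\xi', \eta') \in U \times V$ is visible. By convergence, there exists $N$ such that $(\xi_n, \eta_n) \in U \times V$ for all $n \geq N$, so in particular $(\xi_n, \eta_n)$ is visible for all $n \geq N$. Discarding the finitely many initial terms (which does not affect convergence of $(\xi_n \vert \eta_n)_x$), Lemma \ref{lem:ContinuityGromovProductRank1}(2) applies directly to the tail and yields
\[ \lim_{n \to \infty} (\xi_n \vert \eta_n)_x = (\xi \vert \eta)_x. \]
Since the sequence was arbitrary, this establishes continuity of $(\cdot \vert \cdot)_x$ at $(\xi, \eta)$, whether the value $(\xi \vert \eta)_x$ is finite or infinite (the latter being allowed by the target $[0, \infty]$).

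There is essentially no obstacle here: all the difficult geometric input — the semicontinuity estimate, the convergence of geodesics $\gamma_n$ to a geodesic from $\xi$ to $\eta$ under the rank 1 assumption, and the local visibility of rank 1 hinges — is already packaged into the two preceding lemmas, and the corollary is a one-line combination. The only minor point worth noting is that one must allow $(\xi \vert \eta)_x = \infty$, but Lemma \ref{lem:ContinuityGromovProductRank1}(2) is stated so as to cover this case (the statement asserts equality of limits in $[0,\infty]$), so no separate argument is needed.
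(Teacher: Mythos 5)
Your argument is correct and is exactly the intended derivation: the paper states this as an immediate corollary of Lemma \ref{lem:rank1characterisation} (local visibility near a rank 1 pair) combined with Lemma \ref{lem:ContinuityGromovProductRank1}(2), which is precisely what you do. The only superfluous remark is the case $(\xi\vert\eta)_x=\infty$, which cannot occur since a pair joined by a (rank 1) geodesic is visible and hence has finite Gromov product.
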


We now define one of the properties necessary to make the circumcenter extension construction work.

\begin{mydef} \label{def:Rank1Hinge}
Let $X$ be a proper, connected, geodesically complete $\CAT$ space, $\xi \in \partial X$. We say that $\xi$ {\it is in a rank 1 hinge} if there exist $\eta, \zeta \in \partial X$, such that $(\eta, \zeta)$ is algebraically visible and the pairs $(\xi, \eta), (\xi, \zeta)$ both can be connected by a rank 1 geodesic.
\end{mydef}


\subsection{Metric derivatives} \label{subsec:MetricDerivatives}

In order to extend cross ratio preserving maps to maps of the interior, we need to generalize the notion of metric derivatives, which has been developed for general metric spaces (see \cite{Biswas15, Biswas17a, Biswas18a}). In this subsection, we show how this tool can be extended to boundaries of $\CAT$ spaces that `have sufficiently many algebraically visible pairs'. Since the underlying theory is more general, we will state the definitions and results in a more general form and then return to $\CAT$-spaces and boundaries.\\

Let $Z$ be a topological space, $\rho$ and $\rho'$ two non-negative, symmetric maps $\rho$, $\rho': Z \times Z \rightarrow [0, \infty]$ such that for all $z \in Z, \rho(z,z) = \rho'(z,z) = 0$. By analogy to the previous section, we call an $n$-tuple $(x_1, \dots, x_n) \in Z^n$ {\it algebraically visible} with respect to $\rho$ if and only if for all $i \neq j$, $\rho(x_i, x_j) > 0$. We say that a quadruple $(x_1,x_2,x_3,x_4) \in Z^4$ is {\it admissible} with respect to $\rho$ if it contains no triple $(x_i, x_j, x_k)$ with $i \neq j \neq k \neq i$ such that $\rho(x_i, x_j) = \rho(x_j,x_k) = 0$. Denote the set of quadruples admissible with respect to $\rho$ by $\mathcal{A}_{\rho}$. We will not indicate the $\rho$, whenever it is clear from context. Using the same formula as before, $\rho$ and $\rho'$ both define a cross ratio $cr_{\rho}$ and $cr_{\rho'}$ on the set $\mathcal{A}_{\rho}$ and $\mathcal{A}_{\rho'}$ respectively. We say that $\rho$ and $\rho'$ are {\it M\"obius equivalent} if $\mathcal{A}_{\rho} = \mathcal{A}_{\rho'}$ and $cr_{\rho} = cr_{\rho'}$. We write $\rho \overset{M}{\sim} \rho'$. Note that $\mathcal{A}_{\rho} = \mathcal{A}_{\rho'}$ if and only if $\rho$ and $\rho'$ define the same algebraically visible pairs.

\begin{mydef} \label{def:4visibility}

We say that $(Z,Ê\rho)$ satisfies the {\it 4-visibility} assumption, if the following holds:

\begin{enumerate}

\item[(4v)] For every quadruple $(z, x, x', y') \in Z^4$, there exists $w \in Z$, such that $w$ is algebraically visible with $z$, $x$, $x'$, $y'$.

\end{enumerate}

\end{mydef}

\begin{rem}
For any $n \in \mathbb{N}^{+}$, we can define the assumption ($n$v) by replacing quadruples by $n$-tuples. Note that ($n$v) implies ($k$v) for all $k \leq n$ and whenever $Z$ satisfies ($n$v), it has to contain at least $n+1$ points, as otherwise we could choose an $n$-tuple that contains all points in $Z$ to create a contradiction to ($n$v).

Further note that, if $(Z,Ê\rho)$ satisfies (4v) and $\rho \overset{M}{\sim} \rho'$, then $(Z, \rho')$ satisfies (4v) as well.
\end{rem}

We say that a point $z$ in $(Z, \rho)$ is {\it approximable}, if there exists a sequence $z_n \in Z$, such that $z_n \xrightarrow{n \rightarrow \infty} z$ and $(z, z_n)$ is algebraically visible for all $n$. Note that, if $\rho \overset{M}{\sim} \rho'$, then a point is approximable in $(Z, \rho)$ if and only if it is approximable in $(Z, \rho')$.

\begin{defprop}[cf. \cite{Biswas15}] \label{defprop:Derivative}

Suppose $\rho \overset{M}{\sim} \rho'$. Additionally, assume that $(Z, \rho)$ (and thus $(Z, \rho')$) satisfies (4v)). Let $z \in Z$ and choose $x, y \in Z$ such that $(x,y,z)$ is an algebraically visible triple with respect to $\rho$ (and thus $\rho'$). Then, the expression

\[ R_z(x,y) := \frac{\rho(z, x) \rho(z,y) \rho'(x,y)}{\rho(x,y) \rho'(z,x) \rho'(z,y)}, \]

is independent of the choice of $x$, $y$; it is continuous, whenever $\rho$ and $\rho'$ are continuous and, if $\rho$ and $\rho'$ are continuous, the following equality holds for every point $z \in Z$ that is approximable with respect to $\rho$:

\[ R_z(x,y) = \lim_{z' \rightarrow z, \rho(z,z') \neq 0} \frac{\rho(z, z')}{\rho'(z, z')}. \]

This equation motivates to define {\it the derivative of $\rho$ by $\rho'$ at $z$} by

\[ \frac{\partial \rho}{\partial \rho'}(z) := \frac{\rho(z, x) \rho(z,y) \rho'(x,y)}{\rho(x,y) \rho'(z,x) \rho'(z,y)} \]

\end{defprop}

\begin{proof}

We start by showing that every $z \in Z$ admits $x$, $y \in Z$, such that $(x,y,z)$ is an algebraically visible triple. Let $z \in Z$. We can extend $z$ to a quadruple $(z, a, b, c) \in Z^4$. By (4v), there exists a point $x \in Z$ that is algebraically visible to $z$, $a$, $b$, $c$. In particular, $z \neq x$. By extending the pair $(z,x)$ to a quadruple and using (4v) again, we obtain $y \in Z$, that is algebraically visible to both $z$ and $x$. We conclude that $(x,y,z)$ is an algebraically visible triple.\\

Next, we show independence of $x$ and $y$ for all possible choices of $x, y$. Let $x'$, $y' \in Z$ be another pair such that $(x', y', z)$ is an algebraically visible triple. We proceed in two steps.\\

Step 1: Suppose, one of the pairs $(x,x')$, $(x,y')$, $(y, x')$, $(y,y')$ is algebraically visible. Let's assume that $(x,x')$ is. We want to show that
\[ R_z(x,y) = \frac{\rho(z,x) \rho(z,y) \rho'(x,y)}{\rho(x,y) \rho'(z,x) \rho'(z,y)} = \frac{\rho(z,x') \rho(z,y') \rho'(x',y')}{\rho(x',y') \rho'(z,x') \rho'(z,y')} = R_z(x',y'). \]

This is true if and only if
\begin{equation*}
\begin{split}
\frac{\rho(z,x) \rho(z,y) \rho(x',y') \rho(x,x')}{\rho(x,y) \rho(z,x') \rho(z,y') \rho(x,x')}  = \frac{\rho'(z,x) \rho'(z,y) \rho'(x',y') \rho'(x,x')}{\rho'(x,y) \rho'(z,x') \rho'(z,y') \rho'(x,x')},
\end{split}
\end{equation*}

which is the same as
\[ cr_{\rho}(z,y,x',x) cr_{\rho}(z,x,y',x') = cr_{\rho'}(z,y,x',x) cr_{\rho'}(z,x,y',x'). \]

This last equation is true, since all appearing quadruples are admissible by assumption and $\rho \overset{M}{\sim} \rho'$. The cases, where $(x,y')$, $(y,x')$ or $(y,y')$ is algebraically visible are analogous.\\

Step 2: Suppose, all the pairs above are not algebraically visible. By assumption (4v), there exists a point $w \in Z$, which is algebraically visible with $z$, $x$, $x'$ and $y'$. By Step 1, we obtain that
\[ R_z(x,y) = R_z(w,y') = R_z(x',y'). \]

Therefore, $R_z(x,y) = R_z(x',y')$ for any two algebraically visible triples $(x,y,z)$, $(x',y',z)$.\\

In order to prove continuity, note that, if $\rho$ is continuous, algebraic visibility with respect to $\rho$ is an open condition and analogously for $\rho'$. Therefore, for any $z \in Z$, we find an open neighbourhood $U$ and a pair $(x,y)$, such that for all $z' \in U$, $(x,y,z')$ is an algebraically visible triple with respect to $\rho$ and $\rho'$. Thus, for all $z' \in U$, $\frac{\partial \rho}{\partial \rho'}(z') = R_{z'}(x,y)$, which is continuous in $z'$ by continuity of $\rho$ and $\rho'$.\\

Finally, if $z$ is approximable in $(Z,Ê\rho)$, we find a sequence of points $z_n$ that are algebraically visible with $z$ and converging to $z$. By continuity of $\rho$ and $\rho'$, we find a point $y \in Z$, such that $(z_n,y,z)$ is an algebraically visible triple for all sufficiently large $n$. Using the continuity of $\rho$ and $\rho'$ again, we obtain
\begin{equation*}
\begin{split}
\frac{\partial \rho}{\partial \rho'}(z) & = \lim_{n \rightarrow \infty} R_z(z_n,y)\\
& = \lim_{n \rightarrow \infty} \frac{\rho(z, z_n) \rho(z, y) \rho'(z_n, y)}{\rho(z_n, y) \rho'(z, z_n) \rho'(z,y)}\\
& = \frac{\rho(z,y) \rho'(z,y)}{\rho(z,y) \rho'(z,y)} \lim_{n \rightarrow \infty} \frac{\rho(z, z_n)}{\rho'(z,z_n)} = \lim_{n \rightarrow \infty} \frac{\rho(z, z_n)}{\rho'(z,z_n)}.
\end{split}
\end{equation*}

This implies that $\lim_{z' \rightarrow z} \frac{\rho(z,z')}{\rho'(z,z')}$ exists and the desired equality, which completes the proof.

\end{proof}

We require a few properties of these derivatives. If $\rho, \rho'$ are metrics, these properties are shown in \cite{Biswas15} and the proof is the same as here.

\begin{lem}[cf. \cite{Biswas15}] \label{lem:ChainRuleGMVT}
Let $\rho \overset{M}\sim \rho' \overset{M}\sim \rho''$, $z, z' \in Z$. Then

\begin{enumerate}[leftmargin = 0cm, rightmargin]

\item[(Chain rule)] $\frac{\partial \rho}{\partial \rho'}(z) \frac{\partial \rho'}{\partial \rho''}(z) = \frac{\partialÊ\rho}{\partial \rho''}(z)$

\item[(Geometric mean value theorem)] $\rho(z,z')^2 = \frac{\partial \rho}{\partial \rho'}(z) \frac{\partial \rho}{\partial \rho'}(z') \rho'(z,z')^2$

\end{enumerate}

\end{lem}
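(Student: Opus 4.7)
The plan is to prove both statements by direct manipulation of the defining formula from Definition/Proposition \ref{defprop:Derivative}. Since $\rho \overset{M}{\sim} \rho' \overset{M}{\sim} \rho''$, all three relations have the same algebraically visible pairs, so the notion of an algebraically visible triple and the 4-visibility assumption transfer between them. In particular, at every $z \in Z$ we may choose one common triple $(x,y,z)$ that is algebraically visible with respect to all three, and the derivatives at $z$ can all be evaluated using this same auxiliary pair.

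For the chain rule, I would fix such a triple $(x,y,z)$ and write
\[
\frac{\partial \rho}{\partial \rho'}(z)\cdot\frac{\partial \rho'}{\partial \rho''}(z)
= \frac{\rho(z,x)\rho(z,y)\rho'(x,y)}{\rho(x,y)\rho'(z,x)\rho'(z,y)}\cdot \frac{\rho'(z,x)\rho'(z,y)\rho''(x,y)}{\rho'(x,y)\rho''(z,x)\rho''(z,y)}.
\]
Every occurrence of $\rho'$ cancels, and the remaining expression is exactly $\frac{\partial \rho}{\partial \rho''}(z)$. This is the easy part.

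For the geometric mean value theorem, I would split into two cases. If $(z,z')$ is not algebraically visible with respect to $\rho$, then, because $\mathcal{A}_\rho=\mathcal{A}_{\rho'}$, it is also not algebraically visible with respect to $\rho'$, so $\rho(z,z')=0=\rho'(z,z')$ and both sides vanish (the derivative factors are finite and positive by (4v), so the product with $0$ is still $0$). Otherwise, $(z,z')$ is algebraically visible, and I apply (4v) to the quadruple $(z,z',z,z')$ to obtain an auxiliary point $y$ that is algebraically visible with both $z$ and $z'$. Then the triples $(z',y,z)$ and $(z,y,z')$ are algebraically visible, so I may evaluate
\[
\frac{\partial \rho}{\partial \rho'}(z) = \frac{\rho(z,z')\rho(z,y)\rho'(z',y)}{\rho(z',y)\rho'(z,z')\rho'(z,y)},\qquad
\frac{\partial \rho}{\partial \rho'}(z') = \frac{\rho(z',z)\rho(z',y)\rho'(z,y)}{\rho(z,y)\rho'(z',z)\rho'(z',y)}.
\]
Multiplying these two expressions, every factor involving $y$ cancels in pairs, and what remains is $\rho(z,z')^2/\rho'(z,z')^2$. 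Rearranging yields the claimed identity.

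The only real obstacle is bookkeeping: one has to check that the auxiliary points provided by (4v) make every ratio appearing in the derivation well-defined, i.e.\ that no factor in a denominator is zero and no factor in a numerator needs to be interpreted as $\infty$. Both issues are handled by ensuring algebraic visibility of the chosen triples, which is precisely what (4v) guarantees; once the right $x,y$ (and for the GMVT, the right $y$) have been chosen, each identity reduces to a bare cancellation of factors.
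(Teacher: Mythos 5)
Your proof is correct and follows essentially the same route as the paper: for the chain rule, evaluate all three derivatives with a common algebraically visible triple and cancel; for the geometric mean value theorem, split on whether $(z,z')$ is algebraically visible, use $\mathcal{A}_\rho = \mathcal{A}_{\rho'}$ in the degenerate case, and otherwise pick an auxiliary point via (4v) so that $(z,z',\cdot)$ is an algebraically visible triple and cancel.
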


\begin{proof}
For the Chain rule, choose $x, y \in Z$ such that $(z,x,y)$ is an algebraically visible triple with respect to $\rho, \rho', \rho''$. Then
\[ \frac{\partial \rho}{\partial \rho'}(z) \frac{\partial \rho'}{\partial \rho''}(z) = \frac{\rho(z,x) \rho(z,y) \rho'(x,y) \rho'(z,x) \rho'(z,y) \rho''(x,y)}{\rho(x,y) \rho'(z,x) \rho'(z,y) \rho'(x,y) \rho''(z,x) \rho''(z,y)} = \frac{\partialÊ\rho}{\partial \rho''}(z). \]

For the Geometric mean value theorem, if $\rho(z,z') = 0$, the equation follows from $\mathcal{A}_{\rho} = \mathcal{A}_{\rho'}$. If $\rho(z,z') \neq 0$, we can choose $x \in Z$ such that $(z,z',x)$ is an algebraically visible triple. Then

\[ \frac{\partial \rho}{\partial \rho'}(z) \frac{\partial \rho}{\partial \rho'}(z') = \frac{\rho(z,z') \rho(z,x) \rho'(z',x) \rho(z',z) \rho(z',x) \rho'(z,x)}{\rho(z',x) \rho'(z,z') \rho'(z,x) \rho(z,x) \rho'(z',z) \rho'(z',x)} = \frac{\rho(z,z')^2}{\rho'(z,z')^2} \]

\end{proof}

\begin{rem}
If at least one point $z \in Z$ is approximable in $(Z, \rho)$ and both $\rho$ and $\rho'$ are continuous, then it is easy to see from the characterization of the derivative at the approximable point $z$ by $\frac{\partial \rho}{\partial \rho'}(z) = \lim_{z' \rightarrow z} \frac{\rho(z,z')}{\rho'(z,z')}$ that the Geometric mean value theorem uniquely determines the derivative of $\rho$ by $\rho'$.
\end{rem}

\begin{rem}[cf. \cite{Biswas15}] \label{rem:DerivativesonBoundaries}
Using Lemma \ref{lem:derivative} and the additivity of Busemann functions, it is easy to see that on boundaries of $\CAT$ spaces,
\[ \frac{\partial \rho_x}{\partial \rho_y}(\xi) = e^{B(x,y,\xi)}. \]
\end{rem}

\begin{lem}[cf. \cite{Biswas15}] \label{lem:MaxMin}
Let $\rho \overset{M}\sim \rho'$. Additionally, assume that $Z$ is compact, for all $z, z' \in Z$, $\rho(z,z') \leq 1, \rho'(z,z') \leq 1$ and that for every $z \in Z$ there exist $\bar{z}, \bar{z}' \in Z$ such that $\rho(z,\bar{z}) = 1$ and $\rho'(z, \bar{z}') = 1$. Then,
\[ \max_{z \in Z} \left\{ \frac{\partial \rho}{\partial \rho'}(z) \right\} \min_{z \in Z} \left\{ \frac{\partial \rho}{\partial \rho'}(z) \right\} = 1 \]
\end{lem}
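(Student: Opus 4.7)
The key tool is the Geometric Mean Value Theorem from Lemma \ref{lem:ChainRuleGMVT}, which states
\[ \rho(z,z')^2 = \frac{\partial \rho}{\partial \rho'}(z) \, \frac{\partial \rho}{\partial \rho'}(z') \, \rho'(z,z')^2. \]
Write $f(z) := \frac{\partial \rho}{\partial \rho'}(z)$, $M := \max_{z \in Z} f(z)$, and $m := \min_{z \in Z} f(z)$ (the existence of these uses that $Z$ is compact and $f$ is continuous, by Definition/Proposition \ref{defprop:Derivative}). My plan is to prove $Mm \geq 1$ and $Mm \leq 1$ separately, each by exploiting the corresponding half of the symmetric hypothesis that for every $z$ there is a point realising $\rho$-distance $1$ from $z$ (resp.\ $\rho'$-distance $1$).

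For the lower bound $Mm \geq 1$, fix an arbitrary $z \in Z$ and use the hypothesis to choose $\bar z$ with $\rho(z,\bar z) = 1$. The GMVT then yields
\[ 1 = \rho(z,\bar z)^2 = f(z) \, f(\bar z) \, \rho'(z,\bar z)^2 \leq f(z) \, f(\bar z), \]
since $\rho'(z,\bar z) \leq 1$. Because $f(\bar z) \leq M$, this forces $f(z) \geq 1/M$ for every $z \in Z$. Taking the minimum over $z$ gives $m \geq 1/M$, i.e.\ $Mm \geq 1$.

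For the upper bound $Mm \leq 1$, fix $z \in Z$ and use the other half of the hypothesis to pick $\bar z'$ with $\rho'(z, \bar z') = 1$. By the GMVT,
\[ \rho(z, \bar z')^2 = f(z) \, f(\bar z') \, \rho'(z, \bar z')^2 = f(z) \, f(\bar z'), \]
and since $\rho(z, \bar z') \leq 1$, we conclude $f(z) f(\bar z') \leq 1$. Because $f(\bar z') \geq m$, this yields $f(z) \leq 1/m$ for every $z$. Taking the maximum over $z$ gives $M \leq 1/m$, i.e.\ $Mm \leq 1$, and combining the two inequalities completes the proof.

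There is no serious obstacle here: the only non-trivial ingredient is the GMVT itself, which has been established, and the two halves of the hypothesis are used in perfectly symmetric fashion (once for $\rho$, once for $\rho'$). The mild technical issue is the existence of the max and the min, which requires continuity of $f$; this follows from Definition/Proposition \ref{defprop:Derivative} provided $\rho$ and $\rho'$ are continuous, an assumption implicit in the setting of the lemma.
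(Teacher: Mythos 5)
Your proof is correct and follows essentially the same route as the paper: two applications of the Geometric Mean Value Theorem, one using a point at $\rho$-distance one and one using a point at $\rho'$-distance one, combined with the uniform bound $\rho, \rho' \leq 1$. The only cosmetic difference is that you argue from an arbitrary $z$ and then pass to the extremum, whereas the paper evaluates directly at the maximiser (resp.\ minimiser) and phrases the second half via the reciprocal derivative $\frac{\partial \rho'}{\partial \rho}$.
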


Note that, if $Z = \partial X$ and $\rho = \rho_x, \rho' = \rho_{x'}$, then the assumptions of Lemma \ref{lem:MaxMin} are satisfied, so this Lemma applies in the context that we will be considering.

\begin{proof}
Let $z \in Z$ such that $\frac{\partial \rho}{\partial \rho'}(z)$ is maximal and $z' \in Z$ such that $\frac{\partial \rho}{\partial \rho'}(z')$ is minimal. Denote the obtained maximum and minimum by $\mu$ and $\lambda$ respectively. Let $\bar{z}' \in Z$ be such that $\rho'(z,\bar{z}') = 1$. then

\begin{equation*}
\begin{split}
1 \geq \rho(z,\bar{z}')^2 & = \frac{\partial \rho}{\partial \rho'}(z) \frac{\partial \rho}{\partial \rho'}(\bar{z}') \rho'(z,\bar{z}')^2 \geq \mu \lambda.
\end{split}
\end{equation*}

On the other hand, let $\bar{z} \in Z$ be such that $\rho(z',\bar{z}) = 1$. Then

\begin{equation*}
\begin{split}
1 \geq \rho'(z',\bar{z})^2 = \frac{\partial \rho'}{\partial \rho}(z') \frac{\partial \rho'}{\partial \rho}(\bar{z}) \rho(z',\bar{z})^2 \geq \frac{1}{\lambda}\frac{1}{\mu}.
\end{split}
\end{equation*}

We conclude that $\mu \cdot \lambda = 1$.

\end{proof}


\subsection{Convex functions} \label{subsec:ConvexFunctions}

We need some basic results about convex functions. A function $f : I \rightarrow \mathbb{R}$ defined on an interval $I \subset \mathbb{R}$ is called {\it convex} if for all $a, b \in I$ and $t \in [0,1]$, we have $f( (1-t)a + tb) \leq (1-t) f(a) + tf(b)$. A function is called {\it strictly convex} if this inequality is a strict inequality for all $t \in (0,1)$.

In a geodesic metric space $X$, a function $f : X \rightarrow \mathbb{R}$ is called {\it convex} if for any geodesic $\gamma$ on $X$ and any $a, b$ on the domain of $\gamma$, we have
\[ \forall t \in [0,1] : f(\gamma( (1-t)a + tb ) ) \leq (1-t) f( \gamma(a) ) + t f( \gamma(b) ). \]

\begin{rem}
There is a sufficient, but generally not necessary analytic condition for (strict) convexity. If $f : I \rightarrow \mathbb{R}$ is a $C^2$-function, then $f$ is convex if and only if $\frac{\partial^2 f}{\partial t^2} \geq 0$ everywhere. Furthermore, if $\frac{ \partial^2 f}{\partial t^2} > 0$ everywhere, then $f$ is strictly convex. However, the converse is not necessarily true, as is illustrated by the example $t \mapsto t^4$ at the point zero.
\end{rem}

We recall the following standard result about convex functions.

\begin{lem} \label{lem:Convexity}
Let $f_{z} : X \rightarrow \mathbb{R}$ be a family of convex functions on a connected, geodesic $\CAT$ space $X$ parametrized by $z \in Z$. Define $F(x) := \sup_{z \in Z} \{ f_{z}(x) \}$. Then $F : X \rightarrow \mathbb{R}$ is convex.
\end{lem}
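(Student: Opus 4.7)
The proof is essentially a one-line computation once the right inequalities are lined up, so my plan is to keep it short and make the key observation explicit.

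First I would fix a geodesic $\gamma$ in $X$, two points $a,b$ in its domain, and $t \in [0,1]$. For each individual index $z \in Z$, convexity of $f_z$ gives
\[ f_z\!\left( \gamma((1-t)a + tb) \right) \leq (1-t) f_z(\gamma(a)) + t f_z(\gamma(b)). \]
Since $f_z(\gamma(a)) \leq F(\gamma(a))$ and $f_z(\gamma(b)) \leq F(\gamma(b))$ by definition of $F$ as a pointwise supremum, the right-hand side is bounded above by $(1-t) F(\gamma(a)) + t F(\gamma(b))$, which is independent of $z$.

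Then I would take the supremum of both sides over $z \in Z$. The right-hand side is unchanged, while the left-hand side becomes $F(\gamma((1-t)a + tb))$, yielding exactly the convexity inequality for $F$ along $\gamma$. Since $\gamma$, $a$, $b$, $t$ were arbitrary, this shows $F$ is convex in the sense defined above.

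The only subtlety worth flagging is that one implicitly needs $F$ to take finite values, as assumed by the statement $F : X \to \mathbb{R}$; if the supremum were $+\infty$ at some point, the inequality still holds trivially in the extended real sense, so no essential obstacle arises. There is no hard step here: the argument does not use the CAT(0) hypothesis beyond the existence of geodesics, and does not use any structural property of $Z$.
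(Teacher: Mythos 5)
Your proof is correct and is the standard argument; the paper itself simply states Lemma \ref{lem:Convexity} as a well-known fact without giving a proof, and what you wrote is exactly the expected supremum-of-convex-functions argument, including the appropriate remark about finiteness of $F$.
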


\subsection{Visibility and algebraic visibility} \label{subsec:VisibilityAlgebraicVisibility}

Let $X, Y$ be proper, connected, geodesically complete $\CAT$ spaces. A map $f : \partial X \rightarrow \partial Y$, is called {\it M\"obius} if and only if it sends algebraically visible pairs to algebraically visible pairs and preserves the cross ratio, i.e.
\[ \forall (\xi_1, \xi_2, \xi_3, \xi_4) \in \mathcal{A} : cr_X(\xi_1, \xi_2, \xi_3, \xi_4) = cr_Y(f(\xi_1), f(\xi_2), f(\xi_3), f(\xi_4)).Ê\]

In order to construct our extension map, we require that $f$ is not only M\"obius but also that $f$ and $f^{-1}$ both preserve visible pairs. It is tempting to try and show that M\"obius maps always preserve visible pairs by arguing that a pair $(\xi, \eta)$ in $\partial X$ is visible if and only if it is algebraically visible. It is known that visible pairs are always algebraically visible. However, while the converse is true if $X$ admits a cocompact group action by isometries, it is not true in general, as the following example -- provided by Jean-Claude Picaud and Viktor Schroeder -- illustrates.\\

Consider the manifold $\mathbb{R}^2$ with coordinates $(x,y)$ and equip it with the Riemannian metric $dx^2 + f(x)^2 dy^2$, where $f : \mathbb{R} \rightarrow \mathbb{R}$ is a $C^2$-function, such that $f(x) > 1$ for all $x$ and $\lim_{x \rightarrow \infty} f(x) = 1$. The curvature of this metric at $(x,y)$ is given by $-\frac{f''(x)}{f(x)}$. Hence, if $f$ is strictly convex, this space has negative curvature everywhere. We equip the tangent space of $\mathbb{R}^2$ with the standard basis $e_1, e_2$ everywhere. We denote the inner product with respect to the Riemannian metric above by $\langle \cdot, \cdot \rangle_f$.

This Riemannian manifold is the universal covering of a surface of revolution $\mathbb{R} \times S^1$ with coordinates $(x, \vartheta)$ and Riemannian metric $dx^2 + f(x) d\vartheta^2$. By abuse of notation, we call the projection of the vector fields $e_1, e_2$ onto the surface of revolution by $e_1, e_2$ as well. It is a classical result that a path $\gamma$ on a surface of revolution is a geodesic in the Riemannian sense if and only if the function $\langle \gamma'(t), e_2(\gamma(t)) \rangle_f$ is constant. (This is called Clairaut's constant, cf. \cite{doCarmo}.) We observe from this fact that a geodesic $\gamma(t) = (x(t), y(t))$ with $x'(0) > 0$, will have monotone increasing $x(t)$ for all $t \geq 0$ if and only if its Clairaut constant $\langle \gamma'(t), e_2\rangle_f \leq \lim_{t \rightarrow \infty} \langle e_2, e_2\rangle_f = \lim_{t \rightarrow \infty} f(t)^2 = 1$. Else, the geodesic $\gamma$ will eventually change its $x$-direction and have decreasing $x(t)$. This argumentation carries over to the universal covering, where we conclude that a geodesic ray $\gamma$ represents a point in the boundary with $x(t) \xrightarrow{ t \rightarrow \infty} \infty$ if and only if $\vert \langle\gamma'(0), e_2\rangle_f \vert \leq 1$. The Clairaut constant also implies that no two geodesic rays with $x(t) \xrightarrow{t \rightarrow \infty} \infty$ can be connected by a bi-infinite geodesic. Thus, any pair of geodesics with $x'(0) > 0$ and Clairaut constant at most one is a non-visible pair.

We focus our attention on the borderline case where the absolute value of the Clairaut constant equals one, i.e. $\vert \langle \gamma'(t), e_2\rangle \vert \equiv 1$. Fixing $(x_0, y_0) \in \mathbb{R}^2$, there are exactly two geodesic rays starting at $(x_0, y_0)$ whose Clairaut constant in absolute value equals $1$. We will show that, depending on the choice of the function $f$, this pair of points in the boundary may be algebraically visible or not algebraically visible.\\

We start with some general arguments that will allow us to reverse engineer the function $f$ and $y$, assuming that we know the $x$-coordinate of a geodesic with Clairaut constant one. Suppose, we have a geodesic $\gamma$ with a known $x$-coordinate. We know that the following two equations hold:
\[ \pm 1 \equiv \langle \gamma'(t), e_2 \rangle_f = f(x(t))^2 y'(t). \]
\[ 1 \equiv \langle \gamma'(t) , \gamma'(t) \rangle_f = x'(t)^2 + f(x(t))^2 y'(t)^2Ê\]

This implies that
\[ y'(t) = \pm \frac{1}{f(x(t))^2},Ê\]
\[ f(x(t))^2 = \pm \frac{1}{1 - x'(t)^2}.Ê\]

We now use these equations in two concrete cases.

\begin{exam}
Restrict to $t \geq 2$ and suppose, $x(t) = \ln(t)$. By the equations above, using the fact that we also require $f(x) > 1$, we obtain
\[ f(x(t)) = \sqrt{ \frac{t^2}{t^2 - 1} } = \frac{t}{\sqrt{t^2-1}},Ê\]
\[ y'(t) = \pm \left( 1 - \frac{1}{t^2} \right).Ê\]

In particular, we obtain
\[ f(x) = \frac{e^x}{\sqrt{e^{2x}-1}}, \]
which is a strictly convex function for $x > 0$ with $\lim_{x \rightarrow \infty} f(x) = 1$, as direct computation shows.

For every starting point $p_0$, we obtain two geodesic rays starting at that point that are described by the equations above. Choose some $p_0$ and denote the two geodesics starting there by $\gamma^{+}, \gamma^{-}$. We claim that their Gromov product is finite. Since $\int_2^\infty \frac{1}{t^2} dt < \infty$, we obtain that there exists some constant $C$ such that $y^{+}(t) > t - C$ and $y^{-}(t) < -t + C$ for all $t$. Let $\delta$ be the path connecting $\gamma^{+}(t), \gamma^{-}(t)$. Since the euclidean inner product satisfies $\langle \cdot, \cdot \rangle_{Eucl} \leq \langle \cdot, \cdot \rangle_f$, we see that the euclidean distance satisfies $2t - 2C \leq d_{Eucl}(\gamma^{+}(t), \gamma^{-}(t)) \leq d_f(\gamma^{+}(t), \gamma^{-}(t))$. Therefore, the Gromov product satisfies $( [\gamma^{+}] \vert[\gamma^{-}] )_{p_0} \leq 2C < \infty$. In particular, here we have an example of a non-visible pair that is algebraically visible.

\end{exam}

\begin{exam}

Restrict to $t > 1$, choose $\alpha \in (0, \frac{1}{2})$ and suppose, $x(t) = \frac{1}{1-\alpha}t^{1-\alpha}$. We obtain
\[ f(x(t)) = \frac{1}{\sqrt{ 1 - t^{-2\alpha} }},Ê\]
\[ y'(t) = \pm( 1 - t^{-2\alpha} ), \]
\[ y(t) = \pm( t - \frac{1}{1 - 2\alpha} t^{1 - 2 \alpha} ) + C.Ê\]

In particular,
\[ f(x) = \frac{1}{ \sqrt{ 1 - (1-\alpha)^{-\frac{2\alpha}{1 - \alpha}} x^{-\frac{2\alpha}{1-\alpha} } }}.Ê\]

Abbreviating $\sigma := \frac{1}{1-2\alpha}$ and $\tau := (1-\alpha)^{-\frac{2\alpha}{1-\alpha}}$, we rewrite
\[ f(x) = \frac{1}{\sqrt{ 1 - \tau x^{-\frac{2\alpha}{1-\alpha}} }} \]
\[ y(t) = \pm( t - \sigma t^{1-2\alpha} ) + C.Ê\]

Again, a computation shows that $f'' > 0$ and $f(x) \xrightarrow{x \rightarrow \infty} 1$. Again, we obtain two geodesics $\gamma^{+}, \gamma^{-}$ starting at the same starting point $p_0$, described by these equations. We claim that their Gromov product is infinite. For this, it is sufficient to show that $d(\gamma^{+}(t), \gamma^{-}(t)) \leq t - \psi(t)$ for some function $\psi \xrightarrow{t \rightarrow \infty} \infty$. Since $d_{Eucl}(\gamma^{+}(t), \gamma^{-}(t)) \leq 2 (t - \sigma t^{1-2\alpha}) + C'$ is the euclidean length of the euclidean geodesic between $\gamma^{+}(t), \gamma^{-}(t)$ and since $\sigma > 1$, we obtain that
\begin{equation*}
\begin{split}
d_f(\gamma^{+}(t), \gamma^{-}(t)) & \leq 2 f(x(t)) (t -  \sigma t^{1-2\alpha}) + f(x(t))C'\\
& \leq 2 \frac{1}{\sqrt{1 - t^{-2\alpha}}} (t - t^{1-2\alpha}) + \frac{C'}{\sqrt{1 - t^{-2\alpha}}}\\
& = 2t  \sqrt{ 1 - t^{-2\alpha}} + \frac{C'}{\sqrt{1 - t^{-2\alpha}}}\\
& \leq 2t (1 - \frac{1}{2} t^{-2\alpha} ) + \frac{C'}{\sqrt{1 - t^{-2\alpha}}}\\
& = 2t - \psi(t),
\end{split}
\end{equation*}
where $\psi(t) = t^{1 - 2\alpha} - \frac{C'}{\sqrt{1 - t^{-2\alpha}}} \xrightarrow{t\rightarrow \infty} \infty$. We conclude that $([\gamma^{+}] \vert [\gamma^{-}])_{p_0} = \infty$.

\end{exam}

These examples illustrate why we will assume not only that $f$ is M\"obius but also that it preserves visible pairs in the coming sections.


\subsection{Jacobi fields} \label{subsec:JacobiFields}

We now move fully into the realm of Riemannian manifolds. We refer to \cite{doCarmo} for all necessary background informations. Let $X$ be an $n$-dimensional, connected, geodesically complete Riemannian manifold such that all sectional curvatures are non-positive. Let $\gamma$ be a geodesic in $X$. A vector field $J$ defined along $\gamma$ is called a {\it Jacobi-field} if and only if it satisfies the following second-order ordinary differential equation:
\[ \frac{D^2}{dt^2}J(t) + R(J(t), \gamma'(t))\gamma'(t). \]

where $R$ denotes the Riemannian curvature tensor and $\frac{D}{dt}$ the covariant derivative along $\gamma$ with respect to the Levi-Civita connection. Any Jacobi field along $\gamma$ is uniquely determined by the initial conditions $J(0), \frac{DJ}{dt}(0)$. The space of Jacobi fields along $\gamma$ forms a real $2n$-dimensional vector space.

On complete manifolds, Jacobi fields are uniquely characterised as the vector fields arising from smooth one-parameter families of geodesics $\gamma_s$ with $\gamma_0 = \gamma$. The Jacobi field corresponding to $( \gamma_s)_s$ is given by $J(t) = \frac{d}{ds}\vert_{s=0} \gamma_s(t)$. A Jacobi field is called {\it perpendicular}, if $J(t) \perp \gamma'(t)$ for all $t$. A Jacobi field is called {\it stable} if $\sup_{t \geq 0}\{ \Vert J(t) \Vert^2 \} < \infty$. A Jacobi field is called {\it parallel} if $\Vert J(t) \Vert^2$ is constant along all of $\gamma$.\\

We now define a subset of $X$ that consists of all the points that have `asymptotic features of flatness'. Specifically,
\begin{equation*}
\begin{split}
F_X := \{ x \in X \vert & \exists \, \gamma \text{ geodesic ray, starting at $x$ and}\\
& \exists \, J \text{ perpendicular, parallel Jacobi field along } \gamma \}.
\end{split}
\end{equation*}

We first note that, whenever $x \in F_X$, we find a geodesic $\gamma$ as in the definition of $F_X$ and every point on $\gamma$ is contained in $F_X$. The following result that goes back to Eberlein connects the complement of $F_X$ with a convexity property of horospheres.

\begin{prop}[Lemma 3.1 in \cite{HiH}] \label{prop:HiH}
Let $X$ be a Hadamard manifold, $\Xi$ the radial field in the direction of $\xi \in \partial X$ and $B$ a Busemann function centered at $\xi$. Then $\Xi = - \grad(B), \Xi$ is $C^1$ and $\nabla_v \Xi= \frac{DJ}{dt}(0)$ for all $v \in T_xX$, where $J$ is the unique stable Jacobi field along the geodesic ray $\xi_x$ such that $J(0) = v$.
\end{prop}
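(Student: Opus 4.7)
The three assertions (i) $\Xi = -\grad B$, (ii) $\Xi$ is $C^1$, and (iii) $\nabla_v \Xi = \frac{DJ}{dt}(0)$ build on one another, and the whole argument rests on the classical Eberlein regularity result that Busemann functions on Hadamard manifolds are $C^2$, which I would quote as a black box.

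For (i), I would use that $B$ is $1$-Lipschitz --- from $|B(x,y,\xi)| \le d(x,y)$ --- so $\|\grad B(x)\| \le 1$ for every $x$. On the other hand, applying the cocycle identity along the asymptotic ray $\xi_x$ yields $B(\xi_x(t),\xi) = B(x,\xi) - t$, and differentiating this identity at $t=0$ gives $\langle \grad B(x), \Xi(x)\rangle = -1$. Since $\Xi(x)$ is a unit vector, the Cauchy--Schwarz inequality forces $\grad B(x) = -\Xi(x)$. Assertion (ii) is then immediate: $B \in C^2$ implies $\grad B \in C^1$, hence $\Xi \in C^1$.

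For (iii), I would fix a smooth curve $c : (-\eps,\eps) \to X$ with $c(0) = x$ and $c'(0) = v$, and form the variation of geodesic rays $\gamma_s(t) := \xi_{c(s)}(t)$, each asymptotic to $\xi$. By (ii) this variation is $C^1$ in $s$, so the variation field $J(t) := \frac{\partial}{\partial s}\big|_{s=0} \gamma_s(t)$ is a well-defined Jacobi field along $\xi_x$ with $J(0) = v$. The symmetry lemma of the Levi-Civita connection then yields
\[ \nabla_v \Xi \;=\; \frac{D}{ds}\bigg|_{s=0} \gamma_s'(0) \;=\; \frac{D}{dt}\bigg|_{t=0} \frac{\partial \gamma_s}{\partial s}\bigg|_{s=0} \;=\; \frac{DJ}{dt}(0). \]
To identify $J$ as the \emph{stable} Jacobi field, I would use that each $\gamma_s$ is asymptotic to $\gamma_0$, hence $\sup_{t\ge 0} d(\gamma_0(t), \gamma_s(t)) < \infty$ for each $s$; a uniform version of this bound on $s \in [-\eps/2, \eps/2]$, obtained from the continuous dependence of the asymptotic rays on their starting point guaranteed by (ii), gives $\sup_{t \ge 0}\|J(t)\| < \infty$. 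Uniqueness of the stable Jacobi field with prescribed initial value $J(0) = v$ follows from non-positive curvature: if $J_1, J_2$ are both stable with $J_1(0) = J_2(0)$, then $\|J_1 - J_2\|^2$ is a non-negative convex function on $[0,\infty)$ vanishing at $0$ and bounded above, hence identically zero.

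The main obstacle is the $C^2$-regularity of $B$, which I would invoke rather than prove; without it neither (ii) nor the variational setup behind (iii) makes sense as stated. A secondary subtle point is the uniform-in-$s$ asymptoticity bound needed to conclude stability of $J$, but this is a routine compactness argument once the continuous dependence of $\xi_y$ on $y$ provided by (ii) is in hand.
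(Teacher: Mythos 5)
The paper gives no proof of this proposition; it is quoted verbatim as Lemma~3.1 of \cite{HiH}, and the single sentence following the statement only explains how to interpret $\nabla_v\Xi$ as a second derivative of the Busemann function. So there is no ``paper's proof'' to compare against, and your attempt is best judged as a self-contained reconstruction of the cited result.

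As such a reconstruction, it is essentially correct. Two remarks. First, the $C^2$-regularity of $B$ that you invoke as a black box is not a separate, independent fact that can be plugged in from elsewhere --- it is precisely the hard content of the Heintze--Im~Hof lemma you are trying to prove (and it requires the lower curvature bound $-b^2 \le K$ that the paper imposes; it is not true on arbitrary Hadamard manifolds). In the reference, the $C^1$-ness of $\Xi$ and the identity $\nabla_v\Xi = \frac{DJ}{dt}(0)$ are established together by a Jacobi-field/Riccati analysis, so your argument is in truth a derivation of (i) and (iii) \emph{from} (ii), not a proof of the lemma; you flag this clearly, which is fair. A small simplification: for (i) alone, the everywhere-$C^1$-ness of $B$ on Hadamard manifolds (which holds with no curvature bound) is enough to make the Cauchy--Schwarz argument run; the $C^2$ hypothesis is only needed from (ii) onward. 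Second, the ``routine compactness argument'' at the end can be replaced by something more direct: $t \mapsto d(\gamma_0(t), \gamma_s(t))$ is convex and bounded on $[0,\infty)$ (since the rays are asymptotic), hence non-increasing, so $d(\gamma_0(t), \gamma_s(t)) \le d(x, c(s))$ for all $t \ge 0$, and dividing by $|s|$ and letting $s \to 0$ gives $\|J(t)\| \le \|v\|$ uniformly in $t$. No appeal to a uniform-in-$s$ asymptoticity constant is needed.
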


The covariant derivative $\nabla_v \Xi$ can be thought of as a second derivative of the Busemann function $B$, because for all $v, w \in T_x X$,
\[ \frac{d}{dt}\vert_{t=0} \frac{d}{ds}\vert_{s=0} B(x, \gamma(t,s), \xi) = \langle \nabla_v\Xi, w\rangle,Ê\]
where $\gamma(t,s) = \exp_x(tv+sw)$.




\section{Construction of $\Phi$ and $F$} \label{sec:Definition}

For the rest of this paper, let $X$, $Y$ be $n$-dimensional, connected, simply connected, geodesically complete Riemannian manifolds such that their sectional curvatures are bounded by $-b^2 \leq curv \leq 0$. Further, assume that $\partial X$ and $\partial Y$ satisfy (4v) and that all points in $\partial X$ and $\partial Y$ are in a rank 1 hinge. We denote the unit tangent bundle of $X$ by $T^1X$. Further, we have the canonical projection $\pi_X : TX \rightarrow X$. If the manifold $X$ is clear from the context, we simply write $\pi$. For all $x \in X, \xi \in \partial X$ we denote the unit tangent vector in $T_x^1X$ that `points to $\xi$', i.e. whose induced geodesic ray represents $\xi$, by $\overrightarrow{x\xi}$. This provides us with a homeomorphism between $\partial X$ and $T_x^1 X$ equipped with the standard topology. Analogously, for any two points $x, x' \in X$, we denote the tangent vector of the arc-length geodesic from $x$ to $x'$ at $x$ by $\overrightarrow{xx'}$.

Let $f : \partial X \rightarrow \partial Y$ be a M\"obius homeomorphism such that $f$ and $f^{-1}$ both preserve visible pairs. Our goal is to extend $f$ to a map $F : X \rightarrow Y$. The construction presented in this section is a generalisation of a construction by Biswas for $\mathrm{CAT(-1)}$ spaces. Its most similar presentation to the one below can be found in \cite{Biswas17b}.


\subsection{Constructing $\Phi$} \label{subsec:Phi}

We start by constructing a map between the tangent bundles of $X$ and $Y$. However, it turns out that this map can only be defined after identifying certain vectors in the tangent bundle.

Let $v \in TX$. The geodesic flow on $X$ provides us with a unique bi-infinite geodesic $\gamma$ such that $\gamma'(0) = v$. Denote the two endpoints of $\gamma$ at infinity by $v_{-\infty} := \gamma(-\infty)$ and $v_{\infty} := \gamma(\infty)$. Let $v, w \in T^1X$ and denote their projection in $X$ by $x$ and $x'$ respectively. We say that $v \sim w$, if $\Vert v \Vert = \Vert w \Vert$, $v_{\infty} = w_{\infty}, v_{-\infty} = w_{-\infty}$ and $B(x, x', v_{\infty})$. Note that this is equivalent to the convex hull of the geodesics induced by $v$ and $w$ being a flat strip (see Theorem 2.13 in Part \RomanNumeralCaps{2} of \cite{BH}) and the foot points of $v$ and $w$ being on the same horosphere with respect to either endpoint of the strip. This defines an equivalence relation on $TX$ and we denote the quotient by $\overline{TX}$. Denote the quotient of the unit tangent bundle by the same equivalence relation by $\overline{T^1X}$. The equivalence class of a vector $v$ will be denoted by $[v]$. Since $v \sim w \Leftrightarrow -v \sim -w$, we define $-[v] := [-v]$.

We construct a map $\Phi : \overline{T^1X} \rightarrow \overline{T^1Y}$ which will be a geodesic conjugacy in the sense of Lemma \ref{lem:GeodesicConjugacy}. Let $v \in T_x^1X$ be a unit-vector. As above, we obtain two points $v_{\infty}, v_{-\infty}$ at infinity. Since $f$ preserves visible pairs, there exists at least one geodesic from $f(v_{-\infty})$ to $f(v_{\infty})$. Choose one such geodesic and denote it by $\gamma$. The image of $[v]$ under $\Phi$ will be the equivalence class of a unit-vector on the geodesic $\gamma$ pointing towards $f(v_{\infty})$. All that is left is to choose the foot point on $\gamma$.

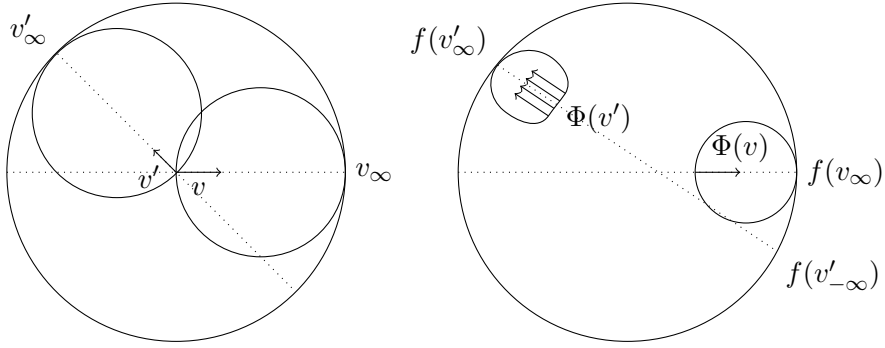
\begin{figure} 
\begin{tikzpicture}[scale=1.50]
\draw [] (-2,0) circle [radius=1.5cm];
\draw [] (2,0) circle [radius=1.5cm];
\draw [->] (-2,0) -- (-1.6,0);
\node[below] at (-1.8,0) {$v$};
\draw [->] (-2,0) -- (-2.2,0.2);
\node[below left] at (-2.05,0.15) {$v'$};
\draw [] (-1.25,0) circle [radius=0.75cm];
\draw [] (-2.525,.525) circle [radius=0.75cm];
\draw [->] (2.6,0) -- (3,0);
\node[above] at (3, 0) {$\Phi(v)$};
\draw [] (3.05,0) circle [radius=0.45cm];
\draw [->] (1.3,0.5) -- (1,0.7);
\draw [->] (1.45,0.7) -- (1.15,0.9);
\draw [->] (1.35,0.5666) -- (1.05,0.7666);
\draw [->] (1.4,0.6333) -- (1.1,0.8333);
\node[below right] at (1.365, 0.7) {$\Phi(v')$};
\draw [] (1.45,0.7) to [out=60, in = -40] (1.35, 1) to [out = 140, in= 50] (0.85,0.95) to [out = 230, in = 140] (0.9, 0.55) to [out=-40, in = 235] (1.3,0.5) to [out=55, in = 235] (1.45, 0.7);
\draw [dotted] (-3.5,0) -- (-0.5,0);
\draw [dotted] (-3.05, 1.05) -- (-0.95, -1.05);
\node [right] at (-0.5,0) {$v_{\infty}$};
\node [above left] at (-3.05, 1.05) {$v'_{\infty}$};
\draw [dotted] (0.5,0) -- (3.5,0);
\draw [dotted] (0.85,0.95) -- (3.31,-0.69);
\node[above left] at (0.85, 0.95) {$f(v'_{\infty})$};
\node[below right] at (3.31, -0.69) {$f(v'_{-\infty})$};
\node[right] at (3.5,0) {$f(v_{\infty})$};
\end{tikzpicture}
\caption{The vector $v$ is sent to the vector $\Phi(v)$. The derivative $\frac{Ê\partial f_* \rho_x}{\partial \rho_y}(f(v_{\infty}))$ determines, which horosphere $\Phi(v)$ needs to be placed on. If $f(v'_{\infty}), f(v'_{-\infty})$ have several connecting bi-infinite geodesics, the choice of $\Phi(v')$ is no longer unique and we obtain a non-trivial equivalence class.}
\label{fig:DefiningPhi} 
\end{figure}

\begin{lem}[cf. \cite{Biswas15}] \label{lem:DefiningPhi}
There exists a unique $y \in \gamma$, such that $\frac{Ê\partial f_* \rho_x}{\partial \rho_y}(f(v_{\infty})) = 1$.

Furthermore, if $\gamma'$ is another geodesic from $f(v_{-\infty})$ to $f(v_{\infty})$ and $y'$ the unique point on $\gamma'$ such that $\frac{Ê\partial f_* \rho_x}{\partial \rho_y'}(f(v_{\infty})) = 1$, then $\overrightarrow{yf(v_{\infty})} \sim \overrightarrow{y'f(v_{\infty})}$.

Finally, if $v \sim v'$ and $\pi(v') =: x'$, then for all $y \in Y$, $\frac{Ê\partial f_* \rho_x}{\partial \rho_y}(f(v_{\infty})) =\frac{Ê\partial f_* \rho_{x'}}{\partial \rho_y}(f(v_{\infty}))$.
\end{lem}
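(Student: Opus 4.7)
The plan is to work out all three statements using the Chain Rule from Lemma \ref{lem:ChainRuleGMVT} together with the explicit formula for derivatives on boundaries from Remark \ref{rem:DerivativesonBoundaries}. Before doing so, we observe that since $f$ is a M\"obius homeomorphism and the cross ratio on $\partial X$ is independent of the base point, the pushforward $f_*\rho_x$ satisfies $cr_{f_*\rho_x} = cr_{\rho_x} \circ (f^{-1})^{\otimes 4} = cr_{\rho_y}$ on $\mathcal{A}$, and $f$ takes algebraically visible pairs to algebraically visible pairs. Hence $f_*\rho_x \overset{M}{\sim} \rho_y$. Combined with the standing (4v) assumption on $\partial Y$, this makes the derivative $\frac{\partial f_*\rho_x}{\partial \rho_y}(\eta)$ well-defined for every $\eta$, in particular at $\eta = f(v_\infty)$.

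For the first statement, I would parametrize $\gamma$ by arclength so that $\gamma(t) \to f(v_\infty)$ as $t \to \infty$, and fix any reference point $y_0 := \gamma(0)$. By the Chain Rule and Remark \ref{rem:DerivativesonBoundaries},
\[ \frac{\partial f_*\rho_x}{\partial \rho_{\gamma(t)}}(f(v_\infty)) = \frac{\partial f_*\rho_x}{\partial \rho_{y_0}}(f(v_\infty)) \cdot \frac{\partial \rho_{y_0}}{\partial \rho_{\gamma(t)}}(f(v_\infty)) = C \cdot e^{B(y_0, \gamma(t), f(v_\infty))}, \]
where $C := \frac{\partial f_*\rho_x}{\partial \rho_{y_0}}(f(v_\infty)) > 0$. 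Since $\gamma(t) \to f(v_\infty)$, the Busemann cocycle gives $B(y_0, \gamma(t), f(v_\infty)) = -t$, so the right-hand side equals $C e^{-t}$. This is a continuous strictly decreasing function of $t$ with range $(0,\infty)$, hence takes the value $1$ at exactly one point $y := \gamma(t_0)$.

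For the second statement, apply the Chain Rule to write
\[ e^{B(y, y', f(v_\infty))} = \frac{\partial \rho_y}{\partial \rho_{y'}}(f(v_\infty)) = \frac{\partial \rho_y}{\partial f_*\rho_x}(f(v_\infty)) \cdot \frac{\partial f_*\rho_x}{\partial \rho_{y'}}(f(v_\infty)) = 1 \cdot 1 = 1, \]
so $B(y, y', f(v_\infty)) = 0$. Since $\gamma$ and $\gamma'$ both connect $f(v_{-\infty})$ to $f(v_\infty)$ in the $\CAT$ space $Y$, they are parallel and bound a flat strip by Theorem 2.13 in Part II of \cite{BH}. Combined with the Busemann equation above, this is exactly the definition of $\overrightarrow{y f(v_\infty)} \sim \overrightarrow{y' f(v_\infty)}$.

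For the third statement, I would apply the Chain Rule to $\rho = f_*\rho_x$, $\rho' = f_*\rho_{x'}$, $\rho'' = \rho_y$, all of which are mutually M\"obius equivalent:
\[ \frac{\partial f_*\rho_x}{\partial \rho_y}(f(v_\infty)) = \frac{\partial f_*\rho_x}{\partial f_*\rho_{x'}}(f(v_\infty)) \cdot \frac{\partial f_*\rho_{x'}}{\partial \rho_y}(f(v_\infty)). \]
Using the defining formula of the derivative and the fact that $f_*\rho_x(a,b) = \rho_x(f^{-1}(a), f^{-1}(b))$, a direct substitution shows
\[ \frac{\partial f_*\rho_x}{\partial f_*\rho_{x'}}(f(v_\infty)) = \frac{\partial \rho_x}{\partial \rho_{x'}}(v_\infty) = e^{B(x, x', v_\infty)}, \]
and this last quantity equals $1$ because $v \sim v'$ forces $B(x, x', v_\infty) = 0$. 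The desired equality follows. The only genuinely delicate point across all three parts is verifying that the derivatives are well-defined at $f(v_\infty)$, which requires choosing an algebraically visible triple involving $f(v_\infty)$; this is precisely what the (4v) assumption on $\partial Y$ supplies.
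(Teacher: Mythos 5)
Your proposal is correct and follows essentially the same route as the paper: the Chain Rule combined with $\frac{\partial \rho_y}{\partial \rho_{y'}}(\xi) = e^{B(y,y',\xi)}$ and the identity $B(\gamma(t),\gamma(t'),f(v_\infty)) = t-t'$ gives existence and uniqueness, the vanishing of $B(y,y',f(v_\infty))$ gives the equivalence of the two vectors, and $B(x,x',v_\infty)=0$ from $v\sim v'$ gives the last claim. Your added remarks on why $f_*\rho_x \overset{M}{\sim} \rho_y$ and why (4v) makes the derivatives well-defined are points the paper leaves implicit, but they do not change the argument.
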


We define $\Phi([v])$ to be the equivalence class of the unit vector at this unique point $y$ that points to $f(v_{\infty})$ (see Figure \ref{fig:DefiningPhi}). By Lemma \ref{lem:DefiningPhi}, $\Phi$ is well-defined. Whenever we use an equivalence class $[v]$ as an input for $\Phi$, we simply write $\Phi(v)$.

\begin{proof}
Let $\gamma$ be a bi-infinite geodesic from $f(v_{-\infty})$ to $f(v_{\infty})$ and $y$, $y' \in \gamma$. By the Chain Rule for metric derivatives, we have

\begin{equation*}
\begin{split}
\frac{\partial f_* \rho_x}{\partial \rho_{y'}}(f(v_{\infty})) & = \frac{\partial f_* \rho_x}{\partial \rho_y}(f(v_{\infty})) \frac{\partial \rho_y}{\partial \rho_{y'}}(f(v_{\infty}))\\
& = \frac{\partial f_* \rho_x}{\partial \rho_y}(f(v_{\infty})) e^{\frac{1}{2} B(y,y', f(v_{\infty}))}.
 \end{split}
 \end{equation*}

Since for any geodesic representative $\gamma$ of $\xi$, $B(\gamma(t), \gamma(t'), \xi) = t-t'$, and since the metric derivative is always a positive number by the way it is introduced in Definition \ref{defprop:Derivative}, any point $y' \in \gamma$ provides us with a unique number $t - t'$ and a unique point $y \in \gamma$ such that $\frac{\partial f_* \rho_x}{\partial \rho_{y}}(f(v_{\infty})) = 1$. This implies existence and uniqueness.\\

For the second statement, let $\gamma'$ be another geodesic from $f(v_{-\infty})$ to $f(v_{\infty})$ and $y'$ the unique point on $\gamma'$ such that $\frac{Ê\partial f_* \rho_x}{\partial \rho_{y'}}(f(v_{\infty})) = 1$. Using the Chain Rule and Remark \ref{rem:DerivativesonBoundaries}, we get
\begin{equation*}
\begin{split}
B(y', y, f(v_{\infty})) & = \ln \left( \frac{Ê\partial \rho_{y'}}{\partial \rho_{y}}(f(v_{\infty})) \right)\\
& = \ln \left( \frac{Ê\partial \rho_{y'}}{\partial f_*\rho_x}(f(v_{\infty})) \cdot \frac{Ê\partial f_* \rho_x}{\partial \rho_{y}}(f(v_{\infty})) \right)\\
& = \ln(1) = 0.
\end{split}
\end{equation*}

Since $\gamma$ and $\gamma'$ have the same endpoints, it follows that the unit vectors at $y$ and $y'$ respectively, pointing at $f(v_{\infty})$ are equivalent.\\

To prove the last statement, let $v \sim v'$, $\pi(v') =: x'$ and $y \in Y$. For the same reasons as above, we have
\begin{equation*}
\begin{split}
\frac{Ê\partial f_* \rho_x}{\partial \rho_y}(f(v_{\infty})) & = \frac{ \partial \rho_x }{ \partial \rho_{x'} }(v_{\infty}) \cdot \frac{Ê\partial f_* \rho_{x'}}{\partial \rho_y}(f(v_{\infty}))\\
& = e^{B(x,x',v_{\infty})} \frac{Ê\partial f_* \rho_{x'}}{\partial \rho_y}(f(v_{\infty}))\\
& =  \frac{Ê\partial f_* \rho_{x'}}{\partial \rho_y}(f(v_{\infty})).
\end{split}
\end{equation*}

\end{proof}

\begin{lem} \label{lem:oppositevectors}
For all $v \in T_x X$, we have $\Phi(-v) = -\Phi(v)$.
\end{lem}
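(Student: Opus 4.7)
The plan is to show that the unique foot point $y \in Y$ constructed for $v$ also satisfies the defining property of the foot point for $-v$, which will immediately imply that $\Phi(-v) = -\Phi(v)$ as unit vectors on the same geodesic.

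First, I would unpack the definition. Observe that $(-v)_\infty = v_{-\infty}$ and $(-v)_{-\infty} = v_\infty$, and that any bi-infinite geodesic $\gamma$ from $f(v_{-\infty})$ to $f(v_\infty)$ (one exists because $f$ preserves visible pairs) also serves, traversed in reverse, as a geodesic from $f((-v)_{-\infty})$ to $f((-v)_\infty)$. So we may use the same $\gamma$ in both constructions. The task reduces to proving that the unique $y \in \gamma$ with $\frac{\partial f_*\rho_x}{\partial \rho_y}(f(v_\infty)) = 1$ also satisfies $\frac{\partial f_*\rho_x}{\partial \rho_y}(f(v_{-\infty})) = 1$; once this is shown, the unit vector at $y$ pointing towards $f(v_{-\infty})$ is the negative of the one pointing towards $f(v_\infty)$, and hence $\Phi(-v) = -\Phi(v)$.

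The key input is Lemma \ref{lem:GromovProductHorospheres}: if a point $z$ lies on a bi-infinite geodesic from $\xi$ to $\eta$, then $m_z(\xi,\eta) = 0$, so $\rho_z(\xi,\eta) = 1$. Applied to $x$ (which lies on the geodesic induced by $v$) we get $\rho_x(v_\infty, v_{-\infty}) = 1$, and hence $f_*\rho_x(f(v_\infty), f(v_{-\infty})) = 1$. Applied to $y \in \gamma$ we get $\rho_y(f(v_\infty), f(v_{-\infty})) = 1$. Now invoke the Geometric Mean Value Theorem (Lemma \ref{lem:ChainRuleGMVT}) with $\rho = f_*\rho_x$ and $\rho' = \rho_y$ on the pair $(f(v_\infty), f(v_{-\infty}))$, which is algebraically visible since it is visible:
\[ 1 = f_*\rho_x(f(v_\infty), f(v_{-\infty}))^2 = \frac{\partial f_*\rho_x}{\partial \rho_y}(f(v_\infty)) \cdot \frac{\partial f_*\rho_x}{\partial \rho_y}(f(v_{-\infty})) \cdot \rho_y(f(v_\infty), f(v_{-\infty}))^2. \]
Substituting the known values $1$ for both the $\rho_y$-term and the derivative at $f(v_\infty)$ yields $\frac{\partial f_*\rho_x}{\partial \rho_y}(f(v_{-\infty})) = 1$, as desired.

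The only subtle point is making sure the hypotheses of the Geometric Mean Value Theorem and the Möbius equivalence $f_*\rho_x \overset{M}{\sim} \rho_y$ are in force; this follows because $f$ is a Möbius homeomorphism preserving algebraically visible pairs and $\partial Y$ satisfies (4v). The rest of the argument is a direct substitution, so I do not anticipate further obstacles.
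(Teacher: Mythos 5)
Your proposal is correct and follows essentially the same route as the paper: both apply the Geometric Mean Value Theorem to the pair $(f(v_{\infty}), f(v_{-\infty}))$ with $\rho = f_*\rho_x$ and $\rho' = \rho_y$, using that $\rho_x(v_{\infty}, v_{-\infty}) = 1$ and $\rho_y(f(v_{\infty}), f(v_{-\infty})) = 1$ because $x$ and $y$ lie on the respective geodesics, to conclude that the two derivatives are reciprocal and hence simultaneously equal to $1$. Your extra care in citing Lemma \ref{lem:GromovProductHorospheres} for the vanishing of the Gromov products and in checking the hypotheses of the GMVT is a fine elaboration of the same argument.
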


\begin{proof}
By the Geometric mean value theorem, for all $y$ on a geodesic $\gamma$ from $f(v_{-\infty})$ to $f(v_{\infty})$,

\begin{equation*}
\begin{split}
\frac{ \partial f_* \rho_x}{\rho_y}(f(v_{\infty})) \frac{\partial f_* \rho_x}{\rho_y}(f(v_{-\infty})) = \frac{\rho_y(f(v_{\infty}), f(v_{-\infty}))^2}{\rho_x(v_{\infty}, v_{-\infty})^2} = \frac{1}{1} = 1.
\end{split}
\end{equation*}

This implies that $\frac{\partial f_* \rho_x}{\rho_y}(f(v_{\infty})) = 1$ if and only if $\frac{\partial f_* \rho_x}{\rho_y}(f(v_{-\infty})) = 1$.
\end{proof}

Throughout the following, we will want to consider Busemann functions that are evaluated on a point in the set $\pi([v])$. We denote $\pi \circ \Phi(v)$ to be the foot point of a chosen representative of $\Phi(v)$.

\begin{lem}[cf. \cite{Biswas15}] \label{lem:GeodesicConjugacy}
For all $x, x' \in X, \xi \in \partial X$,
\[ B(\pi \circ \Phi(\overrightarrow{x\xi}), \pi \circ \Phi(\overrightarrow{x'\xi}), f(\xi)) = B(x, x', \xi). \]

By Lemma \ref{lem:DefiningPhi}, the left-hand-side does not depend on the choice of representative and is thus well-defined.

\end{lem}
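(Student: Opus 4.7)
The plan is to compute $e^{B(y,y',f(\xi))}$, where $y := \pi \circ \Phi(\overrightarrow{x\xi})$ and $y' := \pi \circ \Phi(\overrightarrow{x'\xi})$, as a metric derivative, and match it term-for-term to $e^{B(x,x',\xi)}$ via the Möbius property of $f$. By Remark \ref{rem:DerivativesonBoundaries}, the statement is equivalent to
\[ \frac{\partial \rho_y}{\partial \rho_{y'}}(f(\xi)) = \frac{\partial \rho_x}{\partial \rho_{x'}}(\xi), \]
so I will aim to establish this identity.

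First I would apply the Chain Rule (Lemma \ref{lem:ChainRuleGMVT}) twice, inserting $f_*\rho_x$ and $f_*\rho_{x'}$ as intermediate measures, to write
\[ \frac{\partial \rho_y}{\partial \rho_{y'}}(f(\xi)) = \frac{\partial \rho_y}{\partial f_*\rho_x}(f(\xi)) \cdot \frac{\partial f_*\rho_x}{\partial f_*\rho_{x'}}(f(\xi)) \cdot \frac{\partial f_*\rho_{x'}}{\partial \rho_{y'}}(f(\xi)). \]
The defining property of $\Phi$ from Lemma \ref{lem:DefiningPhi} gives $\frac{\partial f_*\rho_x}{\partial \rho_y}(f(\xi)) = 1 = \frac{\partial f_*\rho_{x'}}{\partial \rho_{y'}}(f(\xi))$; combined with the Chain Rule identity $\frac{\partial \rho}{\partial \rho'} \cdot \frac{\partial \rho'}{\partial \rho} = 1$, this makes the first and third factors equal to $1$, so everything reduces to computing $\frac{\partial f_*\rho_x}{\partial f_*\rho_{x'}}(f(\xi))$.

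Second, I would compute this remaining factor directly from Definition/Proposition \ref{defprop:Derivative}. Using (4v) on $\partial X$ I pick $\alpha, \beta \in \partial X$ so that $(\xi, \alpha, \beta)$ is algebraically visible with respect to $\rho_x$ (equivalently, with respect to $\rho_{x'}$); since $f$ is a Möbius homeomorphism preserving algebraic visibility, $(f(\xi), f(\alpha), f(\beta))$ is algebraically visible with respect to both $f_*\rho_x$ and $f_*\rho_{x'}$, so Definition/Proposition \ref{defprop:Derivative} applies. The identity $(f_*\rho_z)(f(\eta), f(\eta')) = \rho_z(\eta, \eta')$ (valid for any base point $z$ by definition of the pushforward) then makes the formula
\[ \frac{\partial f_*\rho_x}{\partial f_*\rho_{x'}}(f(\xi)) = \frac{f_*\rho_x(f(\xi), f(\alpha)) \, f_*\rho_x(f(\xi), f(\beta)) \, f_*\rho_{x'}(f(\alpha), f(\beta))}{f_*\rho_x(f(\alpha), f(\beta)) \, f_*\rho_{x'}(f(\xi), f(\alpha)) \, f_*\rho_{x'}(f(\xi), f(\beta))} \]
collapse term-by-term to $\frac{\partial \rho_x}{\partial \rho_{x'}}(\xi)$. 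Applying Remark \ref{rem:DerivativesonBoundaries} once more yields $\frac{\partial \rho_y}{\partial \rho_{y'}}(f(\xi)) = e^{B(x,x',\xi)} = e^{B(y,y',f(\xi))}$ after a final invocation of Remark \ref{rem:DerivativesonBoundaries} on the left-hand side, and taking logarithms gives the claim. There is no real obstacle: the only delicate point is checking that the quadruples involved lie in $\mathcal{A}_{\rho_x}$ and $\mathcal{A}_{f_*\rho_x}$, which is why I explicitly invoke (4v) and the fact that $f$ preserves algebraic visibility.
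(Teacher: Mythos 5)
Your proof follows essentially the same route as the paper: translate the Busemann statement into a metric-derivative identity via Remark \ref{rem:DerivativesonBoundaries}, insert $f_*\rho_x$ and $f_*\rho_{x'}$ as intermediaries via the Chain Rule, kill the outer factors using the defining normalization $\frac{\partial f_*\rho_x}{\partial \rho_{\pi \circ \Phi(\overrightarrow{x\xi})}}(f(\xi)) = 1$ from Lemma \ref{lem:DefiningPhi}, and observe that the middle factor $\frac{\partial f_*\rho_x}{\partial f_*\rho_{x'}}(f(\xi))$ equals $\frac{\partial \rho_x}{\partial \rho_{x'}}(\xi)$. The only difference is that you spell out the middle equality term-by-term from Definition/Proposition \ref{defprop:Derivative} using the pushforward identity and (4v), whereas the paper leaves that step implicit; your version is correct and slightly more self-contained.
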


\begin{proof}
\begin{equation*}
\begin{split}
B(\pi \circ \Phi(\overrightarrow{x\xi}), \pi \circ \Phi(\overrightarrow{x'\xi}), f(\xi)) & = \ln \left( \frac{ \partial \rho_{\pi \circ \Phi(\overrightarrow{x\xi})}}{\partial \rho_{\pi \circ \Phi(\overrightarrow{x'\xi})}}(f(\xi)) \right)\\
& = \ln \left( \frac{ \partial \rho_{\pi \circ \Phi(\overrightarrow{x\xi})} }{ \partial f_*\rho_{x} }(f(\xi)) \cdot \frac{ \partial f_* \rho_x}{\partial f_* \rho_{x'}}(f(\xi)) \cdot \frac{ \partial f_* \rho_{x'} }{ \partial \rho_{ \pi \circ \Phi(\overrightarrow{x'\xi})} } (f(\xi))  \right)\\
& = \ln \left( \frac{ \partial \rho_{x} }{ \partial \rho_{x'} }(\xi) \right)\\
& = B(x, x', \xi).
\end{split}
\end{equation*}
\end{proof}

The map $\Phi$ is natural in the following sense.

\begin{lem} \label{lem:Functoriality}
Given two M\"obius bijections $f : \partial X \rightarrow \partial Y, g : \partial Y \rightarrow \partial Z$ that are homeomorphisms and preserve visible pairs, we have
\[ \Phi_g \circ \Phi_f = \Phi_{g \circ f}. \]
Furthermore, $\Phi_{Id} = Id$.
\end{lem}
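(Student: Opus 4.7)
The plan is to observe that for a Möbius homeomorphism $h : \partial X \to \partial Y$ preserving visible pairs, the vector $\Phi_h([v])$ is pinned down by three pieces of data: the two endpoints at infinity of its representatives and a single scalar derivative condition specifying the foot point on any chosen connecting geodesic (Lemma \ref{lem:DefiningPhi}). Composition clearly behaves correctly on endpoints, since $f$ then $g$ and $g \circ f$ act the same on $v_\infty$ and $v_{-\infty}$; so both assertions reduce to matching the scalar conditions, and this will follow from the Chain Rule of Lemma \ref{lem:ChainRuleGMVT}.

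For $\Phi_{\mathrm{Id}} = \mathrm{Id}$, given $v \in T^1 X$ with foot point $x$, I would take the geodesic through $v$ itself as the connecting geodesic in the construction. The condition $\frac{\partial \mathrm{Id}_* \rho_x}{\partial \rho_x}(v_\infty) = 1$ is trivial, so $x$ is a valid foot point and the associated unit vector is $v$, giving $\Phi_{\mathrm{Id}}([v]) = [v]$.

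For the composition, fix $v \in T^1 X$ with foot point $x$, let $w$ be a representative of $\Phi_f([v])$ with foot point $y \in Y$, and $u$ a representative of $\Phi_g([w])$ with foot point $z \in Z$. By construction the endpoints of $u$ coincide with $(g \circ f)(v_{\pm\infty})$, so by the uniqueness in Lemma \ref{lem:DefiningPhi} it suffices to show
\[ \frac{\partial (g\circ f)_* \rho_x}{\partial \rho_z}\bigl((g\circ f)(v_\infty)\bigr) = 1. \]
I would first note the formal identity $(g \circ f)_* \rho_x = g_*(f_* \rho_x)$, which is immediate from $h_* \rho(\cdot, \cdot) = \rho(h^{-1}\cdot, h^{-1}\cdot)$. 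Next, for any $\xi \in \partial Y$ with an algebraically visible pair $(\eta_1, \eta_2)$, expanding both sides with the explicit formula of Definition/Proposition \ref{defprop:Derivative} and relabeling the arguments in $\partial Z$ via $g^{-1}$ yields the translation identity
\[ \frac{\partial g_*(f_*\rho_x)}{\partial g_* \rho_y}(g(\xi)) = \frac{\partial f_* \rho_x}{\partial \rho_y}(\xi). \]
Applying this at $\xi = f(v_\infty)$ together with the Chain Rule then gives
\[ \frac{\partial (g\circ f)_* \rho_x}{\partial \rho_z}\bigl(g(f(v_\infty))\bigr) = \frac{\partial f_* \rho_x}{\partial \rho_y}(f(v_\infty)) \cdot \frac{\partial g_* \rho_y}{\partial \rho_z}\bigl(g(f(v_\infty))\bigr) = 1 \cdot 1 = 1, \]
by the defining properties of $y$ and $z$.

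The main obstacle is the bookkeeping around the translation identity: one must verify carefully that all pairs appearing in the expansion stay inside the appropriate sets $\mathcal{A}$ (so that the derivatives are defined) and that the relations $f_* \rho_x \overset{M}{\sim} \rho_y$, $g_* \rho_y \overset{M}{\sim} \rho_z$, $(g\circ f)_*\rho_x \overset{M}{\sim} \rho_z$ all hold — each of which is automatic from $f$ and $g$ being Möbius. Once this is in place, the above chain of equalities finishes the proof.
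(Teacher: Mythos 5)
Your proposal is correct and follows essentially the same route as the paper: reduce to matching endpoints plus the scalar normalization, then combine the Chain Rule of Lemma \ref{lem:ChainRuleGMVT} with the uniqueness statement of Lemma \ref{lem:DefiningPhi}. Your explicit isolation of the translation identity $\frac{\partial g_*(f_*\rho_x)}{\partial g_*\rho_y}(g(\xi)) = \frac{\partial f_*\rho_x}{\partial \rho_y}(\xi)$ is a useful touch that the paper's one-line chain-rule computation leaves implicit.
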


\begin{proof}
Let $u \in T^1X$ with $\pi(u) = x$. Choose $v \in \Phi_f(u), w \in \Phi_g(v), w' \in \Phi_{g \circ f}(u)$ and denote $y := \pi(v), z := \pi(w), z' := \pi(w')$. By construction of $\Phi$, $w'_{\infty} = g(f(u_{\infty})) = w_{\infty}, w'_{-\infty} = g(f(u_{-\infty})) = w_{-\infty}$ and $\frac{ \partial (g \circ f)_* \rho_x}{ \partial \rho_z}(w'_{\infty}) = 1$. By the Chain Rule,
\begin{equation*}
\begin{split}
\frac{ \partial g_* \rho_y }{ \partial \rho_z}(w_{\infty}) \cdot \frac{ \partial f_* \rho_x }{ \partial \rho_y}(v_{\infty}) & = \frac{\partial g_* f_* \rho_x}{Ê\partial \rho_z}(w_{\infty}) = 1
\end{split}
\end{equation*}

and therefore, $w$ is in the equivalence class of $\Phi_{g \circ f}(v)$. The identity $\Phi_{Id} = Id$ is immediate.
\end{proof}

\begin{rem}
We did not say that $\Phi$ is part of a functor because there are open questions regarding a potential category of boundaries to use. Specifically, for an object to be part of a `boundary category' on which the construction above makes sense, this object needs to admit a `filling' by a Hadamard manifold. This is sometimes called the inverse problem for M\"obius geometry. The only case the author is aware of, where the inverse problem is solved, is the case when the boundary is a circle (see \cite{Buyalo19}).
\end{rem}

Lemma \ref{lem:Functoriality} implies in particular that $\Phi$ is invertible and its inverse is the map induced by $f^{-1}$. In \cite{Biswas15}, Biswas shows that, if $X$ and $Y$ are both $\mathrm{CAT(-1)}$ spaces, the map $\Phi$ is a homeomorphism. Since $X, Y$ can contain flat strips under our assumptions, his proof does not generalize directly. We will present a way around this in the next section. Nevertheless, we raise the following

\begin{quest*}
Is the map $\Phi : \overline{T^1X} \rightarrow \overline{T^1Y}$ a homeomorphism?
\end{quest*}


\subsection{Constructing $F$} \label{subsec:F}

Let $x \in X$. Consider the unit-tangent sphere $T_x^1 X$ at $x$. Every point $\xi \in \partial X$ can be represented by a unit vector $\overrightarrow{x\xi} \in T_x^1 X$. Applying the map $\Phi$ to all $\overrightarrow{x\xi}$, we obtain a collection of equivalence classes in $\overline{T^1Y}$. Note that we may not be able to choose representatives of these equivalence classes, such that all representatives share the same foot point. We want $F(x) \in Y$ to be 'in the middle' of the family $\Phi(T_x^1 X)$. For all $x \in X$, $y \in Y$, $\xi \in \partial X$, we define

\[ u_{x,y}(\xi) := B(\pi \circ \Phi( \overrightarrow{x\xi}), y, f(\xi)). \]

By Lemma \ref{lem:DefiningPhi}, the expression above is independent of the choice of $\pi \circ \Phi(\overrightarrow{x\xi})$. We start by showing important properties of $u_{x,y}(\xi)$.

\begin{lem}[cf. \cite{Biswas15}] \label{lem:derivativeBusemann}
For all $x \in X$, $y \in Y$, $\xi \in \partial X$,
\[ \frac{\partial f_* \rho_x}{\partial \rho_y}(f(\xi)) = e^{u_{x,y}(\xi)}. \]
\end{lem}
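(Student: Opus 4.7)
The plan is to derive the identity directly from the defining property of $\Phi$ combined with the chain rule for metric derivatives (Lemma \ref{lem:ChainRuleGMVT}) and the explicit formula for derivatives of visual distances on boundaries of $\CAT$ spaces given in Remark \ref{rem:DerivativesonBoundaries}.

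Concretely, I would first fix a representative of $\Phi(\overrightarrow{x\xi})$ and denote its foot point by $z := \pi \circ \Phi(\overrightarrow{x\xi})$. The very construction of $\Phi$ in Lemma \ref{lem:DefiningPhi} chose $z$ on a geodesic from $f(v_{-\infty})$ to $f(v_{\infty})$ precisely so that
\[
\frac{\partial f_* \rho_x}{\partial \rho_z}(f(\xi)) = 1.
\]
Since any two admissible choices of representative of $\Phi(\overrightarrow{x\xi})$ have equal Busemann values at $f(\xi)$ (again by Lemma \ref{lem:DefiningPhi}), the quantity $u_{x,y}(\xi)$ is independent of this choice, so it suffices to argue with our fixed $z$.

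Next, I would apply the chain rule from Lemma \ref{lem:ChainRuleGMVT} with $\rho = f_*\rho_x$, $\rho' = \rho_z$, $\rho'' = \rho_y$, which yields
\[
\frac{\partial f_* \rho_x}{\partial \rho_y}(f(\xi)) = \frac{\partial f_* \rho_x}{\partial \rho_z}(f(\xi)) \cdot \frac{\partial \rho_z}{\partial \rho_y}(f(\xi)) = \frac{\partial \rho_z}{\partial \rho_y}(f(\xi)).
\]
(To invoke the chain rule I just need to check M\"obius equivalence of the three functions, which follows from the fact that $f$ is M\"obius and from Theorem \ref{thm:CrossRatio} applied to $Y$.)

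Finally, by Remark \ref{rem:DerivativesonBoundaries} applied in $Y$,
\[
\frac{\partial \rho_z}{\partial \rho_y}(f(\xi)) = e^{B(z,y,f(\xi))} = e^{B(\pi \circ \Phi(\overrightarrow{x\xi}),\,y,\,f(\xi))} = e^{u_{x,y}(\xi)},
\]
which combined with the previous display gives the claim. The entire argument is essentially bookkeeping; I do not anticipate a real obstacle, but the one subtlety worth highlighting is the well-definedness check in the first step, since $\Phi(\overrightarrow{x\xi})$ is an equivalence class rather than an honest vector.
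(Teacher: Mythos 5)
Your proposal is correct and follows essentially the same route as the paper: the chain rule for metric derivatives splits the derivative through $\rho_{\pi \circ \Phi(\overrightarrow{x\xi})}$, the defining property of $\Phi$ makes the first factor equal to $1$, and Remark \ref{rem:DerivativesonBoundaries} identifies the second factor with $e^{B(\pi \circ \Phi(\overrightarrow{x\xi}), y, f(\xi))} = e^{u_{x,y}(\xi)}$. The well-definedness remark you add is the same observation the paper records immediately before stating the lemma, via Lemma \ref{lem:DefiningPhi}.
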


\begin{proof}
We have
\begin{equation*}
\begin{split}
\frac{ \partial f_* \rho_x}{\partial \rho_y}(f(\xi)) & = \frac{ \partial f_* \rho_x}{\partial \rho_{\pi \circ \Phi(\overrightarrow{x\xi})}}(f(\xi)) \frac{ \partial \rho_{\pi \circ \Phi(\overrightarrow{x\xi})}}{\partial \rho_y}(f(\xi))\\
& = 1 \cdot e^{B(\pi \circ \Phi(\overrightarrow{x\xi}), y, f(\xi))}\\
& = e^{u_{x,y}(\xi)},
\end{split}
\end{equation*}
where we used the definition of $\Phi$ and Remark \ref{rem:DerivativesonBoundaries} in the second step.
\end{proof}

\begin{lem} \label{lem:Continuityu}
The map $u_{x,y}(\xi)$ is continuous in $x,y$ and $\xi$.
\end{lem}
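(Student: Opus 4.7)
My plan is to reduce the statement to continuity of the M\"obius derivative via Lemma \ref{lem:derivativeBusemann}, separate the three variables using the Chain Rule, and handle the residual continuity in $\xi$ using the rank~1 hinge assumption together with Corollary \ref{cor:ContinuityGromovProduct}. By Lemma \ref{lem:derivativeBusemann}, $u_{x,y}(\xi) = \log \frac{\partial f_*\rho_x}{\partial \rho_y}(f(\xi))$, so I would fix once and for all base points $x_0 \in X$ and $y_0 \in Y$ and use the Chain Rule (Lemma \ref{lem:ChainRuleGMVT}) to factor
\[ \frac{\partial f_*\rho_x}{\partial \rho_y}(f(\xi)) = \frac{\partial f_*\rho_x}{\partial f_*\rho_{x_0}}(f(\xi)) \cdot \frac{\partial f_*\rho_{x_0}}{\partial \rho_{y_0}}(f(\xi)) \cdot \frac{\partial \rho_{y_0}}{\partial \rho_y}(f(\xi)). \]
The first factor equals $\frac{\partial \rho_x}{\partial \rho_{x_0}}(\xi) = e^{B(x_0,x,\xi)}$ by the definition of the derivative (applied to $f^{-1}(a), f^{-1}(b)$) and Remark \ref{rem:DerivativesonBoundaries}; the third equals $e^{B(y_0,y,f(\xi))}$. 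Taking logarithms yields
\[ u_{x,y}(\xi) = B(x_0,x,\xi) + u_{x_0,y_0}(\xi) + B(y_0,y,f(\xi)). \]
Since Busemann functions are jointly continuous on $X \times X \times \partial X$ and $f$ is continuous by hypothesis, the first and third summands are jointly continuous in $(x,y,\xi)$, and the problem reduces to continuity of the single-variable map $\xi \mapsto u_{x_0,y_0}(\xi)$ for fixed base points.

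For the remaining step, I would fix $\xi \in \partial X$. Since $\xi$ lies in a rank~1 hinge, choose $\eta, \zeta \in \partial X$ with $(\eta,\zeta)$ algebraically visible and both $(\xi,\eta)$ and $(\xi,\zeta)$ connected by rank~1 geodesics. A key observation is that $f$ transports rank~1 visible pairs to rank~1 visible pairs: if $(\xi,\eta)$ is rank~1, then Lemma \ref{lem:rank1characterisation} provides a neighborhood $U$ of $\xi$ on which $(\xi',\eta)$ is visible for every $\xi' \in U$; since $f$ is a homeomorphism that preserves visibility, $f(U)$ is a neighborhood of $f(\xi)$ on which every pair $(f(\xi'), f(\eta))$ is visible, so the converse direction of Lemma \ref{lem:rank1characterisation} shows that $(f(\xi), f(\eta))$ is rank~1. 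Setting $a := f(\eta)$ and $b := f(\zeta)$, Definition/Proposition \ref{defprop:Derivative} gives
\[ \frac{\partial f_*\rho_{x_0}}{\partial \rho_{y_0}}(f(\xi')) = \frac{f_*\rho_{x_0}(f(\xi'),a)\, f_*\rho_{x_0}(f(\xi'),b)\, \rho_{y_0}(a,b)}{f_*\rho_{x_0}(a,b)\, \rho_{y_0}(f(\xi'),a)\, \rho_{y_0}(f(\xi'),b)} \]
for all $\xi'$ in a suitable neighborhood of $\xi$, where admissibility of the triple $(f(\xi'),a,b)$ on a full neighborhood is guaranteed by Lemma \ref{lem:rank1characterisation}. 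Corollary \ref{cor:ContinuityGromovProduct} then applies to each of the four varying factors: the $\rho_{x_0}$-products rely on rank~1 visibility of $(\xi,\eta)$ and $(\xi,\zeta)$ in $X$, while the $\rho_{y_0}$-products rely on rank~1 visibility of $(f(\xi),a)$ and $(f(\xi),b)$ in $Y$, which we have just established. All denominators remain strictly positive by algebraic visibility, so taking logarithms gives the desired continuity of $u_{x_0,y_0}$ at $\xi$.

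The main obstacle is ensuring that the rank~1 structure required by Corollary \ref{cor:ContinuityGromovProduct} is available on both sides of $f$ simultaneously. This is precisely the point where the hypothesis that $f$ \emph{and} $f^{-1}$ preserve visible pairs intervenes: combined with the neighborhood characterization of Lemma \ref{lem:rank1characterisation}, it allows one to upgrade visibility to rank~1 visibility after applying $f$. Once this observation is in place, everything else is bookkeeping via the Chain Rule and joint continuity of Busemann functions.
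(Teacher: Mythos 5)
Your proof is correct and its core step — establishing continuity in $\xi$ by expanding $\frac{\partial f_*\rho_{x_0}}{\partial\rho_{y_0}}$ with a rank~1 hinge, transporting the rank~1 visibility to $Y$ via Lemma~\ref{lem:rank1characterisation} and the hypothesis on $f$, and invoking Corollary~\ref{cor:ContinuityGromovProduct} — is exactly the argument in the paper. Where you genuinely diverge is in how you reduce to this step. The paper treats the three variables separately: continuity in $y$ is declared obvious, continuity in $x$ is proved by rewriting $u_{x,y}(\xi)=B(x,\pi\circ\Psi(\overrightarrow{yf(\xi)}),\xi)$ via Lemma~\ref{lem:GeodesicConjugacy} and the inverse extension $\Psi$, and only then does it turn to $\xi$. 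You instead fix base points $x_0,y_0$ and use the Chain Rule (Lemma~\ref{lem:ChainRuleGMVT}) together with Remark~\ref{rem:DerivativesonBoundaries} to get the clean identity
\[ u_{x,y}(\xi)=B(x_0,x,\xi)+u_{x_0,y_0}(\xi)+B(y_0,y,f(\xi)), \]
which isolates all the $(x,y)$-dependence in two Busemann terms that are manifestly jointly continuous. This buys you two things: you avoid the detour through $\Psi$ and the geodesic conjugacy lemma, and you obtain joint continuity in $(x,y,\xi)$ directly rather than separate continuity in each variable (the latter being what the paper's proof literally establishes). One small point worth making explicit in your write-up is that the Chain Rule applies because $f_*\rho_x\overset{M}{\sim}f_*\rho_{x_0}\overset{M}{\sim}\rho_{y_0}\overset{M}{\sim}\rho_y$, the middle equivalence being exactly the statement that $f$ is Möbius and preserves algebraic visibility; you use this implicitly but don't say it.
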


\begin{proof}
Clearly, $u$ is continuous in $y$. To show continuity in $x$, denote the extension of $f^{-1}$ by $\Psi : \overline{T^1X} \rightarrow \overline{T^1Y}$. By Lemma \ref{lem:Functoriality}, $\Psi = \Phi^{-1}$. By Lemma \ref{lem:GeodesicConjugacy}, we have
\[ u_{x,y}(\xi) = B(\pi \circ \Phi(\overrightarrow{x\xi}), \pi \circ \Phi(\Psi(\overrightarrow{yf(\xi)})), f(\xi)) = B(x, \pi \circ \Psi(\overrightarrow{yf(\xi)}), \xi), \]
which is continuous in $x$.

To prove continuity in $\xi$, we note that this is equivalent to continuity of $\frac{ \partial f_*\rho_x }{\partial \rho_y}(f(\xi))$ in $\xi$ by Lemma \ref{lem:derivativeBusemann}. Since $f$ is continuous by assumption, we are left to prove continuity of specific metric derivatives. By definition,
\[ \frac{ \partial f_*\rho_x }{\partial \rho_y}(f(\xi)) = \frac{ \rho_x(\xi, \eta) \rho_x(\xi, \zeta) \rho_y(f(\eta), f(\zeta))}{ \rho_x(\eta, \zeta) \rho_y(f(\xi), f(\eta)) \rho_y(f(\xi), f(\zeta)) } \]

for any $\eta, \zeta \in \partial X$ such that $(\xi, \eta, \zeta)$ is an algebraically visible triple. Since every point in $\partial X$ is in a rank 1 hinge, we can additionally choose $\eta$ and $\zeta$, such that $(\xi, \eta)$ and $(\xi, \zeta)$ are connected by a rank 1 geodesic. Since $f$ preserves visible pairs, Lemma \ref{lem:rank1characterisation}, implies that $(f(\xi), f(\eta))$ and $(f(\xi), f(\zeta))$ can be connected by a rank 1 geodesic. Corollary \ref{cor:ContinuityGromovProduct} then implies that the expression above is continuous in $\xi$. This proves continuity of $u$ in $\xi$.
\end{proof}

Since $\partial X$ is compact, continuity implies that the supremum-norm $\Vert u_{x,y} \Vert_{\infty} < \infty$. By Lemma \ref{lem:Convexity}, we know that $\Vert u_{x, y} \Vert_{\infty}$ is convex in $y$. Furthermore, the function $y \mapsto \Vert u_{x,y} \Vert_{\infty}$ is proper, since for any diverging sequence $y_n$, we have
\[ \sup_{\xi \in \partial X} \{ u_{x,y_n}(\xi) \} = \sup _{\xi \in \partial X} \{ u_{x,y_0}(\xi) + B(y_0, y_n, f(\xi)) \} \xrightarrow{n \rightarrow \infty} \infty, \]
because for every $n$, we can choose $\xi$ such that $f(\xi)$ is the endpoint of the geodesic from $y_0$ to $y_n$ which yields $\sup_{\xi \in \partial X} \{ B(y_0, y_n, f(\xi)) \} = d(y_0, y_n) \rightarrow \infty$, while $\Vert u_{x,y_0} \Vert_{\infty} < \infty$.

Since $y \mapsto \Vert u_{x,y} \Vert_{\infty}$ is proper and convex, the function
\[ M(x) := \min_{y \in Y}\{ \Vert u_{x,y} \Vert_{\infty} \} \]
is well-defined. In addition, we define
\[ M_x := \{ y \in Y \vert \Vert u_{x,y} \Vert_{\infty} = M(x) \} \]
the set of points where the minimum is obtained. Finally, we define for any $x \in X, y \in Y$
\[ K_{x,y} := \{ \xi \in \partial X \vert u_{x,y}(\xi) = \Vert u_{x,y} \Vert_{\infty} \} \]
the set of points in the boundary where the $u_{x,y}$ obtains its supremum. Analogously, for every $y \in Y$, we obtain sets $M_y \subset X$ and $K_{y,x} \subset \partial Y$ by working with $f^{-1}$ and $\Phi^{-1}$. Lemma \ref{lem:MaxMin} and Lemma \ref{lem:derivativeBusemann} together imply that
\[ \forall y \in M_x: M(x) = \max_{\xi \in \partial X} \{ u_{x,y}(\xi)Ê\} = - \min_{\xi \in \partial X} \{ u_{x,y}(\xi) \}.\]
In particular, we conclude that $K_{x,y}$ is non-empty for all $x \in X, y \in M_x$.

We would like to define $F(x)$ to be the unique point in $M_x$. However, if $Y$ is not a $\mathrm{CAT(-1)}$ space, it is absolutely not clear that $M_x$ consists only of one point. As we will see in a moment, issues arise whenever $\Phi(\overrightarrow{x\xi})$ is an equivalence class that contains more than one vector. We solve this by defining an equivalence relation on $Y$: We define $\sim$ to be the equivalence relation generated by demanding that $y \sim y'$, whenever there exists $x \in X$ such that $y,y' \in M_x$. Denote $\overline{Y} := \faktor{Y}{\sim}$. We define $F : X \rightarrow \overline{Y}$ to be the map that sends $x \mapsto [M_x]$. We call $F$ the {\it circumcenter extension} of $f$.

We can characterise elements of $M_x$ as follows.

\begin{lem}[cf. \cite{Biswas17b}] \label{lem:ConvexHull}
Let $x \in X, y \in Y$. The following are equivalent:
\begin{enumerate}
\item $y \in M_x$
\item For all $w \in T_y^1 Y$, there exists $\xi \in K_{x,y}$ such that $\langle w, \overrightarrow{yf(\xi)} \rangle \leq 0$.
\item The convex hull of the set $\{ \overrightarrow{yf(\xi)} \vert \xi \in K_{x,y} \}$ in $T_y Y$ contains the zero vector.
\end{enumerate}
\end{lem}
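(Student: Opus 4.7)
The plan is to prove (1) $\Leftrightarrow$ (2) via directional derivatives of the convex function $U(y) := \|u_{x,y}\|_\infty$, and then (2) $\Leftrightarrow$ (3) by a Hahn-Banach separation argument in the tangent space $T_y Y$. The key analytic input is the fact that on a Hadamard manifold the Busemann function $y \mapsto B(z,y,\eta)$ is $C^1$ with gradient $-\overrightarrow{y\eta}$ (cf.\ Proposition \ref{prop:HiH}); applied to $u_{x,y}(\xi) = B(\pi \circ \Phi(\overrightarrow{x\xi}), y, f(\xi))$, this gives $\nabla_y u_{x,y}(\xi) = -\overrightarrow{yf(\xi)}$, independently of the choice of representative for $\Phi(\overrightarrow{x\xi})$.

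For (1) $\Leftrightarrow$ (2), fix $w \in T_y^1 Y$ and set $\gamma_w(t) := \exp_y(tw)$. Each $y \mapsto u_{x,y}(\xi)$ is convex and $C^1$, and $(y,\xi) \mapsto u_{x,y}(\xi)$ is jointly continuous (Lemma \ref{lem:Continuityu}) on $Y \times \partial X$ with $\partial X$ compact, so Danskin's theorem yields
\[
\lim_{t \to 0^+} \frac{U(\gamma_w(t)) - U(y)}{t} \;=\; \max_{\xi \in K_{x,y}} \langle \nabla_y u_{x,y}(\xi), w\rangle \;=\; \max_{\xi \in K_{x,y}} \langle -\overrightarrow{yf(\xi)}, w\rangle.
\]
Because $U$ is convex, $y \in M_x$ if and only if this right derivative is non-negative for every $w \in T_y^1 Y$, which is to say that for every $w$ there exists $\xi \in K_{x,y}$ with $\langle w, \overrightarrow{yf(\xi)}\rangle \leq 0$. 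This is exactly (2).

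For (2) $\Leftrightarrow$ (3), let $C := \mathrm{conv}\{\overrightarrow{yf(\xi)} : \xi \in K_{x,y}\} \subset T_y Y$. The set $K_{x,y}$ is compact (as the closed level set of the continuous function $\xi \mapsto u_{x,y}(\xi)$ on compact $\partial X$), the map $\xi \mapsto \overrightarrow{yf(\xi)}$ is continuous, and $T_y Y$ is finite-dimensional, so $C$ is compact and convex. By Hahn-Banach, $0 \notin C$ if and only if some hyperplane strictly separates $0$ from $C$, i.e.\ there exists $w \in T_y^1 Y$ with $\langle w, \overrightarrow{yf(\xi)}\rangle > 0$ for every $\xi \in K_{x,y}$. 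Taking the contrapositive produces the equivalence of (2) and (3).

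The main technical point is justifying the Danskin-type formula for the one-sided derivative of $U$: one must verify the standard hypotheses (compact parameter set, joint continuity, $C^1$ dependence in $y$) and transport the Euclidean statement into the Riemannian setting via $\exp_y$. Once this formula is available, the rest reduces to elementary convex geometry in the finite-dimensional space $T_y Y$.
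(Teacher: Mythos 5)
Your proposal is correct in spirit and reaches the same conclusions, but it packages the argument differently: where the paper proves the implications $(1)\Rightarrow(2)$ and $(3)\Rightarrow(1)$ by explicit contradiction arguments (compactness yields a uniform $\epsilon$-margin on a neighbourhood of $K_{x,y}$, and one then shows $\Vert u_{x,\gamma(t)}\Vert_\infty$ strictly decreases for small $t$), you invoke Danskin's theorem to obtain the one-sided directional derivative of $y \mapsto \Vert u_{x,y}\Vert_\infty$ in closed form and then read off the minimality criterion. Both approaches are variational characterizations of the minimum of a convex pointwise maximum; Danskin is the systematic version of the $\epsilon$-bookkeeping the paper carries out by hand. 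The $(2)\Leftrightarrow(3)$ step is the same Hahn--Banach separation in both. Your version is arguably cleaner and recovers all three equivalences in two biconditionals, at the cost of citing an external theorem whose Riemannian transport you flag but do not carry out.

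There is one real lacuna you should close. You apply Danskin to $U(y) = \Vert u_{x,y}\Vert_\infty$ but write the resulting maximum over the set $K_{x,y} = \{\xi : u_{x,y}(\xi) = \Vert u_{x,y}\Vert_\infty\}$, which by definition is only the set of \emph{positive} maximizers. Since $\Vert u_{x,y}\Vert_\infty = \sup_\xi |u_{x,y}(\xi)| = \max\bigl(\max_\xi u_{x,y}(\xi),\, -\min_\xi u_{x,y}(\xi)\bigr)$, the argmax set for the sup-norm a priori also includes the minimizers of $u_{x,y}$, and the Danskin formula would then carry extra terms of the form $\max_{\eta:\, u_{x,y}(\eta)=-\Vert u_{x,y}\Vert_\infty} \langle \overrightarrow{yf(\eta)}, w\rangle$. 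To justify your formula as stated you must invoke Lemma~\ref{lem:MaxMin} together with Lemma~\ref{lem:derivativeBusemann}, which give $\max_\xi u_{x,y}(\xi) = -\min_\xi u_{x,y}(\xi)$ for \emph{every} $y$ (not just $y \in M_x$); hence $\Vert u_{x,y}\Vert_\infty = \max_\xi u_{x,y}(\xi)$, the argmax set is exactly $K_{x,y}$, and $K_{x,y}$ is always non-empty. Alternatively, one can show directly via the Geometric mean value theorem (Lemma~\ref{lem:ChainRuleGMVT}) that any minimizer $\eta$ satisfies $a_x(\eta)\in K_{x,y}$ and $\overrightarrow{yf(\eta)} = -\overrightarrow{yf(a_x(\eta))}$, so the minimizer contributions are already dominated by those from $K_{x,y}$; but either way, this point must be made explicit, as it is precisely what guarantees that condition~(2) is the right one rather than one also involving the antipodal set.
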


\begin{proof}[Proof of Lemma \ref{lem:ConvexHull}]
(1) $\Rightarrow$ (2): Suppose not. Then, we find $x \in X, y \in M_x$ and $w \in T_y^1Y$ such that for all $\xi \in K_{x,y}$, $\langle w, \overrightarrow{yf(\xi)} \rangle > 0$. Let $\gamma$ be the geodesic passing through $y$ at time zero with tangent vector $w$. Since the inner product is continuous and $K_{x,y}$ is compact, we find $\epsilon, \epsilon' > 0$ and a neighbourhood $N$ of $K_{x,y}$, such that for all $\xi' \in N, \langle w, \overrightarrow{yf(\xi')} \rangle > \epsilon$ and for all $\xi' \in X \diagdown N$, $u_{x,y}(\xi') < M(x) - \epsilon'$. Using the fact that the gradient of the map $y \mapsto B(y',y,\eta)$ is equal to $- \overrightarrow{y\eta}$, we obtain for all $\xi' \in N$ and $t$ sufficiently small
\begin{equation*}
\begin{split}
u_{x, \gamma(t)}(\xi') & = B(\pi \circ \Phi(\overrightarrow{x\xi}), \gamma(t), f(\xi))\\
& = B(\pi \circ \Phi(\overrightarrow{x\xi}), y, f(\xi)) + B(y, \gamma(t), f(\xi))\\
& \leq M(x) + t (- \langle \overrightarrow{yf(\xi)}, w \rangle) + o(t)\\
& < M(x)
\end{split}
\end{equation*}

For $\xi' \in \partial X \diagdown N$, we have $u_{x,\gamma(t)}(\xi') = u_{x,y}(\xi') + B(y,\gamma(t), f(\xi')) < M(x) - \epsilon' + t < M(x)$ for $t$ sufficiently small. We conclude that, for $t > 0$ sufficiently small, $\Vert u_{x,\gamma(t)} \Vert_{\infty} < M(x)$, which contradicts the definition $M(x) = \inf_{y \in Y} \Vert u_{x,y} \Vert_{\infty}$. Therefore, such a vector $w$ cannot exist.\\

(2) $\Rightarrow$ (3): Suppose not. Then, there exists an affine hyperplane $h \subset T_y Y$ separating the zero vector from the convex hull $C$ of $\{ \overrightarrow{yf(\xi)} \vert \xi \in K_{x,y} \}$. Let $n$ be the unit normal vector of the hyperplane parallel to $h$, going through zero, pointing towards $h$. Then, $\langle n, w \rangle > 0$ for all $w \in C$. This is a contradiction to (2), hence $h$ cannot exist. This implies (3).\\

(3) $\Rightarrow$ (1): Suppose not. Then, there exists $y' \in Y$ such that $\Vert u_{x,y'} \Vert_{\infty} < \Vert u_{x,y} \Vert_{\infty}$. Let $\gamma$ be the geodesic from $y$ to $y'$ and let $\xi \in K_{x,y}$. Then,
\[ B(\pi \circ \Phi(\overrightarrow{x\xi}), y, f(\xi)) = u_{x,y}(\xi) > \Vert u_{x,y'} \Vert_{\infty} \geq u_{x,y'}(\xi) = B(\pi \circ \Phi(\overrightarrow{x\xi}),y',f(\xi)). \]

Since $B(z,y,f(\xi))$ is convex in $y$, we conclude that $B(\pi \circ \Phi(\overrightarrow{x\xi}), \gamma(t), f(\xi))$ is strictly decreasing for $t \geq 0$ sufficiently small. Therefore, for all $\xi \in K_{x,y}$,
\[ 0 > \frac{d}{dt}\vert_{t = 0} B(\pi \circ \Phi(\overrightarrow{x\xi}), \gamma(t), f(\xi)) = - \langle \gamma'(0), \overrightarrow{yf(\xi)} \rangle. \]

If there were points $\xi_1, \dots, \xi_k \in K_{x,y}$ and a convex combination such that
\[ \sum_{i=1}^k \alpha_i \overrightarrow{yf(\xi_i)} = 0, \]
then we compute
\[ 0 = \langle \gamma'(0), \sum_{i=1}^k \alpha_i \overrightarrow{yf(\xi_i)} \rangle > 0. \]
This is a contradiction to (3). We conclude that (3) implies (1), which completes the proof.
\end{proof}

The following is an important property of the function $M$.

\begin{lem}[cf. \cite{Biswas18a}] \label{lem:LipschitzContinuityM}
The map $M : X \rightarrow \mathbb{R}$ is $1$-Lipschitz continuous. Furthermore, the maps $x \mapsto \Vert u_{x,y} \Vert_{\infty}$ for fixed $y$ and $y \mapsto \Vert u_{x,y} \Vert_{\infty}$ for fixed $x$ are both $1$-Lipschitz.
\end{lem}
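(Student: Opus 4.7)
The plan is to derive all three Lipschitz bounds from the cocycle equation for Busemann functions, combined with the alternative representation of $u_{x,y}$ obtained in the proof of Lemma \ref{lem:Continuityu}. Recall that Busemann functions satisfy $|B(p,q,\zeta)| \leq d(p,q)$ for any $p,q$ and $\zeta$, and the cocycle $B(p,q,\zeta) = B(p,r,\zeta) + B(r,q,\zeta)$.

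First I would handle the map $y \mapsto \|u_{x,y}\|_\infty$ for fixed $x$. Starting directly from the definition
\[ u_{x,y}(\xi) - u_{x,y'}(\xi) = B(\pi \circ \Phi(\overrightarrow{x\xi}), y, f(\xi)) - B(\pi \circ \Phi(\overrightarrow{x\xi}), y', f(\xi)) = B(y', y, f(\xi)), \]
the cocycle equation gives $|u_{x,y}(\xi) - u_{x,y'}(\xi)| \leq d(y,y')$ uniformly in $\xi$, hence $\|u_{x,y} - u_{x,y'}\|_\infty \leq d(y,y')$. The reverse triangle inequality for sup-norms then yields $|\,\|u_{x,y}\|_\infty - \|u_{x,y'}\|_\infty\,| \leq d(y,y')$.

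For the map $x \mapsto \|u_{x,y}\|_\infty$ with $y$ fixed, I would invoke the key identity from the proof of Lemma \ref{lem:Continuityu}, namely $u_{x,y}(\xi) = B(x, \pi \circ \Psi(\overrightarrow{yf(\xi)}), \xi)$, where $\Psi = \Phi^{-1}$ is the analogous map induced by $f^{-1}$. This expression removes the $x$-dependence from the first Busemann argument and places it where the cocycle equation applies cleanly:
\[ u_{x,y}(\xi) - u_{x',y}(\xi) = B(x, \pi \circ \Psi(\overrightarrow{yf(\xi)}), \xi) - B(x', \pi \circ \Psi(\overrightarrow{yf(\xi)}), \xi) = B(x, x', \xi), \]
so again $\|u_{x,y} - u_{x',y}\|_\infty \leq d(x,x')$ and thus $|\,\|u_{x,y}\|_\infty - \|u_{x',y}\|_\infty\,| \leq d(x,x')$.

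Finally, to conclude that $M$ is $1$-Lipschitz, I would use a standard infimum-of-Lipschitz-functions argument. Given $x, x' \in X$, pick any $y \in M_x$ (which exists since $y \mapsto \|u_{x,y}\|_\infty$ is proper and convex). Then
\[ M(x') \leq \|u_{x',y}\|_\infty \leq \|u_{x,y}\|_\infty + d(x,x') = M(x) + d(x,x'), \]
using the second Lipschitz estimate just established, and swapping the roles of $x$ and $x'$ (with a minimizer $y' \in M_{x'}$) gives the other direction. The only mild subtlety worth mentioning is that $M_x$ is non-empty, but this was already established before the statement of the lemma via convexity and properness of $y \mapsto \|u_{x,y}\|_\infty$, so no real obstacle remains; the proof is essentially a direct application of the cocycle equation together with the two symmetric expressions for $u_{x,y}(\xi)$.
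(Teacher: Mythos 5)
Your proof is correct and rests on the same two facts the paper uses: the Busemann cocycle equation and Lemma~\ref{lem:GeodesicConjugacy} (which you invoke via the identity $u_{x,y}(\xi) = B(x, \pi \circ \Psi(\overrightarrow{yf(\xi)}), \xi)$, itself a direct consequence of that lemma). The only organizational difference is that you first establish the sharper uniform bounds $\Vert u_{x,y} - u_{x',y} \Vert_\infty \le d(x,x')$ and $\Vert u_{x,y} - u_{x,y'} \Vert_\infty \le d(y,y')$ and then read off all three Lipschitz statements via the reverse triangle inequality for sup-norms, whereas the paper runs one chain of inequalities for $M$ using a maximizer $\xi \in K_{x',y}$ and then observes the other two bounds fall out of the same computation; your version is marginally more modular and avoids invoking the sets $K_{x,y}$, but the mathematical content is the same.
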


\begin{proof}[Proof of Lemma \ref{lem:LipschitzContinuityM}]
Let $x, x' \in X$, $y \in M_x$, $y' \in M_{x'}$, $\xi \in K_{x',y}$. Using Lemma \ref{lem:GeodesicConjugacy}, we compute
\begin{equation*}
\begin{split}
M(x') = \Vert u_{x',y'} \Vert_{\infty} & \leq \Vert u_{x',y} \Vert_{\infty}\\
& = B(\pi \circ \Phi(\overrightarrow{x'\xi}), y, f(\xi))\\
& = B(x',x,\xi) + B(\pi \circ \Phi(\overrightarrow{x\xi}),y,f(\xi))\\
& \leq d(x,x') + \Vert u_{x,y} \Vert_{\infty}.
\end{split}
\end{equation*}
We conclude that $M(x') \leq d(x,x') + M(x)$. Since the argument is symmetric in $x, x'$, we conclude that $M$ is $1$-Lipschitz continuous. This estimate also proves the $1$-Lipschitz continuity of the map $x \mapsto \Vert u_{x,y} \Vert_{\infty}$. For the last map, the proof is analogous with $\xi \in K_{x,y'}$.
\end{proof}

It turns out that $F$ has several nice properties.

\begin{lem} \label{lem:ContinuityF}
The map $F$ is continuous with respect to the quotient topology.
\end{lem}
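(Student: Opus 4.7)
The plan is to reduce continuity of $F$ at a point $x_0 \in X$ to the following sequential claim: whenever $x_n \to x_0$ in $X$ and $y_n \in M_{x_n}$, some subsequence of $(y_n)$ converges in $Y$ to a point $y^* \in M_{x_0}$. Granted this, continuity follows by a standard argument. If $U \subseteq \overline{Y}$ is open with $F(x_0) \in U$ and $q : Y \to \overline{Y}$ denotes the quotient map, then $M_{x_0} \subseteq q^{-1}(U)$. Were $F$ discontinuous at $x_0$, we could find $x_n \to x_0$ with $F(x_n) \notin U$, hence $y_n \in M_{x_n} \setminus q^{-1}(U)$; any subsequential limit $y^*$ of such $y_n$ would then lie in the closed set $Y \setminus q^{-1}(U)$, contradicting $y^* \in M_{x_0} \subseteq q^{-1}(U)$.

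To prove the sequential claim, the first step is to bound $(y_n)$ in $Y$. Fix a base point $y_0 \in Y$. By geodesic completeness of $Y$ and surjectivity of $f$, choose $\xi_n \in \partial X$ so that $f(\xi_n)$ is the endpoint of the geodesic ray starting at $y_n$ and passing through $y_0$; for this choice, $B(y_0, y_n, f(\xi_n)) = d(y_0, y_n)$. The cocycle identity for Busemann functions gives $u_{x_n, y_n}(\xi_n) = u_{x_n, y_0}(\xi_n) + B(y_0, y_n, f(\xi_n))$, so
\[ M(x_n) = \Vert u_{x_n, y_n} \Vert_\infty \geq u_{x_n, y_n}(\xi_n) \geq -\Vert u_{x_n, y_0} \Vert_\infty + d(y_0, y_n). \]
Both $M$ and $x \mapsto \Vert u_{x, y_0} \Vert_\infty$ are $1$-Lipschitz by Lemma \ref{lem:LipschitzContinuityM}, so the right-hand side is bounded along the convergent sequence $(x_n)$; this forces $(y_n)$ to be bounded. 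Since $Y$ is a proper Hadamard manifold, a subsequence $y_{n_k}$ converges to some $y^* \in Y$.

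It remains to verify $y^* \in M_{x_0}$. Combining the $1$-Lipschitz dependence of $(x,y) \mapsto \Vert u_{x,y} \Vert_\infty$ in each variable (Lemma \ref{lem:LipschitzContinuityM}) yields
\[ \bigl| \Vert u_{x_{n_k}, y_{n_k}} \Vert_\infty - \Vert u_{x_0, y^*} \Vert_\infty \bigr| \leq d(x_{n_k}, x_0) + d(y_{n_k}, y^*) \xrightarrow{k \to \infty} 0, \]
while $\Vert u_{x_{n_k}, y_{n_k}} \Vert_\infty = M(x_{n_k}) \to M(x_0)$ by Lipschitz continuity of $M$. Hence $\Vert u_{x_0, y^*} \Vert_\infty = M(x_0)$, i.e.\ $y^* \in M_{x_0}$.

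The main obstacle is the boundedness step: one must choose $\xi_n$ (depending on $y_n$) so that $u_{x_n, y_n}(\xi_n)$ captures the full distance $d(y_0, y_n)$. This is essentially the same trick already used to show that $y \mapsto \Vert u_{x, y} \Vert_\infty$ is proper, but one has to trace the dependence on $y_n$ carefully so that the Lipschitz bound in $x$ closes. Once the bound is in place, the remaining pieces are routine applications of properness of $Y$ and the Lipschitz inequalities already established.
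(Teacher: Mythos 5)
Your proof is correct and follows essentially the same route as the paper: reduce to sequential continuity, bound $(y_n)$, pass to a convergent subsequence via properness of $Y$, and use the two-variable $1$-Lipschitz estimate from Lemma \ref{lem:LipschitzContinuityM} to land the limit in $M_{x_0}$. The only cosmetic difference is that you derive the boundedness of $(y_n)$ directly via the Busemann cocycle (unpacking the same estimate the paper uses to establish properness of $y \mapsto \Vert u_{x,y} \Vert_\infty$), whereas the paper argues by contradiction citing that properness.
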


\begin{proof}[Proof of Lemma \ref{lem:ContinuityF}]
Since $X$ is first countable, it is enough to show that $F$ is sequentially continuous. Let $y \in Y$ such that $[y] \in \Ima(F)$. Let $[y] \in \overline{U} \subset \overline{Y}$ open, i.e. $[y] \subset U := P^{-1}(\overline{U})$, $U$ open, where $P$ denotes the projection $Y \rightarrow \overline{Y}$. Let $x \in X$ such that $y \in M_x$ and thus, $F(x) = [y]$. We have $\Vert u_{x,y} \Vert_{\infty} = M(x)$. Let $x_n \rightarrow x$ and let $y_n \in M_{x_n}$.

We first show that $(y_n)_n$ is bounded. Suppose not. Since the map $y \mapsto \Vert u_{x,y} \Vert_{\infty}$ is proper, we conclude that there is a subsequence, also denoted $(y_n)_n$ such that $\Vert u_{x, y_n} \Vert_{\infty} \rightarrow \infty$. On the other hand, since $M$ is $1$-Lipschitz, $\Vert u_{x_n,y_n} \Vert_{\infty} = M(x_n) \rightarrow M(x) = \Vert u_{x,y} \Vert_{\infty}$. In addition, since $x \mapsto \Vert u_{x,y'} \Vert_{\infty}$ is $1$-Lipschitz for all $y'$, we conclude that
\[ M(x) \geq M(x_n) - d(x, x_n) \geq \Vert u_{x, y_n} \Vert_{\infty} - 2d(x,x_n) \rightarrow \infty. \]
This is a contradiction, hence $(y_n)_n$ is bounded.

By properness of $Y$, any subsequence of $(y_n)_n$ has a converging subsequence $(y_{n_i})_i$ that converges to some $y' \in Y$. We claim that $y' \sim y$. Since $\Vert u_{x_n, y_n} \Vert_{\infty}$ is $1$-Lipschitz continuous in both variables, we have
\[ \Vert u_{x,y} \Vert_{\infty} = M(x) \xleftarrow{n \rightarrow \infty} M(x_n) = \Vert u_{x_n, y_n} \Vert_{\infty} \xrightarrow{n \rightarrow \infty} \Vert u_{x,y'} \Vert_{\infty}. Ê\]

Therefore, $y' \in M_x$. Suppose now, that $(y_n)_n$ admits a subsequence $(y_{n_i})_n$ such that for all $n_i, y_{n_i} \notin U$. No subsequence of $(y_{n_i})_i$ can converge to an element in $M_x$, a contradiction to our argument above. Therefore, for all large $n$, $[y_n] \in \overline{U}$ and $[y_n] \rightarrow [y]$. We conclude that $F$ is sequentially continuous.
\end{proof}

We are now ready to prove that the equivalence relation on $Y$ affects only specific parts of $Y$.

\begin{prop} \label{prop:NowhereDensity}
The union $E_Y := \bigcup_{x \in X : \vert M_x \vert \geq 2} M_x$ satisfies $E_Y \subset F_Y$. In particular, the projection $P : Y \rightarrow \overline{Y}$ is a homeomorphism on $Y \diagdown F_Y$.
\end{prop}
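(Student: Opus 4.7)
The plan is to prove the core inclusion $E_Y \subset F_Y$; the homeomorphism statement then follows routinely, since injectivity of $P$ on $Y \setminus F_Y$ is immediate from this inclusion. Fix $y \in E_Y$, so $y \in M_x$ for some $x$ with $\vert M_x \vert \geq 2$; choose a second minimizer $y' \in M_x$, and let $\gamma:[0,L] \to Y$ be the arc-length geodesic from $y$ to $y'$. Since $z \mapsto \Vert u_{x,z} \Vert_\infty$ is convex (Lemma \ref{lem:Convexity}) and attains its infimum $M(x)$ at both $y$ and $y'$, the entire segment $\gamma([0,L])$ lies in $M_x$. Next, Lemma \ref{lem:ConvexHull}(2) applied at $y'$ with tangent vector $-\gamma'(L)$ furnishes $\xi' \in K_{x,y'}$ with $\langle \gamma'(L), \overrightarrow{y' f(\xi')}\rangle \geq 0$. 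Setting $\beta(s) := u_{x,\gamma(s)}(\xi') = B(\pi \circ \Phi(\overrightarrow{x\xi'}), \gamma(s), f(\xi'))$, this function is convex in $s$, bounded above by $M(x)$ on $[0,L]$, equals $M(x)$ at $s = L$, and has derivative $\beta'(L) = -\langle \overrightarrow{y'f(\xi')}, \gamma'(L)\rangle \leq 0$ there. A direct convex-function argument---the secant slopes $(M(x) - \beta(s))/(L - s)$ are non-negative but bounded above by the non-positive quantity $\beta'(L)$---then forces $\beta \equiv M(x)$ on $[0,L]$.

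Thus the Busemann function $B_{\xi'} := B(\pi \circ \Phi(\overrightarrow{x\xi'}), \cdot, f(\xi'))$ is constant along $\gamma$. Consequently the radial field $\Xi = -\grad B_{\xi'}$ toward $f(\xi')$ satisfies $\Xi(\gamma(t)) \perp \gamma'(t)$, and $(B_{\xi'} \circ \gamma)''(t) = -\langle \nabla_{\gamma'(t)} \Xi, \gamma'(t)\rangle = 0$. Proposition \ref{prop:HiH} identifies $\nabla_{\gamma'(t)} \Xi$ with $\frac{DJ_t}{ds}(0)$, where $J_t$ is the stable Jacobi field along the ray $\beta_t(s) := \exp_{\gamma(t)}(s \Xi(\gamma(t)))$ with $J_t(0) = \gamma'(t)$; hence $\frac{d}{ds} \Vert J_t(s) \Vert^2 \vert_{s=0} = 2 \langle \frac{DJ_t}{ds}(0), J_t(0) \rangle = 0$. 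Since $\Vert J_t \Vert^2$ is convex (non-positive curvature), non-negative, and bounded (stability), it must be constant, so $J_t$ is parallel. The orthogonality $J_t(0) \perp \Xi(\gamma(t))$ is preserved along $\beta_t$ because $s \mapsto \langle J_t(s), \beta_t'(s)\rangle$ is affine and bounded, hence identically zero. So $\beta_t$ is a geodesic ray from $\gamma(t)$ admitting a perpendicular parallel Jacobi field, placing $\gamma(t) \in F_Y$, and in particular $y \in F_Y$.

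For the final assertion, $F_Y$ is closed by a standard Arzela-Ascoli argument applied to the defining rays and perpendicular parallel Jacobi fields over a converging sequence of base points. Given $y \in Y \setminus F_Y$, any sufficiently small open ball $U$ around $y$ lies in $Y \setminus F_Y \subset Y \setminus E_Y$, so every equivalence class meeting $U$ is a singleton; consequently $U$ is saturated, its image in $\overline{Y}$ is open, and $P \vert_U$ is a homeomorphism onto $P(U)$. This shows that $P \vert_{Y \setminus F_Y}$ is a local homeomorphism which, being injective by $E_Y \subset F_Y$, is a homeomorphism onto its image. The main technical difficulty is the Jacobi-field step: one must leverage the $C^1$ regularity of $\Xi$ from Proposition \ref{prop:HiH} together with convexity of $\Vert J_t \Vert^2$ to upgrade the vanishing of the single scalar $(B_{\xi'} \circ \gamma)''$ into the existence of a genuinely parallel, perpendicular Jacobi field.
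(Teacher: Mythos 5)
Your proof is correct, but it takes a genuinely different route from the paper's. The paper simply quotes the proof of Lemma \ref{lem:IntersectionofHorospheres}: there, via Proposition \ref{prop:ExtremalDirections}, every point of a nontrivial geodesic segment in $M_x$ is shown to lie on a bi-infinite geodesic in $[f(a_x(\xi)),f(\xi)]$ for a fixed $\xi$, so two distinct points of $M_x$ sit on distinct parallel lines and the Flat Strip Theorem produces a genuine flat strip; flat strips visibly carry perpendicular parallel Jacobi fields, whence $E_Y\subset F_Y$. You instead re-derive only the piece you need --- that $u_{x,\cdot}(\xi')$ is constant along the segment, obtained from convexity together with the sign condition $\beta'(L)\le 0$ supplied by Lemma \ref{lem:ConvexHull}(2) at the far endpoint (the paper gets the same constancy by picking $\xi\in K_{x,p}$ at an interior point $p$) --- and then bypass the flat-strip/antipodal machinery entirely: constancy of the Busemann function along $\gamma$ kills the Hessian in the direction $\gamma'(t)$, Proposition \ref{prop:HiH} converts this into $\tfrac{d}{ds}\Vert J_t\Vert^2(0)=0$ for the stable Jacobi field with $J_t(0)=\gamma'(t)$, and convexity plus boundedness of $\Vert J_t\Vert^2$ forces it to be parallel, with perpendicularity following from the affine-and-bounded argument. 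This buys you a construction that needs only a geodesic ray (exactly what the definition of $F_Y$ asks for) rather than a bi-infinite geodesic and a flat strip, at the price of leaning on the $C^1$-regularity of the radial field and the stable-Jacobi-field identification; the paper's route is shorter given that Lemma \ref{lem:IntersectionofHorospheres} is already in hand, and its flat strip carries strictly more geometric information. Your elaboration of the ``in particular'' clause (singleton classes off $E_Y$, saturated balls, local homeomorphism plus injectivity), including the closedness of $F_Y$, is more detailed than the paper's one-line assertion and is sound.
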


In order to prove this, we need to do some preparation which will be of further use in later sections.

\begin{mydef} \label{def:AntipodalMaps}
Let $x \in X, \xi \in \partial X$. We define $a_x : \partial X \rightarrow \partial X$ to be the map that sends $\xi \in \partial X$ to the forward endpoint of the geodesic ray induced by the vector $-\overrightarrow{x\xi}$. We call $a_x$ the {\it antipodal map with respect to $x$}.
\end{mydef}

By definition, $a_x = \exp_x \circ (-Id) \circ \exp_x^{-1}$. Since the visual topology coincides with the standard topology on the unit tangent sphere $T_x^1X$, we immediately see that $a_x$ is a homeomorphism.

\begin{prop}[cf. \cite{Biswas18a}] \label{prop:ExtremalDirections}
Fix $x \in X$ and $y \in M_x$. Let $\xi \in \partial X$. If $\frac{\partial f_* \rho_x}{\partial \rho_y}(f(\xi))$ is minimal among all $\xi$, then there exists a bi-infinite geodesic $\gamma \in [f(a_x(\xi)), f(\xi)]$ such that $y$ lies on $\gamma$. In particular, $f(a_x(\xi)) = a_y(f(\xi))$.
\end{prop}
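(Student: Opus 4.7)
The plan is to translate the hypothesis into a statement about $u_{x,y}$, extract the crucial equality via the Geometric Mean Value Theorem applied to the antipodal pair $(\xi, a_x(\xi))$, and then read off the geometric conclusion from Lemma~\ref{lem:GromovProductHorospheres}.

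First I would reformulate the hypothesis. By Lemma~\ref{lem:derivativeBusemann}, $\frac{\partial f_*\rho_x}{\partial \rho_y}(f(\xi)) = e^{u_{x,y}(\xi)}$, so minimality of this derivative in $\xi$ is equivalent to $u_{x,y}(\xi)$ achieving its minimum. Since $y \in M_x$, Lemma~\ref{lem:MaxMin} and Lemma~\ref{lem:derivativeBusemann} together give $\min_{\eta \in \partial X} u_{x,y}(\eta) = -M(x)$ (as recorded just before the definition of $F$). Thus the hypothesis is exactly $u_{x,y}(\xi) = -M(x)$.

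Second, I would apply the Geometric Mean Value Theorem from Lemma~\ref{lem:ChainRuleGMVT} to the pair $(f(\xi), f(a_x(\xi)))$ in $\partial Y$ with the M\"obius equivalent pseudo-metrics $f_*\rho_x$ and $\rho_y$. By definition of $a_x$, the rays $\xi_x$ and $(a_x(\xi))_x$ concatenate to a bi-infinite geodesic through $x$; Lemma~\ref{lem:GromovProductHorospheres} therefore yields $(\xi \vert a_x(\xi))_x = 0$, i.e.\ $\rho_x(\xi, a_x(\xi)) = 1$. The GMVT then reads
\[
1 = \rho_x(\xi, a_x(\xi))^2 = e^{u_{x,y}(\xi) + u_{x,y}(a_x(\xi))}\, \rho_y(f(\xi), f(a_x(\xi)))^2.
\]
Combining $u_{x,y}(\xi) = -M(x)$ with the universal bounds $u_{x,y}(a_x(\xi)) \leq M(x)$ and $\rho_y(\cdot,\cdot) \leq 1$, the right-hand side is at most $1$; hence both factors must be exactly $1$, and we obtain
\[
u_{x,y}(a_x(\xi)) = M(x), \qquad (f(\xi) \vert f(a_x(\xi)))_y = 0.
\]

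Finally I would convert the vanishing Gromov product into the geometric statement. Since $(\xi, a_x(\xi))$ is visible and $f$ preserves visible pairs by hypothesis, $(f(\xi), f(a_x(\xi)))$ is visible. Let $h$, $h'$ be the horoballs centered at $f(\xi)$, $f(a_x(\xi))$ passing through $y$. Because $(f(\xi) \vert f(a_x(\xi)))_y = 0$, the second part of Lemma~\ref{lem:GromovProductHorospheres} applies with $m = 0$ and shows that every point of $h \cap h'$ lies on a bi-infinite geodesic from $f(a_x(\xi))$ to $f(\xi)$; since $y \in \partial h \cap \partial h' \subset h \cap h'$, some such geodesic $\gamma \in [f(a_x(\xi)), f(\xi)]$ passes through $y$. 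Uniqueness of geodesics through $y$ in a prescribed direction then forces $a_y(f(\xi)) = f(a_x(\xi))$. The main delicate point is ensuring that the equality case in the GMVT yields both conclusions simultaneously and that visibility of $(f(\xi), f(a_x(\xi)))$ is correctly inherited so that Lemma~\ref{lem:GromovProductHorospheres} applies; the rest is bookkeeping.
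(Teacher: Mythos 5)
Your argument is correct and follows the same strategy as the paper: apply the Geometric Mean Value Theorem to the antipodal pair, use $\rho_x(\xi,a_x(\xi)) = 1$ together with the minimality hypothesis $u_{x,y}(\xi) = -M(x)$, and conclude $\rho_y(f(\xi), f(a_x(\xi))) = 1$. You are slightly more explicit than the paper in extracting both equalities from the equality case and in invoking Lemma~\ref{lem:GromovProductHorospheres} after checking that visibility of $(f(\xi), f(a_x(\xi)))$ is inherited from $(\xi, a_x(\xi))$; the paper compresses these steps into the one-line statement that $y$ lies on the connecting geodesic iff $\rho_y(f(a_x(\xi)),f(\xi)) = 1$.
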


\begin{proof}
A point $y$ lies on a geodesic from $f(a_x(\xi))$ to $f(\xi)$ if and only if $\rho_y(f(a_x(\xi)), f(\xi)) = 1$. Combining Lemma \ref{lem:MaxMin} and Lemma \ref{lem:derivativeBusemann}, we know that the minimal value obtained by $\frac{ \partial f_* \rho_x}{\partial \rho_x}(f(\xi))$ is equal to $e^{-M(x)}$. We compute
\begin{equation*}
\begin{split}
\rho_y(f(a_x(\xi)) f(\xi))^2 & = \frac{\partial \rho_y}{\partial f_* \rho_x}(f(a_x(\xi))) \frac{\partial \rho_y}{\partial f_* \rho_x}(f(\xi)) f_* \rho_x(f(a_x(\xi)), f(\xi))^2\\
& = \frac{\partial \rho_y}{\partial f_* \rho_x}(f(a_x(\xi))) e^{M(x)} 1^2\\
& \geq e^{-M(x)} e^{M(x)} = 1,
\end{split}
\end{equation*}
where we used the fact that $x \in (a_x(\xi), \xi)$ by construction. This concludes the proof.
\end{proof}

\begin{cor}[cf. \cite{Biswas18a}] \label{cor:AntipodalPoints}
If $x \in X, y \in M_x, \xi \in \partial X$, then $\frac{ \partial f_* \rho_x}{\partial \rho_y}(f(\xi))$ is maximal if and only if $\frac{ \partial f_* \rho_x}{\partial \rho_y}(f(a_x(\xi)))$ is minimal.
\end{cor}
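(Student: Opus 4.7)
The plan is to reduce both implications to the single geometric condition that \emph{$y$ lies on a bi-infinite geodesic from $f(\xi)$ to $f(a_x(\xi))$}. The central identity is obtained by applying the Geometric Mean Value Theorem (Lemma \ref{lem:ChainRuleGMVT}) to the M\"obius-equivalent data $\rho = f_*\rho_x$ and $\rho' = \rho_y$ at the pair $(f(\xi), f(a_x(\xi)))$, together with $\rho_x(\xi, a_x(\xi)) = 1$ (since $x$ lies on a bi-infinite geodesic between $\xi$ and $a_x(\xi)$, so their Gromov product at $x$ vanishes):
\[
\frac{\partial f_*\rho_x}{\partial \rho_y}(f(\xi)) \cdot \frac{\partial f_*\rho_x}{\partial \rho_y}(f(a_x(\xi))) \;=\; \frac{1}{\rho_y(f(\xi), f(a_x(\xi)))^2}.
\]
Since $\rho_y \leq 1$ always, this product is at least $1$, with equality precisely when $y$ lies on a bi-infinite geodesic joining $f(\xi)$ and $f(a_x(\xi))$. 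Combined with Lemma \ref{lem:MaxMin}, which asserts that the extremal values of $\xi \mapsto \frac{\partial f_*\rho_x}{\partial \rho_y}(f(\xi))$ multiply to $1$, this identity powers both implications.

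For the direction $(\Leftarrow)$, I would invoke Proposition \ref{prop:ExtremalDirections} with $a_x(\xi)$ replacing $\xi$ in its hypothesis. Since $a_x$ is an involution, the conclusion reads: $y$ lies on a bi-infinite geodesic from $f(\xi)$ to $f(a_x(\xi))$. The identity above then forces the product to equal $1$, so $\frac{\partial f_*\rho_x}{\partial \rho_y}(f(\xi)) = 1/g_{\min} = g_{\max}$ by Lemma \ref{lem:MaxMin}.

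The direction $(\Rightarrow)$ is the main obstacle, because when $\frac{\partial f_*\rho_x}{\partial \rho_y}(f(\xi))$ is already maximal, the bound $\rho_y \leq 1$ in the key identity becomes vacuous and yields only $\frac{\partial f_*\rho_x}{\partial \rho_y}(f(a_x(\xi))) \geq g_{\min}$, not equality. To close this gap I would apply Proposition \ref{prop:ExtremalDirections} to the inverse M\"obius homeomorphism $f^{-1}\colon \partial Y \to \partial X$: by Lemma \ref{lem:Functoriality} its associated extension is $\Phi^{-1}$, and the reciprocal identification $\frac{\partial (f^{-1})_*\rho_y}{\partial \rho_x}(\xi) = 1\big/\frac{\partial f_*\rho_x}{\partial \rho_y}(f(\xi))$ turns the maximality hypothesis into a minimality hypothesis for the inverse construction. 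Proposition \ref{prop:ExtremalDirections} in this symmetric form then yields $f(a_x(\xi)) = a_y(f(\xi))$, i.e., $y$ lies on the desired geodesic, and the key identity closes the argument. The delicate point is verifying that $y \in M_x$ for $f$ translates to the corresponding minimality condition for the inverse setup; this should follow from the relation $\tilde u_{y,x}(f(\xi)) = -u_{x,y}(\xi)$, itself a direct consequence of Lemma \ref{lem:derivativeBusemann}, the chain rule for metric derivatives, and the functoriality of Lemma \ref{lem:Functoriality}.
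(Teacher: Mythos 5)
Your proof is correct and takes essentially the same route as the paper: convert the maximality hypothesis into a minimality hypothesis for $f^{-1}$ via the chain rule, invoke Proposition \ref{prop:ExtremalDirections}, and close with the geometric mean value theorem and Lemma \ref{lem:MaxMin}. One caution on your flagged ``delicate point'': the identity $\tilde u_{y,x}(f(\xi)) = -u_{x,y}(\xi)$ does yield $\Vert\tilde u_{y,x}\Vert_\infty = \Vert u_{x,y}\Vert_\infty$, but that alone does \emph{not} show $x$ lies in the set $M_y$ for $f^{-1}$ --- one condition asks that $\Vert u_{x,y}\Vert_\infty$ minimize over $y'\in Y$, the other over $x'\in X$, and these are a priori unrelated (indeed, that they coincide is nontrivial and is only established in dimension two, in Proposition \ref{prop:InversesDimension2}). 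Fortunately the concern is moot: the proof of Proposition \ref{prop:ExtremalDirections} never actually uses the hypothesis $y\in M_x$ --- it needs only Lemma \ref{lem:MaxMin} applied to the fixed pair $(x,y)$ together with $\rho_y\leq 1$, and in particular works for arbitrary $y$ --- so it may be applied to $f^{-1}$ at any $x$, exactly as the paper implicitly does.
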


\begin{proof}
If $\frac{ \partial f_* \rho_x}{\partial \rho_y}(f(a_x(\xi)))$ is minimal, then $y$ lies on a geodesic from $f(a_x(\xi))$ to $f(\xi)$. Then, $\frac{ \partial f_* \rho_x}{\partial \rho_y}(f(\xi)) = \frac{ \partial f_* \rho_x}{\partial \rho_y}(f(a_x(\xi)))^{-1} = e^{M(x)}$ by the Geometric mean value theorem.

On the other hand, if $\frac{ \partial f_* \rho_x}{\partial \rho_y}(f(\xi))$ is maximal, then $\frac{ \partial f^{-1}_* \rho_y}{\partial \rho_x}(\xi)$ is minimal by the Chain rule. By Proposition \ref{prop:ExtremalDirections}, this implies that $a_x(\xi) = f^{-1}(a_y(f(\xi)))$. The argument above implies that $\frac{ \partial f^{-1}_* \rho_y}{\partial \rho_x}(a_x(\xi))$ is maximal and therefore, $\frac{ \partial f_* \rho_x}{\partial \rho_y}(f(a_x(\xi)))$ is minimal.
\end{proof}

\begin{cor} \label{cor:AtleastthreeExtremalDirections}
For all $x \in X, y \in M_x$, the set $K_{x,y}$ contains at least three points.
\end{cor}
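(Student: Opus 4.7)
The strategy is to combine Lemma \ref{lem:ConvexHull} with the antipodal description of extremal directions provided by Proposition \ref{prop:ExtremalDirections} and Corollary \ref{cor:AntipodalPoints}. Since $y \in M_x$, Lemma \ref{lem:ConvexHull}(3) tells us that the convex hull of the set of unit vectors $\{ \overrightarrow{yf(\xi)} \mid \xi \in K_{x,y} \} \subset T_y^1 Y$ contains the zero vector. This immediately rules out $|K_{x,y}| = 1$, since the convex hull of a single unit vector is that vector itself.

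The main task is therefore to exclude the case $|K_{x,y}| = 2$. The plan is as follows. Suppose $K_{x,y} = \{\xi_1, \xi_2\}$. Two unit vectors whose convex hull contains the origin must be antipodal, so $\overrightarrow{yf(\xi_1)} = -\overrightarrow{yf(\xi_2)}$, which is equivalent to $f(\xi_2) = a_y(f(\xi_1))$. On the other hand, since $\xi_1 \in K_{x,y}$ maximizes $u_{x,y}$, Corollary \ref{cor:AntipodalPoints} shows that $a_x(\xi_1)$ realizes the minimum of $\frac{\partial f_*\rho_x}{\partial \rho_y}$; then Proposition \ref{prop:ExtremalDirections} applied at the minimum point $a_x(\xi_1)$ yields a geodesic through $y$ from $f(\xi_1)$ to $f(a_x(\xi_1))$, i.e.\ $f(a_x(\xi_1)) = a_y(f(\xi_1))$. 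Comparing the two identities and using injectivity of $f$ forces $\xi_2 = a_x(\xi_1)$.

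The final step is to turn this identification into a contradiction. By Lemma \ref{lem:MaxMin} combined with Lemma \ref{lem:derivativeBusemann}, the maximum and minimum of $u_{x,y}$ are exactly $M(x)$ and $-M(x)$, and by Corollary \ref{cor:AntipodalPoints} the point $a_x(\xi_1)$ attains the minimum $-M(x)$. But $\xi_2 = a_x(\xi_1)$ was assumed to belong to $K_{x,y}$, so $u_{x,y}(\xi_2) = M(x)$ as well; this gives $M(x) = -M(x)$, hence $M(x) = 0$. In that degenerate case $u_{x,y}$ is identically zero on $\partial X$, so $K_{x,y} = \partial X$; since $\partial X$ satisfies (4v) it contains at least five points, which again contradicts $|K_{x,y}| = 2$. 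Both subcases being impossible, we conclude $|K_{x,y}| \geq 3$.

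The only delicate point in this plan is making sure that Proposition \ref{prop:ExtremalDirections} is being applied to $a_x(\xi_1)$ in the \emph{correct} role: there, the hypothesis is that $\frac{\partial f_*\rho_x}{\partial \rho_y}(f(\cdot))$ is minimal, which is precisely what Corollary \ref{cor:AntipodalPoints} guarantees for $a_x(\xi_1)$. Once that bookkeeping is in order, the rest of the argument is a short chase through the identities established in Section \ref{sec:Definition}.
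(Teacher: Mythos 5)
Your proof is correct and follows essentially the same route as the paper's: Lemma \ref{lem:ConvexHull} to rule out a single point, then the antipodality forced by a two-point convex combination, Corollary \ref{cor:AntipodalPoints} together with Proposition \ref{prop:ExtremalDirections} to identify the second point with $a_x(\xi_1)$, and finally $\min u_{x,y} = -\max u_{x,y}$ to conclude $u_{x,y} \equiv 0$ and $K_{x,y} = \partial X$. The only cosmetic difference is that you invoke (4v) to get five points in $\partial X$ where the paper simply notes $\partial X$ is infinite; both close the argument.
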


\begin{proof}
By Lemma \ref{lem:ConvexHull}, $K_{x,y}$ contains at least two points, as any non-trivial convex combination requires at least two vectors. Suppose it consisted of exactly two points $\xi, \eta$. Then $0 = \alpha_1 \overrightarrow{yf(\xi)} + \alpha_2 \overrightarrow{yf(\eta)}$ for $\alpha_1, \alpha_2 > 0$. Since this is a sum of unit vectors, we conclude that $a_y(f(\xi)) = f(\eta)$. As $\xi \in K_{x,y}$, Corollary \ref{cor:AntipodalPoints} implies that $u_{x,y}(a_x(\xi))$ is minimal. By Proposition \ref{prop:ExtremalDirections}, $f(a_x(\xi)) = a_y(f(\xi)) = f(\eta)$. Thus, $u_{x,y}(\eta)$ is both maximal and minimal. Since $\min_{\xi \in \partial X} \{ u_{x,y}(\xi) \} = -\max_{\xi \in \partial X} \{ u_{x,y}(\xi) \}$, we obtain that $u_{x,y}(\eta) = 0$ and $u_{x,y} \equiv 0$. Therefore, $K_{x,y} = \partial X$, which contains infinitely many points.
\end{proof}

The following result provides us with more information about $M_x$, which may be of general interest in further study of this construction.

\begin{lem} \label{lem:IntersectionofHorospheres}
Let $x \in X$. The set $M_x$ is convex and contained in an intersection of at least three horospheres in $Y$. Furthermore, $\diam(M_x) \leq 2M(x) < \infty$. In particular, $M_x$ is compact and has codimension at least two in $Y$.
\end{lem}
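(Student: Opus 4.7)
The plan is to establish the four assertions---convexity, the diameter bound and compactness, horosphere containment, and the codimension estimate---in that order, with the tangent-cone analysis of $M_x$ forming the core of the argument. Convexity is immediate: each $y \mapsto u_{x,y}(\xi) = B(\pi \circ \Phi(\overrightarrow{x\xi}), y, f(\xi))$ is a Busemann function in $y$, hence convex, so by Lemma \ref{lem:Convexity} the envelope $g(y) := \Vert u_{x,y} \Vert_{\infty}$ is convex and $M_x = g^{-1}(M(x))$ is a convex level set.

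For the diameter bound, I would extend the segment from $y$ to $y' \in M_x$ to a bi-infinite geodesic with forward endpoint $\eta_+ \in \partial Y$ and set $\xi_+ := f^{-1}(\eta_+)$. The cocycle identity then gives $d(y, y') = B(y, y', f(\xi_+)) = u_{x,y'}(\xi_+) - u_{x,y}(\xi_+)$, which is at most $2M(x)$ because Lemmas \ref{lem:MaxMin} and \ref{lem:derivativeBusemann} together force $u_{x,z}(\xi) \in [-M(x), M(x)]$ whenever $z \in M_x$. Compactness then follows from closedness of $M_x$, this boundedness, and properness of $Y$.

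The central step is the horosphere containment. I would pick $y_0 \in M_x$ in the relative interior, so that the tangent cone $W := T_{y_0} M_x$ is a linear subspace of $T_{y_0} Y$. With $S := \{ \overrightarrow{y_0 f(\xi)} : \xi \in K_{x,y_0} \}$ and $V := \mathrm{span}(S)$, Proposition \ref{prop:HiH} together with the subdifferential of a supremum of convex functions give
\[ g'_+(y_0, v) = \sup_{\xi \in K_{x,y_0}} \langle -\overrightarrow{y_0 f(\xi)}, v \rangle. \]
Because $g \geq M(x)$ with equality on $M_x$, a direction $v$ lies in $W$ exactly when $g'_+(y_0, v) = 0$; applying this to both $v$ and $-v$ (possible because $W$ is a subspace) forces $\langle \overrightarrow{y_0 f(\xi)}, v \rangle = 0$ for every $\xi \in K_{x,y_0}$, so $W \subseteq V^{\perp}$. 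For arbitrary $y \in M_x$ and $\xi \in K_{x,y_0}$, the geodesic $\gamma$ from $y_0$ to $y$ lies in $M_x$ by convexity, so $\gamma'(0) \in W \subseteq V^{\perp}$; the convex function $\phi(t) := u_{x,\gamma(t)}(\xi)$ satisfies $\phi(0) = M(x)$, $\phi \leq M(x)$ on $[0, d(y_0, y)]$, and $\phi'(0) = -\langle \overrightarrow{y_0 f(\xi)}, \gamma'(0) \rangle = 0$, and these constraints force $\phi \equiv M(x)$ by convexity. So $y$ lies on the horosphere $H_\xi := \{ z \in Y : B(\pi \circ \Phi(\overrightarrow{x\xi}), z, f(\xi)) = M(x) \}$ centered at $f(\xi)$; Corollary \ref{cor:AtleastthreeExtremalDirections} supplies $|K_{x,y_0}| \geq 3$ and injectivity of $f$ promotes these to three distinct horospheres.

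For the codimension, the three distinct unit vectors in $S$ cannot all lie in a one-dimensional subspace, since such a subspace could only contain two antipodal unit vectors; hence $\dim V \geq 2$, and combined with $W \subseteq V^{\perp}$ and $\dim W = \dim M_x$, this gives $\codim M_x \geq \dim V \geq 2$. The part I expect to be most delicate is the subdifferential calculation and the inclusion $W \subseteq V^{\perp}$ at a relative interior point---in particular one needs the supremum defining $g'_+$ to be attained (via compactness of $K_{x,y_0}$ and continuity of $\xi \mapsto \overrightarrow{y_0 f(\xi)}$) and to pass correctly through the linear-subspace structure of $W$; once that is in place, everything else reduces to convexity arguments and the Busemann-function machinery already developed.
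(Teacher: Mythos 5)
Your proof is correct and reaches all four conclusions, but the central step --- forcing $B(y_0,y,f(\xi))=0$ for every $y\in M_x$ --- runs on a genuinely different engine than the paper's. The paper never touches the tangent cone of $M_x$: it takes the convex combination $\sum_i\alpha_i\overrightarrow{y_0f(\xi_i)}=0$ with all $\alpha_i>0$ supplied by Lemma \ref{lem:ConvexHull}, notes that convexity of $t\mapsto B(y_0,\delta(t),f(\xi_i))$ together with $u_{x,y}(\xi_i)\le M(x)$ forces each $\langle\delta'(0),\overrightarrow{y_0f(\xi_i)}\rangle\ge 0$, and kills all these inner products at once by pairing $\delta'(0)$ against the vanishing combination; the codimension bound is then extracted from transversality of the resulting horospheres. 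Your route instead uses the first-variation (Danskin) formula for the sup-function $g$ together with the claim that the tangent cone $W$ of $M_x$ at a relative-interior point is a linear subspace. That claim is true but not free: it rests on the Cheeger--Gromoll structure theorem that a closed convex subset of a Riemannian manifold is a topological submanifold whose relative interior is a smooth totally geodesic submanifold, which you should cite since nothing in the paper supplies it. (Note also that only the implication $v\in W\Rightarrow g'_+(y_0,v)=0$ and the easy inequality $g'_+(y_0,v)\ge\langle-\overrightarrow{y_0f(\xi)},v\rangle$ for $\xi\in K_{x,y_0}$ are actually used, so the delicate half of Danskin's theorem can be avoided; your ``exactly when'' is an overstatement you never rely on.) What your approach buys is a tidier payoff: containment in the horosphere of \emph{every} $\xi\in K_{x,y_0}$, and a codimension estimate by pure linear algebra via $W\subseteq V^\perp$ and $\dim V\ge 2$. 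The remaining items (convexity as the argmin of a convex function, the $2M(x)$ diameter bound, compactness) match the paper's proof up to an orientation convention in the Busemann cocycle.
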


\begin{proof}[Proof of Lemma \ref{lem:IntersectionofHorospheres}]
If $M_x$ consists of exactly one point, this is trivial. Suppose, $M_x$ contains at least two points. Let $y \neq y' \in M_x$ and denote the geodesic from $y$ to $y'$ by $\gamma$. Since $\Vert u_{x,\gamma} \Vert_{\infty}$ is convex, greater or equal to $M(x)$ and equal to $M(x)$ at both endpoints, we conclude that $\Vert u_{x, \gamma} \Vert_{\infty} \equiv M(x)$. Therefore, $M_x$ is convex.

Let $p$ be any point on $\gamma$ strictly between $y$ and $y'$. Since $u_{x,p}$ is continuous, we find at least one $\xi \in K_{x,p}$. Since $u_{x,\gamma}(\xi)$ is convex (for any $\xi \in \partial X$), we obtain that it is either constant or increasing in one direction. If it was increasing, then $\Vert u_{x,\gamma}Ê\Vert_{\infty} > \vert u_{x,p}(\xi) \vert = M(x)$ for some $\gamma(t)$ near, but not equal, to $p$. This contradicts the fact that $\Vert u_{x, \gamma} \Vert_{\infty} \equiv M(x)$. Therefore, $u_{x, \gamma}(\xi) \equiv M(x)$ along $\gamma$.\\

By Proposition \ref{prop:ExtremalDirections}, we conclude that every $y \in M_x$ lies on a geodesic from $f(a_x(\xi))$ to $f(\xi)$. Therefore, $\gamma$ is contained in a horosphere centered at $f(\xi)$ intersected with a flat strip from $f(a_x(\xi))$ to $f(\xi)$. Furthermore, we see that for every point $p$ on $\gamma$ that is not an end point, any element $\xi \in K_{x,p}$ realises the supremum $\Vert u_{x,\gamma(t)} \Vert_{\infty}$ at every point on the geodesic $\gamma$. In particular, if we extend $\gamma$ to its maximal length such that it is still contained in $M_x$, the points in $\partial X$ that obtain $u_{x,\gamma}(\xi) = M(x)$ are the same along the entire geodesic, except for some extremal points that appear only at the endpoints of the extended geodesic.

Choose $y_0$ on $\gamma$ not an endpoint. By Lemma \ref{lem:ConvexHull} and Corollary \ref{cor:AtleastthreeExtremalDirections}, there exist $k \geq 3, \xi_1, \dots, \xi_k \in K_{x,y_0}$ and $\alpha_1, \dots, \alpha_k > 0$ such that $\sum_{i=1}^k \alpha_i = 1$ and $\sum_{i=1}^k \alpha_i \overrightarrow{yf(\xi_i)} = 0$. In particular, $\gamma$ is contained in the intersection of horospheres centered at $f(\xi_1), \dots f(\xi_k)$. Suppose, $M_x$ is not contained in the intersection of these horospheres. Then we find $y \in M_x$ such that $B(y, y_0, \xi_i) \neq 0$ for some $i$. Without loss of generality, $B(y, y_0, \xi_1) \neq 0$. Since $M(x) \geq u_{x,y}(\xi_i) = u_{x,y_0}(\xi_i) + B(y_0, y, \xi_i) = M(x) + B(y_0, y, \xi_i)$, we conclude that $B(y_0, y, \xi_i) \leq 0$ for all $i$ and $B(y_0, y, \xi_1) < 0$. Let $\delta$ be the geodesic from $y_0$ to $y$. By convexity, $B(y_0, \delta(t), \xi_1)$ is decreasing for small, positive $t$. Therefore,
\[ 0 > \frac{d}{dt}\vert_{t=0} B(y_0, \delta(t), f(\xi_1)) = \langle \delta'(0), \overrightarrow{yf(\xi_1)} \rangle. \]

On the other hand,
\[ 0 = \langle \delta'(0), \sum_{i = 1}^k \alpha_i \overrightarrow{yf(\xi_i)} \rangle = \alpha_1 \langle \delta'(0), \overrightarrow{yf(\xi_1)} \rangle + \sum_{i=2}^k \alpha_i \langle \delta'(0), \overrightarrow{yf(\xi_i)} \rangle. \]

Since $\alpha_i > 0$ for all $k$, we conclude that
\[ \frac{d}{dt}\vert_{t=0} B(y_0, \delta(t), f(\xi_i)) = \langle \delta'(0), \overrightarrow{yf(\xi_i)} \rangle > 0 \]
for some $i \geq 2$. In particular, $u_{x,\delta(t)}(\xi_i) = u_{x, y_0}(\xi_i) + B(y_0, \delta(t), f(\xi_i)) > u_{x,y_0}(\xi_i) = M(x)$ for $t > 0$ sufficiently small. Since $y, y_0 \in M_x$ and $M_x$ is convex, we have found an element $\delta(t)$ in $M_x$ for which $\Vert u_{x,\delta(t)} \Vert_{\infty}$ is not minimal, a contradiction. Therefore, there can be no point $y$ outside of the intersection of the horospheres centered at the points $f(\xi_1), \dots, f(\xi_k)$. Since, among any three distinct horospheres with non-empty intersection, at least two of them intersect transversely and horospheres have codimension one, we conclude that $M_x$ has codimension at least two. This proves the Lemma except for the bound on the diameter.\\

To estimate the diameter, consider $y, y' \in M_x$, let $\delta$ be the geodesic from $y$ to $y'$ and let $\xi \in \partial X$ such that $f(\xi)$ is the forward end-point of the geodesic ray induced by $\delta$. Then
\[ u_{x,y}(\xi) = u_{x,y'}(\xi) + B(y', y, f(\xi)) = u_{x,y'}(\xi) + d(y,y'). \]

Since $y, y' \in M_x$, the expressions $u_{x,y}(\xi), u_{x,y'}(\xi)$ are both bounded in absolute value by $M(x)$. The equation above shows that, whenever $d(y,y') > 2M(x)$, i.e. the length of $\delta$ greater than $2M(x)$, this bound is violated by at least one of the two terms. We obtain that any two points in $M_x$ are connected by a geodesic of length at most $2M(x)$. This provides the bound on the diameter.
\end{proof}

\begin{proof}[Proof of Proposition \ref{prop:NowhereDensity}]
By the proof of Lemma \ref{lem:IntersectionofHorospheres}, if $M_x$ contains at least two points, any geodesic in $M_x$ is contained within a flat strip. Therefore, $E_Y$ is contained in the union of all flat strips in $Y$. Since every bi-infinite geodesic in a flat strip admits a perpendicular, parallel Jacobi field, every flat strip in $Y$ is contained in $F_Y$. Therefore, $E_Y \subset F_Y$.
\end{proof}

Based on Lemma \ref{lem:IntersectionofHorospheres}, it makes sense to define the set
\[ K_x := \{ \xi \in \partial X \vert \forall y \in M_x : u_{x,y}(\xi) = M(x) \}. \]

The set $K_x$ is non-empty, compact and, by the proof of Lemma \ref{lem:IntersectionofHorospheres}, contains at least three points.\\

In \cite{Biswas17a}, the map $F$ is constructed as the limit of a sequence of circumcenters. There is another geometric interpretation of $M_x$ and $M(x)$, which we present here. Any vector $v \in TX$ defines a horoball in $X$, namely the set

\[ HB(v) := \{ x \in X \vert B(\pi(v), x, v_{\infty}) \leq 0 \}. \]

Consider the horoballs $HB(\Phi(v))$ for all $v \in T_x^1 X$. Define $\Phi(v)^t$ to be the vector obtained by applying the geodesic flow on $Y$ to the vector $\Phi(v)$ (the geodesic flow sends equivalence classes in $T_y Y$ to equivalence classes). Since $\Phi(-v) = -\Phi(v)$, we know that the intersection $\bigcap_{v \in T_x^1 X} HB(\Phi(v))$ is the smallest non-empty intersection in the sense that $\bigcap_{v \in T_x^1 X} HB(\Phi(v)^t) = \emptyset$ for all $t > 0$. If this intersection is empty, there is a minimal $t$, such that $\bigcap_{v \in T_x^1 X} HB(\Phi(v)^{-t})$ is non-empty. This minimal $t$ equals $M(x)$ and the intersection of the horoballs $\Phi(v)^{-t}$ equals $M_x$ (see figure \ref{fig:DefiningF} for the situation where $M_x$ consists of one point).

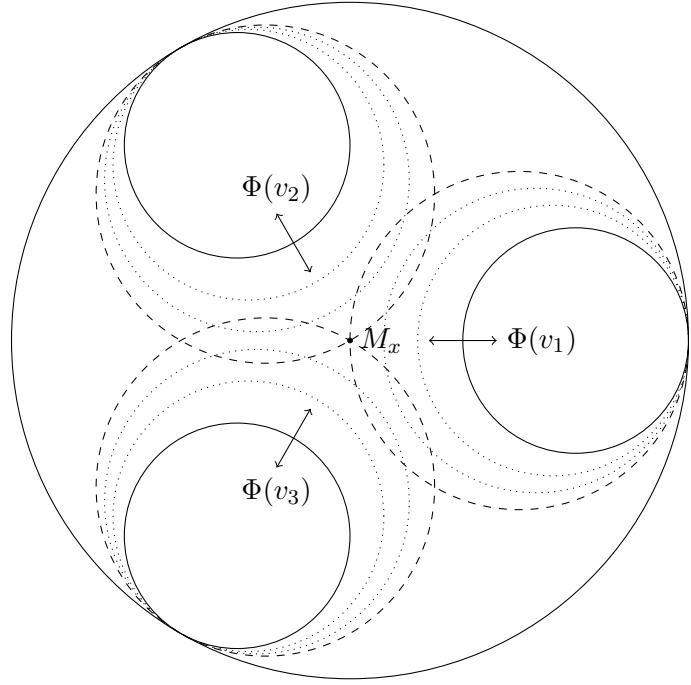
\begin{figure} 
\begin{tikzpicture}[scale=1.50]
\draw [] (0,0) circle [radius=3cm];
\draw [->] (1,0) -- (1.3,0);
\node[right] at (1.3,0) {$\Phi(v_1)$};
\draw [->] (1,0) -- (0.7,0);
\draw [] (2,0) circle [radius=1cm];
\draw [dotted] (1.8,0) circle [radius=1.2cm];
\draw [dotted] (1.65,0) circle [radius=1.35cm];
\draw [dashed] (1.5,0) circle [radius=1.5cm];
\draw [->] (-0.5, 0.866) -- (-0.65, 1.126);
\node[above] at (-0.65,1.126) {$\Phi(v_2)$};
\draw [->] (-0.5, 0.866) -- (-0.35, 0.606);
\draw [] (-1, 1.732) circle [radius = 1cm];
\draw [dotted] (-0.9, 1.559) circle [radius = 1.2cm];
\draw [dotted] (-0.825, 1.429) circle [radius = 1.35cm];
\draw [dashed] (-0.75, 1.299) circle [radius=1.5cm];
\draw [->] (-0.5, -0.866) -- (-0.65, -1.126);
\node[below] at (-0.65,-1.126) {$\Phi(v_3)$};
\draw [->] (-0.5, -0.866) -- (-0.35, -0.606);
\draw [] (-1, -1.732) circle [radius = 1cm];
\draw [dotted] (-0.9, -1.559) circle [radius = 1.2cm];
\draw [dotted] (-0.825, -1.429) circle [radius = 1.35cm];
\draw [dashed] (-0.75, -1.299) circle [radius=1.5cm];
\draw [fill] (0,0) circle [radius=0.02cm];
\node [right] at (0,0) {$M_x$};
\end{tikzpicture}
\caption{If we flow the images $\Phi(v_i)$ backwards in $Y$, we increase the drawn horoballs until they all intersect (which happens for the first time in the case of the dashed horoballs). The candidates for $F(x)$ are all the points in the mutual intersection of the dashed horoballs when going over all $v \in T_x^1 X$.}
\label{fig:DefiningF} 
\end{figure}




\section{H\"older and Lipschitz continuity of $F$} \label{sec:HolderLipschitz}

Recall that, in section \ref{subsec:JacobiFields}, we defined $F_X$ to be the set of all points in $X$ that admit a geodesic ray $\gamma$ starting at $x$ and a perpendicular, parallel Jacobi field along $\gamma$. The goal of this section is to prove that $F$ is locally H\"older continuous on $F^{-1}(Y \diagdown F_Y)$ and to provide a sufficient condition for $F$ to be locally Lipschitz continuous. To do so, we will use geometric properties arising from bounds on the second derivative of the Busemann function.

We first introduce some notation. Given a function $g : Y \rightarrow \mathbb{R}$ that is twice continuously differentiable, we can consider its Hessian, i.e.\,the bilinear form induced by its second differential. Since Busemann functions on $\CAT$ manifolds are twice continuously differentiable, we can consider the Hessian of the Busemann function $y \mapsto B(y',y,\eta)$, which we denote by $H_{y_2}B^{\eta}(y)$. Since a change of $y'$ changes the function $y \mapsto B(y',y,\eta)$ by a constant independent of $y$, $H_{y_2}B^{\eta}(y)$ is independent of $y'$. Since Busemann functions are convex in their second variable, $H_{y_2}B^{\eta}(y)$ is semi-positive definite.

Let $\gamma$ be the geodesic ray from $y$ to $\eta$. Since $\frac{d}{dt}\vert_{t=0} B(y',\gamma(t), \eta) = -1$, we see that $H_{y_2}B^{\eta}(y)(\gamma'(0),w) = 0$ for all $w \in T_y Y$. Therefore, we are interested in the restriction of the Hessian to the orthogonal complement of $\gamma'(0) = \overrightarrow{y\eta}$, which we denote by $\overrightarrow{y\eta}^{\perp}$. Let $wÊ\in T_y Y$, $\eta \in \partial Y$. We write $w^{\perp \eta}$ for the orthogonal projection of $w$ onto $\overrightarrow{y\eta}^{\perp}$.

\begin{lem} \label{lem:Hessianbound}
Let $y_0 \in Y \diagdown F_Y$. Then there exists an open neighbourhood $U \subset Y \diagdown F_Y$ of $y_0$ and a constant $\epsilon > 0$, such that for all $y \in U$, $w \in T_y Y$, we have
\[ H_{y_2}B^{\eta}(w,w) \geq \epsilon \Vert w^{\perp \eta} \Vert^2.Ê\]
\end{lem}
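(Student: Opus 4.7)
The plan is to express $H_{y_2}B^{\eta}$ via stable Jacobi fields using Proposition~\ref{prop:HiH}, identify the null directions of the Hessian with perpendicular parallel Jacobi fields, and then extract a uniform positive lower bound by a compactness argument, once one knows that $F_Y$ is closed.

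First, let $J_w$ denote the unique stable Jacobi field along the ray $\eta_y$ with $J_w(0) = w$. Proposition~\ref{prop:HiH} together with $\Xi = -\grad B^{\eta}$ yields
\[ H_{y_2}B^{\eta}(w,w) = -\langle \nabla_w \Xi, w\rangle = -\langle \tfrac{DJ_w}{dt}(0), w\rangle = -\tfrac{1}{2} (\Vert J_w \Vert^2)'(0). \]
The function $\Vert J_w\Vert^2$ is convex (since curvature is non-positive) and bounded (by stability of $J_w$), hence non-increasing on $[0,\infty)$, so the Hessian is non-negative. If equality holds for some $w \neq 0$ orthogonal to $\overrightarrow{y\eta}$, then $\Vert J_w\Vert^2$ is constant on $[0,\infty)$, and the identity $(\Vert J_w\Vert^2)'' = 2\Vert DJ_w/dt\Vert^2 - 2\langle R(J_w,\gamma')\gamma',J_w\rangle$ forces $J_w$ to be a parallel perpendicular Jacobi field of constant nonzero norm, whence $y \in F_Y$. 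Splitting $w = w^T + w^{\perp\eta}$ along and orthogonal to $\overrightarrow{y\eta}$, the stable Jacobi field with initial condition $w^T$ is a reparametrisation of $\gamma'$ and contributes nothing to the Hessian, so $H_{y_2}B^{\eta}(w,w) = H_{y_2}B^{\eta}(w^{\perp\eta},w^{\perp\eta})$ is strictly positive whenever $w^{\perp\eta} \neq 0$ and $y \notin F_Y$.

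Next, I would show that $F_Y$ is closed: given $y_n \to y$ with $y_n \in F_Y$ and unit-norm perpendicular parallel Jacobi fields $J_n$ along rays $\gamma_n$ from $y_n$, the Arzela-Ascoli theorem extracts a subsequence with $\gamma_n$ converging locally uniformly to a ray $\gamma$ from $y$ and $J_n$ converging on compacts to a Jacobi field $J$ along $\gamma$ inheriting unit norm, perpendicularity, and parallelism, so $y \in F_Y$. Then I pick a compact neighbourhood $\overline{U}$ of $y_0$ disjoint from $F_Y$ and consider the compact sphere bundle
\[ \Sigma := \{(y,\eta,w) : y \in \overline{U},\ \eta \in \partial Y,\ w \in T_y^1 Y,\ w \perp \overrightarrow{y\eta}\}. \]
The map $\phi(y,\eta,w) := H_{y_2}B^{\eta}(w,w)$ is continuous on $\Sigma$ (by continuous dependence of stable Jacobi fields on initial data and on the point at infinity) and strictly positive by the previous paragraph, so $\epsilon := \min_\Sigma \phi > 0$. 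For $y$ in the interior $U$ of $\overline{U}$ and arbitrary $w \in T_y Y$, homogeneity of the Hessian yields $H_{y_2}B^{\eta}(w,w) = \Vert w^{\perp\eta}\Vert^2 \phi(y,\eta, w^{\perp\eta}/\Vert w^{\perp\eta}\Vert) \geq \epsilon\Vert w^{\perp\eta}\Vert^2$ when $w^{\perp\eta} \neq 0$, and the inequality is trivial otherwise.

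The main obstacle is establishing continuity of $\phi$ on $\Sigma$, which hinges on continuous dependence of the stable Jacobi field along a varying geodesic ray both on the initial vector and on the point at infinity; this requires some care since \emph{stable} is a boundary condition at infinity rather than a standard initial-value condition. The closedness of $F_Y$ is also slightly delicate, in that one must verify that parallelism, perpendicularity, and non-degeneracy of the norm all survive the Arzela-Ascoli limit.
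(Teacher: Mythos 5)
Your proposal is correct and follows essentially the same route as the paper: express $H_{y_2}B^{\eta}(w,w)$ as $-\tfrac{1}{2}(\Vert J_w\Vert^2)'(0)$ via Proposition~\ref{prop:HiH}, observe that this is strictly positive for unit perpendicular $w$ precisely when $y \notin F_Y$, and extract a uniform $\epsilon$ by continuity and compactness of $\partial Y$ and the perpendicular unit sphere. You supply some details the paper leaves implicit (the convexity argument identifying null directions with parallel perpendicular Jacobi fields, and the closedness of $F_Y$), and the continuity of the stable Jacobi field in $(y,\eta,w)$ that you flag as the remaining delicate point is exactly the step the paper also asserts without proof.
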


\begin{proof}
Since $y_0 \in Y \diagdown F_Y$, we know that for all $\eta \in \partial Y$ and all $w \in \overrightarrow{y_0 \eta}^{\perp}$ with $\Vert w \Vert = 1$, the unique stable Jacobi field $J_w$ along the geodesic ray $\eta_{y_0}$ satisfies
\[ \frac{ d } {dt}\vert_{t=0} \Vert J_w(t) \Vert^2 < 0. \]

Since $\frac{ d }{dt}\vert_{t=0} \Vert J_w(t) \Vert^2$ depends continuously on $y_0$, $\eta$ and $w$ and since $\partial Y$ and $\overrightarrow{y \eta}^{\perp} \cap T_{y}^1 Y$ are compact for all $y \in Y$, we find some constant $\epsilon > 0$ and an open neighbourhood $U$ of $y_0$, such that $U \subset Y \diagdown F_Y$ and for all $y \in U$, all $\eta \in \partial Y$, all $w \in \overrightarrow{y \eta}^{\perp}$ with $\Vert w \Vert = 1$ and all stable Jacobi fields $J_w$ along the geodesic from $y$ to $\eta$, we have
\begin{equation*} \label{eq:CurvatureEstimate} \frac{ d } {dt}\vert_{t=0} \Vert J_w(t) \Vert^2 \leq -2\epsilon. \end{equation*}

Using Proposition 3.1 in \cite{HiH} (see section \ref{subsec:JacobiFields}), we have for all $y \in U, \eta \in \partial Y, w \in \overrightarrow{y\eta}^{\perp}$ with $\Vert w \Vert = 1$:
\[ H_{y_2} B^{\eta}(y)(w,w) = \langle \nabla_w (-\overrightarrow{y\eta}), w \rangle = -\frac{1}{2} \frac{d}{dt}\vert_{t = 0} \Vert J_w(t) \Vert^2 \geq \epsilon. \]
Since $H_{y_2}B^{\eta}(y)(\overrightarrow{y\eta}, w) = 0$ for all $w \in T_y Y$ and since the Hessian is bilinear, we obtain the estimate stated in the Lemma.
\end{proof}

We need one more piece of notation before stating the results on local H\"older and local Lipschitz continuity. We define the sets
\begin{equation*}
\begin{split}
& \mathcal{D}_X := F^{-1}(Y \diagdown F_Y)\\
& L_X := \{ x \in \mathcal{D}_X \vert \exists U \text{ open neighbourhood of } x, \exists \epsilon > 0 :\\
& \qquad \qquad \qquad \quad \; \forall x' \in U, \forall w \in T_{F(x')}^1 Y, \exists \xi \in K_x : \langle w, \overrightarrow{F(x')f(\xi)} \rangle > \epsilon \}.
\end{split}
\end{equation*}

\begin{rem}
By Lemma \ref{lem:ConvexHull}, any $x \in X$ and any $w \in T_{F(x)}^1 Y$ admits $\xi \in K_x$ such that $\langle w, \overrightarrow{ F(x)f(\xi)} \rangle \geq 0$. However, in dimension three and higher, it is very unclear, if strict inequality can be obtained in general and if it can be obtained uniformly in an open neighbourhood of $x$.
\end{rem}

\begin{prop} \label{prop:Holdercontinuity}
The map $F$ is locally $\frac{1}{2}$-H\"older continuous on $D_X$ and locally Lipschitz continuous on $L_X$.
\end{prop}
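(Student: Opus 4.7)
The plan is to derive, for each $x_0$ under consideration, a local lower bound on the growth of $\phi_x(y) := \|u_{x,y}\|_\infty$ near its minimum $F(x)$, and then combine this with the upper bound of Lemma~\ref{lem:LipschitzContinuityM} to read off the modulus of continuity of $F$. I would first set up as follows: fix $x_0 \in \mathcal{D}_X$, so by Proposition~\ref{prop:NowhereDensity} the point $y_0 := F(x_0) \in Y \setminus F_Y$ is well-defined as a genuine element of $Y$. Lemma~\ref{lem:Hessianbound} furnishes an open convex neighborhood $V \subset Y \setminus F_Y$ of $y_0$ and a constant $\epsilon_H > 0$ with the uniform Hessian bound $H_{y_2}B^\eta(y)(w,w) \geq \epsilon_H \|w^{\perp\eta}\|^2$ on $V$. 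Using continuity of $F$ (Lemma~\ref{lem:ContinuityF}), I would shrink to a neighborhood $U$ of $x_0$ so that $F(U)$ and all geodesic segments joining its points lie inside $V$.

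For the H\"older bound, given $x, x' \in U$, write $y := F(x)$, $y' := F(x')$, and let $\gamma : [0,L] \to V$ be the unit-speed geodesic from $y$ to $y'$, so $L = d(y,y')$. Applying Lemma~\ref{lem:ConvexHull}(2) at $y$ to the tangent vector $w = \gamma'(0)$ yields $\xi \in K_x$ with $\langle \gamma'(0), \overrightarrow{yf(\xi)}\rangle \leq 0$. Define $g(t) := B(y, \gamma(t), f(\xi))$; then $g(0) = 0$, $g'(0) = -\langle \gamma'(0), \overrightarrow{yf(\xi)}\rangle \geq 0$, and $g''(s) \geq \epsilon_H\bigl(1 - \langle \gamma'(s), \overrightarrow{\gamma(s)f(\xi)}\rangle^2\bigr)$. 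A short case distinction on $g'(0)$ relative to $L$ --- if $g'(0) \gtrsim L$ the first-order term $Lg'(0)$ already yields a quadratic bound, and otherwise $\gamma'(s)$ stays nearly perpendicular to the radial direction of $f(\xi)$ throughout $[0,L]$ by uniform continuity and the Hessian integral takes over --- produces $g(L) \geq c_0 L^2$ with a constant $c_0 > 0$ uniform on $U$, provided $L$ is below a uniform threshold. Since $\xi \in K_x$ gives $u_{x,y}(\xi) = M(x)$, I conclude $\|u_{x,y'}\|_\infty \geq u_{x,y'}(\xi) = M(x) + g(L) \geq M(x) + c_0 L^2$.

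The matching upper bound is immediate from Lemma~\ref{lem:LipschitzContinuityM}: $\|u_{x,y'}\|_\infty \leq M(x') + d(x,x') \leq M(x) + 2d(x,x')$. Combining, I get $c_0 d(F(x),F(x'))^2 \leq 2 d(x,x')$, which is the local $\tfrac{1}{2}$-H\"older bound. For the Lipschitz statement on $L_X$, the only change is in the source of $\xi$: instead of Lemma~\ref{lem:ConvexHull}(2), I invoke the defining property of $L_X$ applied at $x' = x$ to $w = -\gamma'(0)$, obtaining $\xi \in K_{x_0}$ with $\langle \gamma'(0), \overrightarrow{yf(\xi)}\rangle < -\epsilon_L$. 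Convexity of $g$ with $g(0) = 0$ then gives $g(L) \geq L g'(0) \geq \epsilon_L L$ directly from the first-order term, and the same upper bound produces the linear estimate.

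The principal technical obstacle sits in the H\"older growth bound: the lower bound $\epsilon_H(1 - \beta(s)^2)$ on $g''(s)$ degenerates precisely when the angle $\beta(s) := \langle \gamma'(s), \overrightarrow{\gamma(s)f(\xi)}\rangle$ approaches $\pm 1$, and one must check that whenever this degeneracy is felt the first-order term $Lg'(0)$ compensates with the right order. The argument relies on compactness of the unit tangent sphere, of $K_x$, and of the closure of the relevant geodesic configurations in $V$, together with enough uniform continuity of $\beta$ along short geodesics. By contrast, the Lipschitz half is comparatively clean, since the defining inequality of $L_X$ is tailor-made to produce a strict first-order sign and so bypass the Hessian altogether; the uniformity of the $L_X$-condition in $x' \in U$ is exactly what allows one to run the argument for every pair of points in a common neighborhood.
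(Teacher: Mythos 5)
Your proposal is correct in substance and follows essentially the same route as the paper's proof: a uniform Hessian lower bound from Lemma~\ref{lem:Hessianbound}, a direction $\xi \in K_x$ with nonnegative first-order term supplied by Lemma~\ref{lem:ConvexHull}, a quadratic (resp.\ linear) lower bound on $B(F(x),F(x'),f(\xi))$, and the matching upper bound $B(F(x),F(x'),f(\xi)) \leq -M(x)+d(x,x')+M(x') \leq 2d(x,x')$ from the geodesic conjugacy together with the $1$-Lipschitz continuity of $M$. Your treatment of the degenerate directions --- splitting on the size of $g'(0)$ relative to $L$ and integrating the Hessian along the geodesic, rather than the paper's dichotomy on whether the extended direction $\xi_0$ lies in $K_x$ --- is a mild reorganization that, if anything, makes the uniformity of the H\"older constant more transparent (the paper's $\delta$ in its Case~2 a priori depends on the pair $x,x'$). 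The one point to repair is in the Lipschitz half: the identity $u_{x,y'}(\xi) = M(x) + g(L)$ needs $u_{x,F(x)}(\xi)=M(x)$, i.e.\ $\xi \in K_x$ for the \emph{varying} point $x$, whereas you extract $\xi \in K_{x_0}$; with $\xi$ only in $K_{x_0}$ the lower bound on $\Vert u_{x,y'}\Vert_\infty$ picks up the uncontrolled defect $M(x)-u_{x,F(x)}(\xi)$ and the linear estimate does not close. This is an ambiguity already present in the paper (its definition of $L_X$ attaches $K$ to the center point while its proof uses $K_x$ for the moving point); you should adopt the latter reading.
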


The statement on H\"older continuity generalises a result by Biswas in \cite{Biswas17a}. The proof is, however, different.

\begin{proof}
Let $x_0 \in \mathcal{D}_X$. By Lemma \ref{lem:Hessianbound}, we find an open neighbourhood $U$ of $x$ and $\epsilon > 0$, such that for all $x \in U$, the Hessian $H_{y_2}B^{f(\xi)}(F(x))$ is positive definite on the subspace $\overrightarrow{F(x)f(\xi)}^{\perp}$ and its positive Eigenvalues are at least $\epsilon$. Let $x, x' \in U$ and let $f(\xi_0)$ be the point represented by the geodesic ray obtained by extending the geodesic from $F(x')$ to $F(x)$. Note that $\overrightarrow{F(x)F(x')} = - \overrightarrow{F(x)f(\xi_0)}$. There are two cases.\\

Case 1: If $\xi_0 \in K_x$, then
\begin{equation*}
\begin{split}
d(F(x),F(x')) & = B(F(x),F(x'),f(\xi_0))\\
& = B(F(x), \pi \circ \Phi(\overrightarrow{x\xi_0}), f(\xi_0)) + B(x,x',\xi_0) + B(\pi \circ \Phi(\overrightarrow{x'\xi_0}), F(x'),f(\xi_0))\\
& \leq -M(x) + d(x,x') + M(x')\\
& \leq 2d(x,x'),
\end{split}
\end{equation*}
where we use that $M$ is $1$-Lipschitz continuous by Lemma \ref{lem:LipschitzContinuityM}.\\

Case 2: Suppose, $\xi_0 \notin K_x$. By continuity of the Riemannian metric, there exists $\delta > 0$, such that for all $\xi \in K_x$, $\langle -\overrightarrow{F(x)F(x')}, \overrightarrow{F(x)f(\xi)} \rangle \leq 1 - \delta^2$. By Lemma \ref{lem:ConvexHull}, we find $\xi \in K_x$ such that $\langle -\overrightarrow{F(x)F(x')}, \overrightarrow{F(x)f(\xi)} \rangle \geq 0$. In particular, this $\xi$ satisfies $\Vert \overrightarrow{F(x)F(x')}^{\perp f(\xi)} \Vert^2 \geq \delta^2$. By Taylor approximation, we know that
\begin{equation*}
\begin{split}
B(F(x),F(x'),f(\xi)) & = -\left\langle \overrightarrow{F(x)F(x')}, \overrightarrow{F(x)f(\xi)} \right\rangle d(F(x),F(x'))\\
& \quad \, + H_{x_2}B^{f(\xi)}\left(\overrightarrow{F(x)F(x')}, \overrightarrow{F(x)F(x')}\right) d(F(x),F(x'))^2\\
& \quad \, + o\left(d(F(x),F(x'))^2\right).
\end{split}
\end{equation*}
Let $0 < \lambda < 1$. For $d(F(x),F(x'))$ sufficiently small (`sufficiently small' depending on $\lambda$), this implies
\begin{equation*}
\begin{split}
B(F(x),F(x'),f(\xi)) & \geq \lambda \epsilon \left\Vert \overrightarrow{F(x)F(x')}^{\perp f(\xi)} \right\Vert^2 d(F(x),F(x'))^2\\
& \geq \lambda \epsilon \delta^2 d(F(x),F(x'))^2
\end{split}
\end{equation*}

Let $U_{\lambda} \subset U$ be an open neighbourhood of $x_0$, such that for all $x, x' \in U$, $d(F(x),F(x'))$ is sufficiently small in the sense above. On the other hand,
\begin{equation*}
\begin{split}
B(F(x),F(x'),f(\xi)) & = B(F(x), \pi \circ \Phi(\overrightarrow{x\xi}), f(\xi)) + B(x,x',\xi) + B(\pi \circ \Phi(\overrightarrow{x'\xi}), F(x'),f(\xi))\\
& \leq -M(x) + d(x,x') + M(x')\\
& \leq 2d(x,x').
\end{split}
\end{equation*}

We conclude that for all $x, x' \in U_{\lambda} \subset U$,
\[ d(F(x),F(x'))^2 \leq \frac{2}{\lambda \epsilon \delta^2} d(x,x'). \]

Combining both cases, we conclude that $F$ is locally $\frac{1}{2}$-H\"older continuous.\\

The proof of local Lipschitz continuity follows the same line of computation. Let $x_0 \in L_X$. We find an open neighbourhood $U$ of $x_0$ and $\epsilon > 0$, such that for all $x \in U, w \in T_{F(x)}^1 Y$, there is a $\xi \in K_x$ such that $- \langle w, \overrightarrow{F(x)f(\xi)} \rangle > \epsilon$. Additionally, we choose $U$ sufficiently small such that for all $x \in U$, the positive Eigenvalues of the Hessian $H_{y_2}B^{f(\xi)}(F(x))$ are at least $\epsilon' > 0$. Let $x, x' \in U$ and let $f(\xi_0)$ be the point represented by the geodesic ray obtained by extending the geodesic from $F(x')$ to $F(x)$. We have the same cases as before.

Case 1: If $\xi_0 \in K_x$, then $d(F(x),F(x')) \leq 2d(x,x')$ by the same argument as above.

Case 2: If $\xi_0 \notin K_x$, then -- as before -- we find $\delta > 0$, such that for all $\xi \in K_x$, $\langle - \overrightarrow{F(x)F(x')}, \overrightarrow{F(x)f\xi)} \rangle \leq 1 - \delta^2$. By construction of $U$, we find $\xi \in K_x$, such that $\epsilon < \langle - \overrightarrow{F(x)F(x')}, \overrightarrow{F(x)f(\xi)} \rangle \leq 1 - \delta^2$ and $\Vert \overrightarrow{F(x)F(x')}^{\perp f(\xi)} \Vert^2 \geq \delta^2$. Therefore, we have
\begin{equation*}
\begin{split}
B(F(x), F(x'),f(\xi)) & = - \langle \overrightarrow{F(x)F(x')}, \overrightarrow{F(x)f(\xi)} \rangle d(F(x),F(x'))\\
& \quad \; + H_{y_2}B^{f(\xi)}(F(x))\left(\overrightarrow{F(x)F(x')}, \overrightarrow{F(x)F(x')} \right) d(F(x),F(x'))^2\\
&\quad \; + o(d(F(x),F(x'))^2)\\
& \geq \epsilon d(F(x),F(x')) + \epsilon' \delta^2 d(F(x),F(x'))^2 + o(d(F(x),F(x'))^2)
\end{split}
\end{equation*}
For $d(F(x),F(x'))$ sufficiently small, we obtain
\begin{equation*}
\begin{split}
\epsilon d(F(x),F(x')) & \leq B(F(x),F(x'),f(\xi))\\
& \leq -M(x) + d(x,x') + M(x')\\
& \leq 2d(x,x').
\end{split}
\end{equation*}
Let $x_0 \in V \subset U$ with $V$ open such that for all $x, x' \in V$, $d(F(x),F(x'))$ is sufficiently small in the sense of the inequality above. We conclude that, for all $x, x' \in V$, $d(F(x),F(x')) \leq \frac{2}{\epsilon} d(x,x')$. Therefore, $F$ is locally Lipschitz continuous near all $x_0 \in L_X$.

\end{proof}

\begin{cor} \label{cor:differentiabilityofF}
The map $F : L_X \rightarrow Y$ is differentiable almost everywhere, i.e. there exists a Lebesgue zero set in $L_X$, such that $F$ is differentiable outside of this zero set.
\end{cor}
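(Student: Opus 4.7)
The plan is to deduce this corollary almost immediately from Proposition \ref{prop:Holdercontinuity} by invoking Rademacher's theorem. Recall that Rademacher's theorem states that every locally Lipschitz map between open subsets of Euclidean spaces is differentiable Lebesgue-almost everywhere, and this extends to maps between smooth (Riemannian) manifolds by working in charts, since smooth coordinate changes preserve both Lipschitz regularity and Lebesgue null sets.

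First I would observe that $L_X$ is an open subset of $X$: by the very definition of $L_X$, every $x_0 \in L_X$ admits an open neighbourhood $U \subset \mathcal{D}_X$ on which the defining estimate holds uniformly, and since this estimate is phrased through $K_x$ and the Riemannian inner product, every point of $U$ inherits the same property and thus lies in $L_X$. Hence $L_X$ is a smooth manifold and can be covered by a countable collection of open sets $V_i$ over which $F$ is Lipschitz continuous (using Proposition \ref{prop:Holdercontinuity} together with the fact that $L_X$ is second countable, so we may cover it by countably many relatively compact open sets on which local Lipschitz continuity becomes Lipschitz continuity).

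Next, for each $V_i$ pick smooth charts on $V_i \subset X$ and on an open subset of $Y$ containing $F(V_i)$. The coordinate representation of $F\vert_{V_i}$ is then a Lipschitz map between open subsets of $\mathbb{R}^n$, to which Rademacher's theorem applies directly, producing a Lebesgue null set $N_i \subset V_i$ outside of which $F$ is differentiable. Setting $N := \bigcup_i N_i$ gives a Lebesgue null set in $L_X$ such that $F$ is differentiable on $L_X \setminus N$.

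Since this proof is essentially just a citation of Rademacher's theorem applied in charts, there is no real obstacle; the only thing that requires a brief verification is that $L_X$ is open (so that the notion of Lebesgue null set and a.e.\ differentiability make sense in the intrinsic sense on the manifold) and that the local Lipschitz constants furnished by Proposition \ref{prop:Holdercontinuity} can be globalised to countably many relatively compact subsets, both of which are standard.
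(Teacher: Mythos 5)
Your proof is correct and takes essentially the same approach as the paper, which dispatches the corollary in one line by citing Rademacher's theorem together with second countability of manifolds; you simply supply the standard chart-based details. One small caveat: whether $L_X$ is literally open depends on reading $\xi\in K_x$ in the definition of $L_X$ as referring to the inner variable $x'$ (consistent with how the condition is invoked in the proof of Proposition \ref{prop:Holdercontinuity}), but this is immaterial, since Proposition \ref{prop:Holdercontinuity} in any case furnishes an open set containing $L_X$ on which $F$ is locally Lipschitz, and Rademacher applies there.
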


This is an immediate application of Rademacher's theorem, exploiting the fact that manifolds are second countable.

\begin{rem}
It is important to note that it is a-priori not clear, whether $\mathcal{D}_X \neq \emptyset$. One of the most crucial obstacles to proving that $\mathcal{D}_X$ is non-empty is the lack of injectivity results for the map $F$. If $F$ was locally injective, some assumptions about $F_Y$ being small would carry over to $F^{-1}(F_Y)$ -- e.g. $F_Y$ having codimension one. If we additionally understood more about the topology of $\overline{Y}$, even more general conditions about $F_Y$ being small -- e.g. $F_Y$ being nowhere dense -- would translate into statements about $F^{-1}(F_Y)$ being small.

As we will see in the next section, there are results of this type for certain special cases, but at the time of writing, little is known about injectivity in the general case.
\end{rem}

Summarising the last two sections, we have proven the following theorem.

\begin{thm} \label{thm:Extensionresult}
Let $X, Y$ be Hadamard manifolds whose sectional curvatures are bounded from below by $-b^2$ such that $\partial X, \partial Y$ satisfy (4v) and all points in $\partial X$ and $\partial Y$ are in a rank 1 hinge. Let $f : \partial X \rightarrow \partial Y$ be a M\"obius homeomorphism, such that $f$ and $f^{-1}$ send visible pairs to visible pairs. Then, there exists an equivalence relation $\sim$ on $Y$, such that the projection $P : Y \rightarrow \faktor{Y}{\sim}$ restricted to $Y \diagdown F_Y$ is a homeomorphism onto its image and there exists a continuous map $F : X \rightarrow \faktor{Y}{\sim}$, which is locally $\frac{1}{2}$-H\"older continuous on $X \diagdown F^{-1}(F_Y)$.
\end{thm}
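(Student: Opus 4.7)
The theorem is essentially a compilation of the construction and results from the preceding two sections, so my plan is to assemble the pieces in the correct order rather than produce new arguments. First I would invoke the construction from Section \ref{subsec:Phi}, which uses the M\"obius property together with the preservation of visible pairs to build the map $\Phi : \overline{T^1 X} \to \overline{T^1 Y}$ via Lemma \ref{lem:DefiningPhi}. Preservation of visible pairs in both directions is essential here, since otherwise there could be vectors $v \in T^1 X$ for which no geodesic from $f(v_{-\infty})$ to $f(v_{\infty})$ exists, and $\Phi$ would fail to be defined at $[v]$.

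Once $\Phi$ is in place, I would introduce the functions $u_{x,y}(\xi) = B(\pi \circ \Phi(\overrightarrow{x\xi}), y, f(\xi))$ as in Section \ref{subsec:F}. Continuity of $u_{x,y}$ in all three arguments (Lemma \ref{lem:Continuityu}) depends on continuity of the Gromov product at pairs connected by a rank 1 geodesic, which is exactly where the rank 1 hinge assumption enters, through Corollary \ref{cor:ContinuityGromovProduct}. The functions $y \mapsto \Vert u_{x,y} \Vert_\infty$ are convex and proper, so the set $M_x$ of minimizers is non-empty, convex and compact, yielding a well-defined assignment $x \mapsto M_x$. The equivalence relation $\sim$ is the one generated by declaring $y \sim y'$ whenever they lie in a common $M_x$, and $F$ is then defined by $F(x) := [M_x] \in \faktor{Y}{\sim}$.

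Next I would verify the topological properties of $F$ and the projection $P$. Proposition \ref{prop:NowhereDensity} shows that the set $E_Y$ of points of $Y$ lying in some $M_x$ with $\vert M_x \vert \geq 2$ is contained in $F_Y$; the argument runs through Lemma \ref{lem:IntersectionofHorospheres}, where any geodesic in $M_x$ is forced to lie in a flat strip in $Y$, which in turn produces a perpendicular parallel Jacobi field at every point of the strip. Consequently, every equivalence class of $\sim$ meeting $Y \setminus F_Y$ reduces to a single point, so $P$ is injective on $Y \setminus F_Y$; since $P$ is continuous and open onto its image in the quotient topology, this restriction is a homeomorphism onto its image. Continuity of $F$ itself is then Lemma \ref{lem:ContinuityF}: using properness of $y \mapsto \Vert u_{x,y}\Vert_\infty$ combined with the $1$-Lipschitz estimate of Lemma \ref{lem:LipschitzContinuityM}, any sequence $y_n \in M_{x_n}$ with $x_n \to x$ is bounded, and every limit point necessarily lies in $M_x$, hence in the equivalence class $F(x)$.

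Finally, for the local $\frac{1}{2}$-H\"older continuity on $\mathcal{D}_X := F^{-1}(Y \setminus F_Y)$ I would appeal directly to Proposition \ref{prop:Holdercontinuity}. The main content there, and the step I would expect to be the real obstacle if it had not already been dealt with, is the uniform quadratic lower bound on the Hessian of Busemann functions on a neighbourhood of any $y_0 \in Y \setminus F_Y$ provided by Lemma \ref{lem:Hessianbound}, which itself rests on Proposition \ref{prop:HiH}. Together with the upper bound $B(F(x), F(x'), f(\xi)) \leq -M(x) + d(x,x') + M(x') \leq 2 d(x,x')$ coming from the definition of $\Phi$ and the $1$-Lipschitz continuity of $M$ from Lemma \ref{lem:LipschitzContinuityM}, this Hessian bound yields $d(F(x),F(x'))^2 \lesssim d(x,x')$ when the direction $\xi_0$ from $F(x')$ to $F(x)$ does not lie in $K_x$, while in the complementary case $\xi_0 \in K_x$ the linear estimate $d(F(x),F(x')) \leq 2 d(x,x')$ drops out directly from the same Busemann cocycle. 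Combining the two cases gives local $\frac{1}{2}$-H\"older continuity on $X \setminus F^{-1}(F_Y)$, which completes the theorem.
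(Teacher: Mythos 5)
Your assembly matches the paper exactly: the theorem is stated there as a summary of Sections \ref{sec:Definition} and \ref{sec:HolderLipschitz}, and you cite precisely the lemmas used (construction of $\Phi$ and $F$, Lemma \ref{lem:Continuityu}, Proposition \ref{prop:NowhereDensity}, Lemma \ref{lem:ContinuityF}, Proposition \ref{prop:Holdercontinuity}) and correctly trace where each hypothesis enters. One small point worth making explicit in the homeomorphism claim for $P|_{Y\setminus F_Y}$: openness onto the image follows because every point of $Y\setminus F_Y$ is its own equivalence class, so $P^{-1}(P(U))=U$ for any open $U\subset Y\setminus F_Y$, making $P(U)$ open in $\overline{Y}$ by definition of the quotient topology.
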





\section{Applications} \label{sec:Applications}

We now turn to several special cases, in which we can show additional properties of the map $F$. The proofs below are all based either on getting more out of the continuity proof in section \ref{sec:HolderLipschitz}, or on a better understanding of the function $M$.


\subsection{Surfaces} \label{subsec:surfaces}

The goal of this section is to prove the following result.

\begin{thm} \label{thm:Surfaces}
Let $X, Y$ be $2$-dimensional Hadamard manifolds whose sectional curvature is bounded from below by $-b^2$, such that $\partial X$ and $\partial Y$ satisfy (4v) and all points in $\partial X$ and $\partial Y$ are in a rank 1 hinge. Suppose, $f : \partial X \rightarrow \partial Y$ is a M\"obius homeomorphism such that $f$ and $f^{-1}$ preserve visible pairs. Then the circumcenter extension of $f$ is a homeomorphism $F : X \rightarrow Y$. In addition, it is locally Lipschitz continuous on a dense subset and differentiable almost everywhere.

Furthermore, if $F$ and $M$ are differentiable at $x$ and $K_x$ contains at least five points, then $DF_x : T_x X \rightarrow T_{F(x)} Y$ is an isometry of tangent spaces equipped with their respective Riemannian metric. In particular, if $K_x$ has at least five points for almost every $x$, then $F$ is a metric isometry.
\end{thm}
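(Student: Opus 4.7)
The plan is to exploit the two-dimensional hypothesis in two crucial ways. First, Lemma \ref{lem:IntersectionofHorospheres} says $M_x$ is convex of codimension at least two, so in dimension two it is a single point. This kills the equivalence relation of Section \ref{subsec:F} and turns $F$ into a genuine continuous map $X\to Y$; moreover Corollary \ref{cor:AtleastthreeExtremalDirections} guarantees $|K_x|\ge 3$. Second, and more importantly, the unit tangent sphere of $Y$ at any point is a circle, which makes a conic-intersection argument available.

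To obtain the homeomorphism claim, I would construct the circumcenter extension $G:Y\to X$ of $f^{-1}$ and observe that Lemma \ref{lem:Functoriality} together with Lemma \ref{lem:GeodesicConjugacy} yield the identity $v_{y,x'}(f(\xi))=-u_{x',y}(\xi)$, whence $\Vert v_{y,\cdot}\Vert_\infty=\Vert u_{\cdot,y}\Vert_\infty$. One then verifies $G\circ F=\mathrm{id}_X$ and $F\circ G=\mathrm{id}_Y$ via the monotonicity chain $M(G(F(x)))\le\Vert u_{G(F(x)),F(x)}\Vert_\infty\le M(x)$, uniqueness of minimisers (singleton $M_x$), and the properness coming from Lemma \ref{lem:LipschitzContinuityM}. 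Local Lipschitz continuity on a dense subset follows from Proposition \ref{prop:Holdercontinuity} once one checks that in dimension two the three extremal directions from Corollary \ref{cor:AtleastthreeExtremalDirections} are generically spread around $S^1$ in a way that opens up the condition defining $L_X$; differentiability almost everywhere is then Rademacher.

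For the differential-is-an-isometry statement, fix $x$ where $F$ and $M$ are both differentiable, pick $\xi\in K_x$, and combine $u_{x,F(x)}(\xi)=M(x)$, $u_{x',F(x')}(\xi)\le M(x')$, and the Busemann cocycle identity
\[ u_{x',F(x')}(\xi)-u_{x,F(x)}(\xi)=-B(x,x',\xi)+B(F(x),F(x'),f(\xi)), \]
which comes directly from Lemma \ref{lem:GeodesicConjugacy}. Taylor expanding both Busemann functions to first order along $x'(t)=\exp_x(tv)$, using $d(F(x'(t)),\exp_{F(x)}(tDF_x(v)))=o(t)$ and $M(x'(t))=M(x)+tDM_x(v)+o(t)$, and then letting $t\to 0^\pm$, produces the pointwise linear identity
\[ DM_x(v)=\langle v,\overrightarrow{x\xi}\rangle-\langle DF_x(v),\overrightarrow{F(x)f(\xi)}\rangle\qquad\forall\,\xi\in K_x,\ v\in T_xX. \]
Writing $A=DF_x$, $a_\xi=\overrightarrow{x\xi}$, $b_\xi=\overrightarrow{F(x)f(\xi)}$ and $g=\mathrm{grad}\,M(x)$, this rearranges to $A^{\!*}b_\xi=a_\xi-g$ for every $\xi\in K_x$.

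The geometric punchline is short. The set $A^{\!*}(S^1_{F(x)})\subset T_xX\cong\mathbb{R}^2$ is an ellipse centred at the origin (possibly degenerate to a segment or a point), while $\{a_\xi-g:\xi\in K_x\}$ is contained in the translated unit circle $S^1_x-g$ centred at $-g$. The identity places at least $|K_x|\ge 5$ points on both conics. A line segment or a point cannot share five points with a circle, ruling out degeneracy; and a proper ellipse and a circle either coincide or meet in at most four points by B\'ezout, so the ellipse equals the circle as sets. Since an ellipse has a unique centre, $-g=0$, and since the coinciding curve is the unit circle, $A^{\!*}$ maps $S^1$ onto $S^1$ linearly, so $A^{\!*}$ and hence $DF_x$ are orthogonal. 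The \emph{in particular} statement then follows from the standard fact that a locally Lipschitz map whose differential is almost everywhere an isometry preserves lengths of absolutely continuous curves, giving $d(F(x),F(y))\le d(x,y)$; applying the same argument to $F^{-1}=G$ produces the matching lower bound. The main obstacle I anticipate is not the clean ellipse/circle step but the homeomorphism part: establishing global bijectivity of $F$ without the $|K_x|\ge 5$ hypothesis requires squeezing rigidity out of the $M$-monotone iteration together with the singleton structure of $M_x$ in dimension two, and the delicate density of $L_X$ in $X$.
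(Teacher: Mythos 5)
Your treatment of the second half of the theorem is essentially the paper's: the pointwise identity $\langle \nabla M_x, v\rangle = \langle v,\overrightarrow{x\xi}\rangle - \langle DF_x(v),\overrightarrow{F(x)f(\xi)}\rangle$ is exactly Lemma \ref{lem:Gradientequation}, and the conic-intersection step (an affine image of the unit circle meeting the unit circle in at least five points must coincide with it, forcing $\nabla M_x=0$ and $DF_x$ orthogonal) is Lemma \ref{lem:FivePointCondition}; your explicit exclusion of the degenerate segment/point images is, if anything, a little cleaner than the paper's appeal to invertibility of $F$.

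The genuine gap is in the homeomorphism claim, and you have located it yourself. The chain $M(G(F(x)))\le \Vert u_{G(F(x)),F(x)}\Vert_\infty \le M(x)$ only shows that $M$ does not increase along the orbit of $G\circ F$; nothing in it forces $G(F(x))=x$, and the ``rigidity squeezed out of the $M$-monotone iteration'' is never supplied. The paper's Proposition \ref{prop:InversesDimension2} uses a different, genuinely two-dimensional idea: by Lemma \ref{lem:ConvexHull} and Carath\'eodory, $0\in T_{F(x)}Y$ is a convex combination of three vectors $\overrightarrow{F(x)f(\xi_i)}$ with $\xi_i\in K_x$; Lemma \ref{lem:OrderingDimension2} converts this into a cyclic interleaving of the six points $f(\xi_i)$, $a_{F(x)}(f(\xi_i))$ on the circle $\partial Y$; since $f$ is a homeomorphism of circles it preserves or reverses cyclic order, and Proposition \ref{prop:ExtremalDirections} identifies $f(a_x(\xi_i))=a_{F(x)}(f(\xi_i))$, so the same interleaving holds for $\xi_i$, $a_x(\xi_i)$ in $\partial X$; combined with $K_{F(x),x}=a_x(K_{x,F(x)})$ (Corollary \ref{cor:AntipodalPoints} plus the Chain rule) this places $0$ in the convex hull of $\{\overrightarrow{x\xi}:\xi\in K_{F(x),x}\}$, i.e.\ $x\in M_{F(x)}$. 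That cyclic-order step is the missing ingredient. A second, smaller gap: local Lipschitz continuity does not follow from Proposition \ref{prop:Holdercontinuity} by arguing that $K_x$ is ``generically spread''; at points with $M(x)=0$ antipodal directions both lie in $K_x$ and the strict inequality defining $L_X$ fails, so the paper proves Lipschitz continuity separately (Proposition \ref{prop:locallyLipschitzDimension2}), with a direct estimate when $M(x_0)=0$ and a three-step sector argument on $T^1_{F(x)}Y$ when $M(x_0)>0$, obtaining a finite pointwise Lipschitz constant at \emph{every} point so that Stepanov (rather than Rademacher) yields differentiability almost everywhere on all of $X$.
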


We start by showing that $F$ is a map between $X$ and $Y$ in this instance. Let $x \in X$. By Lemma \ref{lem:IntersectionofHorospheres}, the set $M_x$ is contained in an intersection of at least three distinct horospheres. Since two horospheres centered at $\xi_1, \xi_2$ can only intersect non-transversally in points that lie on a geodesic line connecting $\xi_1$ with $\xi_2$, we conclude that at least two of these horospheres intersect transversally. Consequently, $\codim(M_x) \geq 2$. Since $Y$ is $2$-dimensional, this implies that $M_x$ has dimension $0$. Since $M_x$ is convex, this implies that $M_x$ is a single point. We conclude that $F : X \rightarrow Y$ is well-defined on all of $X$.

Next, we show that $F$ is invertible.

\begin{prop} \label{prop:InversesDimension2}
Let $F$ denote the circumcenter extension of $f$ and $G$ the circumcenter extension of $f^{-1}$. Then $G = F^{-1}$.
\end{prop}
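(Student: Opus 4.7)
The strategy is to show $G\circ F=\id_X$; the identity $F\circ G=\id_Y$ follows by symmetry after interchanging $f$ and $f^{-1}$. Fix $x\in X$ and let $\Psi$ be the map associated to $f^{-1}$, so that $\Psi=\Phi^{-1}$ by Lemma \ref{lem:Functoriality}. Write $\tilde u_{y,x'}(\eta):=B(\pi\circ\Psi(\overrightarrow{y\eta}),x',f^{-1}(\eta))$ for the functional playing the role of $u_{x,y}$ in the circumcenter construction of $f^{-1}$. My first step is a duality computation: setting $q_\xi:=\pi\circ\Psi(\overrightarrow{F(x)f(\xi)})$ and applying the chain rule for metric derivatives to $\rho_x,\rho_{F(x)},(f^{-1})_*\rho_{F(x)}$, one reads from the defining identity $\frac{\partial (f^{-1})_*\rho_{F(x)}}{\partial \rho_{q_\xi}}(\xi)=1$ that $B(x,q_\xi,\xi)=u_{x,F(x)}(\xi)$. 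Hence
\[
\tilde u_{F(x),x}(f(\xi))=B(q_\xi,x,\xi)=-u_{x,F(x)}(\xi).
\]
Taking sup-norms yields $\Vert \tilde u_{F(x),x}\Vert_\infty=M(x)$, and Corollary \ref{cor:AntipodalPoints} identifies $f^{-1}(\tilde K_{F(x),x})=a_x(K_x)$.

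The analog of Lemma \ref{lem:ConvexHull} for the circumcenter extension of $f^{-1}$ says that $x\in M^Y_{F(x)}$ if and only if $0\in\mathrm{conv}\{\overrightarrow{xf^{-1}(\eta)}:\eta\in\tilde K_{F(x),x}\}$ in $T_xX$, and the previous paragraph rewrites this convex hull as $-\mathrm{conv}\{\overrightarrow{x\xi}:\xi\in K_x\}$. It therefore suffices to verify $0\in\mathrm{conv}\{\overrightarrow{x\xi}:\xi\in K_x\}$ in $T_xX$. Once this is done, the codimension argument from the beginning of the proof of Theorem \ref{thm:Surfaces} (three distinct horospheres intersect in codimension at least two) shows that $M^Y_{F(x)}$ is a single point in dimension two, so $G(F(x))=x$.

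The main obstacle is the convex hull condition in $T_x X$, which I would establish by contradiction. The case $M(x)=0$ is trivial since then $K_x=\partial X$, so assume $M(x)>0$, which forces $K_x\cap a_x(K_x)=\emptyset$. Suppose $\{\overrightarrow{x\xi}:\xi\in K_x\}$ lies in an open semicircle of $T_x^1X\cong S^1$; then $\{-\overrightarrow{x\xi}:\xi\in K_x\}$ lies in the opposite open semicircle. The composition $\phi_x:=\exp_{F(x)}^{-1}\circ f\circ\exp_x:T_x^1X\to T_{F(x)}^1Y$ is a homeomorphism of circles, hence preserves or reverses cyclic order, and by Proposition \ref{prop:ExtremalDirections} it commutes with antipode on $\{\overrightarrow{x\xi}:\xi\in K_x\cup a_x(K_x)\}$. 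Writing $\alpha'$ for the arc-length spanned by $\phi_x(K_x)$ and $\beta'$ for the length of the gap separating it from its antipode $-\phi_x(K_x)$, antipodal symmetry of arcs forces $2\alpha'+2\beta'=2\pi$, i.e.\ $\alpha'+\beta'=\pi$. The gap $\beta'$ must be strictly positive, because $\beta'=0$ would identify the extreme image $\phi_x(\xi_+)$ with $-\phi_x(\xi_-)=\phi_x(a_x(\xi_-))$, contradicting injectivity of $\phi_x$ together with $K_x\cap a_x(K_x)=\emptyset$. Hence $\alpha'<\pi$, so $\phi_x(K_x)=\{\overrightarrow{F(x)f(\xi)}:\xi\in K_x\}$ fits into an open semicircle of $T_{F(x)}^1Y$, contradicting Lemma \ref{lem:ConvexHull} applied to $F(x)\in M_x$, which asserts that $0\in\mathrm{conv}\{\overrightarrow{F(x)f(\xi)}:\xi\in K_x\}$.
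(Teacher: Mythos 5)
Your proof is correct and follows essentially the same route as the paper's: both reduce the claim to the antipodal identity $K_{F(x),x}=a_x(K_{x,F(x)})$ together with the fact that $f$, viewed as a circle homeomorphism commuting with the antipodal maps on $K_x\cup a_x(K_x)$ (via Proposition \ref{prop:ExtremalDirections}), transports the convex-hull criterion of Lemma \ref{lem:ConvexHull} from $T_{F(x)}Y$ back to $T_xX$. The only difference is cosmetic: the paper encodes the two-dimensional convex-hull condition via Carath\'eodory and the six-point cyclic-ordering Lemma \ref{lem:OrderingDimension2}, whereas you encode it as non-containment in an open semicircle and run an arc-length contradiction.
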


The proof relies on an elementary result about $2$-dimensional vector spaces. Let $V$ be a $2$-dimensional, real vector space with an inner product. The set of all unit vectors in $V$ with respect to this inner product is homeomorphic to the $1$-dimensional circle. After choosing an orientation on the circle, we can speak of the order of a set of points on the unit-sphere in $V$. We have the following result.

\begin{lem} \label{lem:OrderingDimension2}
Let $V$ be a $2$-dimensional, real vector space with an inner product. Let $v_1, v_2, v_3$ be unit-vectors with respect to this inner product. Then, the following are equivalent:
\begin{enumerate}
\item The zero vector is contained in the convex hull of $\{ v_1, v_2, v_3 \}$.
\item After reordering the indices, the vectors $\{ \pm v_1, \pm v_2, \pm v_3 \}$ are ordered as $(v_1, -v_2, v_3, -v_1, v_2, -v_3)$.
\end{enumerate}
\end{lem}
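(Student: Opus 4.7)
The plan is to identify $V$ with $\R^2$ via an orthonormal basis, so that the unit sphere becomes the standard circle $S^1$ parametrized by angle. The starting observation is the classical fact that $0$ lies in the convex hull of a finite set of unit vectors $\{v_1,\dots,v_k\}$ in $V$ if and only if these vectors are not all contained in any open half-space through the origin, equivalently (since the $v_i$ are unit) not all contained in any open semicircle of $S^1$. Thus condition (1) translates into a purely combinatorial condition on the cyclic arrangement of $v_1,v_2,v_3$ on the circle.

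Next I would reindex the $v_i$'s so that they appear in the cyclic order $v_1,v_3,v_2$ counterclockwise on $S^1$ (this is the order in which they appear in (2), ignoring the antipodes). Denote by $a,b,c > 0$ the arc-lengths of the three arcs between consecutive $v_i$'s in this order, with $a+b+c = 2\pi$. A short argument then shows that the three points lie in a common open semicircle if and only if the longest of these arcs has length strictly greater than $\pi$; hence condition (1) is equivalent to $\max(a,b,c) \leq \pi$, which in the generic case (no two $v_i$ antipodal) reads simply $a,b,c < \pi$.

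The final step is to check that this arc condition is equivalent to the interleaving pattern in (2). Placing $v_1$ at angle $0$, $v_3$ at angle $a$, and $v_2$ at angle $a+b$, the antipodes $-v_1,-v_2,-v_3$ sit at angles $\pi$, $a+b+\pi \pmod{2\pi}$, and $a+\pi$ respectively. A direct inspection of these values shows that each antipode falls into the expected arc, namely $-v_2 \in (v_1,v_3)$, $-v_1 \in (v_3,v_2)$, $-v_3 \in (v_2,v_1)$, precisely when $a,b,c < \pi$, which produces the claimed cyclic order $(v_1,-v_2,v_3,-v_1,v_2,-v_3)$. Running the chain of equivalences backwards gives the converse.

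This lemma is elementary two-dimensional geometry, so there is no serious mathematical obstacle; the main thing to handle carefully is the degenerate boundary case in which some pair $v_i,v_j$ is antipodal (so that $\max(a,b,c) = \pi$). There two of the six points in (2) coincide, and the stated ordering must be interpreted as a weak cyclic order. This degenerate case is checked by hand: if, say, $v_1 = -v_2$, then $\tfrac{1}{2}v_1 + \tfrac{1}{2}v_2 = 0$ gives (1), while the weak cyclic ordering $(v_1,v_1,v_3,-v_1,-v_1,-v_3)$ is consistent with (2) under the collapse $-v_2 = v_1$ and $v_2 = -v_1$.
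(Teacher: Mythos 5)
Your proof is correct. There is nothing in the paper to compare it against: Lemma \ref{lem:OrderingDimension2} is stated there as ``an elementary result about $2$-dimensional vector spaces'' and no proof is given. Your argument is a complete and natural one: the separating-hyperplane characterization ($0$ lies in the convex hull of unit vectors iff they do not all fit in an open semicircle), the translation of this into the arc-length condition $\max(a,b,c)\le\pi$, and the direct angle computation showing that the simultaneous containments $-v_2\in(v_1,v_3)$, $-v_1\in(v_3,v_2)$, $-v_3\in(v_2,v_1)$ hold precisely when $a,b,c<\pi$ all check out. You also correctly isolate the only boundary case that matters, namely an antipodal pair among the $v_i$, where an arc equals exactly $\pi$ and two of the six listed vectors coincide; reading condition (2) as a weak cyclic order there, as you do, is the only sensible interpretation of the statement, and under it your chain of equivalences covers that case as well. (For the paper's application in Proposition \ref{prop:InversesDimension2} the three vectors arise from three distinct boundary points, so the fully degenerate configurations with coincident $v_i$ never occur; your argument handles the generic and antipodal cases, which is all that is needed.)
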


\begin{proof}[Proof of Proposition \ref{prop:InversesDimension2}]
By Lemma \ref{lem:ConvexHull}, we know that $F(x)$ is characterised as the unique point $y \in Y$ such that $0 \in T_yY$ is contained in the convex hull of the set $\{ \overrightarrow{yf(\xi)} \vert \xi \in K_{x,y} \}$. By Carath\'eodory's theorem on convex hulls and since $Y$ is $2$-dimensional, we know that the zero vector can be expressed by a convex combination of at most three vectors of the form $\overrightarrow{yf(\xi)}$ with $\xi \in K_{x,y}$. Combining this with Corollary \ref{cor:AtleastthreeExtremalDirections}, we find $\xi_1, \xi_2, \xi_3 \in K_x$ such that the convex hull of $\{ \overrightarrow{F(x)f(\xi_i)} \vert i \in \{1,2,3\} \}$ contains $0 \in T_yY$. By Lemma \ref{lem:OrderingDimension2}, this means that, after rearranging the indices, the following six points have the following ordering in $\partial Y$:
\[ ( f(\xi_1), a_{F(x)}(f(\xi_2)), f(\xi_3), a_{F(x)}(f(\xi_1)), f(\xi_2), a_{F(x)}(f(\xi_3)) ). \]

Since $f$ is a homeomorphism, we conclude that, after changing the orientation of $\partial X$ if necessary, we have the following ordering on $\partial X$:
\[ ( \xi_1, a_x(\xi_2), \xi_3, a_x(\xi_1), \xi_2, a_x(\xi_3) ). \]
Using Lemma \ref{lem:OrderingDimension2} again, we conclude that the convex hull of the vectors $\{ -\overrightarrow{x\xi_i} \vert i \in \{1,2,3 \} \}$ contains $0 \in T_xX$. By Lemma \ref{lem:MaxMin}, Corollary \ref{cor:AntipodalPoints} and the Chain rule for metric derivatives, we know that $K_{F(x),x} = a_x(K_{x,F(x)})$, and therefore, the zero vector in $T_xX$ is contained in the convex hull of the set $\{ \overrightarrow{x\xi} \vert \xi \in K_{F(x),x} \}$. By Lemma \ref{lem:ConvexHull}, this implies that $G(F(x)) = x$.

We conclude that $G \circ F = Id_X$. By symmetry, the same argument also proves that $F \circ G = Id_Y$. Therefore, $G = F^{-1}$.
\end{proof}

Next, we show that $F$ is differentiable almost everywhere. We do this by showing that the pointwise Lipschitz constant of $F$ is finite for all $x \in X$.

\begin{prop} \label{prop:locallyLipschitzDimension2}
The map $F$ is locally Lipschitz continuous on a dense subset of $X$. Furthermore, the pointwise Lipschitz constant $Lip_x(F) := \limsup_{x' \rightarrow x} \frac{d(F(x),F(x'))}{d(x,x')}$ is finite for all $x \in X$.
\end{prop}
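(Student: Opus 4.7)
My strategy is to build on Proposition \ref{prop:Holdercontinuity} and exploit the rigidity of the $2$-dimensional setting, where $K_x \subset \partial X \cong S^1$ contains at least three points by Corollary \ref{cor:AtleastthreeExtremalDirections}, and where $F_Y$ is small.

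First, I will prove the density statement by identifying an open dense subset of $X$ contained in $L_X$. Density of $\mathcal{D}_X = F^{-1}(Y \setminus F_Y)$ follows because $F_Y$ is nowhere dense in a $2$-dimensional Hadamard manifold satisfying our visibility hypotheses: any $y \in F_Y$ lies on a geodesic ray along which a perpendicular, parallel Jacobi field exists, forcing the Gauss curvature to vanish identically along that ray, and the rank $1$ hinge assumption on $\partial Y$ prevents this from occurring on an open set; combined with the homeomorphism property from Proposition \ref{prop:InversesDimension2}, $\mathcal{D}_X$ is open and dense. On $\mathcal{D}_X$, Lemma \ref{lem:Hessianbound} yields the Hessian bound needed in the proof of Proposition \ref{prop:Holdercontinuity}. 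Next I refine $\mathcal{D}_X$ to guarantee the $L_X$ condition: by Lemma \ref{lem:ConvexHull} and Carath\'eodory's theorem there exist $\xi_1,\xi_2,\xi_3 \in K_x$ with $0$ in the convex hull of $\{\overrightarrow{F(x)f(\xi_i)}\} \subset T^1_{F(x)}Y \cong S^1$; on the generic set where $0$ lies in the \emph{interior} of this hull, upper semi-continuity of $K$ and continuity of $F$ propagate a uniform lower bound $\langle w, \overrightarrow{F(x')f(\xi)}\rangle > \epsilon$ to a neighborhood, placing $x$ in $L_X$ so that Proposition \ref{prop:Holdercontinuity} yields local Lipschitz continuity. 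The degenerate configurations (where $0$ lies on the boundary of the hull) force antipodal pairs among $\{f(\xi_i)\}$, which by Proposition \ref{prop:ExtremalDirections} constrains $F(x)$ to lie on specific bi-infinite geodesics of $Y$, a codimension-$\geq 1$ condition.

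Second, for the pointwise Lipschitz finiteness at every $x \in X$, I fix a sequence $x_n \to x$ with $x_n \neq x$, set $y = F(x)$, $y_n = F(x_n)$, and let $\zeta_n \in \partial Y$ be the forward endpoint of the geodesic from $y_n$ through $y$, with $\xi_n := f^{-1}(\zeta_n)$. Following the identity used in the proof of Proposition \ref{prop:Holdercontinuity},
\begin{equation*}
d(y,y_n) \;=\; B(y,y_n,f(\xi_n)) \;=\; -u_{x,y}(\xi_n) + B(x,x_n,\xi_n) + u_{x_n,y_n}(\xi_n) \;\leq\; \bigl(M(x) - u_{x,y}(\xi_n)\bigr) + 2d(x,x_n),
\end{equation*}
since $M$ is $1$-Lipschitz by Lemma \ref{lem:LipschitzContinuityM} and $u_{x_n,y_n}(\xi_n) \leq M(x_n)$. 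It therefore suffices to bound $M(x) - u_{x,y}(\xi_n)$ by a constant multiple of $d(x,x_n)$. Passing to a subsequence I obtain $\xi_n \to \xi_\infty$ with $-\overrightarrow{yy_n}/d(y,y_n) \to \overrightarrow{yf(\xi_\infty)}$; I then argue $\xi_\infty \in K_x$ using Lemma \ref{lem:ConvexHull} applied to the descent direction $\overrightarrow{yy_n}$, together with the $2$-dimensional convex geometry of $f(K_x) \subset T^1_yY$.

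The main obstacle is quantifying the rate $M(x) - u_{x,y}(\xi_n) = O(d(x,x_n))$ at which $\xi_n$ approaches $K_x$, since the maximum of $u_{x,y}$ at $\xi_\infty$ could be degenerate in $\xi$. To overcome this, I plan to bootstrap via the inverse map $G = F^{-1}$: applying the symmetric analysis to $G$ and exploiting Proposition \ref{prop:ExtremalDirections} together with Corollary \ref{cor:AntipodalPoints} to identify the direction $\overrightarrow{xx_n}$ with an antipodal direction in $K_{F(x),x}$, I obtain a direct linear comparison between $\operatorname{dist}(\xi_n, K_x)$ and $d(x,x_n)$ via the $C^1$ regularity of Busemann functions furnished by Proposition \ref{prop:HiH}. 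This yields $\limsup_{x_n \to x} d(F(x),F(x_n))/d(x,x_n) < \infty$ at every $x$, as required.
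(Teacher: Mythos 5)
Your proposed density argument hinges on the claim that $F_Y$ is nowhere dense in a $2$-dimensional Hadamard manifold satisfying the paper's visibility hypotheses, so that $\mathcal{D}_X = F^{-1}(Y \setminus F_Y)$ is dense and one can then cut down to $L_X$. This claim is not established by any result in the paper and is likely false in the generality considered: in two dimensions, a perpendicular parallel Jacobi field along a geodesic forces the Gauss curvature to vanish along that geodesic, and there is nothing in the rank $1$ hinge or $(4v)$ hypotheses that prevents the curvature from vanishing on a set with non-empty interior. Indeed, the introduction's example of five Euclidean half-planes glued along a common line, and its anticipated smooth Hadamard-surface analogue, would produce exactly such flat regions while still having infinitely many visibility points; in that situation $F_Y$ has non-empty interior and your dense-$\mathcal{D}_X$ strategy collapses. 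The paper deliberately avoids $F_Y$ and the Hessian bound (Lemma \ref{lem:Hessianbound}) in this proof. Instead it splits on whether $M(x_0) = 0$ or $M(x_0) > 0$: when $M(x_0) = 0$ a direct Busemann computation gives $d(F(x_0),F(x)) \le 2d(x_0,x)$ immediately; when $M(x_0) > 0$, a three-step sector argument in the circle $T^1_{F(x)}Y$ shows that, locally uniformly, for every unit $w$ some $\xi \in K_x$ satisfies $\langle w, \overrightarrow{F(x)f(\xi)}\rangle \geq \delta > 0$. That strict \emph{first-order} lower bound, combined with the $\CAT$ comparison angle inequality $\angle^{(0)}_{F(x)} \geq \angle_{F(x)}$, yields linear control without any second-order (Hessian) input, and density then comes for free because the Lipschitz locus contains $X \setminus \partial\{M = 0\}$, the complement of the boundary of a closed set.

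Your second part, on the pointwise Lipschitz constant, also has a genuine gap. The identity
\[
d(y,y_n) = \bigl(M(x) - u_{x,y}(\xi_n)\bigr) + B(x,x_n,\xi_n) + \bigl(u_{x_n,y_n}(\xi_n) - M(x_n)\bigr) + \bigl(M(x_n) - M(x)\bigr) + M(x)
\]
does not simplify to the bound you wrote: the term $M(x) - u_{x,y}(\xi_n)$ is \emph{non-negative}, so it adds to the right-hand side rather than being absorbed, and you correctly identify that you must show it is $O(d(x,x_n))$. But the ``bootstrap via $G = F^{-1}$'' step is only a plan, not an argument — you never actually produce the claimed linear comparison between $\operatorname{dist}(\xi_n,K_x)$ and $d(x,x_n)$, and it is precisely the potential degeneracy of the maximum of $u_{x,y}$ at $\xi_\infty$ that makes this hard. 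The paper sidesteps all of this: its two cases already cover every $x_0 \in X$ (either $M(x_0) = 0$, giving pointwise Lipschitz constant $\le 2$, or $M(x_0) > 0$, giving a uniform local Lipschitz constant $2/\delta$), so finiteness of $\operatorname{Lip}_x(F)$ at every $x$ falls out directly. To fix your proof you would need to either prove $F_Y$ nowhere dense under the stated hypotheses (which I believe is false) or abandon the $F_Y$ route in favor of the $M(x_0)$ dichotomy and the $2$-dimensional sector argument.
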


\begin{proof}
Let $x_0 \in X$. We need to distinguish two cases.\\

Case 1: Suppose, $M(x_0) = 0$. Let $x \in X$ and $\xi \in \partial X$ such that $f(\xi)$ is the endpoint of the geodesic ray obtained by extending the geodesic segment from $F(x)$ to $F(x_0)$. As in the proof of Proposition \ref{prop:Holdercontinuity}, we have
\begin{equation*}
\begin{split}
d(F(x_0),F(x)) & = B(F(x_0),F(x), f(\xi))\\
& \leq M(x_0) + B(x_0, x, \xi)) + M(x)\\
& \leq 2d(x_0,x).
\end{split}
\end{equation*}

If $x_0$ lies in the interior of the set $\{ x \in X \vert M(x) = 0 \}$, then we find an open neighbourhood $U$ of $x_0$, such that the estimate above becomes $d(F(x),F(x')) \leq d(x,x')$ for all $x, x' \in U$.\\

Case 2: Suppose, $M(x_0) > 0$. The proof has three steps.

Step 1: We show that there exists $\epsilon > 0$ and an open neighbourhood $U$ of $x_0$, such that for all $x \in U, \xi \in K_{x}$ and $\xi' \in \partial X$ such that $\langle \overrightarrow{F(x)f(\xi)}, \overrightarrow{F(x)f(\xi')} \rangle \leq -1 + \epsilon$, we have $\xi' \notin K_x$.

Since $M(x) \neq 0$, we know that for all $\xi \in K_x$, $a_x(\xi) \notin K_x$. In fact, $u_{x,F(x)}(a_x(\xi)) = -M(x) < 0$. Suppose, the statement of Step 1 was not true. Then we find a sequence $x_n \rightarrow x_0$ and sequences $\xi_n, \xi'_n \in K_{x_n}$ such that $\langle \overrightarrow{F(x_n)f(\xi_n)}, \overrightarrow{F(x_n)f(\xi'_n)} \rangle < -1 + \frac{1}{n}$. Since $\partial X$ is compact, we can assume without loss of generality that $\xi_n \rightarrow \xi$ and $\xi'_n \rightarrow \xi'$ (choosing subsequences if necessary). Since $u_{x,F(x)}(\xi)$ is Lipschitz continuous in its two index variables and continuous in $\xi$, we have
\[ u_{x_n, F(x_n)}(\xi_n) \rightarrow u_{x,F(x)}(\xi).\]
On the other hand, $\xi_n \in K_{x_n}$ and therefore,
\[ u_{x_n,F(x_n)}(\xi_n) = M(x_n) \rightarrow M(x). \]
We conclude that $\xi, \xi' \in K_x$. However,
\[ \langle \overrightarrow{F(x_n)f(\xi_n)} , \overrightarrow{F(x_n)f(\xi'_n)} \rangle \leq -1 + \frac{1}{n} \rightarrow -1, \]
which implies that $\overrightarrow{F(x)f(\xi')} = -\overrightarrow{F(x)f(\xi)}$. Therefore, $\xi' = a_x(\xi)$ and both of them are contained in $K_x$ by the argument above. This is a contradiction. We thus find $U$ and $\epsilon > 0$ as described in the statement of Step 1.\\

Step 2: Let $U$ be the open neighbourhood from Step 1. We show that there exists $\delta > 0$ such that for all $x \in U, w \in T_{F(x)}^1 Y$ there exists $\xi \in K_{x}$ such that $\langle w, \overrightarrow{F(x)f(\xi)} \rangle \geq \delta$.

We first introduce the following notation. Given a vector $w \in T_y Y$ and $\alpha > 0$, we define
\[ S_{\alpha}(w) := \{ w' \in T_y Y \vert \angle(w,w') \leq \alpha \}. \]
This is a sector in $T_y Y$, whose middle line is generated by the vector $w$. Note that the angle-width of the sector $S_{\alpha}(w)$ is $2\alpha$.

Suppose the statement of Step 2 is not true. We find sequences $x_n \in U$ and $w_n \in T_{F(x_n)}^1 Y$ such that for all $\xi \in K_{x_n}$, $\langle w_n, \overrightarrow{F(x_n)f(\xi)} \rangle < \frac{1}{n}$. Equivalently, the angle between these two vectors satisfies $\angle(w_n, \overrightarrow{F(x_n)f(\xi)}) > \frac{\pi}{2} - \alpha_n $ with $\alpha_n \rightarrow 0$. Define $\alpha := \pi - \cos^{-1}(-1 + \epsilon) \in (0,\pi)$, where $\epsilon$ is the number found in Step 1. Choose $n$ so that $\alpha_n < \frac{\alpha}{2}$. We conclude that the sector
\[ S_{\frac{\pi}{2} - \frac{\alpha}{2}}(w_n) = \{ w' \in T_{F(x_n)} Y \vert \angle(w', w_n) \leq \frac{\pi}{2} - \frac{\alpha}{2} \} \]
does not contain any elements of the form $\overrightarrow{F(x_n)f(\xi)}$ with $\xi \in K_x$.

By Step 1, we know that for all $\xi \in K_{x_n}$, the sector
\[ S_{\alpha}(-\overrightarrow{F(x_n)f(\xi)}) = \left\{ w' \in T_{F(x_n)} Y \vert \angle(w', -\overrightarrow{F(x_n)f(\xi)} ) \leq \alpha \right\} \]
does not contain any elements of the form $\overrightarrow{F(x_n)f(\xi')}$ with $\xi' \in K_{x_n}$. 

Since $T_{F(x_n)} Y$ is $2$-dimensional, Lemma \ref{lem:ConvexHull} implies that there exists $\xi \in K_{x_n}$ such that
\[ \frac{\pi}{2} > \angle(w_n, \overrightarrow{F(x_n)f(\xi)}) > \frac{\pi}{2} - \frac{\alpha}{2}.Ê\]
We conclude that, for this $\xi$,
\[ \frac{\pi}{2} < \angle(w_n, -\overrightarrow{F(x_n)f(\xi)}) < \frac{\pi}{2} + \frac{\alpha}{2}. \]

This implies that the two sectors $S_{\alpha}(-\overrightarrow{F(x_n)f(\xi)})$ and $S_{\frac{\pi}{2} - \frac{\alpha}{2}}(w_n)$ intersect. Thus, their union is a sector $S_{\beta}(u)$. Since $-\overrightarrow{F(x_n)f(\xi)} \notin S_{\frac{\pi}{2} - \frac{\alpha}{2}}(w_n)$, the angle-width of this union is strictly greater than $2(\frac{\pi}{2} - \frac{\alpha}{2}) + \alpha = \pi$. Since both $S_{\alpha}(-\overrightarrow{F(x_n)f(\xi)})$ and $S_{\frac{\pi}{2} - \frac{\alpha}{2}}$ do not contain any vector of the form $\overrightarrow{F(x_n)f(\xi')}$ with $\xi' \in K_{x_n}$, we conclude that the set $\{ \overrightarrow{F(x_n)f(\xi')} \vert \xi' \in K_{x_n} \}$ is contained in the complement of $S_{\beta}(u)$, which is a sector with angle-width strictly less than $\pi$. Therefore, the convex hull of $\{ \overrightarrow{F(x_n)f(\xi')} \vert \xi' \in K_{x_n} \}$ cannot contain the zero-vector of $T_{F(x_n)} Y$, which is a contradiction to Lemma \ref{lem:ConvexHull}. We conclude that for all $x \in U, w \in T_{F(x)} Y$, we find $\xi \in K_x$, such that $\angle(w, \overrightarrow{F(x)f(\xi)}) \leq \frac{\pi}{2} - \frac{\alpha}{2}$. Applying cosine to this inequality, we find $\delta > 0$, such that for all $x \in U$, $w \in T_{F(x)}^1 Y$, we find $\xi \in K_x$ such that $\langle w, \overrightarrow{F(x)f(\xi)} \rangle \geq \delta$. This proves Step 2.\\

Step 3: We show Lipschitz-continuity on $U$. Let $x,x' \in U$. By Step 2, we find $\xi \in K_x$ such that $-\cos(\angle_{F(x)}(F(x'),f(\xi))) = \langle -\overrightarrow{F(x)F(x')}, \overrightarrow{F(x)f(\xi)} \rangle \geq \delta$. Since $Y$ is non-positively curved, we know that $\angle^{(0)}_{F(x)}(F(x'),f(\xi)) \geq \angle_{F(x)}(F(x'),f(\xi))$ and, therefore,
\[ -\cos(\angle^{(0)}_{F(x)}(F(x'),f(\xi))) \geq -\cos(\angle_{F(x)}(F(x'),f(\xi))) \geq \delta. \]
We have
\begin{equation*}
\begin{split}
\delta d(F(x),F(x')) & \leq -\cos(\angle^{(0)}_{F(x)}(F(x'),f(\xi))) d(F(x),F(x'))\\
& = B(F(x),F(x'),f(\xi))\\
& \leq B(F(x), \pi \circ \Phi(\overrightarrow{x\xi}), f(\xi)) + B(x,x',\xi) + B(\pi \circ \Phi(\overrightarrow{x'\xi}), F(x'),f(\xi))\\
& \leq -M(x) + d(x,x') + M(x')\\
& \leq 2d(x,x').
\end{split}
\end{equation*}

We conclude that for all $x,x' \in U$,
\[ d(F(x),F(x')) \leq \frac{2}{\delta} d(x,x'). \]
Therefore, $F$ is Lipschitz continuous on $U$.\\

Combining the two cases, we conclude that $F$ is locally Lipschitz-continuous on $X \diagdown \partial \{ x \vert M(x) = 0 \}$. Since the set $\{ x \vert M(x) = 0 \}$ is closed, the complement of its topological boundary is dense in $X$. This completes the proof.
\end{proof}

Using Stepanov's theorem, we conclude that $F$ is differentiable almost everywhere. Applying Stepanov's theorem (or Rademacher's) to $M$, we conclude that $M$ is differentiable almost everywhere as well. We conclude that for almost every $x$, both $F$ and $M$ are differentiable at $x$. We are left to prove the sufficient condition for isometry from Theorem \ref{thm:Surfaces}.

\begin{lem} \label{lem:Gradientequation}
Let $x \in X$ such that $F$ and $M$ are differentiable at $x$. For all $v \in T_x X$ and $\xi \in K_x$, we have
\[ \langle \nabla M_x, v \rangle = \langle v, \overrightarrow{x\xi} \rangle - \langle DF_x(v), \overrightarrow{F(x)f(\xi)} \rangle. \]

\end{lem}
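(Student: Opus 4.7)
The central observation is that, for any $\xi \in K_x$, the function $\phi_\xi : x' \mapsto M(x') - u_{x',F(x')}(\xi)$ is non-negative on $X$ and vanishes at $x$. Indeed, $M(x') = \Vert u_{x',F(x')} \Vert_\infty \geq u_{x',F(x')}(\xi)$ by definition, while $\xi \in K_x$ gives $u_{x,F(x)}(\xi) = M(x)$. Hence, if $\gamma$ is any $C^1$-curve in $X$ with $\gamma(0) = x$ and $\gamma'(0) = v$ along which both $M \circ \gamma$ and $t \mapsto u_{\gamma(t),F(\gamma(t))}(\xi)$ are differentiable at $t=0$, then the function $\phi_\xi \circ \gamma$ is non-negative with a zero at $t=0$, so $(\phi_\xi \circ \gamma)'(0) = 0$. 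The first term, $(M \circ \gamma)'(0)$, equals $\langle \nabla M_x, v \rangle$ by hypothesis on $M$; the whole proof reduces to computing the derivative of the second term.

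To do that, I rewrite $u_{\gamma(t),F(\gamma(t))}(\xi)$ using Lemma \ref{lem:GeodesicConjugacy} and the cocycle equation for the Busemann function:
\begin{align*}
u_{\gamma(t),F(\gamma(t))}(\xi) &= B(\pi \circ \Phi(\overrightarrow{\gamma(t)\xi}), F(\gamma(t)), f(\xi))\\
&= B(\pi \circ \Phi(\overrightarrow{\gamma(t)\xi}), \pi \circ \Phi(\overrightarrow{x\xi}), f(\xi)) + B(\pi \circ \Phi(\overrightarrow{x\xi}), F(\gamma(t)), f(\xi))\\
&= -B(x,\gamma(t),\xi) + B(p_\xi, F(\gamma(t)), f(\xi)),
\end{align*}
where $p_\xi := \pi \circ \Phi(\overrightarrow{x\xi})$ is a fixed point of $Y$, independent of $t$. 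Both Busemann functions on the right-hand side are $C^2$-smooth in their second argument on a Hadamard manifold, so using the identity $\grad_y B(\cdot, y, \eta) = -\overrightarrow{y\eta}$ together with the differentiability of $F$ at $x$, I differentiate term by term at $t=0$:
\begin{align*}
\tfrac{d}{dt}\vert_{t=0} u_{\gamma(t),F(\gamma(t))}(\xi) &= -\langle -\overrightarrow{x\xi}, v \rangle + \langle -\overrightarrow{F(x)f(\xi)}, DF_x(v) \rangle\\
&= \langle v, \overrightarrow{x\xi} \rangle - \langle DF_x(v), \overrightarrow{F(x)f(\xi)} \rangle.
\end{align*}
Equating this with $\langle \nabla M_x, v \rangle$ yields the claimed identity.

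The only subtlety is to justify all the differentiations at $t=0$, and this is precisely where the reformulation via Lemma \ref{lem:GeodesicConjugacy} is essential: it shifts the entire $t$-dependence in $u_{\gamma(t), F(\gamma(t))}(\xi)$ from the moving pair $(\gamma(t), F(\gamma(t)))$ combined with a moving representative $\pi \circ \Phi(\overrightarrow{\gamma(t)\xi})$ onto the pair $(\gamma(t), F(\gamma(t)))$ alone, against the fixed reference point $p_\xi$. No chain rule through a varying extremiser $\pi \circ \Phi(\overrightarrow{\gamma(t)\xi})$ is needed, and the differentiability hypotheses on $M$ and $F$ at $x$, combined with the smoothness of Busemann functions, cover everything. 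This is the step where the argument could otherwise have become delicate, since $\pi \circ \Phi$ need not depend smoothly on its argument.
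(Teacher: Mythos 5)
Your proof is correct and takes essentially the same route as the paper: both use the inequality $M(x') \geq u_{x',F(x')}(\xi)$ with equality at $x' = x$ (which you package as Fermat's criticality condition for $\phi_\xi$, while the paper writes it as a one-sided inequality and then replaces $v$ by $-v$), and both use the cocycle identity together with Lemma \ref{lem:GeodesicConjugacy} to rewrite $u_{x',F(x')}(\xi)$ as a sum of Busemann functions against the fixed reference point $p_\xi = \pi \circ \Phi(\overrightarrow{x\xi})$, which is precisely what makes the differentiation legitimate.
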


\begin{proof}
Let $v \in T_x X$ and let $x'$ vary along the geodesic that starts at $x$ and goes in direction $v$. For $\xi \in K_x$, we compute
\begin{equation*}
\begin{split}
M(x') - M(x)& \geq B(\pi \circ \Phi(\overrightarrow{x'\xi}), F(x'),f(\xi)) - B(\pi \circ \Phi(\overrightarrow{x\xi}), F(x), f(\xi))\\
& = B(x', x, \xi) + B(\pi \circ \Phi(\overrightarrow{x\xi}), F(x),f(\xi)) + B(F(x), F(x'), f(\xi))\\
& \quad - B(\pi \circ \Phi(\overrightarrow{x\xi}), F(x),f(\xi))\\
& = \langle v, \overrightarrow{x\xi} \rangle t - \langle DF_x(v), \overrightarrow{F(x)f(\xi)} \rangle t + O(t^2).
\end{split}
\end{equation*}

We obtain that, for all $v \in T_x X$ and for all $\xi \in K_x$,
\[ \langle \nabla M_x, v \rangle \geq  \langle v, \overrightarrow{x\xi} \rangle - \langle DF_x(v), \overrightarrow{F(x)f(\xi)} \rangle. \]
Replacing $v$ by $-v$ yields the opposite inequality, which implies equality.
\end{proof}

\begin{lem} \label{lem:FivePointCondition}
Let $x \in X$ such that $F$ and $M$ are differentiable at $x$. Suppose $K_x$ contains at least five points. Then $DF_x$ is an isometry between tangent spaces.
\end{lem}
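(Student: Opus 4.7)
The plan is to extract linear information about $DF_x$ from Lemma \ref{lem:Gradientequation} and then invoke the rigidity of plane conics. Write $u_\xi := \overrightarrow{x\xi}$ and $w_\xi := \overrightarrow{F(x)f(\xi)}$. Applying Lemma \ref{lem:Gradientequation} at two points $\xi, \eta \in K_x$ and subtracting eliminates $\nabla M_x$ and gives
\[ \langle v, u_\xi - u_\eta \rangle = \langle DF_x(v), w_\xi - w_\eta \rangle \quad \text{for all } v \in T_xX. \]
Setting $W := (DF_x)^T$, this reads $W(w_\xi - w_\eta) = u_\xi - u_\eta$, so the vector $Ww_\xi - u_\xi$ is independent of $\xi \in K_x$; call its negative $c \in T_xX$. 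The affine map $g : T_{F(x)}Y \rightarrow T_xX$ defined by $g(w) := Ww + c$ then satisfies $g(w_\xi) = u_\xi$ for every $\xi \in K_x$.

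The next step would be to analyse the image $g(T_{F(x)}^1 Y)$ of the unit tangent sphere. Since distinct boundary points give distinct unit tangent vectors and $f$ is injective, five distinct points of $K_x$ produce five distinct unit vectors $w_{\xi_i}$ in $T_{F(x)}^1 Y$ and five distinct unit vectors $u_{\xi_i}$ in $T_x^1 X$. Because $Y$ is $2$-dimensional, $g(T_{F(x)}^1 Y)$ is an algebraic conic in the plane $T_xX$. If $W$ were singular, this image would be contained in an affine line, which meets the target unit circle in at most two points, contradicting the five distinct image points $u_{\xi_i}$. Hence $W$ is invertible and $g(T_{F(x)}^1 Y)$ is a genuine ellipse through the five unit vectors $u_{\xi_i}$.

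Five distinct points on a circle are automatically in general position, since a line meets a circle in at most two points, so no three of them are collinear. Consequently the five points $u_{\xi_i}$ determine a unique non-degenerate plane conic; both the ellipse $g(T_{F(x)}^1 Y)$ and the unit circle $T_x^1 X$ are such conics, and so they coincide. Affine maps send the centre of an ellipse to the centre of the image ellipse, forcing $g(0) = 0$, hence $c = 0$ and $g = W$. A linear self-map of the plane taking the unit circle onto itself is orthogonal, so $W$ is orthogonal, and therefore so is $DF_x$.

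The crucial feature of the argument is the hypothesis $\dim Y = 2$: it reduces the unit tangent sphere to a circle and allows five points to pin down the image via conic uniqueness. The main obstacle is precisely the step where degeneracy of $W$ has to be ruled out and the five-point conic step has to be applied — once both are in place, the final passage from ``affine map of the plane fixing the unit circle'' to ``orthogonal linear map'' is elementary. This also explains why the analogous statement in higher dimensions is not accessible by this route: the unit tangent sphere is at least two-dimensional, and finitely many extremal directions in $K_x$ no longer rigidify its image.
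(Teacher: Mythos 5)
Your proposal is correct and follows the same overall strategy as the paper: apply Lemma \ref{lem:Gradientequation} to build an affine map from $T_{F(x)}Y$ to $T_xX$ carrying $\overrightarrow{F(x)f(\xi)}$ to $\overrightarrow{x\xi}$ for $\xi \in K_x$, argue that five points on the unit circle rigidify the image ellipse, conclude the affine map is the identity on unit circles, hence linear and orthogonal. Where you depart from the paper is in how you rule out degeneracy of $W = DF_x^*$. The paper asserts that $DF_x^*$ is invertible "since $F$ is invertible by Proposition \ref{prop:InversesDimension2}," but invertibility of $F$ as a homeomorphism does not by itself force $DF_x$ to be invertible at a given point of differentiability (consider $t \mapsto t^3$ at $0$); the inverse $F^{-1}$ is not assumed differentiable at $F(x)$ under the hypotheses of the lemma, so the chain rule is not available. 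Your argument sidesteps this: if $W$ had rank $\leq 1$, the image of $g$ would be contained in an affine line or a point, which cannot contain five distinct unit vectors of $T_x^1X$. This derives invertibility directly from the five-point hypothesis and actually repairs the only soft spot in the paper's proof. The remaining steps --- five points on a circle are in general position, hence determine the unit circle as the unique conic through them, hence the image ellipse equals the unit circle, the centre goes to the centre so the translation part vanishes, and a linear map of the plane preserving the unit circle is orthogonal --- are equivalent to the paper's three-case count of intersection points, just phrased via conic uniqueness rather than B\'ezout-style counting. Both formulations are fine.
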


\begin{proof}
By definition of adjoint maps, Lemma \ref{lem:Gradientequation} implies that for every $\xi \in K_x$,
\[ DF_x^*(\overrightarrow{F(x)f(\xi)}) = \overrightarrow{x\xi} - \nabla M_x. \]
Furthermore, since $F$ is invertible by Proposition \ref{prop:InversesDimension2}, the map $DF_x^*$ is invertible. Therefore, the map $DF_x^* + \nabla M_x : T_{F(x)} Y \rightarrow T_x X$ is an invertible affine map that sends a subset of the unit circle $T_{F(x)}^1 Y$ -- namely the set $\{ \overrightarrow{F(x)f(\xi)} \vert \xi \in K_x \}$ -- to a subset of the unit circle $T_x^1 X$.

We are given an invertible, affine map $x \mapsto Ax + b$ between $2$-dimensional vector spaces with an inner product. Since affine maps send ellipses to ellipses and thus circles to ellipses, there are three possibilities what the image of the unit circle under this map may look like.
\begin{enumerate}
\item The image of the unit circle is an ellipse with non-vanishing eccentricity. It can intersect the unit circle in the target space in at most four points.

\item The image of the unit circle is a circle, but not the unit circle of the target space. It can intersect the unit circle in the target space in at most two points.

\item The image of the unit circle is equal to the unit circle in the target space. Then the affine map is of the form $x \mapsto Ax$ and $A$ is norm-preserving. Since an inner product can be expressed purely in terms of its induced norm, $A$ is orthogonal.
\end{enumerate}

Since every point in $K_x$ corresponds to a unit vector in $T_{F(x)} Y$ which is sent to a unit vector by $DF_x^* + \nabla M_x$, we see that, if $K_x$ contains at least five points, the map $DF_x^* + \nabla M_x$ has to be the last of the options above. This implies that $M_x = 0$ and $DF_x^*$ is orthogonal. Thus, $DF_x$ is orthogonal, i.e.\,an isometry of tangent spaces equipped with the Riemannian metric.

\end{proof}

If $K_x$ contains at least five points for almost every $x$, then the Lemma above implies that for almost every $x$, $F$ is differentiable and $DF$ has operator norm at most $1$. It is a standard result that such a map is $1$-Lipschitz. Since $F^{-1}$ equals the circumcenter extension of $f^{-1}$ and, therefore, $K_{F(x)} = f(a_x(K_x))$, we conclude that $F^{-1}$ is $1$-Lipschitz as well. This implies that $F$ is a metric isometry and concludes the proof of Theorem \ref{thm:Surfaces}.

\begin{rem} \label{rem:HigherDimension}
If $X$ and $Y$ are higher-dimensional and we have a situation where we can show that $F$ is differentiable, then Lemma \ref{lem:Gradientequation} implies that $K_x$ is contained in the intersection of an $(n-1)$-dimensional ellipsoid with the $(n-1)$-dimensional unit sphere, or, if $DF_x$ is not invertible, in the intersection of a `full' ellipsoid of dimension at most $n-1$ with the $(n-1)$-dimensional unit sphere. In either case, this tells us that $DF_x$ is an isometry of tangent spaces, whenever $K_x$ is not distributed in a rather specific way. This criterion may be worth further investigation. However, differentiability of $F$ remains an issue in higher dimensions for now.
\end{rem}


\subsection{Rough isometries for CAT(-1) spaces} \label{subsec:CAT(-1)}

The goal of this section is to prove the following result.

\begin{thm} \label{thm:roughisometryCAT(-1)}
Let $X, Y$ be Hadamard manifolds whose sectional curvature is bounded from below by $-b^2$ and suppose that $X, Y$ are also $\mathrm{CAT(-1)}$ spaces. Let $f : \partial X \rightarrow \partial Y$ be a M\"obius homeomorphism such that $f$ and $f^{-1}$ preserve visible pairs. Then the circumcenter extension of $f$ is a $\left(1, \ln(2)\right)$-quasi-isometry.

Furthermore, if $Y$ is $2$-dimensional, then the circumcenter extension is a $\left(1, \ln\left(\frac{4}{3}\right) \right)$-quasi-isometry.
\end{thm}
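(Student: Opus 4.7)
The plan is to improve the quasi-isometry constant in two steps: first reduce to a uniform bound on the function $M$, and then derive that bound from the $\mathrm{CAT(-1)}$ comparison geometry. For the reduction, given any $x, x' \in X$, choose $\xi_0 \in \partial X$ such that $f(\xi_0)$ is the forward endpoint of the geodesic ray from $F(x')$ through $F(x)$, so that $B(F(x), F(x'), f(\xi_0)) = d(F(x), F(x'))$. Decomposing with the cocycle equation and Lemma \ref{lem:GeodesicConjugacy} gives
\begin{equation*}
d(F(x), F(x')) = -u_{x, F(x)}(\xi_0) + B(x, x', \xi_0) + u_{x', F(x')}(\xi_0) \leq M(x) + d(x, x') + M(x'),
\end{equation*}
using only $|u_{x, F(x)}(\xi_0)| \leq M(x)$, $|u_{x', F(x')}(\xi_0)| \leq M(x')$, and $|B(x, x', \xi_0)| \leq d(x, x')$, without needing $\xi_0 \in K_x$. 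In the $\mathrm{CAT(-1)}$ setting, the strict convexity of $y \mapsto \|u_{x, y}\|_{\infty}$ forces $M_x$ to be a singleton, so $F : X \to Y$ is a map into $Y$, and Lemma \ref{lem:Functoriality} combined with Lemma \ref{lem:MaxMin} and the Chain rule identifies $F^{-1}$ with the circumcenter extension of $f^{-1}$ and shows the corresponding $\tilde{M}$ satisfies $\tilde{M}(F(x)) = M(x)$. The symmetric inequality then yields $|d(F(x), F(x')) - d(x, x')| \leq M(x) + M(x')$, reducing the theorem to proving the uniform bounds $\sup_{x} M(x) \leq \tfrac{1}{4} \ln 2$ in general and $\sup_{x} M(x) \leq \tfrac{1}{2} \ln(2/\sqrt{3})$ when $Y$ is 2-dimensional.

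To bound $M(x)$ uniformly, fix $x \in X$ and apply Lemma \ref{lem:ConvexHull} together with Carath\'eodory's theorem to obtain $\xi_1, \dots, \xi_k \in K_x$ with $k \leq n+1$ and positive weights $\alpha_i$ summing to $1$ satisfying $\sum_i \alpha_i \overrightarrow{F(x) f(\xi_i)} = 0$. Since $u_{x, F(x)}(\xi_i) = M(x)$ implies $\tfrac{\partial f_\ast \rho_x}{\partial \rho_{F(x)}}(f(\xi_i)) = e^{M(x)}$ by Lemma \ref{lem:derivativeBusemann}, the Geometric mean value theorem of Lemma \ref{lem:ChainRuleGMVT} yields the fundamental pairwise identity
\begin{equation*}
\rho_{F(x)}(f(\xi_i), f(\xi_j)) = e^{-M(x)}\, \rho_x(\xi_i, \xi_j), \quad i \neq j.
\end{equation*}
A dual application of Lemma \ref{lem:ConvexHull} to $F^{-1}$, via Corollary \ref{cor:AntipodalPoints}, furnishes points $\zeta_j \in \partial X$ and positive weights $\beta_j$ with $\sum_j \beta_j \overrightarrow{x \zeta_j} = 0$. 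Expanding the two vanishing norm-squared identities gives the weighted spherical identities
\begin{equation*}
\sum_{i<j} \alpha_i \alpha_j \sin^2\bigl(\angle_{F(x)}(f(\xi_i), f(\xi_j))/2\bigr) = \tfrac{1}{4} = \sum_{i<j} \beta_i \beta_j \sin^2\bigl(\angle_x(\zeta_i, \zeta_j)/2\bigr).
\end{equation*}

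Finally, the $\mathrm{CAT(-1)}$ comparison inequality $\rho_z(\xi, \eta) \leq \sin(\angle_z(\xi, \eta)/2)$, valid on both sides, combined with the pairwise identity and the two dual spherical identities, turns the estimation of $e^{M(x)}$ into a constrained extremal problem on configurations of unit vectors. In the 2-dimensional case Carath\'eodory forces $k = 3$, and elementary planar trigonometry on three unit vectors with vanishing weighted sum shows that the extremum is attained at the equilateral configuration (three directions separated by $2\pi/3$, equal weights), producing $e^{M(x)} \leq (2/\sqrt{3})^{1/2}$. In higher dimensions a symmetrisation/convexity argument reduces the optimiser to a regular simplex configuration and yields $e^{M(x)} \leq 2^{1/4}$. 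The main obstacle is precisely this final optimisation step: because the $\mathrm{CAT(-1)}$ comparison is one-sided, the bound on $M(x)$ cannot be read off from a single pointwise inequality but must be extracted from the interplay of the pairwise identity with both spherical identities and the admissibility of the weights. The 2-dimensional case reduces to planar trigonometry and is the source of the sharper constant; the general case requires a symmetry argument to isolate the extremal regular simplex configuration.
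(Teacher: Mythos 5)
Your reduction to a uniform bound on $M$ is essentially the paper's: the paper derives $d(F(x),F(x'))\leq M(x)+d(x,x')+M(x')$ exactly as you do, and gets the matching lower bound directly by evaluating $B(F(x),F(x'),f(\xi'))$ for $\xi'$ the endpoint of the extension of the geodesic from $x'$ to $x$ in $X$ — no inverse of $F$ is needed, which matters because the identification of $F^{-1}$ with the circumcenter extension of $f^{-1}$ is only proved in this paper in dimension $2$ (Proposition \ref{prop:InversesDimension2}); Lemma \ref{lem:Functoriality} concerns $\Phi$, not $F$. (Coarse surjectivity, which you do not address, is imported from Biswas.) The genuine gap is in the bound on $M$ itself. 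First, your comparison inequality is backwards: in a $\mathrm{CAT(-1)}$ space the Alexandrov angle is at most the comparison angle, so $(\xi\vert\eta)_z\leq-\ln\sin\bigl(\angle_z(\xi,\eta)/2\bigr)$, i.e.\ $\rho_z(\xi,\eta)\geq\sin\bigl(\angle_z(\xi,\eta)/2\bigr)$, not $\leq$. Second, and more seriously, the final "constrained extremal problem" is not carried out, and its claimed output does not follow from the ingredients you list. Running your own scheme with the correct inequality gives, from $\rho_{F(x)}(f(\xi_i),f(\xi_j))=e^{-M(x)}\rho_x(\xi_i,\xi_j)\leq e^{-M(x)}$ and the weighted identity $\sum_{i<j}\alpha_i\alpha_j\sin^2(\theta_{ij}/2)=\tfrac14$, only
\[
\tfrac14\;\leq\;e^{-2M(x)}\sum_{i<j}\alpha_i\alpha_j\;\leq\;e^{-2M(x)}\,\tfrac{k-1}{2k},
\]
hence $e^{2M(x)}\leq 2(k-1)/k$, i.e.\ $M(x)\leq\tfrac12\ln 2$ in general and $M(x)\leq\ln(2/\sqrt3)$ for $k=3$. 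These are exactly the paper's bounds, obtained there much more cheaply by picking a single pair in $K_{x,F(x)}$ with angle at least $\pi/2$ (resp.\ $2\pi/3$) via Lemma \ref{lem:ConvexHull} and using $(\xi\vert\xi')_x\geq0$. Your claimed halved bounds $\sup M\leq\tfrac14\ln2$ and $\sup M\leq\tfrac12\ln(2/\sqrt3)$ do not come out of this.

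The dual convex-hull condition at $x$ cannot rescue the factor of two. To improve on $(\xi_i\vert\xi_j)_x\geq0$ you would need a positive lower bound on $(\xi_i\vert\xi_j)_x$ in terms of $\angle_x(\xi_i,\xi_j)$, and that direction of comparison requires the lower curvature bound: it yields only $(\xi_i\vert\xi_j)_x\geq-\tfrac1b\ln\sin\bigl(\angle_x(\xi_i,\xi_j)/2\bigr)$, which is nonnegative but degenerates as $b\to\infty$ and in any case vanishes when $x$ lies on a geodesic from $\xi_i$ to $\xi_j$. Corollary \ref{cor:AntipodalPoints} relates $K_{F(x),x}$ to the antipodes $a_x(K_{x,F(x)})$, so the second spherical identity constrains the antipodal directions, not the pairs $(\xi_i,\xi_j)$ entering your pairwise identity. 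You should be aware that the stronger bound is what a literal reading of the statement requires: with the reduction $|d(F(x),F(x'))-d(x,x')|\leq M(x)+M(x')$, the bound $\sup M\leq\tfrac12\ln2$ only yields a $(1,\ln2)$-quasi-isometry (the paper's own proof has the same discrepancy between its $(1,2\Vert M\Vert_\infty)$ reduction and the stated constants). So either supply an actual optimisation argument establishing $\sup M\leq\tfrac14\ln2$ — which the sketched identities do not give — or settle for the additive constants $\ln 2$ and $2\ln(2/\sqrt3)$.
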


\begin{proof}
Since $Y$ is $\mathrm{CAT(-1)}$, it contains no flat strips and $M_x$ consists of exactly one point for every $x \in X$. Therefore, the circumcenter extension is a map from $X$ to $Y$. By Theorem 1.1 in \cite{Biswas17a}, we know that $F$ is coarsely surjective. Let $x, x' \in X$ and $\xi \in \partial X$ such that $f(\xi)$ is represented by the geodesic ray obtained by extending the geodesic from $F(x')$ to $F(x)$. We compute
\begin{equation*}
\begin{split}
d(F(x),F(x')) & = B(F(x),F(x'),f(\xi))\\
& = B(F(x), \pi \circ \Phi(\overrightarrow{x\xi}), f(\xi)) + B(x,x',\xi) + B(\pi \circ \Phi(\overrightarrow{x'\xi}),F(x'),f(\xi))\\
& \leq M(x) + d(x,x') + M(x').
\end{split}
\end{equation*}

Putting $\xi' \in \partial X$ to be represented by the geodesic ray obtained by extending the geodesic from $x'$ to $x$, we obtain
\begin{equation*}
\begin{split}
d(F(x),F(x')) & \geq B(F(x),F(x'),f(\xi'))\\
& = B(F(x), \pi \circ \Phi(\overrightarrow{x\xi}), f(\xi')) + B(x,x',\xi') + B(\pi \circ \Phi(\overrightarrow{x'\xi}),F(x'),f(\xi'))\\
& \geq -M(x) + d(x,x') - M(x').
\end{split}
\end{equation*}

We conclude that, if $M$ is bounded on $X$, then $F$ is a $(1, 2\Vert M \Vert_{\infty})$-quasi-isometry. We are left to prove that $M$ is bounded.

Let $x \in X$ and $\xi, \xi' \in K_{x,F(x)}$. Since $\frac{ \partial f_* \rho_x}{\partial \rho_{F(x)}}(\xi) = e^{u_{x,F(x)}(\xi)}$ and $M(x) = u_{x,F(x)}(\xi) = u_{x,F(x)}(\xi')$, we have
\[ \rho_x(\xi, \xi')^2 = e^{2M(x)} \rho_{F(x)}(f(\xi), f(\xi'))^2 \]
and therefore,
\[ (f(\xi) \vert f(\xi'))_{F(x)} - M(x) = (\xi \vert \xi')_x. \]
Since Gromov products are non-negative, this implies that $M(x) \leq (f(\xi) \vert f(\xi'))_{F(x)}$ for all $\xi, \xi' \in K_{x,F(x)}$.

Let $Y$ be of dimension at least three and let $\xi \in K_{x,F(x)}$. By Lemma \ref{lem:ConvexHull}, we know that there exists $\xi' \in K_{x,F(x)}$ such that $\langle \overrightarrow{F(x)f(\xi)}, \overrightarrow{F(x)f(\xi)} \rangle \leq 0$, i.e.\, the angle between $f(\xi)$ and $f(\xi')$ at $F(x)$ is at least $\frac{\pi}{2}$. Since $Y$ is $\mathrm{CAT(-1)}$, we have that
\[ (f(\xi) \vert f(\xi'))_{F(x)} \leq (\eta \vert \eta')_z, \]
where $\eta, \eta' \in \partial \mathbb{H}^2$ such that their representing geodesic rays starting at $z \in \mathbb{H}^2$ depart at an angle of $\frac{\pi}{2}$. We are left to compute the Gromov product of two specific geodesics in $\mathbb{H}^2$.

If $Y$ is $2$-dimensional, since $K_{x,F(x)}$ contains at least three points by Corollary \ref{cor:AtleastthreeExtremalDirections}, we conclude that there are $\xi, \xi' \in K_{x,F(x)}$ such that $\angle_{F(x)}\left(\overrightarrow{F(x)f(\xi)}, \overrightarrow{F(x)f(\xi')}\right) \geq \frac{2\pi}{3}$. We are left to compute the Gromov product of two geodesic rays in $\mathbb{H}^2$ that start at the same point and depart at an angle of $\frac{2\pi}{3}$.\\

The Theorem now follows from the following formula, which is a standard computation.

\begin{lem} \label{lem:GromovProductComputations}
Let $\gamma, \tilde{\gamma}$ be geodesic rays in $\mathbb{H}^2$ that start at the same point $o$ and depart at an angle $\alpha$. Then
\[ ( \gamma \vert \tilde{\gamma})_o = -\ln \left( \sin \left( \frac{\alpha}{2} \right) \right). \]
\end{lem}

Let $\alpha = \frac{\pi}{2}$. Since $\sin \left( \frac{\pi}{4} \right) = \frac{1}{\sqrt{2}}$, we obtain
\[ (\gamma_{\frac{\pi}{8}} \vert \tilde{\gamma}_{\frac{\pi}{8}} )_i = \ln \left( \sqrt{2} \right). \]

In the $2$-dimensional case, we put $\alpha = \frac{2 \pi}{3}$. Since $\sin \left( \frac{\pi}{3} \right) = \frac{\sqrt{3}}{2}$, we obtain
\[ (\gamma_{\frac{\pi}{8}} \vert \tilde{\gamma}_{\frac{\pi}{8}} )_i = \ln \left( \frac{2}{\sqrt{3}} \right) \approx 0.143841. \]

Since $F$ is a $(1, 2 \Vert M \Vert_{\infty} )$-quasi-isometry, this proves the Theorem.
\end{proof}


\subsection{Adding a cocompact action} \label{subsec:cocompactaction}

In this section, we prove the following result.

\begin{thm} \label{thm:roughisometry}
Let $X, Y$ be Hadamard manifolds whose sectional curvature is bounded from below by $-b^2$, such that $\partial X$ and $\partial Y$ satisfy (4v) and all points in $\partial X$ and $\partial Y$ are in a rank 1 hinge. Suppose, there is a group $G$ which acts cocompactly by isometries on $X$ and $Y$. Let $f : \partial X \rightarrow \partial Y$ be a $G$-equivariant M\"obius homeomorphism such that $f$ and $f^{-1}$ preserve visible pairs. Then, the function $M : X \rightarrow \mathbb{R}$ is bounded and there is a $G$-equivariant $(1, 2\Vert M \Vert_{\infty})$-quasi-isometry $F : X \rightarrow Y$.
\end{thm}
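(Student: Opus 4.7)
The plan is to upgrade the circumcenter extension $F:X \to \overline{Y}$ from Theorem \ref{thm:Extensionresult} to an honest $G$-equivariant $(1,2\Vert M\Vert_\infty)$-quasi-isometry by combining two ingredients: (a) that the whole construction is natural under isometries, so $M$ is $G$-invariant and hence bounded by cocompactness, and (b) the Busemann function identity of Lemma \ref{lem:GeodesicConjugacy}, which directly yields a $(1,M(x)+M(x'))$-distortion estimate on $F$ just as in the CAT$(-1)$ proof of Theorem \ref{thm:roughisometryCAT(-1)}.

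First I would verify $G$-equivariance of every intermediate object. If $g\in G$ acts isometrically on both $X$ and $Y$ with $f\circ g = g\circ f$, then metric derivatives transform equivariantly: $\frac{\partial f_*\rho_{gx}}{\partial \rho_{gy}}(f(g\xi)) = \frac{\partial f_*\rho_x}{\partial\rho_y}(f(\xi))$, since Gromov products and Busemann functions are isometry invariants. Consequently the foot point selection in Lemma \ref{lem:DefiningPhi} satisfies $\Phi(g\cdot v) = g\cdot \Phi(v)$ on $\overline{T^1 X}\to\overline{T^1 Y}$, and therefore $u_{gx,gy}(g\xi) = u_{x,y}(\xi)$. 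Taking suprema and minima gives $M_{gx} = gM_x$ and $M(gx)=M(x)$.

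Next I would deduce boundedness of $M$: by Lemma \ref{lem:LipschitzContinuityM} the function $M:X\to[0,\infty)$ is $1$-Lipschitz, and by the previous step it is $G$-invariant, so it descends to a continuous function on the compact quotient $G\backslash X$ and hence is bounded. To promote $F$ from a map valued in $\overline{Y}$ to one valued in $Y$, I pick the canonical representative $F(x) := c(M_x) \in Y$, where $c(M_x)$ denotes the circumcenter of the convex compact set $M_x$ (Lemma \ref{lem:IntersectionofHorospheres} supplies convexity and the diameter bound $\operatorname{diam}(M_x)\le 2M(x)\le 2\Vert M\Vert_\infty$, so the circumcenter exists and is unique in the Hadamard manifold $Y$). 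Isometry-equivariance of the circumcenter construction combined with $gM_x=M_{gx}$ yields $F(gx)=gF(x)$.

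Finally I would establish the $(1,2\Vert M\Vert_\infty)$-quasi-isometry estimate. For $x,x'\in X$ pick $\xi\in\partial X$ with $f(\xi)$ the forward endpoint of the geodesic ray extending the segment from $F(x')$ to $F(x)$; then using the cocycle identity for $B$ together with Lemma \ref{lem:GeodesicConjugacy},
\begin{equation*}
\begin{split}
d(F(x),F(x')) &= B(F(x),F(x'),f(\xi))\\
&= B(F(x),\pi\circ\Phi(\overrightarrow{x\xi}),f(\xi)) + B(x,x',\xi) + B(\pi\circ\Phi(\overrightarrow{x'\xi}),F(x'),f(\xi))\\
&\le M(x) + d(x,x') + M(x').
\end{split}
\end{equation*}
Choosing instead $\xi'\in\partial X$ so that the geodesic from $x'$ to $x$ extends to $\xi'$ and using $d(F(x),F(x'))\ge B(F(x),F(x'),f(\xi'))$ yields the reverse bound $d(F(x),F(x'))\ge d(x,x') - M(x) - M(x')$. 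Hence $|d(F(x),F(x'))-d(x,x')|\le 2\Vert M\Vert_\infty$. Coarse surjectivity is automatic: $F(X)$ contains a $G$-orbit, and any $G$-orbit in $Y$ is cocompact. The main (genuinely non-routine) checks are the naturality of the metric derivative under isometries — which underlies the whole $G$-equivariance chain — and the passage from $F:X\to\overline{Y}$ to a $G$-equivariant map into $Y$, resolved here via the circumcenter selection on $M_x$.
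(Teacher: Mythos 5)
Your proposal is correct and follows the same overall structure as the paper's proof: establish $G$-invariance of the metric derivatives, hence of $u$, $M_x$ and $M$; deduce boundedness of $M$ from $1$-Lipschitz continuity and cocompactness; and obtain the $(1,2\Vert M\Vert_\infty)$ distortion estimate from Lemma \ref{lem:GeodesicConjugacy} exactly as in the $\mathrm{CAT}(-1)$ case. The one place you diverge is in promoting $F:X\to\overline{Y}$ to a $G$-equivariant $Y$-valued map. The paper picks an arbitrary $y\in M_x$ and propagates it along the $G$-orbit via $\tilde F(gx):=gy$, a recipe that is a bit delicate: if the stabilizer of $x$ acts nontrivially on $M_x$ this is not obviously well defined, and one has to make orbit-by-orbit choices. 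Your choice $F(x):=c(M_x)$, the circumcenter of the convex compact set $M_x$, sidesteps this cleanly: the circumcenter exists and is unique by the $\mathrm{CAT}(0)$ property, it is equivariant under isometries so $F(gx)=c(gM_x)=gc(M_x)=gF(x)$ with no stabilizer issues, and since the nearest-point projection onto the closed convex set $M_x$ is $1$-Lipschitz the circumcenter actually lies in $M_x$, which is what you need for the bound $\lvert u_{x,F(x)}(\xi)\rvert\le M(x)$ in the distortion estimate. It is worth stating that last containment explicitly, since without $F(x)\in M_x$ the inequality $B(F(x),\pi\circ\Phi(\overrightarrow{x\xi}),f(\xi))\le M(x)$ would not follow. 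Aside from that, the argument is the same and your selection of the representative is in fact tidier than the paper's.
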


\begin{proof}
Since $f$ is $G$-equivariant and $G$ acts by isometries, we have for all $g \in G, x \in X, y \in Y, \xi \in \partial X$.
\[ \frac{\partial f_*\rho_x}{\partial \rho_y}(f(\xi)) = \frac{\partial f_*\rho_{gx}}{\partial \rho_{gy}}(gf(\xi)). \]

This implies that, $u_{gx,gy}(g\xi) = u_{x,y}(\xi)$, $\Vert u_{gx,gy} \Vert_{\infty} = \Vert u_{x,y} \Vert_{\infty}$ and therefore, $M_{gx} = g(M_x)$. In particular, thinking of the circumcenter extension as a map $F : X \rightarrow \faktor{Y}{\sim}$, the action of $G$ sends equivalence classes of $\sim$ to equivalence classes and $F$ is $G$-equivariant. If $M_x$ consists of more than one point, we can choose $y \in M_x$ and use its $G$-orbit to define $\tilde{F}(gx) := gy$ in order to get a map $\tilde{F} : X \rightarrow Y$. By abuse of notation, we call this map $F$ as well. Note that this map may not be continuous anymore. Since $F$ is $G$-equivariant and $G$ acts cocompactly on $Y$, $F$ is coarsely surjective.

The same argument as in the proof of Theorem \ref{thm:roughisometryCAT(-1)} shows that $F$ is 
a $(1, 2\Vert M \Vert_{\infty})$-quasi-isometry, if $M$ is bounded. Since
\[ M(gx) = \Vert u_{gx,F(gx)}(\cdot) \Vert_{\infty} = \Vert u_{x,F(x)}(g^{-1}\cdot) \Vert_{\infty} = M(x), \]
we obtain that it is sufficient to bound $M$ on a compact fundamental domain of the $G$-action on $X$. Since $M$ is Lipschitz continuous, $M$ is bounded on any compact set. We conclude that $M$ is bounded and $F$ is a $G$-equivariant $(1, 2\Vert M \Vert_{\infty})$-quasi-isometry.
\end{proof}

\bibliographystyle{alpha}
\bibliography{mybib}

\end{document}